\definecolor{citeclr}{rgb}{0.55, 0.55, 0.64}
\definecolor{linkclr}{rgb}{0, 0.21, 0.9447}
\patchcmd{\subsection}{\bfseries}{\itshape}{}{}
\patchcmd{\subsection}{-.5em}{.5em}{}{}
\patchcmd{\subsubsection}{-.5em}{.5em}{}{}
\newtheorem{theorem}{Theorem}[section]
\newtheorem{lemma}[theorem]{Lemma}
\newtheorem{proposition}[theorem]{Proposition}
\theoremstyle{definition}
\newtheorem{remark}[theorem]{Remark}
\newtheorem{figurecap}[theorem]{Figure}
\newtheorem{example}[theorem]{Example}
\newtheorem{conditions}[theorem]{Conditions}
\renewcommand{\C}{\mathbb{C}}
\newcommand{\Q}{\mathbb{Q}}
\newcommand{\R}{\mathbb{R}}
\newcommand{\Z}{\mathbb{Z}}
\newcommand{\cF}{\mathcal{F}}
\newcommand{\cM}{\mathcal{M}}
\newcommand{\cR}{\mathcal{R}}
\newcommand{\cS}{\mathcal{S}}
\DeclareSymbolFont{cyrletters}{OT2}{wncyr}{m}{n}
\DeclareMathSymbol{\sha}{\mathalpha}{cyrletters}{"58}
\newcommand{\eps}{\varepsilon}
\newcommand{\dee}{\partial}
\renewcommand{\Re}{\mathrm{Re}}
\renewcommand{\Im}{\mathrm{Im}}
\renewcommand{\arg}{\mathrm{arg}}
\newcommand*{\onesymb}{\text{\large\usefont{U}{bbold}{m}{n}1}} 
\newcommand{\one}[1]{\raisebox{-0.33pt}{\onesymb}\mspace{-1.5mu}\{#1\}}
\newcommand{\onelr}[1]{\raisebox{-0.33pt}{\onesymb}\mspace{-4.5mu}\left\{#1\right\}}
\renewcommand{\mod}{\mspace{4mu}\mathrm{mod}\mspace{4mu}}
\newlength{\strutheight}
\newcommand{\half}{\frac{1}{2}}
\newcommand{\thalf}{\tfrac{1}{2}}
\newcommand\starsum{\mathop{\sum\nolimits^{*}}} 
\newcommand\starsum*{\mathop{\sum\nolimits^{\mathrlap{*}}}}
\newcommand{\halfGamma}[1]{\Gamma\!\left(\frac{#1}{2}\right)\!}
\newcommand{\thalfGamma}[1]{\Gamma\!\left(\tfrac{#1}{2}\right)\!\mspace{1.5mu}}
\newcommand{\wtpsi}{\mspace{7.5mu}\lower0.18ex\hbox{$\widetilde{}$}\mkern-7.5mu \psi}
\newcommand{\x}{x}
\newcommand{\rep}{\varphi}
\newcommand{\repdim}{r}
\newcommand{\mf}{f}
\newcommand{\centralchar}{\nu}
\newcommand{\repsq}{\rep^{(2)}}
\newcommand{\barrepsq}{\bar\rep^{(2)}}
\newcommand{\rootnum}{\omega}
\newcommand{\divcount}{\sigma_0}
\newcommand{\subconvexityparam}{\lambda}
\newcommand{\V}{V}
\newcommand{\psiV}{\psi}
\newcommand{\psixi}{\psi_\xi}
\newcommand{\Eta}{H} 
\newcommand{\ff}{\mathfrak{f}}
\newcommand{\qrep}{q}
\newcommand{\ET}{\Omega}
\newcommand{\qmur}{\mathfrak{q}}
\newcommand{\nmur}{\mathfrak{n}}
\newcommand{\prodr}{\prod_{j=1}^\repdim}
\newcommand{\sumr}{\sum_{j=1}^\repdim}
\newcommand{\Favg}{\frac{1}{\#\cF}\sum_{d \in \cF}}
\newcommand{\Favgrn}{\frac{\rootnum_{\cF}}{\#\cF}\sum_{d \in \cF}}
\newcommand{\nsqsum}{\sum_{\substack{n=1 \\ n \neq \square}}^\infty}
\newcommand{\mup}{\big(\xi\sqrt{QD^r}\big)}
\newcommand{\mum}{\big(\xi^{-1}\sqrt{QD^r}\big)}
\newcommand{\mupnp}{\xi\sqrt{QD^r}}
\newcommand{\mumd}{\big(\xi^{-1}\sqrt{QD}\big)}
\newcommand{\xd}{\left(\frac{\pi^\repdim \x}{\qrep d^\repdim}\right)}
\newcommand{\xD}{\left(\frac{\pi^\repdim \x}{\qrep D^\repdim}\right)}
\newcommand{\afefac}{\left(\frac{\pi^\repdim}{\qrep d^\repdim}\right)^{\!s - \half}}
\newcommand{\epsfac}{(QD)^\eps}
\newcommand{\qet}{\min\{D^{\frac{1}{14}} \qrep^{\frac{1}{6}},\, \qrep^{\frac{1}{2}}\}}
\newcommand{\Gfac}{\prod_{j=1}^r \frac{\halfGamma{1-s+\bar\kappa_j}}{\halfGamma{s+\kappa_j}}}
\newcommand{\Vp}{V_s\!\left(\frac{\pi^{\frac{\repdim}{2}} n}{\xi\sqrt{\qrep d^\repdim}}\right)}
\newcommand{\Vm}{V_{1-s}^*\!\left(\frac{\pi^{\frac{\repdim}{2}} n\xi}{\sqrt{\qrep d^\repdim}}\right)}
\newcommand{\Vpsq}{V_s\!\left(\frac{\pi^{\frac{\repdim}{2}} n^2}{\xi\sqrt{\qrep d^\repdim}}\right)}
\newcommand{\Vmsq}{V_{1-s}^*\!\left(\frac{\pi^{\frac{\repdim}{2}} n^2\xi}{\sqrt{\qrep d^\repdim}}\right)}
\newcommand{\VmDsq}{V_{1-s}^*\!\left(\frac{\pi^{\frac{\repdim}{2}} n^2\xi}{\sqrt{\qrep D^\repdim}}\right)}
\newcommand{\Gp}{G}
\newcommand{\Gm}{G^*\mspace{-2mu}}
\newcommand{\qstar}{\qrep_*}
\newcommand{\hypertargetc}[1]{\Hy@raisedlink{\hypertarget{#1}{}}}
\author{Alex Cowan}
\address{Department of Mathematics, University of Waterloo, Waterloo, ON, Canada}
\email{alex.cowan@uwaterloo.ca}
\title[On the mean value of $\mathrm{GL}_1$ and $\mathrm{GL}_2$ $L$-functions, with applications to murmurations]{On the mean value of $\mathrm{GL}_1$ and $\mathrm{GL}_2$ $L$-functions,\\with applications to murmurations}
\date{\today}
   \def\MR#1{}
\begin{document}
\begin{abstract}
  ``Murmurations''
  are a recently-discovered type of fine structure in sums of Dirichlet coefficients averaged over families of $L$-functions.
  The root cause of this phenomenon remains mysterious.
  In the present paper, we demonstrate how murmurations arise from the averaging of approximate functional equations.
  This approach to the study of murmurations explains their empirically observed ubiquity,
  as well as their characteristic scale invariance and the peculiar normalization they demand.
  We implement our new approach to the study of murmurations in the case of quadratic twist families of $\mathrm{GL}_1$ automorphic representations, where we exhibit murmurations unconditionally.

  Our proof centres around estimating mean values of the $L$-functions in our quadratic twist families.
  In particular, we require estimates valid significantly higher in the critical strip than what existing results provide.
  To produce these estimates, we construct a variation of the approximate functional equation which is imbued with a mechanism for dynamically rebalancing error terms while preserving holomorphicity.
  We also generalize and sharpen results of Jutila and Stankus on sums of quadratic characters and fundamental discriminants.
  Mean value estimates are given for $\mathrm{GL}_2$ quadratic twist families as well.
\end{abstract}
\maketitle
{
\tableofcontents
}
\section{Introduction}
\label{sec:intro}

\subsection{Murmurations}
\label{sec:intro_murmurations}

The statistical behaviour of Dirichlet coefficients
is a major topic of interest in modern number theory. Via standard methods in analytic number theory, this subject is so intertwined as to be effectively equivalent to the study of $L$-functions in the critical strip; see for instance the prime number theorem.

Up until only very recently, focus has been on
statistics of Dirichlet coefficients $a_\rep(n)$ of automorphic representations $\rep$
in
asymptotic regimes where either $\rep$ is fixed and statistics are taken as $n$ varies --- the \textit{horizontal aspect} --- e.g.\ the prime number theorem, Sato--Tate; or where $n$ is fixed and $\rep$ varies --- the \textit{vertical aspect} --- e.g.\ vertical Sato--Tate, trace formulas. Recently, the phenomenon of ``murmurations'', inspired by the numerical investigation \cite{HLOP}, has highlighted the richness of statistics of $a_\rep(n)$ in asymptotics where $\rep$ and $n$ vary simultaneously so as to keep constant the ratio of $n$ to the conductor $\qrep$ of $\rep$. Particularly striking is the empirical observation that, in a great many cases \cite{drew_letter,quanta}, the value of $a_\rep(n)$ averaged over $n \approx \mathfrak{n}$ and $\qrep \approx \qmur$ is asymptotically a nonzero function of only $\mathfrak{n}/\qmur$. We refer to this as \textit{scale invariance}.

In this paper, we'll say that asymptotics of the form
\begin{align}
  \label{eq:murmurations_def_intro}
  \frac{1}{\#\cF(\qmur)}\sum_{\rep \in \cF(\qmur)} \frac{1}{\sqrt{\#\cS(\nmur)}} \sum_{n \in \cS(\nmur)} a_\rep(n) \,\sim\, M\!\left(\frac{\nmur}{\qmur}\right)
\end{align}
are \textit{murmurations}.
The left hand side is an appropriately normalized sum of Dirichlet coefficients $a_\rep(n)$ over sets $\cF(\qmur)$ of $L$-functions with conductor roughly $\qmur$ and sets $\cS(\nmur)$ of integers roughly $\nmur$,
and the asymptotic value on the right hand side depends only on the ratio $\nmur/\qmur$ --- i.e.\ is scale-invariant --- as $\nmur, \qmur \to \infty$. The essential characteristic is that the average is taken with the horizontal and vertical aspects coupled.
We give a more precise formulation in \cref{sec:outline_construction}, and we give the conversion to ``murmuration densities'' in the sense of \cite{sarnak_letter} in \cite{ratiosconjecture}.

Theoretical results about statistics of $a_\rep(n)$'s mixing the horizontal and vertical aspects are quite elusive. Determining the statistical structure in the scale invariant regime, even conditionally, has been done in only a few cases: Wang \cite{wang} and Zubrilina \cite{zubrilina} have done so unconditionally for Hecke characters of imaginary quadratic fields and newspaces of holomorphic modular forms in the level aspect respectively, while Lee--Oliver--Pozdnyakov \cite{LOP}, Bober--Booker--Lee--Lowry-Duda \cite{bblld}, and Booker--Lee--Lowry-Duda--Seymour-Howell--Zubrilina \cite{blldshz} prove, under GRH, results for quadratic Dirichlet characters, newspaces of holomorphic modular forms in the weight aspect, and Maass forms respectively. Otherwise, it is not known, even heuristically, what the values of these averages of Dirichlet coefficients should be.

In \cite{ratiosconjecture} we connect the scale invariance of averages of Dirichlet coefficients to \textit{ratios conjectures}, from random matrix theory and rooted in the
work of Conrey--Farmer--Keating--Rubinstein--Snaith \cite{cfkrs}. This connection enabled us to exhibit murmurations and other scale-invariant phenomena in many cases by leveraging existing results in random matrix theory. Most of these existing results are conditional on ratios conjectures, two are conditional on GRH, and one is unconditional. 

There is a very general ``recipe'' \cite{conrey_snaith} for producing the ratios conjectures which the method presented in \cite{ratiosconjecture} takes as input. Ratios conjectures estimate distributions of low-lying zeros in families of $L$-functions, and the connection between distributions of zeros and statistics of Dirichlet coefficients is well known.
It is not clear, however, why random matrix theory's estimates of zero distributions consistently lead to scale-invariance.

The current paper explains that
the observed scale invariance comes from a particular term in the approximate functional equation.
\Cref{sec:outline_construction}
elaborates; see \eqref{eq:afe_murmurations_asymp}, \eqref{eq:afe_murmurations_asymp_smoothed}, and \cref{rem:murmurations_sqrt} 
in particular. The unusual normalization in \eqref{eq:murmurations_def_intro}, observed by other authors in the guise of normalizing Dirichlet coefficients so that the Ramanujan--Petersson reads $\theta = 1/2$, is another salient characteristic of murmurations. \Cref{rem:murmurations_sqrt} explains that the fact that $\theta = 1/2$ is the correct normalization is a consequence of the critical strip being centred at $\sigma = 1/2$ in the analytic normalization.

Our method for constructing murmurations in the present paper goes beyond the framework of the ratios conjecture; it requires only a suitable family, and needn't consider ratios. It's more in line with the recipe presented in \cite[\S 4]{cfkrs}. 

We implement our construction for murmurations in the case of quadratic twist families of $\mathrm{GL}_1$ automorphic representations
\hypertargetc{hyper:gl1repdef}{$\rep = |\mspace{0mu}\cdot\mspace{0mu}|^{i\tau}\chi$}, where $\chi$ is an arbitrary primitive Dirichlet character (possibly the trivial character), and $\tau$ is an arbitrary real number.
\Cref{thm:quadratic_murmurations} uses a sharp cutoff, while \cref{thm:gamma_murmurations} allows for weights.
To state our results we must first introduce some notation.

Let $\rep$ be an irreducible unitary cuspidal automorphic representation on $\mathrm{GL}_\repdim(\Q)$ with $L$-function
\begin{align}
  \label{eq:rep_Lfunc_series}
  &
  \begin{aligned}
  L(s,\rep)
  &= \sum_{n=1}^\infty \frac{a_\rep(n)}{n^{s}}
  \\
  &= \prod_p \prod_{j=1}^\repdim \big(1 - \alpha_j(p) p^{-s}\big)^{-1}
  \end{aligned}
  \shortintertext{
    satisfying the functional equation
  }
  \label{eq:rep_Lfunc_functional_equation}
  \left(\frac{\qrep}{\pi^\repdim}\right)^{\!\frac{s}{2}}\prod_{j=1}^\repdim \halfGamma{s + \kappa_j}
  \,&L(s,\rep)
  =
  \rootnum_\rep \left(\frac{\qrep}{\pi^\repdim}\right)^{\!\frac{1-s}{2}}\prod_{j=1}^\repdim \halfGamma{1 - s + \bar\kappa_j} L(1-s,\bar\rep)
  ,
\end{align}
normalized such that $a_\rep(1) = 1$.
Let \hypertargetc{hyper:thetadef}$\theta$ be such that $a_\rep(p) \ll p^\theta$ and $\Re(\kappa_j) \geqslant -\theta$. (The Ramanujan--Petersson conjecture is that one may take $\theta = 0$ always.) When $\repdim = 1$, we'll write
\hypertargetc{hyper:kappadef}$\kappa_1 \eqqcolon \kappa$.

%
Let $\qstar \coloneqq 4\qrep$ if $2 \parallel \qrep$ and $\qstar \coloneqq \qrep$ otherwise.
Pick $\ell \in (\Z/\qstar)^\times$ such that there exists a positive fundamental discriminant $d = \ell \mod \qstar$.
Fix $1 < D_0 < D$ and set
\begin{align}
  \label{eq:Fdef}
  \cF \coloneqq \big\{D_0 < d < D \,:\, \text{$d$ a fundamental discriminant},\,\, d = \ell \mod \qstar \big\}
  .
\end{align}

Let $\chi_d$ denote the Kronecker symbol $\left(\frac{d}{\cdot}\right)$, and let $\rootnum_{\rep\otimes\chi_d}$ denote the root number of the Rankin--Selberg convolution $L(s,\rep\otimes\chi_d)$. Define
\begin{align}
  \label{eq:rootnumFdef}
  \omega_\cF \coloneqq \Favg \rootnum_{\rep\otimes\chi_d}
  .
\end{align}
When $\rep = |\mspace{0mu}\cdot\mspace{0mu}|^{i\tau}\chi$ and $\tau \Delta D \ll D$, one has
$\kappa = \frac{1 - \chi(-1)}{2} - i\tau$ and
\begin{align*}
  \omega_\cF
  = i^{-\frac{1 - \chi(-1)}{2}}\frac{\tau(\chi)}{\sqrt{\qrep}} \!\left(\frac{qD}{\pi}\right)^{\!i\tau} \!\!\chi_d(\qrep) \chi(d)
  + O\!\left(\frac{\tau \Delta D}{D}\right)
  \!,
\end{align*}
where $\tau(\chi)$ is the Gauss sum. Note that the value of $\chi_d(\qrep) = \big(\frac{\ell}{\qrep}\big)$ is independent of $d \in \cF$.


Throughout the paper we will use the common convention that each statement in which \hypertargetc{hyper:epsdef}{$\eps$} appears holds for all sufficiently small positive real numbers $\eps$, with a threshold which may change from line to line, but not within a single equation.

\begin{theorem}[Murmurations for quadratic twists of a $\mathrm{GL}_1$ automorphic representation]
  \label{thm:quadratic_murmurations}
  Let $\tfrac{5}{6} < \delta < 1$.
  In the range
  $(qD)^{1 - \eps} \ll \x \ll (qD)^{1+\eps}$,
  $\,D^{\delta - \eps} \ll \#\cF \ll D^{\delta + \eps}$,
  $\,\tau \ll D^{1 - \delta - \eps}$,
  \begin{align*}
    \frac{1}{\#\cF} \sum_{d \in \cF}
    &\frac{1}{\sqrt{\x}}
    \sum_{n < \x} n^{i\tau}
    \chi(n) \chi_d(n)
    \\
    ={}&
    \frac{\rootnum_{\cF}}{2\pi i}
    \int_{\frac{3}{4} - i\infty}^{\frac{3}{4} + i\infty}
    \frac{\halfGamma{1-s+\bar\kappa}}{\halfGamma{s+\kappa}}
    \frac{L(2-2s+2i\tau, \bar{\chi}^2)}{L^{(2)}(3-2s+2i\tau, \bar{\chi}^2)}
    \prod_{\mspace{1.75mu}p \nmid 2q} \!\left(1 - \frac{1}{(p+1)(1 - \chi^2(p)p^{3-2s+2i\tau})}\right)
    \left(\frac{\pi \x}{q D}\right)^{\!s-\half} \frac{ds}{s}
    \\&
    {}+{}
    \one{\text{\emph{$\bar\chi^2$ is trivial}}}\mspace{2mu}
    \rootnum_{\cF}
    \frac{6}{\pi^2}\frac{1 + \frac{1}{5}\one{2\nmid q}}{1 + 2i\tau}
    \left(\frac{\pi \x}{q D}\right)^{\!i\tau}
    \prod_{p\mid q}\frac{p}{p+1}
    \prod_{\mspace{1.75mu}p \nmid q} \!\left(1 + \frac{1}{(p + 1)(p^2 - 1)}\right)
    \\&
    {}+{}\mspace{0mu}
    O\big(\qrep^{\frac{1}{6}} D^{\frac{1}{12}\rho}
    + \qrep^{\frac{3}{4}} D^{\rho} + |\tau| D^{\delta - 1}
    \big)(qD)^\eps
    ,
  \end{align*}
  where $\rho = -\frac{1}{14} + \frac{1}{8}\!\left(\delta - \frac{13}{14}\right) + \frac{7}{8}\!\left|\mspace{1.5mu}\delta - \frac{13}{14}\right|$.
\end{theorem}

\begin{figure}[H]
  \includegraphics[width=\textwidth]{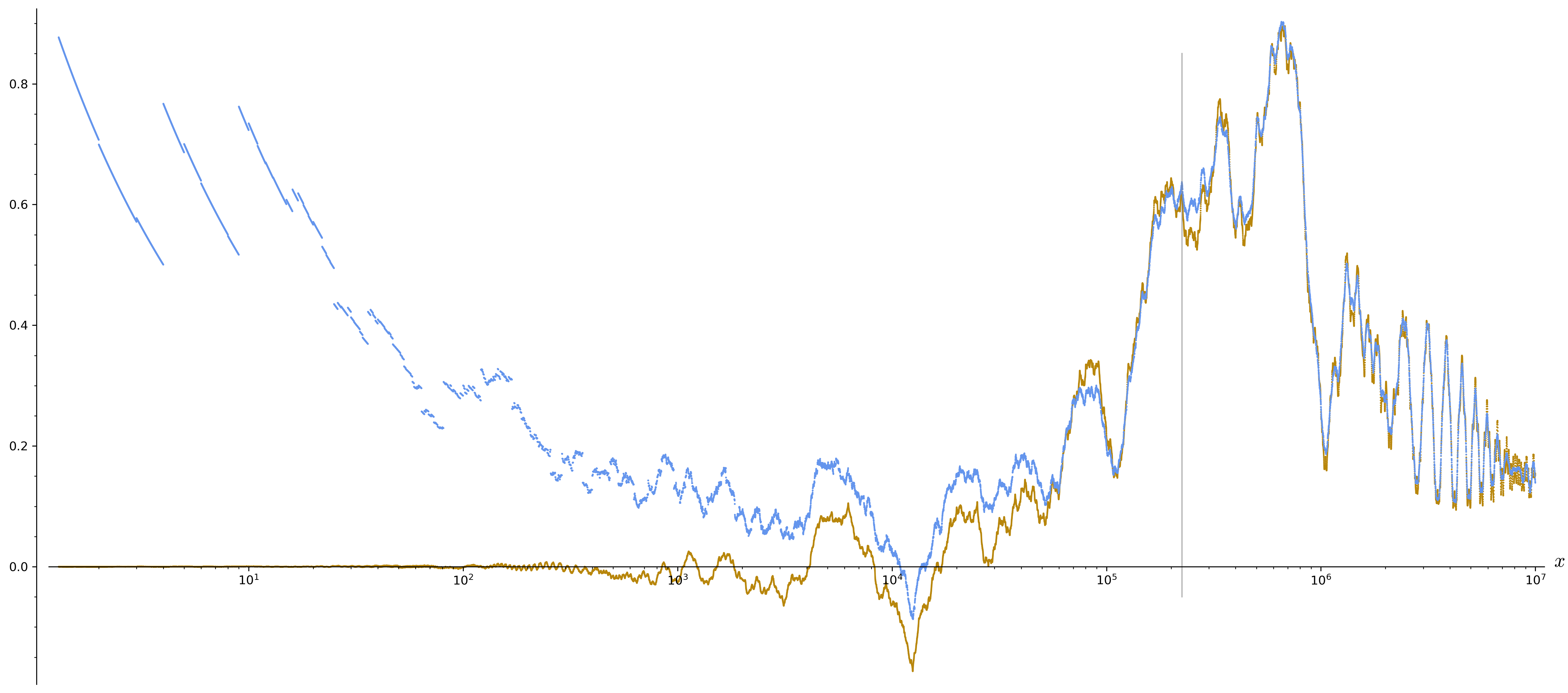}
  \begin{figurecap}
    \label{fig:kronecker_murmuration}
    Real parts of the left (blue) and right (gold) hand sides of \cref{thm:quadratic_murmurations} as functions of $\x$, with
    $\tau = 2$,
    $\chi \mod 7$ sending $3 \mapsto \frac{1 + i\sqrt{3}}{2}$, 
    and
    $\cF = \big\{99,\mspace{-2.5mu}000 < d < 101,\mspace{-2.5mu}000 \,:\, \text{$d$ a fundamental discriminant},\,\, d = 1 \mod 7 \big\}$.
    The vertical grey line indicates the value of $qD/\pi$.
    The right hand side was estimated with a Riemann sum over $|\Im(s)| \leqslant 1,\mspace{-2.5mu}000$ sampled at $200,\mspace{-2.5mu}001$ evenly spaced points, and the products over primes truncated at $p < 30,\mspace{-2.5mu}000$.
    In this example, $\#\cF = 89$.
  \end{figurecap}
\vspace{-\baselineskip}
\end{figure}

\begin{theorem}
  \label{thm:gamma_murmurations}
  Let $\mf$ be a meromorphic function with inverse Mellin transform $\cM^{-1}\mspace{-2mu}\{\mf\}$. Suppose that $\mf$ is holomorphic in the strip $0 < \sigma < 1$ and that $f(\sigma + it) \ll |t|^{-2 - \eps}$ as $|t| \to \infty$ uniformly for $0 < \sigma < 1$.
  For any $\tfrac{3}{4} < \delta < 1$,
  in the range
  $(qD)^{1 - \eps} \ll \x \ll (qD)^{1+\eps}$,
  $D^{\delta - \eps} \ll \#\cF \ll D^{\delta + \eps}$,
  and
  $\tau \ll D^{1 - \delta - \eps}$,
  \begin{align*}
    \frac{1}{\#\cF} \sum_{d \in \cF}
    &\frac{1}{\sqrt\x}
    \sum_{n=1}^\infty \cM^{-1}\mspace{-2mu}\{\mf\}\mspace{-2mu}\!\left(\frac{n}{x}\right)n^{i\tau}\chi(n)\chi_d(n)
    \\
    ={}
    &
    \frac{\rootnum_{\cF}}{2\pi i}\int_{\eps - i\infty}^{\eps + i\infty}
    \frac{\halfGamma{1-s+\bar\kappa}}{\halfGamma{s+\kappa}}
    \frac{L(2-2s+2i\tau, \bar{\chi}^2)}{L^{(2)}(3-2s+2i\tau, \bar{\chi}^2)}
    \prod_{\mspace{1.75mu}p \nmid 2q} \!\left(1 - \frac{1}{(p+1)(1 - \chi^2(p)p^{3-2s+2i\tau})}\right)
    \left(\frac{\pi \x}{q D}\right)^{\!s - \half} \!\mf(s)\mspace{1mu}ds
    \\
    &
    + O\big(
    \qrep^{\frac{3}{4}}
    D^{-\frac{1}{8} + \left|\delta - \frac{7}{8}\mspace{-2mu}\right|}
    (1 + \one{\delta < \tfrac{7}{8}} |\tau|^{\frac{1}{4}})
    + |\tau| D^{\delta - 1}
    \big)(\tau qD)^\eps
    .
  \end{align*}
\end{theorem}

On the left hand sides of both \cref{thm:quadratic_murmurations,thm:gamma_murmurations} are averages of Dirichlet coefficients in the scale-invariant regime, balancing the horizontal and vertical aspects governed by $x$ and $D$ respectively.

The right hand sides' main terms are inverse Mellin transforms of meromorphic functions, and depend only on the scale-invariant parameter $\pi x/qD$. This is a signature characteristic of murmurations.

The error term in \cref{thm:quadratic_murmurations} is $D^{-1/168}$ at its smallest. \Cref{thm:gamma_murmurations} features the substantially smaller $D^{-1/8}$, thanks to the smoothing present in its main terms.

Existing murmuration results have focused on averages short in the horizontal aspect.
One may evaluate \cref{thm:gamma_murmurations} at two different values of $\x$ and take the difference. For any $\tfrac{7}{8} < \eta < 1$, a horizontal sum of length essentially $D^\eta$ can be obtained in this way.
The corresponding ``murmuration density'' in the sense of \cite{sarnak_letter} can be determined from \cite[\S 2]{ratiosconjecture}.

\subsection{Mean values of $L$-functions in families}
\label{sec:intro_mean_values}

Our proofs of \cref{thm:quadratic_murmurations,thm:gamma_murmurations} proceed by
evaluating expressions of the form
\begin{align}
  \label{eq:intro_mean_value}
  \frac{1}{2\pi i}\int_{c - iT}^{c + iT} \frac{1}{\#\cF} \sum_{\rep \in \cF} L(s,\rep)\mspace{2mu}\x^s\,\frac{ds}{s}
\end{align}
in two different ways: by using Mellin inversion in each individual term, and then separately by estimating the sum in the integrand using the approximate functional equation. The latter is a manifestation of the problem of estimating mean values of $L$-functions. Our approach has not appeared outside our companion paper \cite{ratiosconjecture}. Existing results have instead hinged on trace formulas.

The problem of estimating mean values of $L$-functions in families has a long history. We begin our overview with
Jutila \cite{jutila}, who studies $L$-functions of primitive quadratic Dirichlet characters at the critical point $s = \thalf$.
Their work was generalized by Stankus \cite{stankus}, who restricted to fundamental discriminants $d$ lying in an arithmetic progression $d = a \mod q$ with $q$ odd and the gcd $(a,q) = 1$. The smoothed first moment
for $d = 8 \mod 16$ was studied using different methods in \cite{young:first_moment}.

In \cite{tv}, Takhtadzhyan and Vinogradov prove results for the family $L(s,\chi_d)$ of primitive quadratic Dirichlet characters $\chi_d$ with prescribed parity and $|d| < D$, valid not just at the critical point $s = \thalf$, but in the larger region $s \ll D^{\frac{1}{4}}$. Goldfeld and Hoffstein \cite{goldfeld_hoffstein} also prove results for $s \neq \thalf$.
Elliott \cite{elliott} considers a closely related problem and obtains results for $s \ll D^{\frac{1}{13}}$.

Mean values of $\mathrm{GL}_2$ $L$-functions in quadratic twist families are studied in \cite{murty_murty, iwaniec, bfh:1989, bfh:1990, rohrlich, fh, rs, quanli}.
Higher moments of $\mathrm{GL}_1$ $L$-functions are considered in \cite[etc.]{dgh,sound:2000,sono}, and the second moment of $\mathrm{GL}_2$ $L$-functions in \cite{sy,li}.
The second moment of a $\mathrm{GL}_1$ $L$-function can be viewed as the first moment of a $\mathrm{GL}_2$ Eisenstein series.

Moments of $L$-functions in quadratic twist families have various applications, including subconvexity bounds \cite{petrow_young:2020, petrow_young:2023}, non-vanishing \cite{murty_murty, bfh:1989, rohrlich, sound:2000}, and progress towards the Ramanujan--Petersson conjecture \cite{lrs:1995, lrs:1996, di1, di2, di3, di4}.

Precise predictions for the mean values of quadratic twists of some automorphic $L$-functions 
were made by Conrey--Farmer--Keating--Rubinstein--Snaith in \cite[\S 4.4]{cfkrs}. Their conjectures are rooted in random matrix theory, and are highly nontrivial.

None of these many pre-existing results are suitable for our purposes however.
For 
applications to murmurations and the proofs of \cref{thm:quadratic_murmurations,thm:gamma_murmurations},
it'll be critical to have mean value estimates which
\begin{enumerate}[label=(\roman*)]
\item
  hold for $|s|$ large, and
\item
  pertain to families of fundamental discriminants $d$ in a a narrow range of absolute values and in a prescribed arithmetic progression,
\end{enumerate}
Additionally, it'll be beneficial to
\begin{enumerate}[resume,label=(\roman*)]
\item
  avoid restrictions to only $d = 8 \mod 16$ or similar, and
\item
  avoid weights, using a sharp cutoff instead.
\end{enumerate}
\Cref{thm:r1size,thm:r2size} give the mean values of quadratic twists of automorphic $L$-functions uniformly for all $\mathrm{GL}_1$ and $\mathrm{GL}_2$ $L$-functions.
These theorems satisfy each of (i)---(iv) above, with particularly marked improvement with regards to (i).

Define
\begin{align}
  \label{eq:repL2def}
  L(s,\repsq)
  &\coloneqq \sum_{n = 1}^\infty \frac{a_{\rep}(n^2)}{n^{s}}
  .
\end{align}

\begin{theorem}[Mean values in $\mathrm{GL}_1$ quadratic twist families]
  \label{thm:r1size}
  With the notation above around
  any automorphic representation $\rep$ on $\mathrm{GL}_1$, and
  for any $s = \sigma + it$ such that
  $\thalf < \sigma < 1$,
  \begin{align*}
    \Favg
    L(s,\rep\otimes\chi_d)
    &=
    \frac{L(2s, \repsq)}{L^{(2\qrep)}(2s+1, \repsq)}
    \mspace{1mu}
    \,
    \prod_{\mspace{1.75mu}p \nmid 2\qrep} \left(1 - \frac{1}{(p+1)(1 - \alpha(p)^{-2}p^{2s+1})}\right)
    \\
    &+
    O\Bigg(
    \frac{\qrep^{\frac{3}{4}} D^\half}{\#\cF}
    \sqrt{D |s - \kappa|}^{\,\frac{1}{1 + 2\sigma} + \eps}
    +\,
    \qrep^{\frac{3}{4}} \bigg(\frac{D^\half}{\#\cF}\bigg)^{\!\frac{2\sigma - 1}{2\sigma}}
    \Bigg)
    (qD)^\eps
    .
  \end{align*}
\end{theorem}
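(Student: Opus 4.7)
The plan is to apply the paper's new holomorphic approximate functional equation (AFE), with a free balance parameter $\xi > 0$, to each $L(s, \rep\otimes\chi_d)$ for $d \in \cF$, then sum over $d$ and separate the resulting double sum according to whether the Dirichlet coefficient index $n$ is a perfect square or not. Since $\repdim = 1$, the twisted Dirichlet coefficients are $a_\rep(n)\chi_d(n)$ with $|a_\rep(n)| \ll n^\eps$, and the AFE expresses $L(s,\rep\otimes\chi_d)$ as two finite sums, the first involving a test function $V_s$ evaluated at $n/(\xi\sqrt{\qrep d})$ and the second involving a dual $V^*_{1-s}$ evaluated at $n\xi/\sqrt{\qrep d}$. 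After swapping the order of summation, the inner object in each sum is the character sum $S_n \coloneqq \sum_{d\in\cF}\chi_d(n)$.

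For the main term I would process the diagonal $n = m^2$. On this diagonal, $\chi_d(m^2) = \one{(m,d)=1}$, and averaging this coprimality indicator over $d$ in the arithmetic progression $d \equiv \ell \pmod{\qrep}$ with $D_0 < d < D$ yields, after a standard M\"obius inversion, the local density $\prod_{p\mid m,\,p\nmid 2\qrep}\bigl(1 - \frac{1}{p+1}\bigr)$ up to a negligible error. Summing against $a_\rep(m^2) m^{-2s}$ and reassembling the Euler product at primes $p \nmid 2\qrep$ gives the stated closed form $L(2s,\repsq)/L^{(2\qrep)}(2s+1,\repsq)$ multiplied by the infinite product. The cutoff $V_s$ in the AFE is removed by Mellin inversion and a contour shift, valid because $\sigma > \thalf$ places $L(2s,\repsq)$ in its region of absolute convergence and because the AFE preserves holomorphicity in $s$.

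For the off-diagonal contribution ($n$ not a square), the central input is a bound of the form $|S_n| \ll \qrep^{3/4} n^{\alpha + \eps}$ obtained by applying Poisson summation modulo $4n$ (or a quadratic large sieve) to the character sum over fundamental discriminants in an AP modulo $\qrep$; this is the source of the $\qrep^{3/4}$ factor in the error. Substituting this bound into each AFE piece turns the remaining $n$-sums into powers of $\xi\sqrt{\qrep D}$ and $\xi^{-1}\sqrt{\qrep D}$, with additional dependence on $|s-\kappa|$ coming from the gamma-factor decay of $V_s$ and $V^*_{1-s}$. The key remaining task is to choose $\xi = \xi(s, D, \qrep, \#\cF)$ dynamically so as to minimize the maximum of the two resulting expressions, and this is where the AFE's rebalancing mechanism is essential.

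The main obstacle is to carry out this dynamic optimization while preserving the holomorphicity that the contour shift in the diagonal analysis relies on. I expect the two summands of the stated error to arise from two distinct optima of $\xi$: in one regime both AFE sums contribute comparably and balancing yields the term $\sqrt{D|s-\kappa|}^{\,1/(1+2\sigma)+\eps}$, while in another regime $|s-\kappa|$ is so large relative to $\#\cF/D^{\half}$ that truncating the dual sum by absolute convergence is preferable, giving the term $(D^{\half}/\#\cF)^{(2\sigma-1)/(2\sigma)}$. Both regimes inherit the common $\qrep^{3/4}$ from the character-sum bound and the overall $(qD)^\eps$ loss from the Dirichlet coefficients, the sieve for fundamental discriminants, and the Mellin-Barnes estimates for $V_s, V^*_{1-s}$.
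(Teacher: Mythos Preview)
Your overall architecture matches the paper's: apply the AFE with the free parameter $\xi$, split into $n=\square$ and $n\neq\square$, extract the main term from the first-sum diagonal via the coprimality sieve and the Euler product identity of \cref{lemma:euler_prod_rep_square}, bound the off-diagonal, then optimize $\xi$ in two regimes. That much is correct.

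There is, however, a genuine gap in your off-diagonal step. You propose a \emph{pointwise} bound $|S_n|\ll \qrep^{3/4}n^{\alpha+\eps}$ from Poisson modulo $4n$. The paper does not do this, and such a bound with those exponents is not standard. What the paper actually does (\cref{lemma:t1ns_size,lemma:t2ns_size}) is apply Cauchy--Schwarz in $n$ to separate $a_\rep(n)n^{-s}$ from $S_n$, and then invoke the \emph{second-moment} estimate of \cref{lemma:stankus_2nd_moment} (a Jutila--Stankus--Fa\u\i nle\u\i b--Saparniyazov bound), namely $\sum_{n<N,\,n\neq\square}|S_n|^2\ll \qrep N D(\log D)^4\log N$. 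This yields only $\qrep^{1/2}$ from the character sum; the remaining $\qrep^{1/4}$ in the final $\qrep^{3/4}$ comes from the analytic conductor $Q=\qrep|s+\kappa|/(2\pi e)$ entering the AFE cutoff length $\sqrt{QD}$. So the $\qrep^{3/4}$ is a composite, not a direct output of the character-sum method.

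A secondary omission: your sketch treats only one diagonal. The dual-sum diagonal (term~2 squares) is not negligible; it is bounded by $\ET_4=(QD)^{1/2-\sigma}(\xi^{-1}\sqrt{QD})^{\sigma-1/2}$ via \cref{lemma:t2s_size}, and in the second regime of \cref{lemma:et_process} the optimal $\xi$ balances precisely $\ET_1$ (term~1 non-squares) against $\ET_4$, producing the $(D^{1/2}/\#\cF)^{(2\sigma-1)/(2\sigma)}$ term. Your description of the second regime as ``truncating the dual sum by absolute convergence'' misidentifies which term is binding. Likewise, removing $V_s$ from the first-sum diagonal is done by tail-bounding (\cref{lemma:V_approx}), not by a contour shift, and this step together with the fundamental-discriminant sieve produces two further error terms ($\ET_3,\ET_5$) that must be checked to be dominated at the chosen $\xi$.
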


\Cref{thm:r1size}'s error terms balance at 
$\#\cF \asymp_\qrep D^\half (D |s - \kappa|)^{\frac{\sigma}{2\sigma + 1}}$.

\begin{theorem}[Mean values in $\mathrm{GL}_2$ quadratic twist families]
  \label{thm:r2size}
  With the notation above around
  any automorphic representation $\rep$ on $\mathrm{GL}_2$, and
  for any $s = \sigma + it$ such that
  $\thalf + \theta < \sigma < 1$,
  \begin{align*}
    \Favg
    L(s,\rep\otimes\chi_d)
    &=
    \sum_{n=1}^\infty \frac{a_\rep(n^2)}{n^{2s}} \!\prod_{p \mid \frac{n}{(n,2q)}} \frac{p}{p+1}
    \:+\:
    O\Bigg(
    \frac{\qrep^{\frac{3}{4}} D^\half}{\#\cF}
    \bigg(D \prod_{j=1,2} |s - \kappa_j|^{\frac{1}{2}}\bigg)^{\!\frac{1 + 2\theta}{1 + 2\sigma + 2\theta} + \eps}
    (qD)^\eps
    \Bigg)
    .
  \end{align*}
\end{theorem}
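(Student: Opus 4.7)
The plan is to apply the new approximate functional equation (AFE) developed earlier in the paper to each $L(s, \rep\otimes\chi_d)$ for $d \in \cF$, exploiting its rebalancing parameter $\xi > 0$ to split the $L$-value into a main series of length roughly $\xi\sqrt{\qrep d^2}$ in $a_\rep(n)\chi_d(n)/n^s$ and a dual series of complementary length $\xi^{-1}\sqrt{\qrep d^2}$ in $\bar a_\rep(n)\chi_d(n)/n^{1-s}$, multiplied by the root number and a Gamma-factor ratio. Averaging over $d \in \cF$ and swapping orders of summation places $\sum_{d \in \cF}\chi_d(n)$ in the inner sum, weighted by the smooth cutoff $V_s$.

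The $n$-sum then splits naturally into squares and non-squares. On squares $n = m^2$ one has $\chi_d(m^2) = \one{(d,m) = 1}$, so the inner average reduces to the proportion of fundamental discriminants in the progression $d \equiv \ell \pmod{\qrep}$ in $(D_0, D)$ coprime to $m$. A standard sieve computation (or an evaluation via the Dirichlet series of squarefree integers in an arithmetic progression followed by a contour shift) gives this proportion as $\prod_{p \mid m/(m, 2\qrep)} p/(p+1)$ up to an admissible error. Since $V_s$ tends to $1$ as the effective AFE cutoff grows, the diagonal produces the stated main term $\sum_n a_\rep(n^2)/n^{2s}\prod_{p \mid n/(n,2\qrep)} p/(p+1)$, with the tail of $V_s$ absorbed into the error.

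For non-square $n$, the map $d \mapsto \chi_d(n)$ is a non-principal character and one expects cancellation. I would execute this by Poisson summation on $d$ (after removing the fundamentality condition via Möbius), producing a dual Gauss-type expression of size roughly $\sqrt{D/n}$ per term. Combining with Cauchy--Schwarz and the Rankin--Selberg bound $\sum_{n \leq X}|a_\rep(n)|^2 \ll_\rep X(\qrep D)^\eps$ available for $\mathrm{GL}_2$ cusp forms then controls the off-diagonal from the main AFE series in terms of $X = \xi\sqrt{\qrep D^2}$. The dual AFE series is bounded trivially using the individual coefficient bound $|a_\rep(n)| \ll n^\theta$, weighted by the Gamma-factor ratio whose size is $\prod_j|s-\kappa_j|^{1/2-\sigma}$ by Stirling.

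The main obstacle is choosing $\xi$ so as to balance these two contributions optimally. Schematically, the off-diagonal from the main AFE series grows like a positive power of $X = \xi\sqrt{\qrep D^2}$, while the dual sum's bound grows like $(\xi^{-1}\sqrt{\qrep D^2})^{\sigma+\theta}$ times the Stirling factor $\prod_j|s-\kappa_j|^{1/2-\sigma}$. Equating these and solving for $\xi$ produces the exponent $\frac{1+2\theta}{1+2\sigma+2\theta}$ together with the combination $D\prod_{j=1,2}|s-\kappa_j|^{1/2}$ appearing inside the error. The key point is that the new AFE remains holomorphic as $\xi$ varies with $s$, which legitimizes this optimization; a classical AFE forces $\xi = 1$ and yields only the weaker bound in which the balanced quantity is raised to the exponent $1$.
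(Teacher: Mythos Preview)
Your overall architecture matches the paper's: apply the rebalanced AFE to each $L(s,\rep\otimes\chi_d)$, average over $d\in\cF$, and split each of the two AFE sums into squares and non-squares. The diagonal (square) part of the first sum does produce the main term exactly as you say (this is the paper's Lemma~4.5, using the count in Proposition~3.8). But there is a genuine gap in how you treat the dual sum.

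\textbf{The dual sum cannot be bounded trivially.} In the paper's decomposition the second AFE term is also split into squares and non-squares, and the non-square part is bounded using the \emph{same} cancellation in the $d$-average as the first term (Lemma~4.2, via Cauchy--Schwarz and the Jutila--Stankus second-moment bound of Lemma~3.9). This is what puts the factor $D^{1/2}/\#\cF$ in front of the term-2 non-square contribution $\Omega_2$. The optimal $\xi$ is then chosen to balance $\Omega_1$ against $\Omega_2$; since \emph{both} carry $D^{1/2}/\#\cF$, the balanced error is $\frac{D^{1/2}}{\#\cF}\big(D\prod_j|s-\kappa_j|^{1/2}\big)^{\frac{1+2\theta}{1+2\sigma+2\theta}}$. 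If instead you bound the entire dual sum trivially by $\prod_j|s-\kappa_j|^{1/2-\sigma}\cdot(\xi^{-1}\sqrt{QD^2})^{\sigma+\theta}$ with no $D^{1/2}/\#\cF$, then balancing against $\Omega_1$ yields an error of shape
\[
\Big(\tfrac{D^{1/2}}{\#\cF}\Big)^{\frac{2(\sigma+\theta)}{1+2\sigma+2\theta}}\big(D\textstyle\prod_j|s-\kappa_j|^{1/2}\big)^{\frac{1+2\theta}{1+2\sigma+2\theta}},
\]
which is strictly larger than the theorem's error whenever $\#\cF>D^{1/2}$. Equivalently, at the paper's optimal choice $z=z_3$ your trivial term-2 bound evaluates to $e_3+(\delta-\tfrac12)>e_3$. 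So the step ``equating these and solving for $\xi$ produces the exponent $\frac{1+2\theta}{1+2\sigma+2\theta}$'' is correct for the exponent on $D\prod_j|s-\kappa_j|^{1/2}$ but silently drops the discrepancy in the $D^{1/2}/\#\cF$ power, and the resulting bound does \emph{not} prove Theorem~1.2.

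Two smaller points. First, the effective AFE cutoff is $\xi(QD^2)^{1/2}$ with $Q\asymp q\prod_j|s+\kappa_j|$, not $\xi(qD^2)^{1/2}$; the archimedean parameters enter the sum length, not only through the external Stirling ratio. Second, the paper does not use Poisson on the $d$-sum: the non-square contributions (in both AFE terms) are handled by Cauchy--Schwarz together with the second-moment estimate $\sum_{n<N,\,n\neq\square}\big|\sum_d\chi_d(n)\big|^2\ll qND(\log D)^4$ (Lemma~3.9). Your claimed Poisson output ``roughly $\sqrt{D/n}$ per term'' is not a standard bound for $\sum_{d\in\cF}\chi_d(n)$ with $n$ non-square, and in any case the second-moment route is what is actually needed to make the argument close.
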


The regions in which \cref{thm:r1size,thm:r2size} yield asymptotics are very large. For example, in \cite{tv}, which pertains to $\rep$ the trivial representation on $\mathrm{GL}_1$, the admissible range is at most $s \ll \x^{\frac{1}{4} - \eps}$, while \cref{thm:r1size} allows up to $s \asymp \x^{2-\eps}$. The main result of \cite{elliott}, which addresses a similar problem, holds for $s \ll \x^{\frac{1}{13} - \eps}$ inside the critical strip.

The methods we used to prove \cref{thm:r1size,thm:r2size}
are invaluable in establishing \cref{thm:quadratic_murmurations}: we require at minimum an asymptotic for the mean value of $L$-functions in our family
which is
valid for $s \gg \x^{\half + \eps}$, a range much larger than what any existing results address. Indeed, for our result to be compelling, each of the limitations (i)---(iv) listed in \cref{sec:intro_mean_values} needed to be removed. In so doing, we have generalized and/or sharpened a couple existing results on sums of fundamental discriminants or quadratic characters.

\subsection{Techniques and ingredients}
\label{sec:intro_techniques}


To establish our results we introduce a variation of the approximate functional equation.
This variation imbues in the weights a holomorphic dependence on $s$ which dynamically rebalances the lengths of the two sums so as to increase the power savings in \cref{thm:quadratic_murmurations} from $1/216$ to $1/168$, and the admissible range of $\delta$, the exponent of the size of the family, from $7/8 < \delta < 1$ to $5/6 < \delta < 1$.

\Cref{sec:lemmas_afe} discusses the approximate functional equation in detail.
Briefly, for readers already familiar with approximate functional equations, we'll arrange for it to be the case that
\hypertarget{hyper:cdef}{}
\begin{align*}
  L(s,\rep)
  &
  ={} \sum_{n=1}^\infty \frac{a_\rep(n)}{n^{s}}
  V_s\!\left(\frac{\pi^{\frac{\repdim}{2}}}{\qrep^\half\xi}n\right)
  \;+\;
  \rootnum_\rep 
  \left(\frac{\pi^\repdim}{\qrep}\right)^{\!s - \half} 
  \prod_{j=1}^\repdim \frac{\halfGamma{1-s+\bar\kappa_j}}{\halfGamma{s+\kappa_j}}
  \sum_{n=1}^\infty \frac{a_{\bar\rep}(n)}{n^{1-s}}
  V_{1-s}^*\!\left(\frac{\pi^{\frac{\repdim}{2}}\xi}{\qrep^\half}n\right)
  ,
\end{align*}
where
\begin{align}
  \nonumber
  \xi \coloneqq \xi(s) &\coloneqq D^{2\alpha} \exp\!\left(\beta\log\!\left((s - s_0)^2\right)\right)\!
  ,
\end{align}
with the branch cut of the logarithm along the negative real axis and $-\pi < \arg(z) \leqslant \pi$.

The real number parameters $\alpha$ and $\beta$ allow us to rebalance the two sums in the approximate functional equation to reduce error terms. The idea to introduce the role played by $D^{2\alpha}$ is not new; see e.g.\ \cite[Thm.\ 5.3]{IK}. It is, however, necessary for establishing \cref{thm:r2size}, as one does not obtain a power savings otherwise.
The parameter $\beta$ is a $t$-aspect analogue of $\alpha$, 
which we haven't encountered before.

It is critical for our application to murmurations that the holomorphicity in $s$ of the weights in the approximate functional equation be preserved by our modifications, as \cref{thm:quadratic_murmurations} is established by taking an inverse Mellin transform. Seeking only to evaluate $L$-functions at a single point, e.g.\ as in \cref{thm:r1size,thm:r2size} or \cite[\S\S 3.3--3.5]{rubinstein:methods}, is substantially less delicate; our proof of \cref{thm:quadratic_murmurations} requires fixing $V_s$ once and for all at the very beginning, and $V_s$ being able to take care of itself with regards to matters of error term minimization instead of requiring our continued intervention is what leads to power savings.

In \cref{thm:r1size,thm:r2size,thm:quadratic_murmurations}, the parameters $\alpha$ and $\beta$ have been ``optimized out'', in that the optimal choice was made as part of the theorem. The maximum power savings in \cref{thm:quadratic_murmurations} takes $\beta = 1/16$. In contrast, the parameter $\alpha$ is not helpful for proving \cref{thm:quadratic_murmurations}, but critical for \cref{thm:r2size}.

A further optimization made while preparing \cref{thm:quadratic_murmurations,thm:gamma_murmurations} was to decompose mean values of $L$-functions into multiple holomorphic terms through successive approximations, and then to integrate those terms along different vertical lines. Without this trick the theorems' error terms dwarfs their main terms.

Over the course of the proof of \cref{thm:quadratic_murmurations} it becomes necessary to generalize results of Fainleib--Saparniyazov \cite{FS}, Jutila \cite{jutila}, and Stankus \cite{stankus} on sums of quadratic characters or powers of fundamental discriminants. In \cref{sec:lemmas_discs} we extend these existing results to apply to arbitrary arithmetic progressions of fundamental discriminants, shedding existing restrictions on $d \mod 16$. Murmurations are rescaled according to the root number of the family of $L$-functions --- note the factor of $\rootnum_{\cF}$ on the right hand sides of \cref{thm:quadratic_murmurations,thm:gamma_murmurations} --- so maximal control is needed over which root numbers appear and in what proportion. As will be discussed later, this comes down to confining $d$ to a union of arithmetic progressions. We also extend the admissible range of the real part of the power in the discriminant sum. 

In addition to generalizing these results, \cref{lemma:disc_power_sum} improves the error terms of the Jutila and Stankus results, allowing us to save a factor of $D^{\frac{10}{21}}$ in the error term of \cref{thm:quadratic_murmurations}. We ultimately save no more than a power of $D^{\frac{1}{168}}$, so in the context of this problem a factor of $D^{\frac{10}{21}}$ is big. Without our improvements, proving \cref{thm:quadratic_murmurations} would have been impossible. Our proof of \cref{lemma:disc_power_sum} uses different methods than Jutila and Stankus.

Producing the error term in \cref{thm:quadratic_murmurations} required us to balance $47$ error terms in 10 parameters. We wrote code to help us do this, which can be found at \cite{github}. More detail is given in \cref{sec:contrib}. Our code also helped guide the optimization of parameters in the proofs of \cref{thm:r1size,thm:r2size}.

\Cref{thm:quadratic_murmurations} requires a subconvexity bound. We use recent work of Petrow--Young \cite{petrow_young:2023} for this purpose.

\subsection{Outline and setup}
\label{sec:intro_outline} 

Our overall strategy for proving our main results 
\cref{thm:quadratic_murmurations,thm:gamma_murmurations} is presented in \cref{sec:outline}.
\Cref{thm:r1size,thm:r2size} are easier and will be proved along the way.
\Cref{sec:lemmas} establishes a litany of lemmas we'll need in \cref{sec:size}, which builds to the proofs of \cref{thm:r1size,thm:r2size} by analyzing averages of individual terms in the approximate functional equation, and in \cref{sec:contrib}, which takes an inverse Mellin transform of these individual terms to prove \cref{thm:quadratic_murmurations,thm:gamma_murmurations}. Many proofs are postponed to \cref{sec:lemma_proofs}. \Cref{sec:glossary} is a glossary, to remind notation.

Throughout this paper $\rep$ denotes an irreducible unitary cuspidal automorphic representation on $\mathrm{GL}_\repdim(\Q)$ with $L$-function \eqref{eq:rep_Lfunc_series} satisfying the functional equation \eqref{eq:rep_Lfunc_functional_equation}. These equations define the quantities $a_\rep(n)$, $\alpha_j(p)$, $\qrep$, $\rootnum_\rep$, and $\kappa_j$.

The completed $L$-function of $\rep$, defined by
\begin{align}
  \label{eq:Lambdadef}
  \begin{aligned}
    \Lambda(s,\rep) &\coloneqq \left(\frac{\qrep}{\pi^\repdim}\right)^{\!\frac{s}{2}} \prod_{j=1}^\repdim \halfGamma{s + \kappa_j} L(s,\rep)
    \\
    \Lambda(1-s,\bar\rep) &\coloneqq \left(\frac{\qrep}{\pi^\repdim}\right)^{\!\frac{1-s}{2}} \prod_{j=1}^\repdim \halfGamma{1 - s + \bar\kappa_j} L(1-s,\bar\rep)
    ,
  \end{aligned}
\end{align}
is entire unless $\repdim = 1$ and $\rep$ is everywhere unramified, in which case $L(s,\rep) = \zeta(s + i\tau)$ for some $\tau \in \R$ \cite[\S 1.1.1]{michel}.
We use the ``analytic normalization'', wherein the functional equation $\Lambda(s,\rep) = \rootnum_\rep \Lambda(1-s,\bar\rep)$ is between $s$ and $1-s$, and conjecturally $a_\rep(p) \ll 1$. Note that, with our definition \eqref{eq:Lambdadef},
\begin{align}
  \label{eq:notLambda}
  \Lambda(s,\rep) \neq \qrep^{\frac{s}{2}} \prod_{j=1}^\repdim \pi^{-\frac{s + \kappa_j}{2}}\halfGamma{s + \kappa_j} L(s,\rep)
  .
\end{align}
The right hand side above is natural from a theoretical perspective: the factor $\prod_{j=1}^\repdim \pi^{-\frac{s + \kappa_j}{2}}\thalfGamma{s + \kappa_j}$ is the local $L$-function at $\infty$. However, it would lead to more cumbersome notation for us in practice: the only difference is that what we call $\rootnum_\rep$ is equal to $\tilde{\rootnum}_\rep \pi^{\half\sumr \kappa_j - \bar\kappa_j}$, where $\tilde{\rootnum}_\rep$ is the root number one would obtain if they were to define $\Lambda$ to be the right hand side of \eqref{eq:notLambda}.

\begin{example}[$\mathrm{GL}_1$ automorphic representations]
  \label{example:GL1}
  If $\rep$ is an irreducible unitary cuspidal automorphic representation on $\mathrm{GL}_1(\Q)$, then $\rep = |\mspace{0mu}\cdot\mspace{0mu}|^{i\tau}\chi$ for some primitive Dirichlet character $\chi$ and some real number $\tau$;
  \begin{align*}
    \Lambda(s,\rep)
    &=
    \left(\frac{\qrep}{\pi}\right)^{\!\frac{s}{2}} \halfGamma{s - i\tau + \kappa_\chi} L(s - i\tau, \chi)
    \\
    &=
    \rootnum_\chi \left(\frac{\qrep}{\pi}\right)^{\!i\tau} \left(\frac{\qrep}{\pi}\right)^{\!\frac{1-s}{2}} \halfGamma{1 - s + i\tau + \kappa_\chi} L(1 - s + i\tau, \bar\chi)
    ,
  \end{align*}
  where $\kappa_\chi = \frac{1 - \chi(-1)}{2}$
  and $\rootnum_\chi = i^{-\kappa_\chi} \frac{\tau(\chi)}{\sqrt{q}}$ where $\tau(\chi)$ is the Gauss sum. I.e.,
  $\rootnum_\rep = \rootnum_\chi (\tfrac{\qrep}{\pi})^{i\tau}$
  and $\kappa = \frac{1 - \chi(-1)}{2} - i\tau$.
\end{example}

\begin{remark}
  \label{rem:theta}
  Let $\theta$ be such that $a_\rep(p) \ll p^{\theta}$ and $\Re(\kappa_j) \geqslant -\theta$ for all $1 \leqslant j \leqslant \repdim$.
\begin{itemize}
\item
  When $\repdim = 1$ one may take $\theta = 0$, as $a_\rep(n) = |n|^{i\tau}\chi(n)$ for some Dirichlet character $\chi$ and real number $\tau$ \cite[\S 2]{goldfeld_hundley}.
\item
  When $\repdim = 2$, one may take $\theta = \tfrac{7}{64}$ \cite{kim_sarnak}. If $\rep$ corresponds to a holomorphic modular form then one may take $\theta = 0$ \cite{deligne}. See \cite[\S 5.11]{IK} for further discussion of the analytic theory of $\mathrm{GL}_2$ $L$-functions.
\item
  For any $\repdim \in \Z_{>0}$, one may take $$\theta = \half - \frac{1}{\repdim^2 + 1}.$$ See \cite[Thm.\ 1.1]{michel} for a history of this result.
\item
  The Ramanujan--Petersson conjecture is that one may take $\theta = 0$ in every circumstance. (More precisely, at the finite places it states that $|\alpha_j(p)| = 1$ for all $p \nmid \qrep$.)
\end{itemize}
The Ramanujan--Petersson conjecture at the infinite place is known as the Selberg eigenvalue conjecture. See \cite[\S 5.1]{IK} or \cite[\S 1.1.3]{michel} for further discussion.
\end{remark}


\section{Strategy}
\label{sec:outline}

We prove \cref{thm:quadratic_murmurations} by evaluating
\begin{align}
  \label{eq:avg_mellin}
  \frac{1}{2\pi i}\int_{c - iT}^{c + iT} \frac{1}{\#\cF} \sum_{\rep \in \cF} L(s,\rep)\mspace{2mu}\x^s\,\frac{ds}{s}
  ,
\end{align}
i.e.\ the average over a family of $L$-functions of a truncated inverse Mellin transform, in two different ways:
\begin{enumerate}[label=(\roman*)]
\item
  Using the approximate functional equation, to express \eqref{eq:avg_mellin} as an explicit meromorphic function.
\item
  Using Mellin inversion, e.g.\ Perron's formula, to express \eqref{eq:avg_mellin} as an average of Dirichlet coefficients.
\end{enumerate}
\Cref{subsec:afe,subsec:mellin} delve into (i) and (ii) respectively.
How they are combined to produce murmurations is discussed in \cref{sec:outline_construction}.
\Cref{sec:rep_quadratic_twists} focuses on the special case of quadratic twist families, which will be the subject of the paper from thereon out. 

\subsection{The approximate functional equation}
\label{subsec:afe}

The specific approximate functional equation we use,
valid whenever $\Lambda(s,\rep)$ is entire and $0 < \Re(s) < 1$, is
\begin{align}
  \label{eq:afe_specific}
  L(s,\rep)
  &
  ={} \sum_{n=1}^\infty \frac{a_\rep(n)}{n^{s}}
  V_s\!\left(\frac{\pi^{\frac{\repdim}{2}}}{\qrep^\half\xi}n\right)
  \;+\;
  \rootnum_\rep 
  \left(\frac{\pi^\repdim}{\qrep}\right)^{\!s - \half} 
  \prod_{j=1}^\repdim \frac{\halfGamma{1-s+\bar\kappa_j}}{\halfGamma{s+\kappa_j}} 
  \sum_{n=1}^\infty \frac{a_{\bar\rep}(n)}{n^{1-s}}
  V_{1-s}^*\!\left(\frac{\pi^{\frac{\repdim}{2}}\xi}{\qrep^\half}n\right)
  ,
\end{align}
where $V_s$ is smoothing function and $\xi$ is essentially a small power of $s$.
We now define $V$ and $\xi$.

We use the common shorthand
\begin{align}
  \label{eq:vertical_line_integral}
  \int_{(c)} \coloneqq \int_{c - i\infty}^{c + i\infty}
\end{align}
for arbitrary $c \in \R$. 

Let $A, B, \alpha, \beta, c_V, s_0 \in \R$. We discuss these parameters later. 
Let $\log$ denote the branch of the logarithm cut along the negative real axis with $-\pi < \arg(z) \leqslant \pi$.
The definitions of $\xi$ and $V$ are
\begin{align}
  \nonumber
  \xi &\coloneqq \xi(s)
  \coloneqq D^{2\alpha} \exp\!\left(\beta \log\!\left((s - s_0)^2\right)\right)
  \\
  \nonumber
  V_s(y) &\coloneqq \frac{1}{2\pi i} \int_{(c_V\!)} \prodr \frac{\halfGamma{s+w+\kappa_j}}{\halfGamma{s+\kappa_j}} \left(e^{\frac{iw}{A}} + e^{-\frac{iw}{A}} - 1\right)^{\!-B} y^{-w}\frac{dw}{w}
  \\
  \nonumber
  V_{1-s}^*(y) &\coloneqq \frac{1}{2\pi i} \int_{(c_V\!)} \prodr \frac{\halfGamma{1-s+w+\bar\kappa_j}}{\halfGamma{1-s+\bar\kappa_j}} \left(e^{\frac{iw}{A}} + e^{-\frac{iw}{A}} - 1\right)^{\!-B} y^{-w}\frac{dw}{w}
  .
\end{align}

A more general version of 
\eqref{eq:afe_specific} is given in \cref{lemma:afe_psi}.
\Cref{lemma:V_is_holomorphic,lemma:V_approx,lemma:V_deriv_approx} list the essential characteristics of 
$V$. In particular,
\begin{align*}
  V_s\!\left(\frac{\pi^{\frac{\repdim}{2}}}{\qrep^\half\xi}n\right) \approx \onelr{n < Q^\half \xi}
\end{align*}
where
\begin{align}
  \label{eq:Qdef}
  Q \coloneqq Q(s) &\coloneqq \frac{\qrep}{(2\pi e)^\repdim} \prod_{j=1}^\repdim |s + \kappa_j|
\end{align}
is basically the ``analytic conductor'' of $L(s, \rep)$ \cite[(5.7)]{IK} reduced by a factor of $(2\pi e)^\repdim$.

The parameters $A, B, \alpha, \beta, c_V, s_0$ control the shapes of $V$ and $\xi$, and affect only the quality of the error term. We impose restrictions on these parameters in \cref{sec:lemmas_afe}. If $\repdim = 1$, one may take $A = 2$, $s_0 = -1$, $c_V = 2$, $B = 16$, and $|\alpha|, |\beta| < 1$.

Our subsequent analysis is relatively insensitive to how we pick $A$, $B$, $s_0$, and $c_V$, provided they satisfy some additional restrictions.
In contrast, the error term we obtain is very sensitive to $\beta$. \Cref{thm:quadratic_murmurations} results from taking $\beta = 1/16$. If one takes $\beta = 0$ then one recovers a more common version of the approximate functional equation, e.g.\ \cite[Thm.\ 5.3]{IK}. Taking $\beta = 0$ leads to a power savings of $1/216$ in \cref{thm:quadratic_murmurations} instead of $1/168$, and the more restrictive $7/8 < \delta < 1$ instead of $5/6 < \delta < 1$. The parameter $\alpha$, which is similar in character to $\beta$, is used to prove \cref{thm:r1size,thm:r2size}; see \cref{sec:size_proof}.

\subsection{Mellin inversion}
\label{subsec:mellin}

Recall that our strategy is to evaluate \eqref{eq:avg_mellin} in two different ways: substituting for $L(s,\rep)$ using the approximate functional equation \eqref{eq:afe_specific}, and using Mellin inversion to obtain an expression involving Dirichlet coefficients. We lay the groundwork for the latter here. Recall the notation established in and surrounding \eqref{eq:rep_Lfunc_series} pertaining to the automorphic $L$-function $L(s,\rep)$.

\textit{Perron's formula} will be for us the approximation
\begin{align}
  \label{eq:perron}
  \sum_{n < \x} a_\rep(n) = \frac{1}{2\pi i} \int_{c_0 - iT}^{c_0 + iT} L(s,\rep) \mspace{2mu}\x^s \,\frac{ds}{s} + O\big(\x^{c_0}T^{-1}(\x T\qrep)^\eps\big)
  ,
\end{align}
where $\x  > 2$, $\x  \not\in \Z$, $T \ll \x^{1-\eps}$ --- these conditions can be loosened if necessary --- and $c_0 > 1 + \theta$. A more precise version of \eqref{eq:perron} is given in \cite[Cor.\ 5.3]{MV}.

Perron's formula follows from writing $L(s,\rep)$ as an absolutely convergent Dirichlet series (whence the necessity for $c_0 > 1 + \theta$), and then applying Mellin inversion for the function $1/s$ term-by-term. To establish \cref{thm:gamma_murmurations} we will follow this procedure using a function other than $1/s$. By replacing $L(s,\rep)$ with its Dirichlet series, one finds, for suitable meromorphic functions $\mf$,
\begin{align}
  \label{eq:approx_general_inverse_mellin}
  \frac{1}{2\pi i}\int_{c_0 - iT}^{c_0 + iT} L(s,\rep)\mspace{2mu}\x^s \mf(s)\mspace{1.5mu}ds \approx \sum_{n=1}^\infty a_\rep(n) \cM^{-1}\mspace{-2mu}\{\mf\}\big(\tfrac{n}{\x}\big)
  .
\end{align}
The discrepancy that ``$\approx$'' implies, which comes from the truncation of the integral to $|\Im(s)| < T$, depends on $\mf$. For example, in Perron's formula \eqref{eq:perron}, one has an error term of roughly $\x/T$. If $\mf(s) = \thalfGamma{s}$, then one may take the $T \to \infty$, obtaining the identity
\begin{align}
  \label{eq:inverse_mellin_halfGamma}
  \frac{1}{4\pi i}\int_{(c_0)} \halfGamma{s}L(s,\rep)\mspace{2mu}\x^s\,ds = \sum_{n=1}^\infty e^{\nicefrac{-n^2}{x^2}} a_\rep(n)
  .
\end{align}
In fact \eqref{eq:inverse_mellin_halfGamma} holds for any $c_0 > 0$ if $L(s,\rep)$ is entire.

\subsection{Constructing murmurations}
\label{sec:outline_construction}

Our strategy is still to evaluate the average inverse Mellin transform \eqref{eq:avg_mellin} in two different ways: by substituting the approximate functional equation \eqref{eq:afe_specific}, and by applying an inverse Mellin transform to the Dirichlet series as in \eqref{eq:approx_general_inverse_mellin} or Perron's formula \eqref{eq:perron}.

Let $\cF_\infty$ be a set of $L$-functions $L(s,\rep) = \sum_n a_\rep(n) n^{-s}$ with conductors $\qrep = q_\rep$. Let $\cS_\infty$ be a subset of $\Z$.
Pick increasing functions \hypertargetc{hyper:murdefs}{$\qmur$, $\Delta \qmur$, $\nmur$, $\Delta \nmur$} from $\R_{\geqslant 0}$ to $\R_{\geqslant 0}$, with $\qmur$ and $\nmur$ unbounded.
Define
\hypertarget{hyper:cSdef}{}
\begin{align*}
  \cF(\x) &\coloneqq \{\rep \in \cF_\infty \,:\, \qmur(\x) - \Delta\qmur(\x) < \qrep < \qmur(\x)\}
  \quad\text{and}\quad
  \cS(\x) \coloneqq \{n \in \cS_\infty \,:\, \nmur(\x) - \Delta\nmur(\x) < n < \nmur(\x)\}
  .
\end{align*}
If
\begin{align}
  \label{eq:murmurationsdef}
  \frac{1}{\#\cF(\x)}\sum_{\rep \in \cF(\x)} \frac{1}{\sqrt{\#\cS(\x)}} \sum_{n \in \cS(\x)} a_\rep(n) \,\sim\, M\!\left(\frac{\nmur(x)}{\qmur(x)}\right)
\end{align}
as $\x \to \infty$
for some nonzero function $M$, we call \eqref{eq:murmurationsdef} a \textit{murmuration}. If weights are present, e.g.\
\begin{align*}
  \frac{1}{\#\cF(\x)}\sum_{\rep \in \cF(\x)} \frac{1}{\sqrt{\#\cS(\x)}} \sum_{n \in \cS(\x)} a_\rep(n) \,\Phi\!\left(\frac{\qrep}{\qmur}, \frac{n}{\nmur}\right) \,\sim\, M_\Phi\!\left(\frac{\nmur}{\qmur}\right)
\end{align*}
for some reasonable function \hypertargetc{hyper:Phidef}{$\Phi$}, we'll call that a murmuration also. Statistics of the form \eqref{eq:murmurationsdef} emerge naturally from the theoretical approach of this paper, as well as our companion paper \cite{ratiosconjecture}'s. They capture fine structure in sums of Dirichlet coefficients.


We'll focus on cases where $\cF_\infty$ is a set of automorphic representations all having the same degree $\repdim$, root number $\rootnum_\rep = \rootnum_{\cF}$, and gamma factors $\prod_j \thalfGamma{s+\kappa_j}$. Take $\nmur(\x) = \Delta \nmur(\x) = \x$. Let $\theta$ be such that $a_\rep(n) \ll n^{\theta + \eps}$ and $\Re(\kappa_j) \geqslant -\theta$ all $\rep \in \cF_\infty$. Let's also fix $c \in (0,1)$, $\,c_0 > 1 + \theta$, and a function $T = T(\qmur)$ positive, increasing, and unbounded as $\qmur \to \infty$.

Let's return to our strategy of evaluating the average inverse Mellin transform \eqref{eq:avg_mellin} in two different ways.
The conjunction of Perron's formula \eqref{eq:perron} and the approximate functional equation \eqref{eq:afe_specific} (for $L(s,\rep) \neq \zeta(s + i\tau)$) gives
\begin{align}
  \label{eq:afe_perron_mellin_conjunction}
  \begin{aligned}
    \frac{1}{\sqrt{\x}} \sum_{n < \x} a_\rep(n)
    ={}
    & \frac{1}{2\pi i} \int_{c - iT}^{c + iT} \sum_{n=1}^\infty \frac{a_\rep(n)}{n^s} V_s\!\left(\frac{\pi^{\frac{\repdim}{2}}}{\qrep^\half\xi}n\right) \x^{s-\half} \,\frac{ds}{s}
    \\
    {}+{} 
    &\frac{1}{2\pi i} \int_{c - iT}^{c + iT} \prod_{j=1}^\repdim \frac{\Gamma\!\left(\frac{1-s+\bar\kappa_j}{2}\right)}{\Gamma\!\left(\frac{s+\kappa_j}{2}\right)}
    \sum_{n=1}^\infty \frac{\rootnum_\rep a_{\bar\rep}(n)}{n^{1-s}}
    V_{1-s}^*\!\left(\frac{\pi^{\frac{\repdim}{2}}\xi}{\qrep^\half}n\right)
    \left(\frac{\pi^\repdim \x}{\qrep}\right)^{\!s-\half} \frac{ds}{s}
    \\
    {}+{}
    & \frac{1}{2\pi i}\left(\int_{c_0 - iT}^{c - iT} + \int_{c + iT}^{c_0 + iT} \right) L(s,\rep) \mspace{2mu}\x^{s-\half} \,\frac{ds}{s}
    \;+\; O\big(\x^{c_0 - \half}T^{-1}(\x T \qrep)^\eps\big)
    .
  \end{aligned}
\end{align}

Averaging \eqref{eq:afe_perron_mellin_conjunction} over $\cF(\x) \eqqcolon \cF$ and writing $y \coloneqq \x/\qmur$ gives
\begin{align}
  \label{eq:afe_perron_mellin_conjunction_avg}
  \begin{aligned}
    \frac{1}{\#\cF} \sum_{\rep \in \cF} \frac{1}{\sqrt{\qmur y}}
    &\sum_{n < \qmur y} a_\rep(n)
    \\
    ={}
    & \frac{1}{2\pi i} \int_{c - iT}^{c + iT} \frac{1}{\#\cF} \sum_{\rep \in \cF} \sum_{n=1}^\infty \frac{a_\rep(n)}{n^s}
    V_{s}\!\left(\frac{\pi^{\frac{\repdim}{2}}}{\qrep^\half\xi}n\right)
    \x^{s-\half} \,\frac{ds}{s}
    \\
    {}+{} 
    &\frac{1}{2\pi i} \int_{c - iT}^{c + iT} \prod_{j=1}^\repdim \frac{\Gamma\!\left(\frac{1-s+\bar\kappa_j}{2}\right)}{\Gamma\!\left(\frac{s+\kappa_j}{2}\right)}
    \frac{1}{\#\cF} \sum_{\rep \in \cF}
    \sum_{n=1}^\infty \frac{\rootnum_{\rep} a_{\bar\rep}(n)}{n^{1-s}}
    V_{1-s}^*\!\left(\frac{\pi^{\frac{\repdim}{2}}\xi}{\qrep^\half}n\right)
    \!
    \left(\frac{\qmur}{\qrep} \pi^\repdim y\right)^{\!s-\half} \frac{ds}{s}
    \\
    {}+{}
    & \frac{1}{2\pi i}\left(\int_{c_0 - iT}^{c - iT} + \int_{c + iT}^{c_0 + iT} \right) \frac{1}{\#\cF} \sum_{\rep \in \cF} L(s,\rep) \mspace{2mu}\x^{s-\half} \,\frac{ds}{s}
    \;+\; O\big(\x^{c_0 - \half}T^{-1}(\x T \qmur)^\eps\big)
    .
  \end{aligned}
\end{align}
Observe that the second term on the right hand side of equation \eqref{eq:afe_perron_mellin_conjunction_avg},
\begin{align}
  \nonumber
  \frac{1}{2\pi i} \int_{c - iT}^{c + iT}
  &
  \prod_{j=1}^\repdim \frac{\Gamma\!\left(\frac{1-s+\bar\kappa_j}{2}\right)}{\Gamma\!\left(\frac{s+\kappa_j}{2}\right)}
  \frac{1}{\#\cF} \sum_{\rep \in \cF}
  \sum_{n=1}^\infty \frac{\rootnum_{\rep} a_{\bar\rep}(n)}{n^{1-s}}
  V_{1-s}^*\!\left(\frac{\pi^{\frac{\repdim}{2}}\xi}{\qrep^\half}n\right)
  \left(\frac{\qmur}{\qrep} \pi^\repdim y\right)^{\!s-\half} \frac{ds}{s}
  \\
  \label{eq:approx_t2}
  &\approx
  \frac{\rootnum_{\cF}}{2\pi i} \int_{c - iT}^{c + iT} \prod_{j=1}^\repdim \frac{\Gamma\!\left(\frac{1-s+\bar\kappa_j}{2}\right)}{\Gamma\!\left(\frac{s+\kappa_j}{2}\right)}
  \sum_{n=1}^\infty
  V_{1-s}^*\!\left(\frac{\pi^{\frac{\repdim}{2}}\xi}{q^\half}n\right)
  \frac{1}{\#\cF} \sum_{\rep \in \cF} \frac{a_{\bar\rep}(n)}{n^{1-s}}
  \left(\pi^\repdim y\right)^{\!s-\half}
  \frac{ds}{s}
\end{align}
is approximately an inverse Mellin transform in the variable $\pi^\repdim y$.
This pattern is encountered repeatedly in \cite{ratiosconjecture}, and is the source of the scale invariance seen in murmurations. (One may also elect to weight the Dirichlet coefficients $a_{\bar\rep}(n)$ by $\rootnum_{\rep}$, like in \eqref{eq:afe_perron_mellin_conjunction_avg}, removing the need to approximate $\rootnum_{\rep} \approx \rootnum_{\cF}$.)

We are led to the investigation of the averaged Dirichlet series
\begin{align}
  \label{eq:avg_dirichlet_series}
  \sum_{n=1}^\infty
  \frac{1}{\#\cF} \sum_{\rep \in \cF}
  \frac{a_{\bar\rep}(n)}{n^{1-s}}
\end{align}
appearing in \eqref{eq:approx_t2}. Analysis of this quantity is essentially what's done heuristically in \cite{cfkrs} or \cite{conrey_snaith} as part of the ratios conjecture recipe.
To observe scale invariance, the average \eqref{eq:avg_dirichlet_series} should be approximately independent of $\qmur$ when $\qmur$ is large. As evidenced by the widespread success the ratios conjecture recipe has enjoyed, this is very often the case for natural families $\cF$.

We also look to bound the contribution of the many other terms and approximations which arise. The right hand side of \eqref{eq:afe_perron_mellin_conjunction_avg} features, in addition to what will be our main term, three terms to be analyzed separately. Furthermore, the approximation \eqref{eq:approx_t2} will come with error terms, as will the estimation of \eqref{eq:avg_dirichlet_series} by a function independent of $\qmur$.

\begin{remark}[The correct normalization for murmurations]
  \label{rem:murmurations_sqrt}
  Each of \cite{zubrilina,LOP,bblld,wang}'s main theorems average Dirichlet coefficients $\lambda(p)$ which are normalized to be of size $\sqrt{p}$ under the Ramanujan--Petersson conjecture. It is a peculiarity of murmurations that this is the appropriate normalization. If one were to instead normalize the Dirichlet coefficients to be of size $p^{\half - \eps}$ the resulting average would be zero, and normalizing the Dirichlet coefficients to be of size $p^{\half + \eps}$ would cause the average to diverge. If one were to replace the Dirichlet coefficients in the main theorems of \cite{zubrilina,LOP,bblld,wang} by iid random variables with variance $\sqrt{p}^2$, the resulting sums would diverge almost surely \cite[Thm.\ 2.5.8]{durrett}.
  
  By partial summation, the normalization in \eqref{eq:murmurationsdef} is essentially the same as averaging Dirichlet coefficients of size $\sqrt{p}$ \cite[\S 2]{ratiosconjecture}. Asymptotics of this form are seen to emerge naturally from our construction above. The source of this normalization is the shift of $1/2$ appearing in the exponent of the term of the approximate functional equation in which the functional equation has been applied:
  \begin{align*}
      L(s,\rep)
      &
      ={} \sum_{n=1}^\infty \frac{a_\rep(n)}{n^{s}}
      V_s\!\left(\frac{\pi^{\frac{\repdim}{2}}}{\qrep^\half\xi}n\right)
      \;+\;
      \rootnum_\rep 
      \left(\frac{\pi^\repdim}{\qrep}\right)^{\!s - \textcolor{red!88!black}{\half}}
      \prod_{j=1}^\repdim \frac{\halfGamma{1-s+\bar\kappa_j}}{\halfGamma{s+\kappa_j}}
      \sum_{n=1}^\infty \frac{a_{\bar\rep}(n)}{n^{1-s}}
      V_{1-s}^*\!\left(\frac{\pi^{\frac{\repdim}{2}}\xi}{\qrep^\half}n\right)
      \!.
  \end{align*}
  Here we have used specifically the approximate functional equation \eqref{eq:afe_specific}, but the same shift of $1/2$ is there always. See \cref{lemma:afe_psi} for a quite general form of the approximate functional equation. \Cref{lemma:afe_psi}'s proof makes it clear that this shift of $1/2$ is inevitable.

  The significance of this $1/2$ is the following. Applying Perron's formula \eqref{eq:perron} or any other form of Mellin inversion \eqref{eq:approx_general_inverse_mellin} to obtain a sum over Dirichlet coefficients, an intermediate step is multiplying $L(s,\rep)$ by $\x^s$, where $\x$ the a newly introduced variable with respect to which the inverse Mellin transform is taken. We have observed in \eqref{eq:afe_perron_mellin_conjunction} and \eqref{eq:afe_perron_mellin_conjunction_avg} that the scale invariant parameter $\x/q$ emerges naturally during this multiplication:
  \begin{align*}
    L(s,\rep)
    &
    ={} \sum_{n=1}^\infty \frac{a_\rep(n)}{n^{s}}
    V_s\!\left(\frac{\pi^{\frac{\repdim}{2}}}{\qrep^\half\xi}n\right)
    \;+\;
    \rootnum_\rep 
    \prod_{j=1}^\repdim \frac{\halfGamma{1-s+\bar\kappa_j}}{\halfGamma{s+\kappa_j}}
    \sum_{n=1}^\infty \frac{a_{\bar\rep}(n)}{n^{1-s}}
    V_{1-s}^*\!\left(\frac{\pi^{\frac{\repdim}{2}}\xi}{\qrep^\half}n\right)
    \left(\frac{\pi^\repdim}{\qrep}\right)^{\!s - \textcolor{red!88!black}{\half}}
    \\
    \Longrightarrow\quad\quad
    L(s,\rep)\x^s
    &
    ={} \sum_{n=1}^\infty \frac{a_\rep(n)}{n^{s}}
    V_s\!\left(\frac{\pi^{\frac{\repdim}{2}}}{\qrep^\half\xi}n\right)\x^s
    \;+\;
    \rootnum_\rep 
    \prod_{j=1}^\repdim \frac{\halfGamma{1-s+\bar\kappa_j}}{\halfGamma{s+\kappa_j}}
    \sum_{n=1}^\infty \frac{a_{\bar\rep}(n)}{n^{1-s}}
    V_{1-s}^*\!\left(\frac{\pi^{\frac{\repdim}{2}}\xi}{\qrep^\half}n\right)
    \left(\frac{\pi^\repdim}{\qrep}\right)^{\!s - \half} \x^s
    \\
    &=
    \sum_{n=1}^\infty \frac{a_\rep(n)}{n^{s}}
    V_s\!\left(\frac{\pi^{\frac{\repdim}{2}}}{\qrep^\half\xi}n\right)\x^s
    \;+\;
    \rootnum_\rep 
    \prod_{j=1}^\repdim \frac{\halfGamma{1-s+\bar\kappa_j}}{\halfGamma{s+\kappa_j}}
    \sum_{n=1}^\infty \frac{a_{\bar\rep}(n)}{n^{1-s}}
    V_{1-s}^*\!\left(\frac{\pi^{\frac{\repdim}{2}}\xi}{\qrep^\half}n\right)
    \left(\frac{\pi^\repdim\x}{\qrep}\right)^{\!s - \half} \x^{\textcolor{red!88!black}{\half}}
    .
  \end{align*}
  One then divides through by $\x^\half$, yielding the peculiar normalization. 
  
  The source of this $1/2$ is the functional equation \eqref{eq:rep_Lfunc_functional_equation} itself:
  \begin{align*}
    \left(\frac{\qrep}{\pi^\repdim}\right)^{\!\frac{s}{2}}\prod_{j=1}^\repdim \halfGamma{s + \kappa_j} L(s,\rep)
    &=
    \rootnum_\rep \left(\frac{\qrep}{\pi^\repdim}\right)^{\!\frac{1-s}{2}}\prod_{j=1}^\repdim \halfGamma{1 - s + \bar\kappa_j} L(1-s,\bar\rep)
    \\
    \Longleftrightarrow\quad\quad
    \prod_{j=1}^\repdim \halfGamma{s + \kappa_j} L(s,\rep)
    &=
    \rootnum_\rep \left(\frac{\pi^\repdim}{\qrep}\right)^{s - \!\textcolor{red!88!black}{\half}}\prod_{j=1}^\repdim \halfGamma{1 - s + \bar\kappa_j} L(1-s,\bar\rep)
    .
  \end{align*}
  In other words, the fact that the correct normalization for observing murmurations takes Dirichlet coefficients to be of size $p^\half$ is a consequence of the fact that the critical strip is centred at $\sigma = 1/2$ in the analytic normalization.
\end{remark}

Let us now return to the approximation \eqref{eq:approx_t2} and disect it in more detail.
For a function \hypertargetc{hyper:gdef}{$g: \C \to \C$}, a real number \hypertargetc{hyper:cgdef}{$0 < c_g < 1$}, and constants \hypertargetc{hyper:K1K2def}{$K_1, K_2 \in \C$}, define
\begin{align*}
  \cR_1 &\coloneqq
  \frac{1}{2\pi i} \int_{c - iT}^{c + iT}
  \frac{1}{\#\cF} \sum_{\rep \in \cF} \sum_{n=1}^\infty \frac{a_\rep(n)}{n^s}
  V_{s}\!\left(\frac{\pi^{\frac{\repdim}{2}}}{\qrep^\half\xi}n\right)
  \x^{s-\half} \,\frac{ds}{s}
  \,-\, K_1
  \\
  \cR_2 &\coloneqq 
  \frac{\rootnum_{\cF}}{2\pi i} \int_{c - iT}^{c + iT}
  \prod_{j=1}^\repdim \frac{\Gamma\!\left(\frac{1-s+\bar\kappa_j}{2}\right)}{\Gamma\!\left(\frac{s+\kappa_j}{2}\right)}
  \Bigg[
  \frac{1}{\#\cF} \sum_{\rep \in \cF}
  \sum_{n=1}^\infty \frac{a_{\bar\rep}(n)}{n^{1-s}}
  V_{1-s}^*\!\left(\frac{\pi^{\frac{\repdim}{2}}\xi}{\qrep^\half}n\right)
  \left(\frac{\qmur}{\qrep} \right)^{\!s-\half}
  - g(s)
  \Bigg]
  (\pi^\repdim y)^{s-\half}\,
  \frac{ds}{s}
  \\
  \cR_H &\coloneqq \frac{1}{2\pi i}\left(\int_{c_0 - iT}^{c - iT} + \int_{c + iT}^{c_0 + iT} \right) \frac{1}{\#\cF} \sum_{\rep \in \cF} L(s,\rep) \mspace{2mu}\x^{s-\half} \,\frac{ds}{s}
  \\
  \cR_P &\coloneqq \x^{c_0 - \half}T^{-1}(\x T \qmur)^\eps
  \\
  \cR_g &\coloneqq
  \frac{\rootnum_{\cF}}{2\pi i} \left(\int_{c_g - i\infty}^{c_g + i\infty} - \int_{c - iT}^{c + iT}\right) \prod_{j=1}^\repdim \frac{\Gamma\!\left(\frac{1-s+\bar\kappa_j}{2}\right)}{\Gamma\!\left(\frac{s+\kappa_j}{2}\right)}
  g(s) (\pi^\repdim y)^{s-\half} \frac{ds}{s}
  -
  K_2
  .
\end{align*}

With these definitions we can write \eqref{eq:afe_perron_mellin_conjunction_avg} as
\begin{align}
  \label{eq:afe_murmurations_asymp}
  \frac{1}{\#\cF} \sum_{\rep \in \cF} \frac{1}{\sqrt{\qmur y}} \sum_{n < \qmur y} a_\rep(n)
  ={}
  &
  \frac{\rootnum_{\cF}}{2\pi i} \int_{c_g - i\infty}^{c_g + i\infty} \prod_{j=1}^\repdim \frac{\Gamma\!\left(\frac{1-s+\bar\kappa_j}{2}\right)}{\Gamma\!\left(\frac{s+\kappa_j}{2}\right)}
  g(s) (\pi^\repdim y)^{s-\half} \frac{ds}{s}
  +
  K_1 + K_2
  \\
  \nonumber
  &+ \cR_1 + \cR_2 + \cR_H + \cR_g + O(\cR_P)
  .
\end{align}

Suppose that, for all $y$ in some open set containing $1$, the integral on the right hand side of \eqref{eq:afe_murmurations_asymp} converges, and that $\cR_1$, $\cR_2$, $\cR_H$, $\cR_g$, $\cR_P$ all go to $0$ as $\qmur \to \infty$. Then we see that the right hand side asymptotically depends only on $y = \x/\qmur$, and so the left hand side is scale-invariant. We have obtained a murmuration in the sense of \eqref{eq:murmurationsdef}.

Variations are also possible. If one desires smoothing in the horizontal aspect, replacing the use of Perron's formula \eqref{eq:perron} with the inverse Mellin transform \eqref{eq:approx_general_inverse_mellin} of some function $\mf$ which decays suitably quickly leads to
\begin{align}
  \label{eq:afe_murmurations_asymp_smoothed}
  \frac{1}{\#\cF} \sum_{\rep \in \cF} \frac{1}{\sqrt{\qmur y}} \sum_{n < \qmur y} \cM^{-1}\mspace{-2mu}\{f\}\mspace{-1mu}\big(\tfrac{n}{x}\big) \mspace{2mu} a_\rep(n)
  &=
  \frac{\rootnum_{\cF}}{2\pi i} \int_{c_g - i\infty}^{c_g + i\infty} \prod_{j=1}^\repdim \frac{\Gamma\!\left(\frac{1-s+\bar\kappa_j}{2}\right)}{\Gamma\!\left(\frac{s+\kappa_j}{2}\right)}
  g(s) (\pi^\repdim y)^{s-\half} f(s)\mspace{1.5mu}ds
  +
  K_1^{(f)} + K_2^{(f)}
  \\
  \nonumber
  &+ \cR_1^{(f)} + \cR_2^{(f)} + \cR_H^{(f)} + \cR_g^{(f)}
  ,
\end{align}
for analogously defined quantities $*^{(f)}$. The weights present in these sorts of variations can improve error terms relative to \eqref{eq:afe_murmurations_asymp}, as is observed comparing \cref{thm:quadratic_murmurations} to \cref{thm:gamma_murmurations}. In general, the analysis in situations like \eqref{eq:afe_murmurations_asymp_smoothed} is substantially less delicate.

\Cref{sec:rep_quadratic_twists} onwards focuses on establishing \eqref{eq:afe_murmurations_asymp} for quadratic twist families on $\mathrm{GL}_1$. Our main result is \cref{thm:quadratic_murmurations}. The variation \eqref{eq:afe_murmurations_asymp_smoothed} for quadratic twists is \cref{thm:gamma_murmurations}. An intermediate step is to determine the average \eqref{eq:avg_dirichlet_series} of $L$-functions over the family, and this is recorded in \cref{thm:r1size,thm:r2size}, for both $\mathrm{GL}_1$ and $\mathrm{GL}_2$.

\subsection{Quadratic twist families of automorphic representations}
\label{sec:rep_quadratic_twists}

Let $\chi$ be a nontrivial even primitive Dirichlet character of conductor $q_\chi$ coprime to $\qrep$. The $L$-function of the Rankin--Selberg convolution $\rep\otimes\chi$ is given by \cite[Prop.\ 1.1]{zhang}
\begin{align}
  \label{eq:rankin_selberg_gl1}
  &\begin{aligned}
  L(s,\rep\otimes\chi)
  &= \sum_{n=1}^\infty \frac{a_\rep(n)\chi(n)}{n^s}
  \\
  \phantom{\Lambda(s,\rep\otimes\chi)}
  &= \prod_p \prod_{j=1}^\repdim \big(1 - \alpha_j(p)\chi(p)p^{-s}\big)^{-1}.
  \end{aligned}
  \shortintertext{
    The functional equation of $L(s,\rep\otimes\chi)$ is
  }
  \nonumber
  &\mspace{-3.8mu}\Lambda(s,\rep\otimes\chi)
  \coloneqq \left(\frac{\qrep q_\chi^\repdim}{\pi^\repdim}\right)^{\!\frac{s}{2}} \prod_{j=1}^\repdim \halfGamma{s + \kappa_j} L(s,\rep\otimes\chi)
  \\
  \nonumber
  &\phantom{\Lambda(s,\rep\otimes\chi)} = \rootnum_{\rep\otimes\chi} \Lambda(1-s, \bar\rep\otimes\bar\chi).
\end{align}
The root number is given by
\begin{align}
  \label{eq:root_number_def}
  \rootnum_{\rep\otimes\chi} \coloneqq \centralchar_\rep(q_\chi) \chi(\qrep) \rootnum_{\rep} \rootnum_\chi^\repdim
  ,
\end{align}
where
\hypertargetc{hyper:centralchardef}{$\centralchar_\rep$} is the central character of $\rep$,
and $\rootnum_\chi$ is the root number of $L(s,\chi)$ \cite[Props.\ 1.4, 1.5]{zhang}. 

The approximate functional equation \eqref{eq:afe_specific} for $L(s,\rep\otimes\chi)$ is
\begin{align}
  \begin{aligned}
  L(s,\rep\otimes\chi)
  &= \sum_{n=1}^\infty \frac{a_\rep(n)\chi(n)}{n^s} V_{s}\left(\frac{\pi^{\frac{\repdim}{2}}}{\qrep^\half q_\chi^{\frac{\repdim}{2}}\xi}n\right)
  \\
  &+ \rootnum_{\rep\otimes\chi} \left(\frac{\pi^\repdim}{\qrep q_\chi^\repdim}\right)^{\!s - \half}
  \prod_{j=1}^\repdim \frac{\halfGamma{1-s+\bar\kappa_j}}{\halfGamma{s+\kappa_j}}
  \sum_{n=1}^\infty \frac{a_{\bar\rep}(n)\overline{\chi}(n)}{n^{1-s}} V_{1-s}^*\left(\frac{\pi^{\frac{\repdim}{2}} \xi}{\qrep^\half q_\chi^{\frac{\repdim}{2}}}n\right)
  .
  \end{aligned}
  \label{eq:afe_rep}
\end{align}

Recall the definition \eqref{eq:Fdef} for $\cF$, and $\rootnum_{\cF}$ implicitly. Write
\hypertargetc{hyper:DeltaD}{$\Delta D \coloneqq D - D_0$}.
It follows from \eqref{eq:root_number_def}, quadratic reciprocity, and the fact \hypertargetc{hyper:rootnumchid}{$\rootnum_{\chi_d} = 1$} \cite[\S 5.9]{IK}, that, up to factors corresponding to the infinite place,
the value of $\rootnum_{\rep\otimes\chi_d}$ is determined by a congruence condition modulo $4\qrep$ if $2 \parallel \qrep$ and modulo $\qrep$ otherwise.
The construction of $\cF$ ensures
\begin{itemize}
\item
  $\Lambda(s,\rep\otimes\chi_d)$ is entire for every $d \in \cF$, 
\item
  the gamma factors of $L(s,\rep)$ and $L(s,\rep\otimes\chi_d)$ are all identical, and
\item
  the local factors at the finite places of the root numbers $\rootnum_{\rep\otimes\chi_d}$ are all identical. 
\end{itemize}

It is a general feature of murmurations that they are scaled by the root number of the family, and to be observed, root numbers in the family mustn't average to zero. It is for this reason that control over $d \mod \qrep$ is imperative for our application.

Perron's formula \eqref{eq:perron} applied to the average inverse Mellin transform \eqref{eq:avg_mellin} gives
\begin{align}
  \label{eq:quadratic_perron}
  \frac{1}{2\pi i}
  \int_{c - iT}^{c + iT} \Favg L(s,\rep \otimes \chi_d)\mspace{2mu}\x^{s-\half}\,\frac{ds}{s}
  &=
  \Favg \frac{1}{\sqrt{\x}} \sum_{n<\x} a_\rep(n)\chi_d(n)
  \\
  \nonumber
  &+ \frac{1}{2\pi i}\left(\int_{c - iT}^{1 + \theta + \eps - iT} + \int_{1 + \theta + \eps + iT}^{c + iT} \right) \frac{1}{\#\cF} \sum_{\rep \in \cF} L(s,\rep) \,\x^{s-\half} \,\frac{ds}{s}
  \\
  \nonumber
  &+ O\big(\x^{\half + \theta}T^{-1}(\x T D)^\eps\big)
  .
\end{align}

Substituting the approximate functional equation \eqref{eq:afe_rep} into the average inverse Mellin transform \eqref{eq:avg_mellin} for the family $\cF$ of quadratic twists 
gives
\begin{align}
  \nonumber
  \frac{1}{2\pi i}
  &
  \int_{c - iT}^{c + iT} \Favg L(s,\rep \otimes \chi_d)\mspace{2mu}\x^{s-\half}\,\frac{ds}{s}
  \\
  \label{eq:afe_r1_unweighted_t1ns}
  &=
  \frac{1}{2\pi i} \int_{c - iT}^{c + iT} \Favg \sum_{\substack{n=1 \\ n \neq \square}}^\infty \frac{a_\rep(n)\chi_d(n)}{n^s}V_s\!\left(\frac{\pi^{\frac{\repdim}{2}}n}{\xi\sqrt{\qrep d^\repdim}}\right)\x^{s-\half}\,\frac{ds}{s}
  \\
  \label{eq:afe_r1_unweighted_t2ns}
  &+
  \frac{1}{2\pi i} \int_{c - iT}^{c + iT} \prod_{j=1}^\repdim \frac{\halfGamma{1-s+\bar\kappa_j}}{\halfGamma{s+\kappa_j}} \Favg \sum_{\substack{n=1 \\ n \neq \square}}^\infty \frac{\rootnum_{\rep\otimes\chi_d} a_{\bar\rep}(n)\chi_d(n)}{n^{1-s}}V_{1-s}^*\!\left(\frac{\pi^{\frac{\repdim}{2}}\xi n}{\sqrt{\qrep d^\repdim}}\right)\xd^{\!s-\half}\frac{ds}{s}
  \\
  \label{eq:afe_r1_unweighted_t1s}
  &+
  \frac{1}{2\pi i} \int_{c - iT}^{c + iT} \Favg \sum_{n=1}^\infty \frac{a_\rep(n^2)\chi_d(n^2)}{n^{2s}}V_s\!\left(\frac{\pi^{\frac{\repdim}{2}} n^2}{\xi \sqrt{\qrep d^\repdim}}\right)\x^{s-\half}\,\frac{ds}{s}
  \\
  \label{eq:afe_r1_unweighted_t2s}
  &+
  \frac{1}{2\pi i} \int_{c - iT}^{c + iT} \prod_{j=1}^\repdim \frac{\halfGamma{1-s+\bar\kappa_j}}{\halfGamma{s+\kappa_j}} \Favg \sum_{n=1}^\infty \frac{\rootnum_{\rep\otimes\chi_d} a_{\bar\rep}(n^2)\chi_d(n^2)}{n^{2-2s}}V_{1-s}^*\!\left(\frac{\pi^{\frac{\repdim}{2}} \xi n^2}{\sqrt{\qrep d^\repdim}}\right)\xd^{\!s-\half}\frac{ds}{s}
  .
\end{align}
We have split the Dirichlet series into sums over squares and nonsquares. The latter we will analyze using \cref{lemma:stankus_2nd_moment}, which is an extension of a result of Jutila and Stankus \cite{jutila,stankus}.

The integrands of the four terms \eqref{eq:afe_r1_unweighted_t1ns}, \eqref{eq:afe_r1_unweighted_t2ns}, \eqref{eq:afe_r1_unweighted_t1s}, and \eqref{eq:afe_r1_unweighted_t2s} are bounded in \cref{lemma:t1ns_size,lemma:t2ns_size,lemma:t1s_size,lemma:t2s_approx_step1,lemma:t2s_approx_step2,lemma:t2s_approx_step3,lemma:t2s_approx_step4}. The sole main term comes from \eqref{eq:afe_r1_unweighted_t2s}, and is given in \cref{lemma:t2s_approx_step4}.

\section{Groundwork} 
\label{sec:lemmas}

\subsection{The approximate functional equation}
\label{sec:lemmas_afe}

Here and throughout we will make good use of \textit{Stirling's formula}, often without comment. There are multiple versions of this approximation to the $\Gamma$ function.
Gradshteyn and Ryzhik \cite[8.344]{GR} give, for $|s| \to \infty$ with $-\pi < \arg(s) < \pi$ and bounded away from $\pm\pi$,
\begin{align}
  \label{eq:stirling_log}
  \log \Gamma(s) = (s - \thalf)\log s - s + \log\sqrt{2\pi} + O\!\left(\tfrac{1}{s}\right)
  .
\end{align}
The same reference gives more precise expansions too if necessary. It follows from \eqref{eq:stirling_log} that
\begin{align}
  \label{eq:stirling_arg}
  \Gamma(s) = \sqrt{2\pi} \,s^{s - \half} e^{-s} \left(1 + O\!\left(\tfrac{1}{s}\right)\right),
\end{align}
and, writing $s = \sigma + it$,
\begin{align}
  \label{eq:stirling}
  |\Gamma(s)| = \sqrt{2\pi} \,|s|^{\sigma - \half} e^{-\sigma} e^{-\arg(s) t} \left(1 + O\!\left(\tfrac{1}{s}\right)\right)
  .
\end{align}
In many contexts $\sigma$ is confined to a vertical strip and may be regarded as absolutely bounded. In such cases, where one is interested only in the $t$-aspect of \eqref{eq:stirling}, the following slightly simpler version of Stirling's formula is handy: 
\begin{align}
  \label{eq:stirling_t}
  |\Gamma(\sigma + it)| = \sqrt{2\pi} \,|t|^{\sigma - \half} e^{-\frac{\pi}{2}|t|} \left(1 + O\!\left(\tfrac{1}{t}\right)\right)
  .
\end{align}
The error term $O\!\left(\tfrac{1}{t}\right)$ above should be understood to be $\ll 1$ when $\sigma > 0$ is bounded away from $0$. (Recall that the above approximations are valid for $\arg(s)$ bounded away from $\pm \pi$, so the poles of $\Gamma$ at the negative integers don't enter into this discussion.)

\begin{remark}
  \label{rem:stirling}
  We will often deal with products and ratios of gamma functions. Using only \eqref{eq:stirling}, one deduces e.g.\
  \begin{align}
    \nonumber
    |\Gamma(s_1)\Gamma(s_2)|
    =
    2\pi |s_1|^{\sigma_1 - \half} |s_2|^{\sigma_2 - \half} e^{-\sigma_1 - \sigma_2} e^{-\arg(s_1) t_1 - \arg(s_2)t_2}
    \left(1 + O\!\left(\frac{\Gamma(s_2)}{s_1} + \frac{\Gamma(s_1)}{s_2}\right) \right)
    .
  \end{align}
  However, returning to \eqref{eq:stirling_log}, we obtain the same approximation with a much better error term:
  \begin{align}
    \nonumber
    |\Gamma(s_1)\Gamma(s_2)|
    =
    2\pi |s_1|^{\sigma_1 - \half} |s_2|^{\sigma_2 - \half} e^{-\sigma_1 - \sigma_2} e^{-\arg(s_1) t_1 - \arg(s_2)t_2}
    \left(1 + O\!\left(\frac{1}{s_1} + \frac{1}{s_2}\right) \right)
    .
  \end{align}
\end{remark}

Having introduced Stirling's formula, we now move on to discussing the approximate functional equation.
Define
\begin{align}
  \label{eq:Gdef}
  \begin{aligned}
  \Gp(s) &\coloneqq \prod_{j=1}^\repdim \halfGamma{s + \kappa_j}
  \\
  \Gm(s) &\coloneqq \prod_{j=1}^\repdim \halfGamma{s + \bar\kappa_j}
  \end{aligned}
  ,
\end{align}
the gamma factors for $\rep$ and $\bar\rep$ respectively.

\begin{conditions}
  \label{cond:afe_general}
Let $\psiV(s,w)$ be meromorphic in $s$ for $0 \leqslant \Re(s) \leqslant 1$.
Choose $c_V \in \R_{>0}$ such that
\begin{itemize} 
\item
  $L(s+w,\rep)$ and $L(1-s+w,\bar\rep)$ converge absolutely whenever $\Re(w) = c_V$,
\item
  $\Lambda(s+w,\rep)$ is holomorphic on the line $\Re(w) = -c_V$, and
\item
  $\psiV(s,w)$ is meromorphic whenever $|\Re(w)| < c_V$.
\item
  $\psiV(s,w)$ is holomorphic on the lines $\Re(w) = \pm c_V$ and at $w = 0$.
\end{itemize}
\end{conditions}

Define
\begin{align}
  \label{eq:wtVdef}
  \begin{aligned}
    \V_+(s,y) &\coloneqq \frac{1}{2\pi i} \int_{(c_V\!)} \frac{\Gp(s+w)}{\Gp(s)} \psiV(s, w)\,y^{-w}\frac{dw}{w}
    \\
    \V_-(s,y) &\coloneqq \frac{1}{2\pi i} \int_{(c_V\!)} \frac{\Gm(s+w)}{\Gm(s)} \psiV(1-s,-w)\,y^{-w}\frac{dw}{w}
    .
  \end{aligned}
\end{align}

\begin{lemma}[{Approximate functional equation}]
  \label{lemma:afe_psi}
  Subject to \cref{cond:afe_general},
  \begin{align*}
    \Gp(s)
    L(s,\rep) \psiV(s,0)
    ={}&
    \Gp(s)
    \sum_{n=1}^\infty \frac{a_\rep(n)}{n^{s}} \V_+\!\!\left(s, \frac{\pi^{\frac{\repdim}{2}}n}{\qrep^\half}\right)
    \;+\;
    \rootnum_\rep\left(\frac{\pi^\repdim}{\qrep}\right)^{\!s - \half}
    \Gm(1-s)
    \sum_{n=1}^\infty \frac{a_{\bar\rep}(n)}{n^{1-s}}
    \V_-\!\!\left(1-s, \frac{\pi^{\frac{\repdim}{2}}n}{\qrep^\half}\right)
    \\
    &
    - \sum_{\substack{|\Re(w')| < c_V \\ w' \neq 0}} \underset{w'=w}{\mathrm{Res}} \,\Lambda(s+w,\rep) \frac{\psiV(s,w)}{w}
    .
  \end{align*}
\end{lemma}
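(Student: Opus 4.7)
The plan is to study the contour integral
\[
I(s) \coloneqq \frac{1}{2\pi i}\int_{(c_V)} \Lambda(s+w, \rep)\,\frac{\psiV(s,w)}{w}\,dw
\]
and evaluate it in two different ways. For the first evaluation, on the line $\Re(w) = c_V$ Conditions~\ref{cond:afe_general} guarantee that the Dirichlet series $L(s+w,\rep) = \sum_n a_\rep(n) n^{-(s+w)}$ converges absolutely and that $\psiV(s,w)$ is holomorphic. Writing $\Lambda(s+w,\rep) = (\qrep/\pi^\repdim)^{(s+w)/2}\, \Gp(s+w)\, L(s+w,\rep)$, interchanging sum and integral, and pulling out $(\qrep/\pi^\repdim)^{s/2} \Gp(s) n^{-s}$, one recognises the remaining inner integral via~\eqref{eq:wtVdef} as $\V_+\!\bigl(s, n\pi^{\repdim/2}/\qrep^{1/2}\bigr)$.

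For the second evaluation, shift the contour from $(c_V)$ to $(-c_V)$. By Conditions~\ref{cond:afe_general}, the only poles of the integrand in the strip $|\Re(w)| < c_V$ are at $w = 0$, with residue $\Lambda(s,\rep)\psiV(s,0)$, and at the other poles $w = w'$ of $\psiV(s,w)/w$, which produce the residue sum on the right hand side of the claim. On the shifted contour, apply the functional equation $\Lambda(s+w,\rep) = \rootnum_\rep \Lambda(1-s-w, \bar\rep)$ and then substitute $u = -w$; the combination $dw = -du$ together with the reversal of orientation returns us to the line $\Re(u) = c_V$ with a net sign of $+1$. Expanding $L(1-s+u,\bar\rep)$ as an absolutely convergent Dirichlet series there and recognising the resulting inner integral as $\V_-\!\bigl(1-s, n\pi^{\repdim/2}/\qrep^{1/2}\bigr)$ produces
\[
\rootnum_\rep (\qrep/\pi^\repdim)^{(1-s)/2}\,\Gm(1-s)\sum_n \frac{a_{\bar\rep}(n)}{n^{1-s}}\,\V_-\!\bigl(1-s,n\pi^{\repdim/2}/\qrep^{1/2}\bigr).
\]
Equating the two evaluations of $I(s)$ and dividing through by $(\qrep/\pi^\repdim)^{s/2}$ yields the lemma, the exponent difference $(1-s)/2-s/2 = 1/2-s$ producing the factor $(\pi^\repdim/\qrep)^{s-1/2}$ in the functional-equation term.

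The main technical obstacle is justifying the contour shift and the interchange of summation with integration. Stirling's formula~\eqref{eq:stirling_t} provides exponential decay of the gamma factors in $\Lambda$ along vertical strips, and Phragm\'en--Lindel\"of-type convexity bounds control $L(s+w,\rep)$ polynomially in $|\Im(w)|$; combined with the tacit requirement that $\psiV(s,w)$ decay sufficiently rapidly in $|\Im(w)|$ on $|\Re(w)| \leqslant c_V$ (something beyond the bare holomorphicity stated in Conditions~\ref{cond:afe_general}), these suffice to make the integrands absolutely integrable, to permit termwise integration of the Dirichlet series, and to ensure that the horizontal connecting segments between $(c_V)$ and $(-c_V)$ vanish in the limit as $|\Im(w)| \to \infty$.
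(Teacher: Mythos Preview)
Your proposal is correct and follows essentially the same approach as the paper: evaluate $\frac{1}{2\pi i}\int_{(c_V)}\Lambda(s+w,\rep)\psiV(s,w)\,\frac{dw}{w}$, shift to $(-c_V)$ collecting residues, apply the functional equation, substitute $w\mapsto -w$, and unfold both sides into Dirichlet series to recognise $\V_\pm$. One small imprecision: you attribute the non-zero residues in the strip solely to poles of $\psiV(s,w)/w$, but in general $\Lambda(s+w,\rep)$ itself may contribute poles (e.g.\ when $\rep$ is everywhere unramified); the residue sum in the lemma is phrased to absorb either source, so this does not affect the argument.
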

The \hyperlink{proof:afe_psi}{proof of \cref*{lemma:afe_psi}} is the first of many we will postpone to \cref{sec:lemma_proofs}.
See the glossary \cref{sec:glossary} for the notation in \cref{lemma:afe_psi}.

We now describe the choice of $\V$ we make for our applications.
\begin{conditions}
\label{cond:afe_specific}
Let $A, B, \alpha, \beta, s_0$ be complex numbers. 
Choose parameters such that the following conditions are satisfied:
\begin{itemize}
\item
  $A \in \R_{>0}$,
\item
  $B \in \Z_{>0}$,
\item
  $\Re(s_0) < 0$.
\item
  $|\beta| < 1$,
\item
  $0 < \sigma < 1$
\item
  $\theta < c_V < \frac{\pi}{3}A$
\item
  for all $\sigma_w$ such that $\frac{\pi}{3}A > \sigma_w > -\min\!\left\{\frac{\pi}{3}A,\,\sigma + \Re(\kappa_1),\dots,\sigma + \Re(\kappa_\repdim)\right\}$,
  $$\frac{B}{A} > \pi|\beta| + \frac{\pi\repdim}{2} + \half\sumr \left(\left|\frac{\sigma_w - 1}{\sigma + \Re(\kappa_j)}\right| + 1\right),$$
\end{itemize}
\end{conditions}
When $\repdim = 1$, one may take $A = 2$, $B = 16$, $s_0 = -1$, and $c_V = 2$.

Let
\begin{align}
  \label{eq:xidef}
  \xi \coloneqq \xi(s) &\coloneqq D^{2\alpha} \exp\!\left(\beta \log\!\left((s - s_0)^2\right)\right)
  ,
\end{align}
where the branch cut of the logarithm is along the negative real axis and $-\pi < \arg(z) \leqslant \pi$.

We take $\psiV$ in \eqref{eq:wtVdef} to be the function
\begin{align}
  \label{eq:psidef}
  \psiV(s,w) \gets \psixi(s,w) \coloneqq \xi^w \left(e^{i\frac{w}{A}} + e^{-i\frac{w}{A}} - 1\right)^{-B}
  .
\end{align}
(In \eqref{eq:psidef}, $\gets$ denotes variable assignment, like in pseudocode. One could write ``$=$'' or ``$\coloneqq$'' instead, as long as one understands that the quantity $\psiV$ in \eqref{eq:wtVdef} etc.\ pertains to any function $\psiV$ satisfying the conditions listed above \eqref{eq:wtVdef}, not just $\psixi$.)

For parameters conforming to \cref{cond:afe_specific}, define
\begin{align}
  \label{eq:Vdef}
  \begin{aligned}
  V_s(y) &\coloneqq \frac{1}{2\pi i} \int_{(c_V\!)} \frac{\Gp(s+w)}{\Gp(s)} \left(e^{i\frac{w}{A}} + e^{-i\frac{w}{A}} - 1\right)^{-B} y^{-w}\frac{dw}{w}
  \\
  V_s^*\mspace{-2mu}(y) &\coloneqq \frac{1}{2\pi i} \int_{(c_V\!)} \frac{\Gm(s+w)}{\Gm(s)} \left(e^{i\frac{w}{A}} + e^{-i\frac{w}{A}} - 1\right)^{-B} y^{-w}\frac{dw}{w}
  \end{aligned}
  ,
\end{align}
so that, when $\psiV$ in \eqref{eq:wtVdef} is taken to be $\psixi$ from \eqref{eq:psidef}, we have
\begin{align*}
  V_s\big(\xi^{-1}y\big) &= \V_+(s, y)
  \shortintertext{and}
  V_{1-s}^*\big(\xi y\big) &= \V_-(1-s, y)
  .
\end{align*}

As we will be taking an inverse Mellin transform, it is imperative that, after applying the approximate functional equation to our integrand, it remains holomorphic. This is what the next lemma ensures.

\begin{lemma}
  \label{lemma:V_is_holomorphic}
  $V_s\big(\xi^{-1}y\big)$ and $V_{1-s}^*\big(\xi y\big)$ are holomorphic in the strip $0 < \sigma < 1$.
\end{lemma}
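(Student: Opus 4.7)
Substituting the explicit form \eqref{eq:xidef} of $\xi$, the two integrals to be analyzed are
\begin{align*}
V_s(\xi^{-1}y) &= \frac{1}{2\pi i}\int_{(c_V)} \frac{\Gp(s+w)}{\Gp(s)} \left(e^{iw/A}+e^{-iw/A}-1\right)^{\!-B} \xi(s)^w \, y^{-w}\,\frac{dw}{w},\\
V_{1-s}^*(\xi y) &= \frac{1}{2\pi i}\int_{(c_V)} \frac{\Gm(1-s+w)}{\Gm(1-s)} \left(e^{iw/A}+e^{-iw/A}-1\right)^{\!-B} \xi(s)^{-w} \, y^{-w}\,\frac{dw}{w}.
\end{align*}
The variable $s$ enters through the gamma ratio and through the explicit function $\xi(s) = D^{2\alpha}\exp(\beta\log((s-s_0)^2))$. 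I plan to verify two things: (a) the integrand is jointly holomorphic in $s$ in the strip and in $w$ on a neighborhood of $\Re(w)=c_V$; and (b) the integral converges absolutely, uniformly on compact subsets of the strip. These two facts together with Morera's theorem (applied after Fubini on a triangle in the $s$-variable) give holomorphicity of the integral.

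For (a), the factor $1/\Gp(s)$ is entire because $\Gamma$ has no zeros, and $\Gp(s+w)$ as a function of $s$ has poles only where $s+w+\kappa_j = -2k$ for some $k\in\Z_{\geqslant 0}$. On the contour, $\Re(w)=c_V > \theta$ and $\Re(\kappa_j) \geqslant -\theta$ by \cref{cond:afe_specific} and the hypothesis on $\theta$, so any such pole would lie at $\Re(s) \leqslant -c_V + \theta < 0$, outside the strip. The factor $\xi(s)$ is holomorphic in the strip: since $\Re(s_0) < 0$ and $0<\Re(s)<1$, we have $\Re(s - s_0) > 0$, so $s-s_0$ lies in the open right half-plane; writing $s-s_0 = R e^{i\phi}$ with $\phi\in(-\tfrac{\pi}{2},\tfrac{\pi}{2})$ gives $(s-s_0)^2 = R^2 e^{2i\phi}$ with $2\phi\in(-\pi,\pi)$, which avoids the branch cut $(-\infty,0]$ of $\log$, making $\log((s-s_0)^2)$, and thus $\xi(s)$, holomorphic and nonvanishing. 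Hence $\xi^{\pm w}$ is holomorphic in $s$ as well.

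For (b), write $w = c_V + it$. Stirling's formula \eqref{eq:stirling_t} applied to each of the $\repdim$ gamma factors yields $|\Gp(s+w)| \ll |t|^{O(1)} \exp(-\tfrac{\pi \repdim}{4}|t|)$ uniformly for $s$ in any compact subset of the strip. Elementary estimation gives $|(e^{iw/A}+e^{-iw/A}-1)^{-B}| \ll \exp(-\tfrac{B}{A}|t|)$ as $|t|\to\infty$. Finally, writing $\log\xi(s) = 2\alpha\log D + \beta(\log|s-s_0|^2 + i\arg(s-s_0)^2)$ and noting $|\arg(s-s_0)^2|<\pi$ from the argument above, one gets $|\xi(s)^w| = |\xi(s)|^{c_V} \exp(-t\,\beta\arg(s-s_0)^2)$, so that $|\xi(s)^{\pm w} y^{-w}| \ll \exp(\pi|\beta|\,|t|)$ locally uniformly in $s$. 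Collecting these, the integrand is bounded by $|t|^{O(1)}\exp(-(\tfrac{B}{A} + \tfrac{\pi\repdim}{4} - \pi|\beta|)|t|)$; the condition $\tfrac{B}{A} > \pi|\beta| + \tfrac{\pi\repdim}{2} + \cdots$ in \cref{cond:afe_specific} makes the exponent strictly negative, giving the required absolute, locally uniform convergence. The argument for $V_{1-s}^*(\xi y)$ is identical upon replacing $s \leftrightarrow 1-s$ and $\kappa_j \leftrightarrow \bar\kappa_j$, since $0<\Re(s)<1 \Leftrightarrow 0<\Re(1-s)<1$ and $\Re(\bar\kappa_j)=\Re(\kappa_j)\geqslant -\theta$. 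The only subtle point in the whole argument is checking that $\xi(s)$ really is single-valued and holomorphic in the strip, which is exactly why the hypothesis $\Re(s_0)<0$ was placed in \cref{cond:afe_specific}.
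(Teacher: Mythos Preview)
Your proof is correct and follows essentially the same route as the paper's: verify that the gamma ratio and $\xi(s)^{\pm w}$ are holomorphic in $s$ on the strip along the contour $\Re(w)=c_V$, and then appeal to convergence of the integral. The only difference is that the paper defers the convergence check to \cref{lemma:V_approx}, whereas you supply a direct Stirling estimate inline and make the Morera/Fubini justification explicit; both arrive at the same conclusion.
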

\begin{proof}
  We only prove the result for $V_s$. The situation for $V_{1-s}^*$ is similar.
  First we look at the factor
  \begin{align*}
    \frac{\Gp(s+w)}{\Gp(s)} = \prodr \frac{\halfGamma{s+w+\kappa_j}}{\halfGamma{s+\kappa_j}}
  \end{align*}
  in the integrand of $V_s(\xi^{\pm 1}y)$.
  As $\Gamma$ is never $0$, the denominator does not affect holomorphicity (provided one removes the removable singularities as the poles of the denominator, which we take to be understood). If $\Re(w) = c_V > \theta$, which is assumed in \cref{cond:afe_specific}, then $\Re(s + w + \kappa_j) > 0$, avoiding the poles of the numerator.

  Next we look at the only other factor in $V_s(\xi^{\pm 1}y)$'s integrand which depends on $s$,
  \begin{align*}
    \xi(s)^{\mp w} = D^{\mp 2\alpha w} \exp\!\left(\mp w\beta \log\big((s-s_0)^2\big)\right)
    .
  \end{align*}
  \Cref{cond:afe_specific} require that $\Re(s_0) < 0$, and we are taking $\log$ to have its branch cut along the negative real axis, so $\log\!\big((s-s_0)^2\big)$ is holomorphic in the half-plane $\Re(s) > 0$. Holomorphicity then follows from the convergence of the integral, which \cref{lemma:V_approx} establishes.
\end{proof}

Recall the definition \eqref{eq:Qdef} for the ``reduced analytic conductor'':
\begin{align}
  \nonumber
  Q \coloneqq Q(s) &\coloneqq \frac{\qrep}{(2\pi e)^\repdim} \prod_{j=1}^\repdim |s + \kappa_j|
  .
\end{align}

\Cref{lemma:V_approx} below is our central result around the approximate functional equation. 

\begin{lemma}
  \label{lemma:V_approx}
  For all $\sigma_w$ such that $\tfrac{\pi}{3}A > \sigma_w > -\min\!\left\{\frac{\pi}{3}A,\, \sigma + \Re(\kappa_1),\dots,\sigma + \Re(\kappa_\repdim)\right\}$,
  \begin{align*}
    V_s\!\left(\frac{\pi^{\frac{\repdim}{2}}}{\qrep^\half \xi}n\right)
    &= \onelr{n < Q^\half \xi} + O\!\left(\left(\frac{Q^\half \xi}{n}\right)^{\!\sigma_w} \cR_\sigma\right)
    \\
    \shortintertext{and}
    V_{1-s}^*\!\left(\frac{\pi^{\frac{\repdim}{2}}\xi}{\qrep^\half}n\right)
    &= \onelr{n < Q^\half \xi^{-1}} + O\!\left(\left(\frac{Q^\half \xi^{-1}}{n}\right)^{\!\sigma_w} \cR_{1-\sigma}\right)
    ,
  \end{align*}
  where
  \begin{align*}
    \cR_\sigma &\coloneqq
    \exp\!\left(\sumr \sigma + |\Re(\kappa_j)| + \frac{|\sigma_w|}{2} \left(\left|\frac{\sigma_w - 1}{\sigma + \Re(\kappa_j)}\right| + 3\right)\right)
    \frac{\repdim + B}{|\sigma_w|}
  \end{align*}
  is a constant depending only on $\sigma$, $\sigma_w$, $\Re(\kappa_1), \dots, \Re(\kappa_\repdim)$, $\repdim$, and $B$.
\end{lemma}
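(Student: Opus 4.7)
My approach is the standard contour-shift technique for bounding approximate-functional-equation smoothing functions. I would begin by applying Stirling's formula \eqref{eq:stirling_log} to the gamma ratio: for $w$ on a vertical line,
\begin{align*}
\log\halfGamma{s+w+\kappa_j} - \log\halfGamma{s+\kappa_j} = \tfrac{w}{2}\log\!\tfrac{s+\kappa_j}{2} + O\!\left(\tfrac{|w|+|w|^2}{|s+\kappa_j|}\right),
\end{align*}
so that $\Gp(s+w)/\Gp(s)$ equals $\prod_j((s+\kappa_j)/2)^{w/2}$ times a bounded correction. When combined with $y^{-w}$ at $y = \pi^{\repdim/2}n/(\qrep^{1/2}\xi)$, and using $Q = \qrep\prod_j|s+\kappa_j|/(2\pi e)^\repdim$, the integrand is dominated by $(Q^{1/2}\xi/n)^w$ up to a constant factor of the form $e^{\repdim w/2}$. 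This identifies $Q^{1/2}\xi$ as the natural threshold separating the two regimes of the lemma.

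Next I would shift the contour from $\Re(w) = c_V$ to $\Re(w) = \sigma_w$. The range on $\sigma_w$ in the hypothesis is exactly what avoids poles: $\sigma_w > -\sigma - \Re(\kappa_j)$ keeps us to the right of $\halfGamma{s+w+\kappa_j}$'s poles, while $|\sigma_w| < \pi A/3$ keeps us away from the zeros of $2\cos(w/A)-1$ (which lie on the real axis at $w = \pm\pi A/3 + 2\pi A k$). The only pole crossed, and only when $\sigma_w < 0$, is the simple pole of $1/w$ at $w=0$, whose residue evaluates to $\Gp(s)/\Gp(s)\cdot(2-1)^{-B}\cdot y^0 = 1$. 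Choosing $\sigma_w < 0$ when $n < Q^{1/2}\xi$ makes the indicator contribution match and keeps $(Q^{1/2}\xi/n)^{\sigma_w}$ small; choosing $\sigma_w > 0$ in the opposite regime makes both the residue and the shifted integral contribute negligibly.

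The remaining, and principal, task is to bound the shifted integral by $\cR_\sigma(Q^{1/2}\xi/n)^{\sigma_w}$. On the line $\Re(w) = \sigma_w$ there are three exponential contributions in $|t_w|$: decay $\exp(-\pi\repdim|t_w|/4)$ from the gamma ratio via Stirling, potential growth at most $\exp(\pi|\beta||t_w|)$ from $|\xi^w| = |(s-s_0)^{2\beta w}|$ (the bound coming from $|\arg(s-s_0)| < \pi$, enforced by the branch-cut choice), and decay $\exp(-B|t_w|/A)$ from the factor $(e^{iw/A}+e^{-iw/A}-1)^{-B}$. The inequality $B/A > \pi|\beta| + \pi\repdim/2 + \tfrac{1}{2}\sum_j(\cdots)$ of \cref{cond:afe_specific} is exactly what ensures the net exponential rate is sufficiently negative to dominate the polynomial growth from Stirling; the integral $\int dt_w/|w|$ then contributes the $1/|\sigma_w|$ and $\exp\!\left(\sum_j(\sigma + |\Re(\kappa_j)| + \cdots)\right)$ factors of $\cR_\sigma$. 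The statement for $V_{1-s}^*$ follows from the same argument with $s \mapsto 1-s$ and $\kappa_j \mapsto \bar\kappa_j$: since $|1-s+\bar\kappa_j| \asymp |s+\kappa_j|$ for $|t|$ large and is bounded for $|t|$ small, the same threshold $Q^{1/2}\xi^{-1}$ emerges, up to constants absorbed into $\cR_{1-\sigma}$. The main technical obstacle I anticipate is the tight bookkeeping of constants needed to fit everything within the explicit exponential form of $\cR_\sigma$ stated in the lemma.
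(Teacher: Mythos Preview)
Your approach is essentially the paper's: Stirling for the gamma ratio, shift the $w$-contour to $\Re(w)=\sigma_w$, pick up the residue $1$ at $w=0$ when $\sigma_w<0$, and bound the remaining integral by balancing the exponential rates in $|t_w|$. The identification of the threshold $Q^{1/2}\xi$, the role of $|\sigma_w|<\pi A/3$ in avoiding zeros of $2\cos(w/A)-1$, and the handling of $V_{1-s}^*$ by $s\mapsto 1-s$, $\kappa_j\mapsto\bar\kappa_j$ all match.

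One point of your bookkeeping is off, though it does not break the argument. You claim the gamma ratio contributes \emph{decay} $\exp(-\pi r|t_w|/4)$. That is true only asymptotically for $|t_w|\gg |t+t_j|$; in the transition zone $t_w\approx -(t+t_j)$ the cross term $(t+t_j)\bigl(\arg(s+\kappa_j)-\arg(s+w+\kappa_j)\bigr)$ can be of order $+\tfrac{\pi}{2}|t_w|$ per factor, so the ratio can \emph{grow}. The paper therefore uses the coarse uniform bound
\[
\tfrac12\textstyle\sum_j (t+t_j)\arg(s+\kappa_j)-(t+t_j+t_w)\arg(s+w+\kappa_j)\;\le\;\tfrac{\pi r}{2}|t_w|+\textstyle\sum_j|\sigma|+|\sigma_j|+|\sigma_w|,
\]
and this is exactly why the $\tfrac{\pi r}{2}$ appears in \cref{cond:afe_specific}: it compensates potential growth from the gamma ratio, not polynomial growth. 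Similarly, your first-order expansion $\log\Gamma\bigl(\tfrac{s+w+\kappa_j}{2}\bigr)-\log\Gamma\bigl(\tfrac{s+\kappa_j}{2}\bigr)=\tfrac{w}{2}\log\tfrac{s+\kappa_j}{2}+O(\cdots)$ is only valid for $|w|\ll|s+\kappa_j|$ and cannot be used on the whole vertical line; the paper instead applies Stirling separately to numerator and denominator and tracks $|1+w/(s+\kappa_j)|^{(\sigma+\sigma_w+\sigma_j-1)/2}$, which is what produces the $\tfrac12\sum_j\bigl(\bigl|\tfrac{\sigma_w-1}{\sigma+\sigma_j}\bigr|+1\bigr)$ piece of $\cR_\sigma$. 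Since you invoke the correct inequality from \cref{cond:afe_specific} anyway, the exponential still wins and your proof goes through once these two points are corrected.
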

\hyperlink{proof:V_approx}{Click here to jump to the proof of \cref*{lemma:V_approx}.}

The following lemma is needed for taking linear approximations, which is the way we'll make the approximation $d \approx D$ for all $d \in \cF$.

\begin{lemma}
  \label{lemma:V_deriv_approx}
  Let $D > d > D - \Delta D > 0$. Suppose
  \begin{align*}
    \frac{B}{A} - \left(\pi|\beta| + \frac{\pi\repdim}{2} + \half\sumr \left(\left|\frac{\sigma_w - 1}{\sigma + \sigma_j}\right| + 1\right)\right)
    \;>\; \frac{r\Delta D}{D}
    .
  \end{align*}
  Then, For all $\sigma_w$ such that $\min\!\left\{\tfrac{\pi}{3}A,\,\frac{D}{r\Delta D}\right\} > \sigma_w > -\min\!\left\{\frac{\pi}{3}A,\,\frac{D}{r\Delta D},\, \sigma + \Re(\kappa_1),\dots,\sigma + \Re(\kappa_\repdim)\right\}$,
  \begin{align*}
    V_s^\circ\!\left(\frac{\xi^{\pm 1}y}{D^{\frac{\repdim}{2}}}\right) - V_s^\circ\!\left(\frac{\xi^{\pm 1}y}{d^{\frac{\repdim}{2}}}\right)
    \ll
    \left(\frac{Q^\half \xi^{\pm 1}}{n}\right)^{\!\sigma_w}
    \cR_\sigma \frac{\repdim |\sigma_w| \Delta D}{D}
    ,
  \end{align*}
  where $\cR_\sigma$ is as in \cref{lemma:V_approx}. Here $V_s^\circ$ means either $V_s$ or $V_s^*$.
\end{lemma}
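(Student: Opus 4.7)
My plan is to mimic the proof of \cref{lemma:V_approx}, exploiting the key fact that the difference vanishes at $w=0$. Write
\begin{align*}
V_s^\circ\!\left(\frac{\xi^{\pm 1}y}{D^{r/2}}\right) - V_s^\circ\!\left(\frac{\xi^{\pm 1}y}{d^{r/2}}\right)
= \frac{1}{2\pi i}\int_{(c_V)} K(s,w)\,(\xi^{\pm 1}y)^{-w}\,\frac{D^{rw/2} - d^{rw/2}}{w}\,dw,
\end{align*}
where $K(s,w)$ denotes the gamma-factor ratio (either $G(s+w)/G(s)$ or $G^*(s+w)/G^*(s)$) times $(e^{iw/A} + e^{-iw/A} - 1)^{-B}$. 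The factor $D^{rw/2} - d^{rw/2}$ vanishes at $w = 0$, so the integrand is holomorphic there. This allows me to shift the contour from $\Re(w)=c_V$ to $\Re(w)=\sigma_w$ without picking up any residue; in particular, no indicator-function main term appears, which contrasts with \cref{lemma:V_approx}.

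On the shifted contour, the fundamental theorem of calculus gives
\begin{align*}
|D^{rw/2} - d^{rw/2}| = \left|\int_d^D \tfrac{rw}{2}\,t^{rw/2 - 1}\,dt\right| \leqslant \tfrac{r|w|\Delta D}{2}\,\max_{d \leqslant t \leqslant D} t^{r\sigma_w/2 - 1}.
\end{align*}
Under the hypothesis $|\sigma_w| < D/(r\Delta D)$, the ratio $(d/D)^{r\sigma_w/2-1}$ is $O(1)$, so this simplifies to $\ll (r|w|\Delta D/D)\,D^{r\sigma_w/2}$. This gives
\begin{align*}
\left|(\xi^{\pm 1}y)^{-w}\,\frac{D^{rw/2}-d^{rw/2}}{w}\right| \ll \frac{r\Delta D}{D}\left(\frac{\xi^{\pm 1}y}{D^{r/2}}\right)^{\!\!-\sigma_w},
\end{align*}
the crucial observation being that the $|w|$ in the numerator cancels against the $1/w$ already present in the integrand.

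What remains is to bound $\int_{(\sigma_w)} |K(s,w)|\,|dw|$, and this is done exactly as in the proof of \cref{lemma:V_approx}: Stirling's formula yields $e^{-\pi r |t|/4}$ decay from the gamma ratio, and the factor $(e^{iw/A} + e^{-iw/A} - 1)^{-B} = (2\cos(w/A)-1)^{-B}$ contributes $e^{-B|t|/A}$ decay. The strengthened inequality $B/A - (\pi|\beta| + \pi r/2 + \cdots) > r\Delta D/D$ in the hypothesis absorbs the modest additional growth, of size $\exp(r|\sigma_w|\Delta D/D)$, that is introduced when controlling $|(d/D)^{rw/2}|$ uniformly on the contour—this is the sole role of the $r\Delta D/D$ term on the right side of the hypothesis. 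The resulting integral is bounded by $|\sigma_w|\,\cR_\sigma$, where the factor $|\sigma_w|$ compensates for the $1/|\sigma_w|$ built into $\cR_\sigma$'s definition and arises because we no longer have the $1/w$ factor after the cancellation described above.

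The main obstacle is bookkeeping: verifying that every exponential growth and decay factor balances correctly on the shifted contour, in particular confirming that the hypothesis on $B/A$ is exactly sharp enough to absorb the extra growth from the $(d/D)^{rw/2}$ factor, so that the same Stirling-based estimates used for \cref{lemma:V_approx} apply uniformly here. Everything else is a routine adaptation of that earlier argument.
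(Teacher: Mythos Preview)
Your proposal is correct and follows essentially the same route as the paper: write the difference as a contour integral, observe that the factor $D^{rw/2}-d^{rw/2}$ kills the pole at $w=0$ so the contour shifts freely to $\Re(w)=\sigma_w$, bound the new factor linearly in $|w|$, and then recycle the exponential-decay estimates from the proof of \cref{lemma:V_approx}.

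One small point: your explanation of the role of the strengthened hypothesis $B/A-(\cdots)>r\Delta D/D$ is slightly off. With your FTC bound
\[
|D^{rw/2}-d^{rw/2}|\le \tfrac{r|w|}{2}\int_d^D t^{r\sigma_w/2-1}\,dt
\ll \frac{r|w|\Delta D}{D}\,D^{r\sigma_w/2},
\]
the $O(1)$ control on $(d/D)^{r\sigma_w/2-1}$ already follows from the range restriction $|\sigma_w|<D/(r\Delta D)$, uniformly in $t_w$; there is no additional growth in $t_w$ to absorb. The paper instead writes $|1-(d/D)^{rw/2}|\ll\min\{1,\,r|w|\Delta D/D\}$ (with an $e^{r|\sigma_w|\Delta D/D}$ in place of $1$ when $\sigma_w<0$) and uses the hypothesis to argue that the region where the constant branch of the $\min$ is active contributes negligibly, so one may use the linear branch throughout. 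Your FTC argument gives the linear bound globally and sidesteps this, so the strengthened hypothesis is not actually doing work in your version---but this only means your argument is marginally cleaner, not that anything is wrong.
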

\hyperlink{proof:V_deriv_approx}{Click here to jump to the proof of \cref*{lemma:V_deriv_approx}.}

\subsection{Sums over fundamental discriminants}
\label{sec:lemmas_discs}

Let $\phi$ denote Euler's totient function,
let $\divcount(m) \coloneqq \sum_{k \mid m} 1$ denote the divisor counting function,
and let $\starsum*{}$ denote sums which run over fundamental discriminants.
For any $k \in \Z_{>0}$, let $\chi_k^0$ denote the trivial character mod $k$.

\Cref{lemma:sqfree_character_estimate} is a helper lemma we go on to use in the proof of \cref{lemma:disc_power_sum}.

\begin{lemma}
  \label{lemma:sqfree_character_estimate}
  For any $\ff, m \in \Z_{>0}$, any Dirichlet character $\chi \mod \ff$, and any $z \in \C$ with $\Re(z) > -\thalf$,
  \begin{align*}
    \sum_{n < \x} n^z \mu(n)^2 \chi(n)\chi_m^0(n)
    &=
    \one{\text{\emph{$\chi$ is trivial}}}\frac{\x^{z+1}}{z+1}\frac{6}{\pi^2}\prod_{p \mid m\ff}\frac{p}{p+1}
    \\
    &\hspace{2cm}
    + O\!\left(\ff^{\frac{1}{6}}\x^{\Re(z) + \half}\!\left(\x^{\frac{1}{14}} + |\Im(z)|^{\frac{1}{6}}\right) \big(m\ff\x|\Im(z)|\big)^\eps\right)
    .
  \end{align*}
\end{lemma}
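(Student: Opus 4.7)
The plan is to express the sum as a contour integral via Perron's formula, shift the contour past the pole at $s = 1+z$, and bound the resulting integral via subconvexity.

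First I would use the identity $\mu(n)^2 = \sum_{d^2 \mid n}\mu(d)$ to factor the Dirichlet series
\[
  F(s) \coloneqq \sum_{n=1}^\infty \frac{\mu(n)^2 \chi(n)\chi_m^0(n)}{n^{s-z}}
  = \frac{L(s-z,\chi\chi_m^0)}{L(2(s-z),\chi^2\chi_m^0)},
\]
convergent absolutely for $\Re(s) > 1 + \Re(z)$. Applying Perron's formula at a line $c = 1 + \Re(z) + \eps$ gives
\[
  \sum_{n<\x} n^z \mu(n)^2\chi(n)\chi_m^0(n) = \frac{1}{2\pi i}\int_{c - iT}^{c + iT} F(s)\frac{\x^s}{s}\,ds + O\!\left(\frac{\x^{1+\Re(z)+\eps}}{T}\right),
\]
the error term coming from the standard effective version of Perron's formula.

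Next I would shift the contour leftward to the line $c' = \half + \Re(z)$. In the strip $c' < \Re(s) < c$ the only pole of the integrand is at $s = 1 + z$, contributed by $L(s - z,\chi\chi_m^0)$ when $\chi\chi_m^0$ is principal\,---\,equivalently when $\chi$ itself is trivial. A direct residue calculation using $\mathrm{Res}_{s=1}L(s,\chi_{m\ff}^0) = \prod_{p \mid m\ff}(1 - 1/p)$ and $L(2,\chi_{m\ff}^0) = \frac{\pi^2}{6}\prod_{p \mid m\ff}(1 - 1/p^2)$ produces exactly the asserted main term $\frac{\x^{z+1}}{z+1}\cdot\frac{6}{\pi^2}\prod_{p \mid m\ff}\frac{p}{p+1}$. (Zeros of $L(2(s-z),\chi^2\chi_m^0)$ sit on $\Re(s) = \frac14 + \Re(z)$, safely left of $c'$, and the possible pole at $s = \half + z$ when $\chi^2$ is trivial just gives a zero of the integrand, not a new residue.)

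The main task is then bounding the integral on the shifted line $c'$. On $\Re(s - z) = \half$, the character $\chi\chi_m^0$ has conductor dividing $\ff$, so the Petrow--Young subconvex bound \cite{petrow_young:2023} gives $L(\half + iu,\chi\chi_m^0) \ll (\ff(1+|u|))^{1/6+\eps}m^\eps$, uniformly for all moduli. On the line $\Re = 1$, the standard lower bound yields $1/L(1+iu,\chi^2\chi_m^0) \ll (m\ff(1+|u|))^\eps$. Combining, on the line $\Re(s) = c'$ with $s = c' + it$,
\[
  \frac{F(s)}{s} \ll (\ff)^{1/6+\eps}(m\ff(1+|\Im(z)+t|))^\eps \frac{(1+|\Im(z)+t|)^{1/6}}{1+|t|},
\]
so the integral is $\ll \ff^{1/6}\x^{\half+\Re(z)}(|\Im(z)|^{1/6} + T^{1/6})(m\ff\x|\Im(z)|)^\eps$. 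The horizontal segments from $c$ to $c'$ at height $\pm T$ contribute at most $O(\x^{1+\Re(z)+\eps}T^{-1+\eps})$ and are absorbed into the Perron error.

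Finally I would optimize $T$. Balancing the Perron error $\x^{1+\Re(z)}/T$ against $\ff^{1/6}\x^{\half+\Re(z)}T^{1/6}$ gives $T = \x^{3/7}\ff^{-1/7}$, yielding an error of $\ff^{5/42}\x^{\half + \Re(z)+1/14} \leqslant \ff^{1/6}\x^{\half+\Re(z)+1/14}$ in this regime; taking $T = \max(|\Im(z)|,\x^{3/7}\ff^{-1/7})$ accounts also for the $|\Im(z)|^{1/6}$ term. The main obstacle is ensuring the subconvex input applies cleanly despite $\chi\chi_m^0$ being imprimitive; this is handled by reducing to the primitive character inducing $\chi\chi_m^0$ (whose conductor divides $\ff$) and absorbing the finitely many removed Euler factors at primes dividing $m$ into the $m^\eps$ factor.
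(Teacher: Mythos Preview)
Your argument is essentially the paper's: Perron's formula, shift to $\Re(s)=\tfrac12+\Re(z)$, Petrow--Young subconvexity for the numerator, the standard $1/L(1+it,\cdot)\ll(\cdot)^\eps$ bound for the denominator, and optimize $T\asymp \max\{x^{3/7},|\Im(z)|\}$. The only cosmetic difference is that the paper factors out the $m$-coprimality as a finite Euler product $\prod_{p\mid m}(1+\chi(p)p^{-(s-z)})\ll m^\eps$ rather than absorbing it into the character as you do; both are fine. One small slip: your parenthetical that the zeros of $L(2(s-z),\chi^2\chi_m^0)$ ``sit on $\Re(s)=\tfrac14+\Re(z)$'' is GRH, not a theorem---but you don't actually use it, since the nonvanishing you need on $\Re(2(s-z))=1$ is exactly what your later invocation of the standard lower bound for $1/L(1+iu,\cdot)$ provides.
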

\hyperlink{proof:sqfree_character_estimate}{Click here to jump to the proof of \cref*{lemma:sqfree_character_estimate}.} The idea of the proof is to approximate the Dirichlet series
\begin{align*}
  \sum_{n < \x} n^z \mu(n)^2 \chi(n)\chi_m^0(n) n^{-s} \approx \frac{L(s-z,\chi)}{L(2s-2z,\chi^2)}
\end{align*}
and then invoke Perron's formula \eqref{eq:perron}. The subconvexity bound \cite{petrow_young:2023} is used.

\Cref{lemma:disc_power_sum} is both a generalization and a sharpening of \cite[Lemma 1]{jutila} and \cite[Lemma 1]{stankus}. The generalizations are that the result now applies to even positive integers $q$ in addition to odd ones, and that the admissible range $\Re(z) > -\thalf$ has been extended from $\Re(z) \geqslant 0$. This extension is necessary for us to obtain a result; without it, we would have been too constrained while balancing of error terms in \cref{sec:contrib_unweighted} to avoid the error term of \cref{thm:quadratic_murmurations} dwarfing the main term.

The sharpening is in the presence of $q^{\frac{1}{6}} (D^{\frac{1}{14}} + |\Im(z)|^{\frac{1}{6}})$ in the $\min$ of the error term. Without this sharpening, the factor of $|z|+1$ in the error term would have led to growth in the $t$ aspect too large to allow for an asymptotic result in \cref{thm:quadratic_murmurations}, i.e.\ we would not have been able to observe murmurations with a sharp cutoff.

Define
\begin{align}
  \label{eq:etadef}
  \eta_{m,\ff,\ell} \coloneqq
  \thalf\one{4 \nmid \ff} + \one{\text{$4\mid \ff$ and $\ell = 1\mod 4$}} + \tfrac{1}{4}\one{(m\ff,2) = 1}
  .
\end{align}

\begin{proposition}
  \label{lemma:disc_power_sum}
  For $(\ell,q) = 1$ and $\Re(z) > -\thalf$,
  \begin{align*}
    \starsum*_{\substack{0 < d < D \\ d = \ell \mod q \\ (d,m) = 1}} d^z
    &=
    \eta_{m,q,\ell}\,\frac{D^{z+1}}{z+1}\frac{6}{\pi^2\phi(q)} \prod_{p \mid 4qm} \frac{p}{p+1}
    \\
    &
    +
    O\!\left(D^{\Re(z) + \half}
    \min\!\left\{
    q^{\frac{1}{6}} \big(D^{\frac{1}{14}} + |\Im(z)|^{\frac{1}{6}}\big)
    ,\,
    q^{\frac{1}{2}} (|z|+1)
    \right\}
    \big(qm D|\Im(z)|\big)^\eps
    \right)
    .
  \end{align*}
\end{proposition}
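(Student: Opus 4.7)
\smallskip

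\noindent\textbf{Proof plan for \cref{lemma:disc_power_sum}.}

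The plan is to reduce the restricted sum over fundamental discriminants to a sum over squarefree integers twisted by a Dirichlet character, to which \cref{lemma:sqfree_character_estimate} applies directly. A positive fundamental discriminant $d$ is either (a) odd, squarefree, with $d\equiv 1\mod 4$, or (b) of the form $d = 4k$ with $k$ squarefree and $k\equiv 2,3\mod 4$. I would split the left-hand side according to these two cases.

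\smallskip

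\noindent\emph{Case (a): odd fundamental discriminants.} Here the conditions are ``$d$ squarefree, $(d,2m)=1$, $d\equiv \ell\mod q$, $d\equiv 1\mod 4$''. Let $q' \coloneqq \mathrm{lcm}(q,4)$ and, by CRT, combine the two congruence conditions into ``$d\equiv \ell'\mod q'$'' for a unique $\ell'\in(\Z/q')^\times$ (if the two conditions are compatible; otherwise this case contributes $0$, which happens precisely when $4\mid q$ and $\ell\not\equiv 1\mod 4$). Detect the progression by orthogonality:
\begin{align*}
  \one{d\equiv \ell'\mod q'}
  \;=\;\frac{1}{\phi(q')}\sum_{\chi\mod q'}\bar\chi(\ell')\,\chi(d).
\end{align*}
Thus the case-(a) contribution equals
\begin{align*}
  \frac{1}{\phi(q')}\sum_{\chi\mod q'}\bar\chi(\ell')\sum_{d<D}d^z\,\mu^2(d)\,\chi(d)\,\chi_{2m}^0(d),
\end{align*}
and \cref{lemma:sqfree_character_estimate} applies to each inner sum with $\ff=q'$. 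The trivial character contributes the main term, and each of the $\phi(q')-1$ nontrivial characters contributes an error of size $q'^{1/6}D^{\Re(z)+1/2}(D^{1/14}+|\Im z|^{1/6})(qmD|\Im z|)^\eps$.

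\smallskip

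\noindent\emph{Case (b): even fundamental discriminants.} Write $d=4k$ with $k$ squarefree, $(k,2m)=1$, and $k\equiv 2,3\mod 4$. The condition $4k\equiv \ell\mod q$ together with the two residues $k\mod 4$ gives, via CRT on $q'' \coloneqq \mathrm{lcm}(q,4)$, a set of admissible residues $k\mod q''$; this case is empty unless $4\mid \gcd(\ell,q)$ doesn't obstruct (so in particular vanishes when $4\mid q$ and $4\nmid \ell$). For each admissible residue $\ell''$, detect $k\equiv \ell''\mod q''$ by characters mod $q''$ and apply \cref{lemma:sqfree_character_estimate} to
\begin{align*}
  \sum_{k<D/4}(4k)^z\mu^2(k)\chi(k)\chi_{2m}^0(k) = 4^z\sum_{k<D/4}k^z\mu^2(k)\chi(k)\chi_{2m}^0(k).
\end{align*}
Summing the main terms from Cases (a) and (b) produces the constants $\eta_{m,q,\ell}$ in \eqref{eq:etadef}: the three indicators correspond precisely to the three sub-cases (odd fundamental $d$ when $4\nmid q$; the combined contribution when $4\mid q$ and $\ell\equiv1\mod4$; and the extra factor from case (b) when $q$ and $m$ are both odd, where all parities survive).

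\smallskip

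\noindent\emph{The second bound in the minimum.} For the alternative error term $q^{1/2}(|z|+1)$, I would not route through \cref{lemma:sqfree_character_estimate}; instead, after detecting the progression by characters mod $q'$ as above, I would apply partial summation to
$\sum_{n<D}\mu^2(n)\chi(n)\chi_{2m}^0(n)$ with weight $n^z$, using the P\'olya--Vinogradov-type bound $\sum_{n<x}\mu^2(n)\chi(n)\ll q^{1/2}x^\eps$ (valid for nontrivial $\chi$, via $\mu^2(n)=\sum_{a^2\mid n}\mu(a)$ and interchanging). Partial summation costs the factor $|z|+1$, yielding the claimed $q^{1/2}(|z|+1)D^{\Re z+1/2}(qmD)^\eps$. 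Taking the better of the two bounds produces the minimum in the error.

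\smallskip

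\noindent\emph{Main obstacle.} The delicate step is bookkeeping Cases (a) and (b) and the subtleties of $q\mod 4$ so that the three indicator terms defining $\eta_{m,q,\ell}$ emerge with the right coefficients and the product $\prod_{p\mid 4qm}\frac{p}{p+1}$ assembles correctly from the various local factors appearing in \cref{lemma:sqfree_character_estimate} applied at moduli $q'$ and $2m$. The rest is mechanical: the subconvex bound is already packaged inside \cref{lemma:sqfree_character_estimate}, and the elementary bound is standard partial summation.
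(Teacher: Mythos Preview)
Your approach is essentially the paper's: classify fundamental discriminants by shape, detect the progression by character orthogonality, apply \cref{lemma:sqfree_character_estimate} for the first error term, and use partial summation plus P\'olya--Vinogradov for the second. Two small points deserve attention.

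In Case~(b) your conditions ``$k$ squarefree, $(k,2m)=1$, $k\equiv 2,3\bmod 4$'' are internally inconsistent: if $k\equiv 2\bmod 4$ then $2\mid k$, so $(k,2m)\neq 1$, and your sum as written drops all discriminants $d\equiv 8\bmod 16$. The paper avoids this by splitting the even case into two subfamilies, $d=8n'$ with $n'$ odd squarefree and $d=4n'$ with $n'\equiv 3\bmod 4$ squarefree, and applying \cref{lemma:sqfree_character_estimate} to each after the substitution. You will need to do the same; this three-way split is also what makes the three indicators in $\eta_{m,q,\ell}$ come out with the correct coefficients $\tfrac12$, $1$, $\tfrac14$.

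For the second bound, the estimate you quote, $\sum_{n<x}\mu^2(n)\chi(n)\ll q^{1/2}x^\eps$, is too strong; writing $\mu^2(n)=\sum_{a^2\mid n}\mu(a)$ and applying P\'olya--Vinogradov to the inner sum gives $\ll q^{1/2}x^{1/2+\eps}$, not $x^\eps$. That is the input the partial summation actually needs to produce $(|z|+1)q^{1/2}D^{\Re(z)+1/2+\eps}$, and it is what the paper uses (citing Jutila and Stankus for $z=0$ and then summing by parts). Since you land on the right final exponent this looks like a slip rather than a gap.
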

\hyperlink{proof:disc_power_sum}{Click here to jump to the proof of \cref*{lemma:disc_power_sum}.} This proof is in two steps, one for each error term. The two steps are quite dissimilar. The first leverages \cref{lemma:sqfree_character_estimate} above, while the second proceeds with more direct and elementary manipulations of Dirichlet characters and is similar to \cite{stankus}.

Define
\begin{align}
  \label{eq:Etadef}
  \Eta_{\cF} \coloneqq 
  \eta_{\qrep,\qrep,\ell}
  ,
\end{align}
where $\eta_{m,\ff,\ell}$ is as defined in \eqref{eq:etadef}.

\begin{lemma}
  \label{lemma:Fsize}
  \begin{align*}
    \#\cF
    = \Delta D \,\frac{6\Eta_{\cF}}{\pi^2\phi(\qrep)} \prod_{p\mid 2\qrep} \frac{p}{p+1}
    + O\!\left(D^\half \qet (\qrep D)^\eps \right)
    .
  \end{align*}
\end{lemma}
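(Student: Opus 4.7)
The plan is to recognize $\#\cF$ as a direct specialization of Proposition~\ref{lemma:disc_power_sum}. Decomposing the defining range as
\[
\#\cF \;=\; \starsum*_{\substack{0 < d < D \\ d \equiv \ell \mspace{-6mu}\mod q}} 1 \;-\; \starsum*_{\substack{0 < d < D_0 \\ d \equiv \ell \mspace{-6mu}\mod q}} 1,
\]
I apply Proposition~\ref{lemma:disc_power_sum} to each sum with $z = 0$ and $m = 1$, so that the coprimality condition $(d,m)=1$ is vacuous. Each application yields a main term consisting of a fixed prefactor times $D$ or $D_0$, and their difference is that prefactor times $\Delta D = D - D_0$.

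To match the stated main term, two small identifications are needed. First, observe that $\{p : p \mid 4\qrep\} = \{p : p \mid 2\qrep\}$, so the product $\prod_{p \mid 4\qrep}\tfrac{p}{p+1}$ coming from Proposition~\ref{lemma:disc_power_sum} equals the product $\prod_{p \mid 2\qrep}\tfrac{p}{p+1}$ in the statement. Second, I would verify directly from \eqref{eq:etadef} and \eqref{eq:Etadef} that $\eta_{1,\qrep,\ell} = \eta_{\qrep,\qrep,\ell} = \Eta_{\cF}$: the first two summands in $\eta_{m,\ff,\ell}$ depend only on the pair $(\ff,\ell) = (\qrep,\ell)$, and the third summand's indicator $\one{(m\ff,2) = 1}$ reduces to ``$\qrep$ is odd'' in both cases $m=1$ and $m=\qrep$.

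For the error term, specializing $z = 0$ gives $\Im(z) = 0$ and $|z| = 0$, so the minimum appearing in Proposition~\ref{lemma:disc_power_sum} collapses to $\min\{\qrep^{1/6} D^{1/14},\, \qrep^{1/2}\} = \qet$. Each of the two invocations contributes $O(D^{1/2} \qet (\qrep D)^\eps)$; since $D_0 < D$, the second is dominated by the first, giving the claimed combined error. There is essentially no obstacle here: the result is a direct corollary of Proposition~\ref{lemma:disc_power_sum}, and the only care required lies in the two bookkeeping identifications above.
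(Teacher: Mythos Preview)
Your proposal is correct and matches the paper's approach exactly: the paper simply states that the lemma follows from Proposition~\ref{lemma:disc_power_sum} with $z=0$, and you have filled in precisely the bookkeeping details (the decomposition over $(D_0,D)$, the identification $\prod_{p\mid 4\qrep}=\prod_{p\mid 2\qrep}$, the equality $\eta_{1,\qrep,\ell}=\eta_{\qrep,\qrep,\ell}=\Eta_{\cF}$, and the collapse of the error term at $z=0$) that the paper leaves implicit.
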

\begin{proof}
  Follows from \cref{lemma:disc_power_sum} with $z = 0$.
\end{proof}

\Cref{lemma:stankus_2nd_moment} is a generalization of bounds of Jutila \cite{jutila} and Stankus \cite{stankus}, which sum only over $n$ odd. We will use it to bound the terms \eqref{eq:afe_r1_unweighted_t1ns} and \eqref{eq:afe_r1_unweighted_t2ns}, the sums over nonsquare indices $n$ in the Dirichlet series of the approximate functional equation averaged over our family of quadratic twists. The bounding quantity on the right hand side of \cref{lemma:stankus_2nd_moment} has acquired an additional factor of $(\log D)^2$ relative to \cite[Lemma 5]{stankus} due to the presence of even integers in the $n$ sum.

Eliminating the restriction to $n$ odd in the hypothesis of \cref{lemma:stankus_2nd_moment} is what allows us to consider families $\cF$ which contain fundamental discriminants $d = 1 \mod 4$. These make up two thirds of all fundamental discriminants (the natural densities of positive fundamental discriminants $d = 1\mod 4$, $d = 8 \mod 16$, and $d = 12 \mod 16$ are respectively $2/\pi^2$, $1/2\pi^2$, and $1/2\pi^2$, per \cref{lemma:disc_power_sum}), so it's nice that we're able to include them. Moreover, it's thanks to this generalization, in tandem with the related one in \cref{lemma:disc_power_sum}, that we're able to meaningfully consider twists of automorphic representations $\rep$ with conductor $\qrep$ even.

\begin{lemma}
  \label{lemma:stankus_2nd_moment}
  Let $\ff,\ell \in \Z_{>0}$ and $N, D \in \R_{>3}$ be such that
  $\ell$ is coprime to $\ff$ and $N,D \not\in \Z$.
  \begin{align*}
    \sum_{\substack{0 < n < N \\ n \neq \square \\ (n,\ff) = 1}} \,\left|\, \starsum*_{\substack{0 < d < D \\ d = \ell \mod \ff}} \chi_d(n) \,\right|^2
    \ll
    \ff ND(\log D)^4\log N
    + \ff^{\frac{1}{3} + \eps} N^\half D \min\!\big\{D^{\frac{1}{7}}, \ff^{\frac{2}{3}}\big\}
    .
  \end{align*}
\end{lemma}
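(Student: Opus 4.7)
The plan is to reduce to Stankus's Lemma~5 (which handles $n$ odd) via the 2-adic decomposition $n = 2^k m$ with $m$ odd. When $\ff$ is even, $(n,\ff)=1$ already forces $k=0$, and the result follows from a notational extension of Stankus's bound to the arithmetic progression $d=\ell\mod\ff$. Assume now $\ff$ is odd. Factor $\chi_d(n) = \chi_d(2)^k \chi_d(m)$ and, using $\gcd(8,\ff)=1$, partition the $d$-sum by residue class modulo $8\ff$, so that $\chi_d(2)\in\{-1,0,+1\}$ is constant on each class. Recalling that odd fundamental discriminants satisfy $d=1\mod 4$, only the classes $d=1,5\mod 8$ contribute nonzero $\chi_d(2) = \pm 1$; let $\tilde\ell_1,\tilde\ell_5\mod 8\ff$ be the residues determined by CRT. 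A Cauchy--Schwarz over the $O(1)$ classes yields
\begin{equation*}
  \bigg| \,\starsum*_{\substack{0<d<D \\ d=\ell\mod \ff}} \chi_d(n) \bigg|^2
  \ll \sum_{r\in\{1,5\}} \bigg| \,\starsum*_{\substack{0<d<D \\ d=\tilde\ell_r\mod 8\ff}} \chi_d(m) \bigg|^2
  .
\end{equation*}

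When $m\neq\square$, the inner sum is exactly of Stankus--Jutila form at modulus $8\ff$, contributing $\ll \ff \tfrac{N}{2^k} D (\log D)^2 + \ff^{1/3+\eps}(\tfrac{N}{2^k})^{1/2} D \min\{D^{1/7},\ff^{2/3}\}$ after summing over odd $m<N/2^k$, non-square, with $(m,\ff)=1$. The sum over $k\in[0,\log_2 N]$ is geometric but introduces an overall $\log N$ factor; combined with minor losses from the enlargement to modulus $8\ff$, this accounts for the extra $(\log D)^2\log N$ relative to Stankus's bound.

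The case $m=\square$ occurs only for $k$ odd, and constitutes the key new technical ingredient. Here $\chi_d(m)=\one{(d,m)=1}$, so the inner sum reduces to $\sum_{r\in\{1,5\}}\chi_r(2)\cdot\#\{d=\tilde\ell_r\mod 8\ff,\,(d,m)=1,\,0<d<D,\,\text{fund.\ disc.}\}$. Applying \cref{lemma:disc_power_sum} at $z=0$ to each count, the indicator $\eta_{m,8\ff,\tilde\ell_r}=1$ in both cases (since $\tilde\ell_1\equiv\tilde\ell_5\equiv 1\mod 4$), while the weights $\chi_r(2)=\pm 1$ have opposite signs. Hence the two main terms \emph{cancel exactly}, leaving an inner sum of size $O(\min\{\ff^{1/6}D^{1/2+1/14},\,\ff^{1/2}D^{1/2}\}D^\eps)$. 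Squaring, summing over $\sqrt{N/2^k}$ squares, then over odd $k$ (again geometric in $k$), is absorbed by $\ff^{1/3+\eps}N^{1/2}D\min\{D^{1/7},\ff^{2/3}\}$.

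The main obstacle is the squares case, which has no analogue in Stankus's argument---absent the cancellation between the $r=1$ and $r=5$ contributions, the square terms would dominate by a factor of order $D$ and the lemma would fail. This cancellation relies crucially on the extension of \cref{lemma:disc_power_sum} to arbitrary residues modulo even moduli such as $8\ff$, a generalization not available in the original Jutila and Stankus versions.
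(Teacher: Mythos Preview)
Your proposal is correct and follows essentially the same route as the paper: the $2$-adic decomposition $n=2^km$, partitioning the $d$-sum by residue modulo~$8$ to make $\chi_d(2)$ constant, bounding the non-square-$m$ contribution by a Stankus/Fa\u{\i}nle\u{\i}b--Saparniyazov second-moment estimate, and handling $m=\square$ (necessarily $k$ odd) via the cancellation of main terms in \cref{lemma:disc_power_sum} are exactly what the paper does. One small caveat: Stankus's Lemma~5 is stated only for odd moduli, so at modulus $8\ff$ the paper falls back to the underlying Fa\u{\i}nle\u{\i}b--Saparniyazov theorem (which carries no parity restriction)---your ``Stankus--Jutila form at modulus $8\ff$'' should be read in that sense.
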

\hyperlink{proof:stankus_2nd_moment}{Click here to jump to the proof of \cref*{lemma:stankus_2nd_moment}.} The main ingredient is \cite[\foreignlanguage{russian}{Теорема} 1]{FS} of Fainleib and Saparniyazov, transcribed as \cref{thm:FS}, which was also used by Stankus to prove \cite[Lemma 5]{stankus}.


\subsection{Euler products}
\label{sec:lemmas_euler_prod}

\Cref{lemma:euler_prod_rep_square} processes the Dirichlet series which ultimately becomes the main constituent of the right hand sides of \cref{thm:r1size,thm:quadratic_murmurations,thm:gamma_murmurations}. 

Recall the definition \eqref{eq:repL2def}:
\begin{align}
  \nonumber
  L(s,\repsq)
  &\coloneqq \sum_{n = 1}^\infty \frac{a_{\rep}(n^2)}{n^{s}}
  \\
  \nonumber
  &= \prod_p \prod_{j=1}^\repdim \left(1 - \alpha_j(p)^2 p^{-s}\right)^{-1}
  .
\end{align}

\begin{lemma}
  \label{lemma:euler_prod_rep_square}
  For $\repdim = 1$ and $\Re(s) > \thalf$,
  \begin{align*}
    \sum_{n=1}^\infty \frac{a_\rep(n^2)}{n^{2s}} \prod_{p \mid \frac{n}{(n,2\qrep)}} \frac{p}{p + 1}
    &=
    \frac{L(2s, \repsq)}{L^{(2\qrep)}(2s+1, \repsq)}
    \mspace{1mu}
    \prod_{p \nmid 2\qrep} \left(1 - \frac{1}{(p+1)(1 - \alpha_1(p)^{-2}p^{2s+1})}\right)
    .
  \end{align*}
\end{lemma}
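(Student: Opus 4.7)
The plan is to prove this by recognizing that each side is an Euler product and then matching local factors at each prime. Since $\repdim = 1$, we have $\rep = |\cdot|^{i\tau}\chi$ for some primitive Dirichlet character $\chi$ and some $\tau \in \R$, so $a_\rep$ is completely multiplicative, and hence so is $n \mapsto a_\rep(n^2)$. The weight $n \mapsto \prod_{p \mid n/(n,2\qrep)} \frac{p}{p+1}$ is also multiplicative in $n$, as it only depends on the squarefree part of $n/(n,2\qrep)$. Thus the whole summand on the left hand side is multiplicative in $n$, and under the assumption $\Re(s) > \thalf$ (which gives absolute convergence because $|a_\rep(n^2)| \leqslant 1$ for $\mathrm{GL}_1$), the sum factors as an Euler product.

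Writing $x_p \coloneqq \alpha_1(p)^2 p^{-2s}$, I would then compute the local factor on the left hand side at each prime $p$. For $p \mid 2\qrep$, the weight is identically $1$, so the local factor is simply $\sum_{k \geqslant 0} x_p^k = (1 - x_p)^{-1}$. For $p \nmid 2\qrep$, the weight is $1$ at $k = 0$ and $p/(p+1)$ for $k \geqslant 1$, giving
\[
1 + \frac{p}{p+1}\!\left(\frac{1}{1-x_p} - 1\right) = \frac{(p+1)(1-x_p) + p x_p}{(p+1)(1-x_p)} = \frac{p + 1 - x_p}{(p+1)(1-x_p)}.
\]

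Next I would compute the local factor of the right hand side at each prime. At $p \mid 2\qrep$, the factors from $L^{(2\qrep)}(2s+1,\repsq)^{-1}$ and the product over $p \nmid 2\qrep$ both contribute trivially, leaving the Euler factor of $L(2s,\repsq)$, namely $(1-x_p)^{-1}$, matching the left hand side. At $p \nmid 2\qrep$, setting $z_p \coloneqq \alpha_1(p)^{-2} p^{2s+1} = p/x_p$, the local factor is
\[
\frac{1 - x_p/p}{1 - x_p} \left(1 - \frac{1}{(p+1)(1 - z_p)}\right).
\]
A short simplification gives $1 - \frac{1}{(p+1)(1-z_p)} = \frac{p(p+1-x_p)}{(p+1)(p-x_p)}$, and then the local factor collapses to $\frac{p+1-x_p}{(p+1)(1-x_p)}$, matching the left hand side.

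There is no real obstacle here: the argument is pure algebra once multiplicativity is in hand. The only mild subtlety is bookkeeping the two different $p \mid 2\qrep$ vs.\ $p \nmid 2\qrep$ cases on both sides so that the excluded primes in $L^{(2\qrep)}$ and in the product $\prod_{p \nmid 2\qrep}$ line up with the absence of the weight $p/(p+1)$ at those primes on the left. I would present the proof in exactly the order above: reduce to an Euler product, then verify equality of local factors in the two cases.
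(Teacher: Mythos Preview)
Your proof is correct and takes essentially the same approach as the paper: both reduce to an Euler product and verify the identity prime by prime. The only difference is organizational---the paper transforms the left hand side into the right by successively factoring out $L(2s,\repsq)$ and then $L^{(2\qrep)}(2s+1,\repsq)^{-1}$, whereas you compute the local factor on each side separately and match them---but the underlying algebra is the same.
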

\hypertarget{proof:euler_prod_rep_square}{}
\begin{proof}[Proof of \cref{lemma:euler_prod_rep_square}]
  When $\Re(s) > \thalf$ the left hand side of \cref{lemma:euler_prod_rep_square} converges absolutely. \Cref{lemma:prod_convergence} states that the right hand side converges as well. \Cref{lemma:euler_prod_rep_square} then follows from a calculation. First we identify the factor of $L(2s,\repsq)$:
  \begin{align*}
    \sum_{n=1}^\infty \frac{a_\rep(n^2)}{n^{2s}} \prod_{p \mid \frac{n}{(n,2\qrep)}} \frac{p}{p + 1}
    ={}
    &\prod_{p \nmid 2\qrep} 1 + \frac{p}{p+1}\left(\frac{\alpha_1(p)^2}{p^{2s}} + \frac{\alpha_1(p)^4}{p^{4s}} + \dots\right)
    \\
    {}\cdot{}
    &\prod_{p \mid 2\qrep} \left(1 + \frac{\alpha_1(p)^2}{p^{2s}} + \frac{\alpha_1(p)^4}{p^{4s}} + \dots\right)
    \\
    ={}
    &\prod_{p \nmid 2\qrep} \left(1 - \frac{\alpha_1(p)^2}{(p+1)p^{2s}}\right)\left(1 + \frac{\alpha_1(p)^2}{p^{2s}} + \frac{\alpha_1(p)^4}{p^{4s}} + \dots\right)
    \\
    {}\cdot{}
    &\prod_{p \mid 2\qrep} \left(1 + \frac{\alpha_1(p)^2}{p^{2s}} + \frac{\alpha_1(p)^4}{p^{4s}} + \dots\right)
    \\
    ={}
    &
    \prod_{p} \left(1 + \frac{\alpha_1(p)^2}{p^{2s}} + \frac{\alpha_1(p)^4}{p^{4s}} + \dots\right)
    \cdot 
    \prod_{p \nmid 2\qrep} \left(1 - \frac{\alpha_1(p)^2}{(p+1)p^{2s}}\right)
    \\
    ={}
    &
    L(2s,\repsq)
    \prod_{p \nmid 2\qrep} \left(1 - \frac{\alpha_1(p)^2}{(p+1)p^{2s}}\right)
    .
    \shortintertext{Now we bring out the factor of $L^{(2\qrep)}(2s+1,\repsq)^{-1}$:}
    L(2s,\repsq)
    \prod_{p \nmid 2\qrep} \left(1 - \frac{\alpha_1(p)^2}{(p+1)p^{2s}}\right)
    ={}&
    \frac{L(2s,\repsq)}{L^{(2\qrep)}(2s+1,\repsq)} \prod_{p \nmid 2\qrep} \left(1 - \frac{\alpha_1(p)^2}{(p+1)p^{2s}}\right) \left(1 - \frac{\alpha_1(p)^2}{p^{2s+1}}\right)^{-1}
    \\
    ={}&
    \frac{L(2s,\repsq)}{L^{(2\qrep)}(2s+1,\repsq)}
    \prod_{p \nmid 2\qrep} 
    \frac{1 - \frac{\alpha_1(p)^2}{(p+1)p^{2s}}}{1 - \frac{\alpha_1(p)^2}{p\cdot p^{2s}}}
    \\
    ={}&
    \frac{L(2s,\repsq)}{L^{(2\qrep)}(2s+1,\repsq)}
    \prod_{p \nmid 2\qrep} 
    \left(1 - \frac{\frac{\alpha_1(p)^2}{(p+1)p^{2s}} - \frac{\alpha_1(p)^2}{p\cdot p^{2s}}}{1 - \frac{\alpha_1(p)^2}{p\cdot p^{2s}}}\right)
    \\
    ={}&
    \frac{L(2s,\repsq)}{L^{(2\qrep)}(2s+1,\repsq)}
    \prod_{p \nmid 2\qrep} 
    \left(1 - \left(\frac{1}{p+1} - \frac{1}{p}\right)\frac{\alpha_1(p)^2}{p^{2s}}\frac{1}{1 - \frac{\alpha_1(p)^2}{p^{2s+1}}}\right)
    \\
    ={}&
    \frac{L(2s,\repsq)}{L^{(2\qrep)}(2s+1,\repsq)}
    \prod_{p \nmid 2\qrep} 
    \left(1 + \frac{1}{p+1}\frac{\alpha_1(p)^2}{p^{2s+1}}\frac{1}{1 - \frac{\alpha_1(p)^2}{p^{2s+1}}}\right)
    \\
    ={}&
    \frac{L(2s,\repsq)}{L^{(2\qrep)}(2s+1,\repsq)}
    \prod_{p \nmid 2\qrep} 
    \left(1 - \frac{1}{p+1} \cdot \frac{1}{1 - \left(\frac{\alpha_1(p)^2}{p^{2s+1}}\right)^{-1}}\right)
    .
    \qedhere
  \end{align*}
\end{proof}

\begin{lemma}
  \label{lemma:prod_convergence}
  For all $s \in \C$ such that $\Re(s) > -\half + \theta$, the infinite product
  \begin{align*}
    \prod_{p \nmid 2\qrep} \prodr \left(1 - \frac{1}{(p+1)(1 - \alpha_j(p)^{-2}p^{2s+1})}\right)
  \end{align*}
  converges absolutely.
\end{lemma}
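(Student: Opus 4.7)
The plan is to reduce absolute convergence of the product to absolute convergence of the double sum
\begin{align*}
  \sum_{p \nmid 2\qrep} \sum_{j=1}^\repdim \left|\frac{1}{(p+1)(1 - \alpha_j(p)^{-2} p^{2s+1})}\right|,
\end{align*}
by the standard criterion that an infinite product $\prod (1 + u_\nu)$ of nonzero factors converges absolutely if and only if $\sum |u_\nu|$ converges. Once this sum is controlled, the product is automatically nonzero and absolutely convergent outside of at most finitely many exceptional primes, which don't affect the conclusion.

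The key ingredient is the standard bound $|\alpha_j(p)| \leqslant p^{\theta}$ on the Satake parameters at unramified primes, consistent with the convention for $\theta$ described in \cref{rem:theta} (see also \cite[\S 5.11]{IK}). This yields
\begin{align*}
  \big|\alpha_j(p)^{-2} p^{2s+1}\big| \;\geqslant\; p^{2\sigma + 1 - 2\theta},
\end{align*}
which tends to infinity with $p$ precisely because the hypothesis $\sigma > -\half + \theta$ gives $2\sigma + 1 - 2\theta > 0$. Hence for all sufficiently large $p$, one has $|1 - \alpha_j(p)^{-2} p^{2s+1}| \geqslant \thalf |\alpha_j(p)^{-2} p^{2s+1}|$, and therefore
\begin{align*}
  \left|\frac{1}{(p+1)(1 - \alpha_j(p)^{-2} p^{2s+1})}\right|
  \;\leqslant\; \frac{2 |\alpha_j(p)|^{2}}{(p+1)\,p^{2\sigma + 1}}
  \;\leqslant\; \frac{2}{p^{2 + 2(\sigma - \theta)}}.
\end{align*}
Summing over $p$ and $j$, the tail is dominated by $2\repdim \sum_p p^{-2 - 2(\sigma - \theta)}$, which converges since $2 + 2(\sigma - \theta) > 1$.

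The only mild obstacle is verifying that each factor in the product is actually nonzero (so that the absolute-convergence criterion applies without modification): for $p \nmid 2\qrep$ the Satake parameters $\alpha_j(p)$ are nonzero, and the estimate above shows that $\alpha_j(p)^{-2} p^{2s+1} \neq 1$ for every $p$ larger than some threshold depending on $s$, leaving at most finitely many primes to inspect by hand. These finitely many factors contribute a finite nonzero constant and do not affect the absolute convergence of the product. This completes the plan.
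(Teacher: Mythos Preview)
Your proof is correct and follows the same approach as the paper, which simply states that the lemma ``follows from the fact that $|\alpha_j(p)| \ll p^\theta$''; you have supplied the routine details behind this one-line justification.
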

\begin{proof}
  Follows from the fact that $|\alpha_j(p)| \ll p^\theta$.
\end{proof}

\section{Analyzing terms in the approximate functional equation averaged over a family of quadratic twists}
\label{sec:size}

The first part of this section is dedicated to analysis of the individual terms in the decomposition of \eqref{eq:afe_rep}, first into the two terms, \textit{term 1} and \textit{term 2} --- term 2 is the one in which the functional equation has been applied --- and then further decomposed into sums over squares \eqref{eq:afe_t1s} \eqref{eq:afe_t2s} and nonsquares \eqref{eq:afe_t1ns} \eqref{eq:afe_t2ns} in the Dirichlet series. The result of this decomposition is
\begin{align}
  \nonumber
  \Favg
  &L(s,\rep \otimes \chi_d)
  \\
  \label{eq:afe_t1ns}
  &=
  \Favg \sum_{\substack{n=1 \\ n \neq \square}}^\infty \frac{a_\rep(n)\chi_d(n)}{n^s}
  \Vp
  \\
  \label{eq:afe_t2ns}
  &+
  \prod_{j=1}^\repdim \frac{\halfGamma{1-s+\bar\kappa_j}}{\halfGamma{s+\kappa_j}} \Favg \sum_{\substack{n=1 \\ n \neq \square}}^\infty
  \frac{\rootnum_{\rep\otimes\chi_d} a_{\bar\rep}(n)\chi_d(n)}{n^{1-s}}
  \Vm
  \left(\frac{\pi^\repdim}{qd^\repdim}\right)^{\!s - \half}
  \\
  \label{eq:afe_t1s}
  &+
  \Favg \sum_{n=1}^\infty \frac{a_\rep(n^2)\chi_d(n^2)}{n^{2s}}
  \Vpsq
  \\
  \label{eq:afe_t2s}
  &+
  \prod_{j=1}^\repdim \frac{\halfGamma{1-s+\bar\kappa_j}}{\halfGamma{s+\kappa_j}} \Favg \sum_{n=1}^\infty \frac{\rootnum_{\rep\otimes\chi_d} a_{\bar\rep}(n^2)\chi_d(n^2)}{n^{2-2s}}
  \Vmsq
  \left(\frac{\pi^\repdim}{qd^\repdim}\right)^{\!s - \half}
  .
\end{align}
The analysis in this section of the four terms above form the basis of the proofs of \cref{thm:r1size,thm:r2size,thm:quadratic_murmurations,thm:gamma_murmurations}.  Two types of results are presented: bounds and approximations.
\Cref{lemma:t1ns_size,lemma:t2ns_size,lemma:t1s_size,lemma:t2s_size} give upper bounds on the sizes of \eqref{eq:afe_t1ns} \eqref{eq:afe_t2ns} \eqref{eq:afe_t1s} \eqref{eq:afe_t2s} respectively.

\Cref{thm:r1size,thm:r2size} need an approximation to the term 1 square sum \eqref{eq:afe_t1s}, not an upper bound. This is handled by \cref{lemma:t1s_approx}. In contrast, the proofs of \cref{thm:quadratic_murmurations,thm:gamma_murmurations} use only the upper bound from \cref{lemma:t1s_size}.

The main terms on the right hand sides of \cref{thm:quadratic_murmurations,thm:gamma_murmurations} come from term 2's sum over squares \eqref{eq:afe_t2s}, so this term also needs to be appropriately estimated. This is done in four steps, by \cref{lemma:t2s_approx_step1,lemma:t2s_approx_step2,lemma:t2s_approx_step3,lemma:t2s_approx_step4}. Each of these successive approximations preserves the holomorphicity of the main term (or, in the case of \cref{lemma:t2s_approx_step4}, meromorphicity). This is important for two reasons.

First, the estimate we are left with after all four lemmas are applied in sequence will be further manipulated as part of the integrand of an inverse Mellin transform, so it is essential that it be meromorphic.
  
Second, by breaking the process of approximating \eqref{eq:afe_t2s} into four steps, \cref{lemma:t2s_contrib_r1}, which processes the inverse Mellin transform of \eqref{eq:afe_t2s}, is able to shift the contour of integration in between each step. The abscissas at which the error terms from \cref{lemma:t2s_approx_step1,lemma:t2s_approx_step2,lemma:t2s_approx_step3,lemma:t2s_approx_step4} are smallest differ, so this decomposition helps the error term of \cref{thm:quadratic_murmurations,thm:gamma_murmurations}. This trick is essential for our application: if one were to insist that the errors of these successive approximations all be integrated along the same vertical line segments, then no choice of abscissa would yield an asymptotic result.

In \cref{sec:size_proof} these bounds and approximations are used to prove \cref{thm:r1size,thm:r2size}.

\subsection{Bounds}
\label{sec:size_bounds}

\begin{lemma}[Term 1 non-square sum \eqref{eq:afe_t1ns} bound]
  \label{lemma:t1ns_size}
  \begin{align*}
    &\Favg \nsqsum \frac{a_\rep(n)\chi_d(n)}{n^s} \Vp
    \ll \frac{\qrep^\half D^\half}{\#\cF} \mup^\half \left(1 + \mup^{\half - \sigma + \theta}\right) \epsfac
    .
  \end{align*}
\end{lemma}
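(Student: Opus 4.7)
The plan is to bound the left-hand side by three operations: truncating $V_s$ to essentially $n \ll \mupnp (qD)^\eps$ using \cref{lemma:V_approx}; swapping the order of summation to perform the $d$-sum first; and applying Cauchy--Schwarz in $n$ to separate the Dirichlet coefficients $a_\rep(n)$ from the character sum $\sum_{d \in \cF} \chi_d(n)$. The latter will then be controlled by \cref{lemma:stankus_2nd_moment}.

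Exchanging the order of summation, the left-hand side equals
\begin{align*}
\frac{1}{\#\cF}\nsqsum \frac{a_\rep(n)}{n^s} \sum_{d \in \cF} \chi_d(n)\, V_s\!\left(\frac{n}{\xi\sqrt{\qrep d^\repdim}}\right).
\end{align*}
Applying \cref{lemma:V_approx} with a large $\sigma_w > 0$, the smooth weight $V_s$ is essentially the sharp cutoff $\one{n \ll \mupnp}$ uniformly for $d \in \cF$ (up to bounded multiplicative constants, since $d \asymp D$ throughout $\cF$), and has tail decay of arbitrary polynomial order. A dyadic decomposition therefore reduces the task to bounding
\begin{align*}
S := \sum_{\substack{n < N\\ n \neq \square}} \frac{a_\rep(n)}{n^s} \sum_{d \in \cF} \chi_d(n),
\qquad N := \mupnp (qD)^\eps,
\end{align*}
with an overhead of only $(qD)^\eps$.

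Applying Cauchy--Schwarz in $n$ yields
\begin{align*}
|S|^2 \;\leq\; \bigg(\sum_{n < N} \frac{|a_\rep(n)|^2}{n^{2\sigma}}\bigg) \,\bigg(\sum_{\substack{n < N\\ n \neq \square}} \bigg|\sum_{d\in\cF}\chi_d(n)\bigg|^2\bigg).
\end{align*}
The first factor is $\ll \big(1 + N^{1 + 2\theta - 2\sigma}\big)(qD)^\eps$ using the pointwise bound $a_\rep(n) \ll n^{\theta+\eps}$. The second factor is bounded using \cref{lemma:stankus_2nd_moment} applied with $\ff = \qrep$; its main term gives $\ll \qrep N D (qND)^\eps$. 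Multiplying the two factors, taking square roots, dividing by $\#\cF$, and substituting $N \asymp \mupnp$ yields precisely
\begin{align*}
\frac{\qrep^{1/2} D^{1/2} \mupnp^{1/2}}{\#\cF}\big(1 + \mupnp^{1/2 - \sigma + \theta}\big)(qD)^\eps,
\end{align*}
which is the claimed bound.

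The main obstacle is to confirm that the secondary term in \cref{lemma:stankus_2nd_moment}, namely $\qrep^{1/3+\eps} N^{1/2} D \min\{D^{1/7}, \qrep^{2/3}\}$, is dominated by its main term $\qrep N D$ in the regime of interest; this reduces to a routine exponent comparison once $N \asymp \mupnp$ is substituted. A minor subtlety is the residual $d$-dependence inside $V_s$, but this is harmless because $\sqrt{\qrep d^\repdim}\xi$ is of a single size $\asymp \mupnp$ throughout $\cF$, so the truncation can be made uniform in $d$.
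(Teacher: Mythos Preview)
Your approach is essentially the same as the paper's: truncate via \cref{lemma:V_approx}, apply Cauchy--Schwarz in $n$, bound the coefficient sum by $a_\rep(n)\ll n^{\theta+\eps}$, and control the character sum with \cref{lemma:stankus_2nd_moment}. One small point you elide that the paper makes explicit: \cref{lemma:stankus_2nd_moment} applies to the inner sum over a full initial segment $0<d<D$, not over $\cF=\{D_0<d<D:\,d\equiv\ell\bmod q\}$, so before invoking it you should write $\sum_{d\in\cF}=\starsum_{d<D}-\starsum_{d<D_0}$ and observe that the lemma's bound is monotone in $D$; the paper does this in \eqref{eq:t1ns_size_2}--\eqref{eq:t1ns_size_3}. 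Your observation about the secondary term of \cref{lemma:stankus_2nd_moment} is fine (indeed $\min\{D^{1/7},q^{2/3}\}\le q^{2/3}$ makes it $\le qN^{1/2}D\le qND$ trivially), and your remark about the $d$-dependence of the truncation point is the same approximation the paper uses in passing to \eqref{eq:t1ns_size_0}.
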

\hypertarget{proof:t1ns_size}{}
\begin{proof}
  The main ingredient in this proof is the bound of \cref{lemma:stankus_2nd_moment}. To be able to apply the lemma, we use Cauchy--Schwarz to isolate the second moment character sum.

  We first apply \cref{lemma:V_approx} to truncate the sum in $n$ of the quantity in question:
  \begin{align}
    \label{eq:t1ns_size_0}
    &\Favg \nsqsum \frac{a_\rep(n)\chi_d(n)}{n^s} \Vp
    \ll \sum_{\substack{n < \mup^{1+\eps} \\ n \neq \square}} \frac{a_\rep(n)}{n^s} \Favg \chi_d(n)
    .
  \end{align}
  
  We now split the right hand side of \eqref{eq:t1ns_size_0} into a difference of two sums --- a decomposition which follows immediately from the definition of $\cF$:
  \begin{align}
    \label{eq:t1ns_size_2}
    \eqref{eq:t1ns_size_0}
    \ll \sum_{\substack{n < \mup^{1+\eps} \\ n \neq \square}} \frac{a_\rep(n)}{n^s} \frac{1}{\#\cF} \left(\,
    \starsum*_{\substack{d < D \\ d = \ell \mod \qrep}} \chi_d(n) 
    \;\;-
    \starsum*_{\substack{d < D_0 \\ d = \ell \mod \qrep}} \chi_d(n) 
    \right)
    .
  \end{align}

  By the triangle inequality, the difference \eqref{eq:t1ns_size_2} is $\ll$ its largest term. The bound we will use,
  \begin{align}
    \label{eq:t1ns_size_lemma_bound}
    \sum_{\substack{0 < n < N \\ n \neq \square \\ (n,q) = 1}} \,\left|\, \starsum*_{\substack{0 < d < D \\ d = \ell \mod q}} \chi_d(n) \,\right|^2
    \ll
    qND(\log D)^4\log N
    + q^{\frac{1}{3} + \eps} N^\half D \min\!\left\{D^{\frac{1}{7}}, q^{\frac{2}{3}}\right\}
  \end{align}
  from \cref{lemma:stankus_2nd_moment}, 
  is increasing in $D$, so the $d < D$ sum is the larger term:
  \begin{align}
    \label{eq:t1ns_size_3}
    \eqref{eq:t1ns_size_2}
    \ll
    \frac{1}{\#\cF} \sum_{\substack{n < \mup^{1+\eps} \\ n \neq \square}} \frac{a_\rep(n)}{n^s} \starsum*_{\substack{d < D \\ d = \ell \mod \qrep}} \chi_d(n)
    .
  \end{align}

  Cauchy--Schwarz gives
  \begin{align}
    \label{eq:t1ns_size_4}
    \eqref{eq:t1ns_size_3}
    &\ll
    \frac{1}{\#\cF}
    \left(\sum_{\substack{n < \mup^{1+\eps} \\ n \neq \square}} \left|\,\frac{a_\rep(n)}{n^s} \,\right|^2\right)^{\!\half}
    \left(\sum_{\substack{n < \mup^{1+\eps} \\ n \neq \square}} \left|\,\starsum*_{\substack{d < D \\ d = \ell \mod \qrep}} \chi_d(n) \,\right|^2\right)^{\!\half}
    .
    \shortintertext{Using the bound $a_\rep(n) \ll n^{\theta + \eps}$,}
    \nonumber
    \eqref{eq:t1ns_size_4}
    & \ll
    \frac{1}{\#\cF}
    \left( \left(1 + \mup^{1 - 2\sigma + 2\theta}\right)|\thalf - \sigma + \theta|^{-1} \right)^{\!\half}
    \left( D \mup \epsfac \right)^{\!\half}
    \\
    \nonumber
    & \ll
    \frac{\qrep^\half D^\half}{\#\cF} \mup^\half \left(1 + \mup^{\half - \sigma + \theta}\right) |\thalf - \sigma + \theta|^{-\half} \epsfac
    .
  \end{align}
  If $\thalf - \sigma + \theta = 0$, then the bound is instead
  \begin{align*}
    &\frac{\qrep^\half D^\half}{\#\cF} \mup^\half \log\!\mup \epsfac
    \ll \frac{\qrep^\half D^\half}{\#\cF} \mup^\half \epsfac
    .
  \end{align*}
  \Cref{lemma:t1ns_size} follows.
\end{proof}

\begin{lemma}[Term 2 non-square sum \eqref{eq:afe_t2ns} bound]
  \label{lemma:t2ns_size}
  \begin{align*}
    &\Favg \nsqsum
    \frac{\rootnum_{\rep\otimes\chi_d} a_{\bar\rep}(n)\chi_d(n)}{n^{1-s}} \Vm \Gfac \afefac
    \\
    &\hspace{2cm}
    \ll \frac{\qrep^\half D^\half}{\#\cF} (QD^r)^{\half - \sigma} \mum^\half \left(1 + \mum^{\sigma - \half + \theta}\right) \epsfac
    .
  \end{align*}
\end{lemma}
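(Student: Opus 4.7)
The proof should proceed in close parallel to the proof of Lemma~\ref{lemma:t1ns_size}, with the new ingredient being a uniform bound on the gamma-ratio and power factor $\Gfac \afefac$. The plan is: first, truncate the sum over $n$ at $n < \mumnp^{1+\eps}$ by applying Lemma~\ref{lemma:V_approx} with the roles of $s$ and $\xi$ appropriately swapped, noting that the conductor of $\rep\otimes\chi_d$ is $\qrep d^\repdim$, so the reduced analytic conductor of the twist is $\asymp QD^\repdim$ uniformly over $d \in \cF$. Second, bound $\Gfac \afefac$ in absolute value uniformly for $d \asymp D$.

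For this second step, Stirling's formula \eqref{eq:stirling_t} applied to each factor $\halfGamma{1-s+\bar\kappa_j}/\halfGamma{s+\kappa_j}$ shows that the $e^{-\pi|t|/2}$ exponentials from numerator and denominator cancel, since $\Im(\bar\kappa_j) = -\Im(\kappa_j)$ makes the imaginary parts of the two arguments have the same magnitude, leaving a ratio of magnitude $\asymp |s+\kappa_j|^{\half-\sigma}$. Taking the product over $j$ and recalling the definition \eqref{eq:Qdef} of $Q$ gives $|\Gfac| \ll (Q/\qrep)^{\half-\sigma}$, which combined with $|\afefac| \ll (\qrep D^\repdim)^{\half-\sigma}$ yields $|\Gfac \cdot \afefac| \ll (QD^\repdim)^{\half-\sigma}$ uniformly over $d \in \cF$. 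This factor, being independent of $d$ up to constants, can be pulled outside the family average.

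From here the argument mimics the proof of Lemma~\ref{lemma:t1ns_size} line for line: decompose the family average as a difference of partial sums over $d < D$ and $d < D_0$ and retain the dominant ($d<D$) piece, apply Cauchy--Schwarz in $n$, then estimate the two resulting factors separately. The Dirichlet coefficient factor
\[
\Bigl(\sum_{\substack{n < \mumnp^{1+\eps} \\ n \neq \square}} \tfrac{|a_{\bar\rep}(n)|^2}{n^{2(1-\sigma)}}\Bigr)^{\!1/2}
\]
is $\ll (1 + \mum^{\sigma - \half + \theta}) \epsfac$ using $a_{\bar\rep}(n) \ll n^{\theta+\eps}$, with the logarithmic boundary case (when $\sigma - \half + \theta = 0$) absorbed into $\epsfac$. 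The character second-moment factor is $\ll \sqrt{\qrep D \mumnp}\,\epsfac$ by Lemma~\ref{lemma:stankus_2nd_moment}. Multiplying these together with the $(QD^\repdim)^{\half-\sigma}$ factor and dividing by $\#\cF$ produces the stated bound. No substantial obstacle is anticipated beyond the Stirling bookkeeping in the second step.
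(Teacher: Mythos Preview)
Your proposal is correct and is precisely what the paper intends: its proof reads ``Similar to the proof of \cref{lemma:t1ns_size},'' and you have spelled out exactly that parallel, including the Stirling computation showing $\bigl|\Gfac\afefac\bigr| \ll (QD^\repdim)^{\half-\sigma}$ and the replacement $\xi \mapsto \xi^{-1}$, $\sigma \mapsto 1-\sigma$ in the truncation and Cauchy--Schwarz steps.
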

\hypertarget{proof:t2ns_size}{}
\begin{proof}
  Similar to the \hyperlink{proof:t1ns_size}{proof of \cref*{lemma:t1ns_size}}.
\end{proof}

\begin{lemma}[Term 1 square sum \eqref{eq:afe_t1s} bound]
  \label{lemma:t1s_size}
  \begin{align*}
    \Favg \sum_{n=1}^\infty \frac{a_\rep(n^2)\chi_d(n^2)}{n^{2s}} \Vpsq
    & \ll \left(1 + \mup^{\half - \sigma + \theta}\right) \epsfac
    .
  \end{align*}
\end{lemma}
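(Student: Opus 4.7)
The plan is to bound the inner sum uniformly in $d$ by the triangle inequality; no cancellation in either the $d$ or $n$ sum is needed, because the character sum over squares collapses trivially. Specifically, $\chi_d(n^2) \in \{0,1\}$, equalling $1$ precisely when $(n,d)=1$, so $|\chi_d(n^2)|\leqslant 1$. This is in sharp contrast to the analysis of the nonsquare term in \cref{lemma:t1ns_size}, where we needed to exploit cancellation via \cref{lemma:stankus_2nd_moment}.

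First, I would apply \cref{lemma:V_approx} with $\sigma_w$ small and positive to effectively truncate the $n$-sum to $n^2 < (\xi\sqrt{QD^r})^{1+\eps}$, i.e. $n < (\xi\sqrt{QD^r})^{\frac{1}{2}+\eps}$; the tail past this cutoff contributes at most $O((QD)^\eps)$ thanks to the polynomial decay of $V_s$.

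Next, using $|a_\rep(n^2)|\ll n^{2\theta+\eps}$ together with $|\chi_d(n^2)|\leqslant 1$, the inner sum is dominated by
\[
\sum_{n < (\xi\sqrt{QD^r})^{\frac{1}{2}+\eps}} \frac{n^{2\theta+\eps}}{n^{2\sigma}}
\ll \bigl(1 + (\xi\sqrt{QD^r})^{\frac{1}{2}-\sigma+\theta}\bigr)(QD)^\eps,
\]
where the two regimes correspond to whether the exponent $1-2\sigma+2\theta$ is negative (convergent tail) or not (polynomial growth). Since this estimate is completely uniform in $d$, averaging over $d\in\cF$ preserves it and yields the desired bound.

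The only mild subtlety is handling the borderline case $2\sigma - 2\theta = 1$, where a logarithmic factor $\log(\xi\sqrt{QD^r})$ appears in place of the exponent; this is absorbed into $(QD)^\eps$, so no genuine obstacle arises. Overall, this lemma is by far the easiest of the four per-term bounds, owing to the positivity of $\chi_d(n^2)$ on coprime indices.
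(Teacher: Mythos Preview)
Your proposal is correct and follows essentially the same approach as the paper: apply the triangle inequality together with \cref{lemma:V_approx} to truncate the $n$-sum at $n^2 < \big(\xi\sqrt{QD^r}\big)^{1+\eps}$, then use $a_\rep(n)\ll n^{\theta+\eps}$ to bound the remaining finite sum. The paper's proof is identical in substance, just more terse.
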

\hypertarget{proof:t1s_size}{}
\begin{proof}
  Applying the triangle inequality and \cref{lemma:V_approx},
  \begin{align*}
    \Favg \sum_{n=1}^\infty \frac{a_\rep(n^2)\chi_d(n^2)}{n^{2s}} \Vpsq
    &\ll
    \sum_{n^2 < \mup^{1+\eps}} \frac{|a_\rep(n^2)|}{n^{2\sigma}}
    \\
    &\ll
    \left(1 + \mup^{\half - \sigma + \theta}\right) \epsfac
    ,
  \end{align*}
  where we have used the bound $a_\rep(n) \ll n^{\theta + \eps}$.
\end{proof}

\begin{lemma}[Term 2 square sum \eqref{eq:afe_t2s} bound]
  \label{lemma:t2s_size}
  \begin{align*}
    \Favg
    \sum_{n=1}^\infty
    \frac{\rootnum_{\rep\otimes\chi_d} a_{\bar\rep}(n^2)\chi_d(n^2)}{n^{2-2s}}
    \Vmsq
    &\Gfac \afefac
    \\
    &\ll (QD^r)^{\half - \sigma} \left(1 + \mum^{\sigma - \half + \theta}\right) \epsfac
    .
  \end{align*}
\end{lemma}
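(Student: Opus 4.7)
The plan is to follow the blueprint of the \hyperlink{proof:t1s_size}{proof of \cref*{lemma:t1s_size}}, adapted to account for the extra gamma factor ratio and the power of $\pi^r/qd^r$.

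First, I will apply the triangle inequality in $d$ to remove $\chi_d(n^2)$ (which equals $1$ for $(n,d)=1$ and $0$ otherwise, hence is $O(1)$), and invoke \cref{lemma:V_approx} with a suitable $\sigma_w > 0$ to truncate the $n$-sum. With $d \asymp D$ for $d \in \cF$, the argument $\pi^{r/2}\xi n^2/\sqrt{qd^r}$ of $V_{1-s}^*$ matches the form in \cref{lemma:V_approx} after replacing $q$ by $qd^r$, so the reduced analytic conductor becomes $\asymp QD^r$ and the indicator threshold reads $n^2 < \mum^{1+\eps}$. All resulting error terms from \cref{lemma:V_approx} can be absorbed into the $(QD)^\eps$ factor by choosing $\sigma_w$ appropriately.

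Next, I will apply the Ramanujan-type bound $a_{\bar\rep}(n^2) \ll n^{2\theta + \eps}$ and estimate
\begin{align*}
  \sum_{n^2 < \mum^{1+\eps}} \frac{|a_{\bar\rep}(n^2)|}{n^{2-2\sigma}}
  \ll \sum_{n < \mum^{(1+\eps)/2}} n^{2\sigma - 2 + 2\theta + \eps}
  \ll \bigl(1 + \mum^{\sigma - \half + \theta}\bigr)(qD)^\eps,
\end{align*}
using the integer sum bound $\sum_{n < N} n^{\gamma - 1} \ll 1 + N^\gamma$ (with any borderline $\gamma = 0$ contributing only a logarithm that is absorbed into $(qD)^\eps$).

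Finally, I will handle the prefactor $\prod_j \halfGamma{1-s+\bar\kappa_j}/\halfGamma{s+\kappa_j} \cdot (\pi^r/qd^r)^{s-\half}$. By Stirling's formula \eqref{eq:stirling}, the exponential $e^{-\arg(\cdot)\Im(\cdot)}$ contributions of numerator and denominator cancel in the strip $0 < \sigma < 1$, leaving
\begin{align*}
  \left|\prod_{j=1}^r \frac{\halfGamma{1-s+\bar\kappa_j}}{\halfGamma{s+\kappa_j}}\right| \ll \prod_{j=1}^r |s+\kappa_j|^{\half - \sigma} (qD)^\eps.
\end{align*}
Combining with $|(\pi^r/qd^r)^{s-\half}| \asymp (qD^r)^{\half - \sigma}$ and the definition \eqref{eq:Qdef} of $Q$, which gives $q \prod_j |s+\kappa_j| \asymp Q$, the prefactor is $\ll (QD^r)^{\half - \sigma}(qD)^\eps$. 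Multiplying the three contributions yields the claimed bound. No step presents a serious obstacle; the only mild care is in verifying that the $\sigma_w$ chosen in \cref{lemma:V_approx} lies in the admissible range of \cref{cond:afe_specific}, which is the same check performed in all neighboring lemmas.
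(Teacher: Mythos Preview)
Your proposal is correct and follows essentially the same approach as the paper, which simply says the proof is ``similar to the \hyperlink{proof:t1s_size}{proof of \cref*{lemma:t1s_size}}''. You have correctly spelled out the two adjustments that distinguish this case from \cref{lemma:t1s_size}: the truncation threshold becomes $\mum$ via \cref{lemma:V_approx}, and the extra prefactor $\Gfac\afefac$ contributes the factor $(QD^r)^{\half-\sigma}$ by Stirling.
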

\hypertarget{proof:t2s_size}{}
\begin{proof}
  Similar to the \hyperlink{proof:t1s_size}{proof of \cref*{lemma:t1s_size}}.
\end{proof}

\subsection{Approximations}
\label{sec:size_approximations}

\Cref{lemma:t1s_approx} is used to produce the main term of \cref{prop:avgsize}. When $\repdim = 1$, one may apply \cref{lemma:euler_prod_rep_square} to the right hand side.

\begin{lemma}[Term 1 square sum approximation]
  \label{lemma:t1s_approx}
  If $\sigma > \thalf + \theta$,
  \begin{align*}
    \Favg \sum_{n=1}^\infty \frac{a_\rep(n^2)\chi_d(n^2)}{n^{2s}} \Vpsq
    &=
    \sum_{n=1}^\infty \frac{a_\rep(n^2)}{n^{2s}} \prod_{p\mid\frac{n}{(n,2q)}}\frac{p}{p+1}
    \\
    &
    + O\!\left(\frac{D^{\half}}{\#\cF}\qet + \mup^{\half - \sigma + \theta}\right) \epsfac
    .    
  \end{align*}
\end{lemma}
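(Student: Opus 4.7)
The strategy is to truncate the smoothing factor $V_s$ to the sharp cutoff $n^2<\mupnp$, exchange the orders of summation, and then evaluate the character average $\Favg\chi_d(n^2)$ for each surviving $n$ via the sums-over-fundamental-discriminants results from \cref{sec:lemmas_discs}. First I would apply \cref{lemma:V_approx} to $V_s\!\big(\pi^{\repdim/2} n^2/(\xi\sqrt{\qrep d^\repdim})\big)$ twice: once with $\sigma_w$ slightly less than $\tfrac{1}{2}-\sigma+\theta$ (a negative number, since $\sigma>\tfrac{1}{2}+\theta$) to bound the error on $n\leqslant\mup^{1/2}$, and once with $\sigma_w$ slightly greater than the same value to bound $n>\mup^{1/2}$. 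Using $|a_\rep(n^2)|\ll n^{2\theta+\eps}$, both tail estimates contribute $O(\mup^{1/2-\sigma+\theta+\eps})$, which is the second stated error term; what remains is $\sum_{n<\mup^{1/2}} a_\rep(n^2)\chi_d(n^2)/n^{2s}$.

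Next I would swap the $d$-average with the truncated $n$-sum. Since $\chi_d(n^2)=\one{(d,n)=1}$, the inner average becomes $\#\{d\in\cF:(d,n)=1\}/\#\cF$. Applying \cref{lemma:disc_power_sum} at the bounds $D$ and $D_0$ (and subtracting) with $z=0$ and $m=n$ gives
\begin{align*}
  \#\{d\in\cF:(d,n)=1\}=\eta_{n,\qrep,\ell}\,\Delta D\,\frac{6}{\pi^2\phi(\qrep)}\prod_{p\mid 4\qrep n}\tfrac{p}{p+1}+O\big(D^{1/2}\qet(\qrep nD)^\eps\big),
\end{align*}
while \cref{lemma:Fsize} supplies the denominator $\#\cF=\Eta_{\cF}\,\Delta D\,\frac{6}{\pi^2\phi(\qrep)}\prod_{p\mid 2\qrep}\frac{p}{p+1}+O(D^{1/2}\qet(\qrep D)^\eps)$. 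Upon taking the ratio, the $\Delta D\cdot 6/(\pi^2\phi(\qrep))$ factors cancel and the Euler-product ratio telescopes to $\prod_{p\mid n/(n,2\qrep)}\frac{p}{p+1}$, since $4\qrep$ and $2\qrep$ have the same set of prime divisors; this produces the stated main term.

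Assembling the error from \cref{lemma:disc_power_sum} weighted by $|a_\rep(n^2)|/n^{2\sigma}$ and summed over $n<\mup^{1/2}$ yields $O((D^{1/2}\qet/\#\cF)(\qrep D)^\eps)$, using the absolute convergence of $\sum n^{2\theta-2\sigma}$ for $\sigma>\tfrac{1}{2}+\theta$; this is the first stated error term. Completing the truncated sum to an infinite sum contributes a tail bounded as in the first step. The main obstacle is the bookkeeping around the ratio $\eta_{n,\qrep,\ell}/\Eta_{\cF}$, which equals $1$ whenever $\qrep$ is even but acquires a parity-of-$n$ dependence when $\qrep$ is odd. Splitting the $n$-sum by parity and absorbing the residual back into the Euler product (using \cref{lemma:euler_prod_rep_square}) resolves this; achieving the sharp exponent $\tfrac{1}{2}-\sigma+\theta$ in the tail bound crucially relies on the dual-$\sigma_w$ application of \cref{lemma:V_approx} outlined above, since a single choice of $\sigma_w$ gives only the weaker bound $O(\mup^{\eps})$.
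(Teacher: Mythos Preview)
Your approach mirrors the paper's: both control the smoothing via \cref{lemma:V_approx}, evaluate $\Favg\chi_d(n^2)=\#\{d\in\cF:(d,n)=1\}/\#\cF$ through \cref{lemma:disc_power_sum} at $z=0$, and normalise by \cref{lemma:Fsize}. The paper orders the steps slightly differently: rather than truncating to $n^2<\mupnp$ and later extending back, it exploits the absolute convergence available at $\sigma>\tfrac12+\theta$ to split off the full Dirichlet series at once and bound the remainder $\sum_n a_\rep(n^2)\chi_d(n^2)n^{-2s}\big(V_s(\cdot)-1\big)$ directly as $\ll\sum_{n^2>\mupnp}n^{2\theta-2\sigma}$. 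This sidesteps your dual-$\sigma_w$ manoeuvre entirely, so that trick is not ``crucial'' here---though your version is also valid.

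You are right to flag the ratio $\eta_{n,q,\ell}/\Eta_\cF$. The paper's proof in fact writes $\Eta_\cF$ at the step where \cref{lemma:disc_power_sum} delivers $\eta_{n,q,\ell}$, and the two differ precisely when $q$ is odd and $n$ is even (the ratio is then $2/3$, not $1$). Your parity-splitting correctly isolates this extra $2/3$, but it does not fold back into $\prod_{p\mid n/(n,2q)}\tfrac{p}{p+1}$ as stated: when $q$ is odd and $2\|n$ one has $2\nmid n/(n,2q)$, so the $p=2$ factor $\tfrac{2}{3}$ is absent from the stated product even though the correct density carries it. In other words the natural output of your argument (and of the paper's, done carefully) is $\prod_{p\mid n/(n,q)}\tfrac{p}{p+1}$, which agrees with the stated form whenever $q$ is even or $4\mid n$ but not when $q$ is odd and $2\|n$. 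Your appeal to \cref{lemma:euler_prod_rep_square} does not repair this, and neither does the paper's proof; the discrepancy appears to be a minor slip in the statement rather than something either argument can patch.
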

\hyperlink{proof:t1s_approx}{Click here to jump to the proof of \cref*{lemma:t1s_approx}.} It contains many of the elements used in the proofs of the next four lemmas.

An approximation of the square sum \eqref{eq:afe_t2s} is needed in \cref{lemma:t2s_contrib_r1,lemma:t2s_contrib_r1_smoothed}.
Each of \cref{lemma:t2s_approx_step1,lemma:t2s_approx_step2,lemma:t2s_approx_step3,lemma:t2s_approx_step4} feeds into the next, in the sense that the left hand side of each is the main term on the right hand side of the previous. As such, they may easily be applied successively.

\begin{lemma}[Term 2 square sum approximation, step 1]
  \label{lemma:t2s_approx_step1}
  \begin{align*}
    \Favg \sum_{n=1}^\infty \frac{a_{\bar\rep}(n^2)\chi_d(n^2)}{n^{2-2s}} \Vmsq
    &= \sum_{n=1}^\infty \frac{a_{\bar\rep}(n^2)}{n^{2-2s}} \VmDsq \Favg \chi_d(n^2)
    \\
    &+ O\!\left(\frac{\#\cF}{D} \left(1 + \mum^{\sigma - \half + \theta}\right) \epsfac\right)
    .
  \end{align*}
\end{lemma}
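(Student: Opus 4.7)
The plan is to swap $d$ for $D$ in the argument of $V_{1-s}^*$ and control the resulting discrepancy via the linear-approximation estimate of \cref{lemma:V_deriv_approx}. Interchanging the sum over $n$ with the average over $d \in \cF$, the difference between the two sides of the claimed identity equals
\begin{align*}
\frac{1}{\#\cF}\sum_{d \in \cF} \sum_{n=1}^\infty \frac{a_{\bar\rep}(n^2) \chi_d(n^2)}{n^{2-2s}}\!\left[V_{1-s}^*\!\left(\frac{\pi^{\frac{\repdim}{2}}\xi n^2}{\sqrt{\qrep d^\repdim}}\right) - V_{1-s}^*\!\left(\frac{\pi^{\frac{\repdim}{2}}\xi n^2}{\sqrt{\qrep D^\repdim}}\right)\right].
\end{align*}

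I would then take absolute values, use $|\chi_d(n^2)| \leq 1$ and $|a_{\bar\rep}(n^2)| \ll n^{2\theta + \eps}$, and apply \cref{lemma:V_deriv_approx} (with the summation variable ``$n$'' in that lemma corresponding to $n^2$ here) to bound the bracketed difference uniformly in $d \in \cF$ by $\ll (Q^{\frac{1}{2}} \xi^{-1}/n^2)^{\sigma_w} \cR_{1-\sigma}\, r|\sigma_w|\Delta D/D$. The resulting sum over $n$ is essentially $\sum_n n^{2\sigma + 2\theta - 2 - 2\sigma_w + \eps}$, which converges provided $\sigma_w > \sigma + \theta - \tfrac{1}{2}$. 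I would take $\sigma_w$ arbitrarily small and positive when $\sigma + \theta < \tfrac{1}{2}$, and $\sigma_w = \sigma + \theta - \tfrac{1}{2} + \eps$ otherwise; combined with the polynomial factor $(Q^{\frac{1}{2}}\xi^{-1})^{\sigma_w}$, this yields a bound of $\ll (1 + \mum^{\sigma - \frac{1}{2} + \theta})\epsfac$ on the $n$-sum.

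Putting everything together, the discrepancy is $\ll (\Delta D/D)(1 + \mum^{\sigma - \frac{1}{2} + \theta})\epsfac$. By \cref{lemma:Fsize}, $\Delta D$ and $\#\cF$ are of comparable size in the regime where the main term of that lemma dominates, so $\Delta D/D \ll \#\cF/D$ up to factors absorbable into $\epsfac$, yielding the claim.

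The main subtlety is verifying the admissibility hypothesis on $\sigma_w$ in \cref{lemma:V_deriv_approx} — in particular that the required inequality on $B/A$ holds for the choices above — and tracking the passage from $\Delta D$ to $\#\cF$ cleanly through \cref{lemma:Fsize}. Beyond that, the argument is a routine triangle-inequality estimate in the spirit of \cref{lemma:t2s_size,lemma:t1s_approx}.
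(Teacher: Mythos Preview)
Your proposal is correct and matches the paper's proof essentially line for line: write the discrepancy as the double sum over $d$ and $n$ of the bracketed $V^*$-difference, bound it via \cref{lemma:V_deriv_approx} together with $|a_{\bar\rep}(n^2)|\ll n^{2\theta+\eps}$, and then pass from $\Delta D/D$ to $\#\cF/D$. The paper is slightly terser---it truncates the $n$-sum at $n^2<\mum^{1+\eps}$ rather than spelling out the choice of $\sigma_w$, and silently replaces $\Delta D$ by $\#\cF$ in the last line without naming \cref{lemma:Fsize}---but the content is the same.
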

\hyperlink{proof:t2s_approx_step1}{Click here to jump to the proof of \cref*{lemma:t2s_approx_step1}.} It uses \cref{lemma:V_deriv_approx} as part of a first order series expansion approximating $d \approx D$ for all $d \in \cF$.

\begin{lemma}[Term 2 square sum approximation, step 2]
  \label{lemma:t2s_approx_step2}
  \begin{align*}
    \sum_{n=1}^\infty \frac{a_{\bar\rep}(n^2)}{n^{2-2s}}
    &\VmDsq \Favg \chi_d(n^2)
    \left(\frac{\pi^\repdim \x}{\qrep d^\repdim}\right)^{\!s - \half}
    \\
    &= \sum_{n=1}^\infty \frac{a_{\bar\rep}(n^2)}{n^{2-2s}} \prod_{p \mid 2qn} \frac{p}{p + 1} \VmDsq \frac{6\Eta_{\cF} \Delta D}{\pi^2 \phi(\qrep)\#\cF} \frac{1 - \left(1 - \frac{\Delta D}{D}\right)^{1 - r(s-\half)}}{(1 - r(s-\half))\frac{\Delta D}{D}}
    \xD^{\!s - \half}
    \\
    &+ O\Bigg(
    \frac{\qrep^{\frac{1}{6}} D^\half}{\#\cF} \left(D^{\frac{1}{14}} + |s|^{\frac{1}{6}}\right) \left(1 + \mum^{\sigma - \half + \theta}\right) \xD^{\!\sigma - \half} \epsfac 
    \Bigg)
    .
  \end{align*}
\end{lemma}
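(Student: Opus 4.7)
The plan is to evaluate the $d$-average explicitly via \cref{lemma:disc_power_sum}, then bound the resulting $n$-sum using the approximate functional equation machinery. Since $\VmDsq$ depends on $D$ but not on $d$, it factors out of the $d$-average at once. Using $\chi_d(n^2) = \one{(d,n)=1}$ for every fundamental discriminant $d$, and separating the $D$- and $d$-dependence of the weight via
\begin{align*}
  \left(\frac{\pi^\repdim \x}{\qrep d^\repdim}\right)^{\!s-\half} = \xD^{\!s-\half}\left(\frac{D}{d}\right)^{\!r(s-\half)},
\end{align*}
the average reduces to computing $\starsum*_{\substack{D_0 < d < D \\ d = \ell \mod \qrep \\ (d,n)=1}} d^{-r(s-\half)}$, multiplied by the $d$-independent prefactor $\xD^{s-\half} D^{r(s-\half)}/\#\cF$.

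I will write this discriminant sum as the difference $\starsum^*_{0<d<D} - \starsum^*_{0<d<D_0}$ and apply \cref{lemma:disc_power_sum} to each with $z = -r(s-\half)$ and $m = n$. Each endpoint contributes a main term proportional to $D^{1-r(s-\half)}/(1-r(s-\half))$, respectively $D_0^{1-r(s-\half)}/(1-r(s-\half))$, sharing the common arithmetic factor $\eta_{n,\qrep,\ell}\tfrac{6}{\pi^2 \phi(\qrep)}\prod_{p\mid 4qn}\tfrac{p}{p+1}$. Substituting $D_0 = D - \Delta D$ and collecting yields the fraction $(1 - (1 - \Delta D/D)^{1 - r(s - \half)})/((1 - r(s - \half))\Delta D/D)$ after factoring out one power of $D$. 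Since $\prod_{p \mid 4qn}$ and $\prod_{p \mid 2qn}$ have identical prime support, the product simplifies as stated. I then replace $\eta_{n,\qrep,\ell}$ by $\Eta_{\cF}$; these agree unless $\qrep$ is odd and $n$ is even, in which case they differ by $\tfrac{1}{4}$. The induced discrepancy is $O(\Delta D/(\phi(\qrep)\#\cF)) = O(1)$ by \cref{lemma:Fsize}, which is absorbed into the error already present.

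To bound the error summed over $n$, I will use \cref{lemma:V_approx} to effectively truncate $\VmDsq$ at $n < \mum^{1/2 + \eps}$ (note $n^2 < \mum$ after absorbing the square), the Ramanujan-type bound $a_{\bar\rep}(n^2) \ll n^{2\theta + \eps}$, and $\sum_{n < N} n^{2\sigma - 2 + 2\theta} \ll 1 + N^{2\sigma - 1 + 2\theta}$ to produce the factor $1 + \mum^{\sigma - \half + \theta}$. The per-$n$ error from \cref{lemma:disc_power_sum}, namely $D^{\half - r(\sigma - \half)}\qrep^{1/6}(D^{1/14} + |s|^{1/6})(\qrep n D|s|)^\eps$, after rescaling by $D^{r(s-\half)}/\#\cF$ and multiplying by $|\xD|^{\sigma - \half}$, gives the claimed bound. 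The main obstacle is careful bookkeeping: the $n$-dependence in $\prod_{p|4qn}\tfrac{p}{p+1}$ and in $\eta_{n,\qrep,\ell}$ each inject an $O(n^\eps)$ factor that gets absorbed into $\epsfac$, and one must verify that the $|\xD|^{\sigma - \half}$ prefactor indeed comes out cleanly from the explicit $d$-weight without coupling to the $n$-sum.
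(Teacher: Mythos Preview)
Your approach is essentially the same as the paper's: pull the $d$-independent weight $\VmDsq$ outside, rewrite the inner average as a discriminant power sum with $z=-r(s-\tfrac12)$ and $m=n$, apply \cref{lemma:disc_power_sum}, and then bound the $n$-sum in the error via \cref{lemma:V_approx} and $a_{\bar\rep}(n^2)\ll n^{2\theta+\eps}$.

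You are in fact more careful than the paper on one point: you correctly observe that \cref{lemma:disc_power_sum} produces $\eta_{n,q,\ell}$, not $\Eta_{\cF}=\eta_{q,q,\ell}$, and that these differ by $\tfrac14$ when $q$ is odd and $n$ is even. However, your resolution---that the discrepancy is $O(\Delta D/(\phi(q)\#\cF))=O(1)$ and hence ``absorbed into the error already present''---does not hold. The discrepancy, after summing over $n$ and multiplying by the remaining factors, is of size $O\big((1+\mum^{\sigma-\frac12+\theta})|\xD|^{\sigma-\frac12}\big)$, i.e.\ the same order as the main term restricted to even $n$; the stated error term carries the additional small factor $\tfrac{q^{1/6}D^{1/2}}{\#\cF}(D^{1/14}+|s|^{1/6})$ and cannot absorb it. The paper's own proof simply writes $\Eta_{\cF}$ in place of $\eta_{n,q,\ell}$ without comment, so this is a shared gap rather than a defect specific to your argument; a clean fix is to carry $\eta_{n,q,\ell}$ through to \cref{lemma:t2s_approx_step3} and absorb the ratio $\eta_{n,q,\ell}/\Eta_{\cF}$ into the Euler product there.
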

\hyperlink{proof:t2s_approx_step2}{Click here to jump to the proof of \cref*{lemma:t2s_approx_step2}.} The main idea is to use \cref{lemma:disc_power_sum} to evaluate the sum over $d$ on the left hand side. Only one of the two terms in the $\min$ of \cref{lemma:disc_power_sum}'s error term is preserved, as the other will not yield a good enough bound for our purposes.

\begin{lemma}[Term 2 square sum approximation, step 3]
  \label{lemma:t2s_approx_step3}
  \begin{align*}
    \sum_{n=1}^\infty
    \frac{a_{\bar\rep}(n^2)}{n^{2-2s}}
    &
    \prod_{p \mid 2qn} \frac{p}{p + 1} \VmDsq \frac{6\Eta_{\cF}\Delta D}{\pi^2 \phi(\qrep)\#\cF} \frac{1 - \left(1 - \frac{\Delta D}{D}\right)^{1 - \repdim(s-\half)}}{(1 - \repdim(s-\half))\frac{\Delta D}{D}} 
    \\
    &= \sum_{n=1}^\infty \frac{a_{\bar\rep}(n^2)}{n^{2-2s}} \prod_{p \mid \frac{n}{(n,2q)}} \frac{p}{p + 1} \VmDsq \frac{1 - \left(1 - \frac{\Delta D}{D}\right)^{1 - \repdim(s-\half)}}{(1 - \repdim(s-\half))\frac{\Delta D}{D}} 
    \\
    &+ O\!\left(
    \frac{D^\half}{\#\cF} \qet \left(1 + \mum^{\sigma - \half + \theta}\right) \min\!\left\{1,\, \frac{\Delta D}{|s| D}\right\} (\qrep D)^\eps
    \right)
    .
  \end{align*}
\end{lemma}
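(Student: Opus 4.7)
The plan is to make a direct substitution using \cref{lemma:Fsize} and then bound the resulting discrepancy. As a first step, since $(n,2q) \mid 2q$, the prime product on the left-hand side factors as
\begin{align*}
\prod_{p \mid 2qn}\frac{p}{p+1} = \prod_{p \mid 2q}\frac{p}{p+1}\cdot\prod_{p \mid \frac{n}{(n,2q)}}\frac{p}{p+1}.
\end{align*}
This turns the left-hand side of the lemma into $C$ times the main term on the right-hand side, where
\begin{align*}
C \coloneqq \frac{6\Eta_{\cF}\Delta D}{\pi^2\phi(\qrep)\#\cF}\prod_{p \mid 2q}\frac{p}{p+1}.
\end{align*}

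Next I would invoke \cref{lemma:Fsize}, which expresses $\#\cF = \Delta D \cdot \frac{6\Eta_{\cF}}{\pi^2\phi(\qrep)}\prod_{p \mid 2\qrep}\frac{p}{p+1} + O\!\big(D^{1/2}\qet(qD)^\eps\big)$. Dividing the main term through yields $C = 1 + O\!\big(\tfrac{D^{1/2}\qet(qD)^\eps}{\#\cF}\big)$. Substituting this back produces the right-hand side main term plus an error of size $|C-1|$ multiplied by the modulus of the right-hand side main term.

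The remaining work is to bound the magnitude of that main term. I would handle the $n$-sum $\sum_n \tfrac{|a_{\bar\rep}(n^2)|}{n^{2\sigma}}\prod_{p\mid n/(n,2q)}\tfrac{p}{p+1}\,\big|V_{1-s}^*(\tfrac{\pi^{r/2}\xi n^2}{\sqrt{\qrep D^r}})\big|$ by invoking \cref{lemma:V_approx} to effectively truncate at $n^2 \ll \mum^{1+\eps}$ and the Ramanujan-type bound $a_{\bar\rep}(n) \ll n^{\theta+\eps}$, producing the factor $(1 + \mum^{\sigma-\half+\theta})(qD)^\eps$. The Taylor-type factor $\frac{1-(1-\Delta D/D)^{1-r(s-\half)}}{(1-r(s-\half))\Delta D/D}$ I would bound via the integral representation
\begin{align*}
\frac{1-(1-x)^a}{ax} = \frac{1}{x}\int_0^x (1-u)^{a-1}\,du,
\end{align*}
which, for $\Re(a)$ bounded away from $0$, is uniformly $O(1)$ and additionally $O(1/|ax|)$ once $|ax| \gg 1$; combined with the size of $|1-r(s-\half)|$, this yields the $\min$ appearing in the stated error.

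The main obstacle is the bookkeeping around the Taylor factor: getting the stated $\min\{1,\Delta D/(|s|D)\}$ precisely requires tracking how the $(1-r(s-\half))$ denominator interacts with the numerator as $|s|$ grows, and handling the case when $\Re(a)$ can be close to $0$ (namely $\repdim = 2$ with $\sigma$ near $1$), where the integral bound degrades and a separate estimate via $|1-(1-x)^a| \leqslant 1 + (1-x)^{\Re(a)}$ is needed. The rest is routine substitution.
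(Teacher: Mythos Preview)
Your proposal is correct and follows essentially the same approach as the paper: factor out $\prod_{p\mid 2q}\frac{p}{p+1}$, invoke \cref{lemma:Fsize} to replace the resulting constant by $1 + O(\#\cF^{-1}D^{1/2}\qet(qD)^\eps)$, and bound the remainder by truncating the $n$-sum via \cref{lemma:V_approx} and controlling the Taylor-type factor. The only cosmetic difference is that the paper bounds that factor by a first-order expansion when $|s|\Delta D/D$ is small and by the crude numerator bound $|1-(1-\tfrac{\Delta D}{D})^{1-r(s-1/2)}|\ll_\sigma 1$ otherwise, rather than through your integral representation; this sidesteps the $\Re(a)\to 0$ issue you flagged without extra work.
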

\hyperlink{proof:t2s_approx_step3}{Click here to jump to the proof of \cref*{lemma:t2s_approx_step3}.} Here \cref{lemma:Fsize} is used to relate $\#\cF$ and $\Delta D$.

\begin{lemma}[Term 2 square sum approximation, step 4]
  \label{lemma:t2s_approx_step4}
  For $\sigma < \thalf - \theta$,
  \begin{align*}
    &\sum_{n=1}^\infty
    \frac{a_{\bar\rep}(n^2)}{n^{2-2s}}
    \prod_{p \mid \frac{n}{(n,2q)}}
    \frac{p}{p+1}
    \VmDsq \frac{1 - \left(1 - \frac{\Delta D}{D}\right)^{1 - \repdim(s-\half)}}{(1 - \repdim(s-\half))\frac{\Delta D}{D}}
    \\
    &\hspace{4cm}
    =
    \sum_{n=1}^\infty \frac{a_{\bar\rep}(n^2)}{n^{2-2s}} \prod_{p \mid \frac{n}{(n,2q)}} \frac{p}{p+1}
    \frac{1 - \left(1 - \frac{\Delta D}{D}\right)^{1 - \repdim(s-\half)}}{(1 - \repdim(s-\half))\frac{\Delta D}{D}}
    \\&
    \hspace{4cm}
    + O\!\left(\mum^{\sigma - \half + \theta} \min\!\left\{ 1, \frac{D}{|s|\Delta D} \right\} \epsfac \right)
    .
  \end{align*}
\end{lemma}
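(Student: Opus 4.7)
The plan is to write the difference between the two sides as a product $F(s) \cdot G(s)$, where
\[
  F(s) \coloneqq \frac{1 - (1 - \Delta D/D)^{1 - r(s - 1/2)}}{(1 - r(s - 1/2))\Delta D/D},
\qquad
  G(s) \coloneqq \sum_{n=1}^\infty \frac{a_{\bar\rep}(n^2)}{n^{2-2s}} \prod_{p \mid \frac{n}{(n,2q)}} \frac{p}{p+1}\biggl(\VmDsq - 1\biggr),
\]
and then to bound $F(s)$ and $G(s)$ separately.

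For the factor $F(s)$ I would set $x \coloneqq \Delta D/D \in (0,1)$ and $u \coloneqq 1 - r(s - 1/2)$ and show $F(s) \ll \min\{1,\,D/(|s|\Delta D)\}$ by splitting on $|u|x$. When $|u|x \ll 1$, Taylor-expanding $(1-x)^u = \exp(u\log(1-x))$ gives numerator $= ux + O((ux)^2 + ux^2)$, so $F(s) = 1 + O(x + ux) \ll 1$. When $|u|x \gg 1$, I would note that $|(1-x)^u| = (1-x)^{\Re(u)}$ is bounded (since $\sigma$ is confined to a fixed strip and $\Re(u)$ is therefore bounded), so the numerator is $O(1)$ and $F(s) \ll 1/(|u|x) \ll D/(|s|\Delta D)$.

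For the Dirichlet sum $G(s)$ I would set $N \coloneqq \mumnp^{1/2}$ so that, after substituting $m \gets n^2/D^{r/2}$ into \cref{lemma:V_approx},
\[
  \VmDsq = \onelr{n < N} + O\!\left(\bigl(N^2/n^2\bigr)^{\sigma_w}\cR_{1-\sigma}\right)
\]
for any $\sigma_w$ in the admissible range. I would then split $G(s)$ at $n = N$ and use \emph{different} choices of $\sigma_w$ on each piece, which is allowed since the approximation holds for each admissible $\sigma_w$ separately. On $n \leqslant N$ the indicator is $1$, so $V_{1-s}^* - 1$ is just the error term; choosing $\sigma_w < 0$ with $|\sigma_w|$ just above $1/2 - \sigma - \theta > 0$ and using $a_{\bar\rep}(n^2) \ll n^{2\theta + \eps}$ gives
\[
  N^{-2|\sigma_w|}\sum_{n \leqslant N} n^{2\theta - 2 + 2\sigma + 2|\sigma_w| + \eps} \;\ll\; N^{2(\sigma + \theta - 1/2) + \eps} = \mumnp^{\sigma - 1/2 + \theta + \eps}.
\]
On $n > N$ the indicator contributes $-1$, and the hypothesis $\sigma < 1/2 - \theta$ ensures $\sum_{n > N}n^{2\theta - 2 + 2\sigma + \eps} \ll N^{2(\sigma + \theta - 1/2) + \eps}$. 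The error term from \cref{lemma:V_approx} here, with a small positive $\sigma_w$, gives $N^{2\sigma_w}\sum_{n > N} n^{2\theta - 2 + 2\sigma - 2\sigma_w + \eps}$, which likewise collapses to the same bound once one notes convergence is automatic for any $\sigma_w > 0$ in our range. Both halves therefore contribute $O(\mumnp^{\sigma - 1/2 + \theta + \eps})$.

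Multiplying $F(s) \ll \min\{1,\,D/(|s|\Delta D)\}$ by $G(s) \ll \mumnp^{\sigma - 1/2 + \theta}(qD)^\eps$ then yields the stated error. The main obstacle I anticipate is the bookkeeping around $\sigma_w$: one must verify that both a small negative and a small positive value of $\sigma_w$ lie in the range permitted by \cref{cond:afe_specific}, and simultaneously that $|\sigma_w| > 1/2 - \sigma - \theta$ for the head sum while keeping $\sigma_w - \sigma < 1/2 - \theta$ for the tail sum --- a squeeze that is possible precisely because the hypothesis $\sigma < 1/2 - \theta$ leaves a positive gap on both sides of zero.
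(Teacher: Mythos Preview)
Your proposal is correct and follows essentially the same route as the paper: factor the difference as the product of the ratio $F(s)$ and the tail sum $G(s)$, bound $F(s) \ll \min\{1, D/(|s|\Delta D)\}$ by the small-/large-argument dichotomy (the paper simply cites the identical bounds \eqref{eq:t2s_size3_3}--\eqref{eq:t2s_size3_4} from the previous lemma), and bound $G(s)$ via \cref{lemma:V_approx}. Your treatment of $G(s)$ is more explicit than the paper's --- you carefully split at $n=N$ and pick $\sigma_w$ of each sign --- whereas the paper compresses this into a one-line tail estimate $\sum_{n^2 > (\xi^{-1}\sqrt{QD^r})^{1-\eps}} n^{2\theta-2+2\sigma}$, but the substance is the same.
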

\hyperlink{proof:t2s_approx_step4}{Click here to jump to the proof of \cref*{lemma:t2s_approx_step4}.} The idea is that, as the Dirichlet series on the right is absolutely convergent, its tail, which $V_{1-s}^*$ removes, may be added back.

\subsection{Proofs of \cref{thm:r1size,thm:r2size}}
\label{sec:size_proof}

\Cref{prop:avgsize} is a precursor to \cref{thm:r1size,thm:r2size}, prior to optimizing out $\xi$'s parameters $\alpha$ and $\beta$ and simplifying the error term.
\Cref{lemma:et_process} performs this optimization; we shall not prove that our choice indeed minimizes the error term with respect to $\alpha$ and $\beta$, but only that \cref{thm:r1size,thm:r2size} hold.

Notation may be recalled with the help of the glossary found in \cref{sec:glossary}.

\begin{proposition}
  \label{prop:avgsize}
  For any $s \in \C$ such that
  $\thalf + \theta < \sigma < 1$
  \begin{align*}
    \Favg L(s,\rep\otimes\chi_d)
    &=
    \frac{a_{\rep}(n^2)}{n^{2s}} \prod_{p \mid \frac{n}{(n,2q)}} \frac{p}{p+1}
    \\
    &+
    O\Bigg(
    \frac{\qrep^\half D^\half}{\#\cF} \mup^\half 
    \;+\;
    \frac{\qrep^\half D^\half}{\#\cF} (QD^r)^{\half - \sigma} \mum^{\sigma + \theta} 
    \\
    &\hspace{0.5cm}
    +
    \mup^{\half - \sigma + \theta}
    \;+\;
    (QD^r)^{\half - \sigma} \mum^{\sigma - \half + \theta}
    \;+\;
    \frac{D^\half}{\#\cF}\qet
    \Bigg)
    .
  \end{align*}
\end{proposition}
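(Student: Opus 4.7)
The plan is to insert the four-term decomposition \eqref{eq:afe_t1ns}--\eqref{eq:afe_t2s} produced by the approximate functional equation into $\Favg L(s,\rep\otimes\chi_d)$ and then invoke the lemmas of \cref{sec:size_bounds,sec:size_approximations} on each piece in turn. Exactly one of the four pieces, the term-1 square sum \eqref{eq:afe_t1s}, will contribute the main term via \cref{lemma:t1s_approx}, whose hypothesis $\sigma > \thalf + \theta$ matches that of the proposition; the other three pieces are absorbed into the error.

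Applying \cref{lemma:t1s_approx} to \eqref{eq:afe_t1s} directly produces the displayed main term $\sum_{n=1}^\infty a_\rep(n^2) n^{-2s} \prod_{p \mid n/(n,2q)} p/(p+1)$ together with an error of $\frac{D^{\half}}{\#\cF}\qet + \mup^{\half - \sigma + \theta}$ (absorbed into $(qD)^\eps$), accounting for two of the five listed error terms. Next, \cref{lemma:t1ns_size} bounds \eqref{eq:afe_t1ns} by $\frac{\qrep^\half D^\half}{\#\cF}\mup^\half (1 + \mup^{\half - \sigma + \theta}) \epsfac$; since $\sigma > \thalf + \theta$ forces $\mup^{\half - \sigma + \theta} \ll 1$, this reduces to $\frac{\qrep^\half D^\half}{\#\cF}\mup^\half \epsfac$. \Cref{lemma:t2ns_size} yields $\frac{\qrep^\half D^\half}{\#\cF}(QD^r)^{\half-\sigma}\mum^\half(1 + \mum^{\sigma-\half+\theta})\epsfac$ for \eqref{eq:afe_t2ns}; here $\sigma - \half + \theta > 2\theta \geqslant 0$, so the bound simplifies to $\frac{\qrep^\half D^\half}{\#\cF}(QD^r)^{\half-\sigma}\mum^{\sigma+\theta}\epsfac$. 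Finally, \cref{lemma:t2s_size} handles \eqref{eq:afe_t2s} analogously, giving $(QD^r)^{\half-\sigma}\mum^{\sigma-\half+\theta}\epsfac$ after the same simplification. Summing these five contributions with the main term reproduces the statement of \cref{prop:avgsize} verbatim.

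No substantive analytic difficulty remains at this stage: the proposition is essentially an assembly step. All of the delicate work, in particular the holomorphicity-preserving construction of $V_s$, $V_{1-s}^*$ and the weight $\xi$, the character-sum second-moment input of \cref{lemma:stankus_2nd_moment}, and the sharpened discriminant-power-sum asymptotic of \cref{lemma:disc_power_sum}, lives in the lemmas of \cref{sec:lemmas}. The only care required at this level is the bookkeeping check that under $\sigma > \thalf + \theta$ each factor of the form $1 + \mup^{\half-\sigma+\theta}$ collapses to $\ll 1$ while each $1 + \mum^{\sigma-\half+\theta}$ is of the same order as its second summand; the main obstacle, so to speak, is verifying that the five surviving error terms exhaust the five listed in the proposition. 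The parameters $\alpha, \beta$ hidden inside $\xi$ remain free and will be optimized downstream in \cref{lemma:et_process} en route to \cref{thm:r1size,thm:r2size}.
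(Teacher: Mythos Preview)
Your proposal is correct and follows exactly the same route as the paper's proof: decompose via \eqref{eq:afe_t1ns}--\eqref{eq:afe_t2s}, extract the main term from \eqref{eq:afe_t1s} using \cref{lemma:t1s_approx}, and bound the remaining three pieces via \cref{lemma:t1ns_size,lemma:t2ns_size,lemma:t2s_size}, using $\sigma > \thalf + \theta$ to simplify each $1 + (\cdot)^{\pm(\half-\sigma+\theta)}$ factor exactly as you describe. Your bookkeeping matches the paper's line by line.
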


\hypertarget{proof:avgsize}{}
\begin{proof}[Proof of \cref{prop:avgsize}]
  Averaging the approximate functional equation \eqref{eq:afe_rep} over $\cF$ gives
  \begin{align}
    \nonumber
    \Favg
    &L(s,\rep\otimes\chi_d)
    \\
    &=
    \label{eq:is_t1ns}
    \Favg \sum_{\substack{n=1 \\ n \neq \square}}^\infty \frac{a_\rep(n)\chi_d(n)}{n^s}V_s\!\left(\frac{n}{\xi \sqrt{\qrep d^\repdim}}\right)
    \\
    \label{eq:is_t2ns}
    &+
    \Favg \sum_{\substack{n=1 \\ n \neq \square}}^\infty \frac{\rootnum_{\rep\otimes\chi_d} a_{\bar\rep}(n)\chi_d(n)}{n^{1-s}}V_{1-s}^*\!\left(\frac{n\xi}{\sqrt{\qrep d^\repdim}}\right) \Gfac \afefac
    \\
    \label{eq:is_t1s}
    &+
    \Favg \sum_{n=1}^\infty \frac{a_\rep(n^2)\chi_d(n^2)}{n^{2s}}V_s\!\left(\frac{n^2}{\xi\sqrt{\qrep d^\repdim}}\right)
    \\
    \label{eq:is_t2s}
    &+
    \Favg \sum_{n=1}^\infty \frac{\rootnum_{\rep\otimes\chi_d} a_{\bar\rep}(n^2)\chi_d(n^2)}{n^{2-2s}}V_{1-s}^*\!\left(\frac{n^2\xi}{\sqrt{\qrep d^\repdim}}\right) \Gfac \afefac
    .
  \end{align}
  \begin{align}
    \shortintertext{
      By \cref{lemma:t1ns_size},
    }
    \label{eq:is_t1ns_eval}
    \eqref{eq:is_t1ns}
    &\ll \frac{\qrep^\half D^\half}{\#\cF} \mup^\half \epsfac
    \shortintertext{
      By \cref{lemma:t2ns_size},
    }
    \label{eq:is_t2ns_eval}
    \eqref{eq:is_t2ns}
    &\ll \frac{\qrep^\half D^\half}{\#\cF} (QD^r)^{\half - \sigma} \mum^{\sigma + \theta} \epsfac
    \shortintertext{
      By \cref{lemma:t2s_size},
    }
    \label{eq:is_t2s_eval}
    \eqref{eq:is_t2s}
    &\ll (QD^r)^{\half - \sigma} \mum^{\sigma - \half + \theta} \epsfac
  \end{align}
  In \eqref{eq:is_t1ns_eval} \eqref{eq:is_t2ns_eval} \eqref{eq:is_t2s_eval} we have used the assumption that $\sigma > \thalf + \theta$ to omit certain terms.

  By \cref{lemma:t1s_approx}
  \begin{align}
    \label{eq:is_t1s_mt}
    \eqref{eq:is_t1s}
    ={}&
    \frac{a_{\rep}(n^2)}{n^{2s}} \prod_{p \mid \frac{n}{(n,2q)}} \frac{p}{p+1}
    \\
    \label{eq:is_t1s_et}
    &
    + O\!\left(\frac{D^\half}{\#\cF}\qet + \mup^{\half - \sigma + \theta}\right) \epsfac
    .
  \end{align}
  The main term of \cref{prop:avgsize} is \eqref{eq:is_t1s_mt},
  and the error terms are \eqref{eq:is_t1ns_eval} \eqref{eq:is_t2ns_eval} \eqref{eq:is_t2s_eval} \eqref{eq:is_t1s_et}.
\end{proof}

\begin{remark}
  \label{rem:avgsize_variant}
  A variant of \cref{prop:avgsize} is
  \begin{align*}
    \Favg L(s,\rep\otimes\chi_d)
    &= \Favg L^{(d)}(2s, \repsq)
    \\
    &+
    O\Bigg(
    \frac{\qrep^\half D^\half}{\#\cF} \mup^\half
    \;+\;
    \frac{\qrep^\half D^\half}{\#\cF} (QD^r)^{\half - \sigma} \mum^{\sigma + \theta}
    \\
    &\hspace{0.5cm}
    +
    \mup^{\half - \sigma + \theta}
    \;+\;
    (QD^r)^{\half - \sigma} \mum^{\sigma - \half + \theta}
    \Bigg)
    .
  \end{align*}
\end{remark}
Although both the left and right hand sides above are averages over $d \in \cF$, it is certainly not the case that $L(s,\rep\otimes\chi_d) \approx L^{(d)}(2s, \repsq)$. \Cref{rem:avgsize_variant} highlights that the sums over nonsquares \eqref{eq:is_t1ns} and \eqref{eq:is_t2ns} wash out when an average over $\cF$ is taken, as argued heuristically in \cite[\S 4.4]{cfkrs} and \cite[\S 2.2]{conrey_snaith}. In the region $\sigma < \thalf - \theta$, the counterpart \eqref{eq:is_t2s} of the sum over squares \eqref{eq:is_t1s} dominates, as can be seen from a simple invocation of the functional equation.

\hypertarget{proof:avgsize_variant}{}
\begin{proof}[Proof of \cref{rem:avgsize_variant}]
  The assumption $\sigma > \thalf + \theta$ also allows us to write
  \begin{align}
    \label{eq:is_t1s_mt_rem}
    \eqref{eq:is_t1s}
    &= \Favg \sum_{n=1}^\infty \frac{a_\rep(n^2)\chi_d(n^2)}{n^{2s}}
    \\
    \label{eq:is_t1s_et_rem}
    &+ \Favg \sum_{n=1}^\infty \frac{a_\rep(n^2)\chi_d(n^2)}{n^{2s}}\left(V_s\!\left(\frac{n^2}{\xi\sqrt{\qrep d^\repdim}}\right) - 1\right)
    ,
  \end{align}
  as \eqref{eq:is_t1s_mt_rem} is absolutely convergent.
  The error term \eqref{eq:is_t1s_et_rem} is bounded by
  \begin{align}
    \nonumber
    \eqref{eq:is_t1s_et_rem}
    &\ll \sum_{n^2 > \xi\sqrt{QD^r}} \frac{n^{2\theta}}{n^{2\sigma}} \;\epsfac
    \\
    \label{eq:is_t1s_et_rem_eval}
    &\ll \mup^{\half - \sigma + \theta} \epsfac
    .
  \end{align}
  The main term \eqref{eq:is_t1s_mt_rem} is identified to be
  \begin{align}
    \nonumber
    \eqref{eq:is_t1s_mt_rem}
    &= \Favg \sum_{\substack{n=1 \\ (n,d) = 1}}^\infty \frac{a_\rep(n^2)}{n^{2s}}
    \\
    \label{eq:is_t1s_mt_rem_eval}
    &= \Favg L^{(d)}(2s, \repsq)
    .
  \end{align}
  The rest of the argument from the \hyperlink{proof:avgsize}{proof of \cref*{prop:avgsize}} is preserved.
\end{proof}

For brevity, we give names to the error terms in \cref{prop:avgsize}:
\begin{align}
  \label{eq:ETdef}
  \begin{aligned}
  \ET_1 &\coloneqq \frac{\qrep^\half D^\half}{\#\cF} \mup^\half 
  \\
  \ET_2 &\coloneqq \frac{\qrep^\half D^\half}{\#\cF} (QD^r)^{\half - \sigma} \mum^{\sigma + \theta} 
  \\
  \ET_3 &\coloneqq \mup^{\half - \sigma + \theta} 
  \\
  \ET_4 &\coloneqq (QD^r)^{\half - \sigma} \mum^{\sigma - \half + \theta} 
  \\
  \ET_5 &\coloneqq \frac{D^\half}{\#\cF}\qet 
  \\
  \ET &\coloneqq \ET_1 + \ET_2 + \ET_3 + \ET_4 + \ET_5
  .
  \end{aligned}
\end{align}

\begin{lemma}
  \label{lemma:et_process}
  If $r = 1$, then
  \begin{align*}
    &\text{if }\,
    \#\cF \leqslant D^\half (|s - \kappa| D)^{\frac{\sigma}{2\sigma + 1}}
    \text{ then }\,
    \ET \ll \frac{\qrep^{\frac{3}{4}} D^\half}{\#\cF} \sqrt{|s - \kappa| D}^{\,\frac{1}{1 + 2\sigma}}
    \shortintertext{and}
    &\text{if }\,
    \#\cF \geqslant D^\half (|s - \kappa| D)^{\frac{\sigma}{2\sigma + 1}}
    \text{ then }\,
    \ET \ll \qrep^{\frac{3}{4}} \left(\frac{D^\half}{\#\cF}\right)^{\!\frac{2\sigma - 1}{2\sigma}}
    .
  \end{align*}  
  If $r \geqslant 2$, then
  \begin{align*}
    \ET \ll
    \frac{\qrep^{\frac{3}{4}} D^\half}{\#\cF} \sqrt{D \prodr |s - \kappa_j|^{\frac{1}{r}}}^{\,\frac{1 + 2\theta}{1 + 2\sigma + 2\theta}r}
    .
  \end{align*}
\end{lemma}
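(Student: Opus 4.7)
The plan is to view $\ET_1, \ldots, \ET_5$ from \eqref{eq:ETdef} as functions of the free parameter $\xi$ defined in \eqref{eq:xidef}, and to choose $\xi$ (via the real parameters $\alpha, \beta$ subject to $|\beta|<1$) so as to minimize $\max_i \ET_i$. Setting $A \coloneqq \qrep^{1/2}D^{1/2}/\#\cF$ and expanding, one sees that $\ET_1 \asymp A\xi^{1/2}(QD^\repdim)^{1/4}$ is increasing in $\xi$ while $\ET_2, \ET_3, \ET_4$ are all decreasing in $\xi$ under the hypothesis $\sigma > 1/2 + \theta$, and $\ET_5$ is $\xi$-independent. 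The optimum therefore balances $\ET_1$ against whichever of $\ET_2, \ET_3, \ET_4$ dominates, and $\ET_5$ must then be checked to be swallowed.

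For $\repdim = 1$ (so $\theta = 0$ and $\ET_3 = \ET_4$), in Case 1 I would take $\xi$ to solve $\ET_1 = \ET_2$, which corresponds to $\xi = (|s-\kappa|D)^{(1-2\sigma)/(2(1+2\sigma))}$ (achievable by $\alpha = \beta = (1-2\sigma)/(4(1+2\sigma))$, so $|\beta|<1$ is trivial). The common value then simplifies to the claimed bound once the condition $\sigma \geqslant 1/2$ is used to verify $1/2 + 1/(2(1+2\sigma)) \leqslant 3/4$, and the Case 1 hypothesis on $\#\cF$ is precisely what ensures $\ET_3 \leqslant \ET_1$ at this $\xi$. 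In Case 2 one instead balances $\ET_1 = \ET_3$, which gives common value $A^{(2\sigma-1)/(2\sigma)}$; the $q$-exponent $(2\sigma-1)/(4\sigma) < 1/2 \leqslant 3/4$ then delivers the claim.

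For $\repdim \geqslant 2$ a single case suffices. Take $\xi$ realizing, via $\alpha$ and $\beta$, the power $(D^\repdim\prod_j|s-\kappa_j|)^{(1-2\sigma+2\theta)/(2(1+2\sigma+2\theta))}$, which solves $\ET_1 = \ET_2$. The common value matches the claim provided $\sigma \geqslant 1/2 + \theta$, which guarantees $(1+2\theta)/(2(1+2\sigma+2\theta)) \leqslant 1/4$, so the extra $\qrep^{1/4}$ produced by $(QD^\repdim)^{1/4}$ combines with the $\qrep^{1/2}$ in $A$ to give exactly $\qrep^{3/4}$. The terms $\ET_3$ and $\ET_4$ (no longer equal when $\theta > 0$; $\ET_4 \geqslant \ET_3$ since our $\xi < 1$) are then bounded by $\ET_1$ through analogous exponent computations, using $\sigma - \theta > 0$.

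The main obstacle is algebraic bookkeeping: for each of the five error terms in each of the three regimes, one must verify a single exponent inequality, tracking the exponents of $\qrep$, $D$, and $|s-\kappa_j|$ separately. A secondary subtlety is that the genuinely optimal $\xi$ would carry a $\qrep$-dependence which cannot be realized via $D^{2\alpha}|s-s_0|^{2\beta}$ given $|\beta| < 1$; however, omitting the $\qrep$-factor from $\xi$ leaves the $\ET_1$ bound unchanged, since the missing $\qrep$-contribution is regenerated from the $\qrep^{1/4}$ inside $(QD^\repdim)^{1/4}$, and the final alignment to $\qrep^{3/4}$ absorbs any slack. Finally, $\ET_5 \leqslant \qrep^{1/2}D^{1/2}/\#\cF$ is dominated by $\ET_1$ in each case, since the scale-dependent factor accompanying $A$ in the claimed bound is always $\geqslant 1$ and $\qrep^{1/2} \leqslant \qrep^{3/4}$.
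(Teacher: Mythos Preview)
Your proposal is correct and takes essentially the same approach as the paper: optimize $\xi$ by balancing $\ET_1$ against $\ET_2$ in Case~1 and the $r\geqslant 2$ case, and against $\ET_3=\ET_4$ in Case~2, then verify the remaining terms are dominated. The only difference is cosmetic: the paper sets $\beta=0$ and realizes the desired $|\xi|$ entirely through $\alpha$ (permissible since the lemma is pointwise in $s$, so $\alpha$ may depend on $|s+\kappa|$ via the parameter $\gamma$ defined by $|s+\kappa|=D^\gamma$), whereas your choice $\alpha=\beta$ only hits the target $|\xi|$ when $|s-s_0|\asymp|s+\kappa|$---but since $\alpha$ alone already suffices, this is not a gap.
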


\begin{proof}
  The $q$ aspect of \cref{lemma:et_process} follows from a cursory examination of \eqref{eq:ETdef}. Recall that $\sigma > \thalf + \theta$.

  Regarding the $D$- and $s$- aspects,
  define $\gamma$ and $\delta$ so that
  \begin{align}
    \nonumber
    \prodr |s - \kappa_j| &\eqqcolon D^{r\gamma} 
    \quad\text{and}\quad
    \#\cF \eqqcolon D^\delta
    .
  \end{align}

  It will also be convenient to define
  \begin{align}
    \nonumber
    z \coloneqq \alpha + \beta\gamma
    \quad\text{and}\quad
    w \coloneqq 1 + \gamma
  \end{align}
  Set $\beta = 0$; it's not useful here.
  With $\log_D$ denoting the logarithm to base $D$,
  \begin{align}
    \label{eq:et1_zw}
    \log_D(\ET_1) &\sim_q \half - \delta
    + \half\left(\frac{r}{2}w + 2z\right)
    \\
    \log_D(\ET_2) &\sim_q \half - \delta
    + (\sigma + \theta)\left(\frac{r}{2}w + 2z\right) + \left(\half - \sigma\right)rw
    \\
    \log_D(\ET_3) &\sim_q \left(\sigma - \half + \theta\right)\left(\frac{r}{2}w - 2z\right) + \left(\half - \sigma\right)rw
    \\
    \label{eq:et4_zw}
    \log_D(\ET_4) &\sim_q \left(\half - \sigma + \theta\right)\left(\frac{r}{2}w + 2z\right)
    \\
    \log_D(\ET_5) &\sim_q \half - \delta + \log_D(\qet)
    \label{eq:et5_zw}
  \end{align}

  Let's first look at the case $\fbox{$r = 1$}$. Here $\theta = 0$.
  Define
  \begin{align}
    \nonumber
    z_1 &\coloneqq -\frac{w}{4}\frac{2\sigma - 1}{2\sigma + 1}
    \\
    \nonumber
    z_2 &\coloneqq -\frac{w}{4} + \frac{2\delta - 1}{4\sigma}
    \shortintertext{and}
    \nonumber
    e_1 &\coloneqq \frac{1}{2 + 4\sigma}w + \half - \delta
    \\
    \nonumber
    e_2 &\coloneqq -\frac{1}{\sigma}\left(\sigma - \half\right)\!\left(\delta - \half\right)
    .
  \end{align}

  On can verify by substituting into \eqref{eq:et1_zw}---\eqref{eq:et5_zw} that
  \begin{align}
    \label{eq:e1ineq}
    &\text{if }
    \delta \leqslant \half + \frac{\sigma w}{2\sigma + 1}
    \text{ then }
    e_1 \geqslant \log_D(\ET_1)\Big|_{z=z_1}, \dots, \log_D(\ET_5)\Big|_{z=z_1}
    \shortintertext{and}
    &\text{if }
    \delta \geqslant \half + \frac{\sigma w}{2\sigma + 1}
    \text{ then }
    e_2 \geqslant \log_D(\ET_1)\Big|_{z=z_2}, \dots, \log_D(\ET_5)\Big|_{z=z_2}
    .
    \label{eq:e2ineq}
  \end{align}

  If $\fbox{$r \geqslant 2$}$, define
  \begin{align}
    \nonumber
    z_3 &\coloneqq -\frac{rw}{4}\frac{2\sigma - 1 - 2\theta}{2\sigma + 1 + 2\theta}
    \shortintertext{and}
    \nonumber
    e_3 &\coloneqq \frac{1 + 2\theta}{2 + 4\sigma + 4\theta}rw + \half - \delta
  \end{align}
  On can verify by substituting into \eqref{eq:et1_zw}---\eqref{eq:et5_zw} that
  \begin{align}
    \label{eq:e3ineq}
    e_3 \geqslant \log_D(\ET_1)\Big|_{z=z_3}, \dots, \log_D(\ET_5)\Big|_{z=z_3}
  \end{align}

  The $D$- and $s$- aspects of \cref{lemma:et_process} follow from \eqref{eq:e1ineq}, \eqref{eq:e2ineq}, and \eqref{eq:e3ineq}; we choose $\alpha$ so that $z = z_1,z_2,z_3$ as appropriate. 
\end{proof}

\Cref{prop:avgsize} and \cref{lemma:et_process} combine to yield \cref{thm:r1size,thm:r2size}.

\section{Inverse Mellin transform of the average $L$-function}
\label{sec:contrib}

\subsection{Outline}
\label{sec:contrib_outline}

Recall that our strategy for proving \cref{thm:quadratic_murmurations}, described in \cref{sec:outline_construction}, is to evaluate the truncated inverse Mellin transform
\begin{align*}
  \frac{1}{2\pi i}\int_{c - iT}^{c + iT} \Favg L(s,\rep\otimes\chi_d)\,\x^{s-\half}\,\frac{ds}{s}
\end{align*}
in two different ways. First, using Perron's formula,
\begin{align*}
  \frac{1}{2\pi i}\int_{c - iT}^{c + iT} \Favg L(s,\rep\otimes\chi_d)\,\x^{s-\half}\,\frac{ds}{s}
  =
  \Favg \frac{1}{\sqrt{\x}} \sum_{n < x} a_\rep(n)\chi_d(n)
  \;+\;
  O(x^{c-\half} T^{-1} (xT\qrep)^\eps)
  .
\end{align*}
Second, from the approximate functional equation \eqref{eq:afe_rep} averaged over $\cF$ and decomposed as \eqref{eq:afe_t1ns} \eqref{eq:afe_t2ns} \eqref{eq:afe_t1s} \eqref{eq:afe_t2s},
\begin{align}
  \nonumber
  \frac{1}{2\pi i}
  &
  \int_{c - iT}^{c + iT} \Favg L(s,\rep \otimes \chi_d)\,\x^{s-\half}\,\frac{ds}{s}
  \\
  \label{eq:afe_unweighted_t1ns_re}
  &=
  \frac{1}{2\pi i} \int_{c - iT}^{c + iT} \Favg \sum_{\substack{n=1 \\ n \neq \square}}^\infty \frac{a_\rep(n)\chi_d(n)}{n^s}
  \Vp
  \x^{s-\half}\,\frac{ds}{s}
  \\
  \label{eq:afe_unweighted_t2ns_re}
  &+
  \frac{1}{2\pi i} \int_{c - iT}^{c + iT} \prod_{j=1}^\repdim \frac{\halfGamma{1-s+\bar\kappa_j}}{\halfGamma{s+\kappa_j}} \Favg \sum_{\substack{n=1 \\ n \neq \square}}^\infty \frac{\rootnum_{\rep\otimes\chi_d} a_{\bar\rep}(n)\chi_d(n)}{n^{1-s}}
  \Vm
  \xd^{\!s-\half}\frac{ds}{s}
  \\
  \label{eq:afe_unweighted_t1s_re}
  &+
  \frac{1}{2\pi i} \int_{c - iT}^{c + iT} \Favg \sum_{n=1}^\infty \frac{a_\rep(n^2)\chi_d(n^2)}{n^{2s}}
  \Vpsq
  \x^{s-\half}\,\frac{ds}{s}
  \\
  \label{eq:afe_unweighted_t2s_re}
  &+
  \frac{1}{2\pi i} \int_{c - iT}^{c + iT} \prod_{j=1}^\repdim \frac{\halfGamma{1-s+\bar\kappa_j}}{\halfGamma{s+\kappa_j}} \Favg \sum_{n=1}^\infty \frac{\rootnum_{\rep\otimes\chi_d} a_{\bar\rep}(n^2)\chi_d(n^2)}{n^{2-2s}}
  \Vmsq
  \xd^{\!s-\half}\frac{ds}{s}
  .
\end{align}

\Cref{lemma:t1ns_contrib_r1,lemma:t2ns_contrib_r1,lemma:t1s_contrib_r1,lemma:t2s_contrib_r1} below process the four terms \eqref{eq:afe_unweighted_t1ns_re}, \eqref{eq:afe_unweighted_t2ns_re}, \eqref{eq:afe_unweighted_t1s_re}, and \eqref{eq:afe_unweighted_t2s_re} respectively. The sole main term comes from \eqref{eq:afe_unweighted_t2s_re}.

After setting $\repdim = 1$ and applying \cref{lemma:t1ns_contrib_r1,lemma:t2ns_contrib_r1,lemma:t1s_contrib_r1,lemma:t2s_contrib_r1}, we'll be left with the approximation
\begin{align}
  \nonumber
  \Favg
  &\frac{1}{\sqrt{\x}} \sum_{n < x} a_\rep(n)\chi_d(n)
  \;+\; O(x^{c-\half} T^{-1} (xT\qrep)^\eps)
  \\
  \label{eq:after_contrib_lemmas_mt}
  ={}&
  \frac{\rootnum_{\cF}}{2\pi i}\int_{\eps - iT}^{\eps + iT} \frac{\halfGamma{1-s+\bar\kappa}}{\halfGamma{s+\kappa}}
  \frac{L(2-2s, \barrepsq)}{L^{(2\qrep)}(3-2s, \barrepsq)}
  \mspace{1mu}
  \prod_{p \nmid 2\qrep}
  \left(1 - \frac{1}{(p+1)(1 - \bar\alpha_1(p)^{-2}p^{3-2s})}\right)
  \\
  \nonumber
  &\hspace{8cm}
  \cdot
  \frac{1 - \left(1 - \frac{\Delta D}{D}\right)^{1 - (s-\half)}}{(1 - (s-\half))\frac{\Delta D}{D}}
  \left(\frac{\pi\x}{\qrep D}\right)^{\!s - \half} \frac{ds}{s}
  \\
  \label{eq:after_contrib_lemmas_et}
  &+ O\!\left(D^\rho \qrep^{\frac{3}{4}} + \frac{|\kappa|\Delta D}{D}\right)\!\epsfac
  ,
\end{align}
with the exponent $\rho$ in the error term depending on the parameters $\alpha$ and $\beta$ imbued in the approximate functional equation's weights $\V$. These parameters, as well as $T$, will be ``optimized out'', i.e.\ set to the values which minimize the error term. We won't prove that our choices do in fact minimize the error term, only that the claimed error term is obtained.

It will at this point in the proof of \cref{thm:quadratic_murmurations} be desirable to
\begin{enumerate}[label=\arabic*.]
\item
  Remove the factor of
  $$\frac{1 - \left(1 - \frac{\Delta D}{D}\right)^{1 - (s-\half)}}{(1 - (s-\half))}$$
  appearing in the integrand of \eqref{eq:after_contrib_lemmas_mt}, and
\item
  Extend the integral so that it runs from $c - i\infty$ to $c + i\infty$, instead of being truncated at $|\Im(s)| < T$.
\end{enumerate}
We achieve these two goals in \cref{lemma:eliminate_derivative_factor}, at the cost of introducing a factor of $1/12$ in the exponent of $D$ in our error term. 
The scalar $1/12 = 1/4 - 1/6$ comes from the subconvexity bound of Petrow and Young \cite{petrow_young:2023}.
At this point \cref{thm:quadratic_murmurations} will quickly follow.

The proof \cref{thm:gamma_murmurations}, covered in \cref{sec:contrib_weighted}, is similar but substantially more straightforward in comparison. The simplicity is thanks to the relatively quick decay of the factor $\mf$ in the integrand, which makes the situation much less delicate.

\subsection{Proof of \cref{thm:quadratic_murmurations}}
\label{sec:contrib_unweighted}

\Cref{lemma:t1ns_contrib_r1,lemma:t2ns_contrib_r1,lemma:t1s_contrib_r1,lemma:t2s_contrib_r1}, for handling \eqref{eq:afe_unweighted_t1ns_re} \eqref{eq:afe_unweighted_t2ns_re} \eqref{eq:afe_unweighted_t1s_re} \eqref{eq:afe_unweighted_t2s_re}, in turn rely on \cref{lemma:t1ns_size,lemma:t2ns_size,lemma:t1s_size,lemma:t2s_approx_step1,lemma:t2s_approx_step2,lemma:t2s_approx_step3,lemma:t2s_approx_step4}, which bound individual terms in the integrand.
We integrate each of these terms separately. 
Prior to invoking the bounds from \cref{lemma:t1ns_size,lemma:t2ns_size,lemma:t1s_size,lemma:t2s_approx_step1,lemma:t2s_approx_step2,lemma:t2s_approx_step3,lemma:t2s_approx_step4}, we shift the contour of integration to run along a vertical line with real part well suited for that individual term, using \cref{lemma:t1s_size,lemma:t2s_size} to bound the contribution of integrals along horizontal segments. This procedure reduces the error term, and is mandatory lest the error term dwarf the main term. The sole main term comes emerges from \cref{lemma:t2s_contrib_r1}.

We will be left with many error terms to balance, and there will be many parameters involved: in addition to the relative sizes of $D$, $T$, and $\#\cF$, we'll have the seven real parts of the lines along which we integrate our seven terms from \cref{lemma:t1ns_size,lemma:t2ns_size,lemma:t1s_size,lemma:t2s_approx_step1,lemma:t2s_approx_step2,lemma:t2s_approx_step3,lemma:t2s_approx_step4}. Importantly, we'll also still have in reserve the flexibility to choose the parameter $\beta$ appearing in the definition \eqref{eq:xidef} of $\xi$. The parameter $\alpha$ will turn out not to be helpful here.

In what follows we consider only the case where $(qD)^{1-\eps} \ll \x \ll (qD)^{1+\eps}$
--- which we will abbreviate \hypertargetc{hyper:asymppmdef}{$\x \asymp (qD)^{1 \pm \eps}$} --- 
as that is the region of interest for murmurations; the asymptotic case in which one of $\x$ or $qD$ is much larger than the other is easier to understand and has been explored elsewhere. Our formulas for murmurations do hold outside the range $\x \asymp (qD)^{1 \pm \eps}$, and one can use them to recover the more classically studied asymptotic cases. See for example \cite[\S 3]{conrey_snaith} for the idea of the connection.

To optimize our error term in the various cases of interest it was helpful to implement the situation in code \cite{github}. Our implementation has $47$ error terms and $10$ parameters. Numerical experimentation and minimization in a variety of cases guided our calculations and helped us find the balance of error terms we present here.

Throughout the current subsection set $\repdim = 1$. This implies that 
\begin{align}
  \label{eq:chidef}
  \rep &\eqqcolon |\mspace{0mu}\cdot\mspace{0mu}|^{i\tau}\chi,
  \\
  \label{eq:kappadef}
  \kappa &\coloneqq \kappa_1 = \frac{1 - \chi(-1)}{2} - i\tau,
  \\
  \nonumber
  \theta &= 0.
\end{align}
Impose that $\tau \ll (D/\#\cF)^{1 - \eps}$.
Let $\gamma$ and $\delta$ be such that
\begin{align}
  \label{eq:gammadeltadef}
  T \eqqcolon D^\gamma \quad\text{and}\quad \#\cF \eqqcolon D^\delta,
\end{align}
and assume further that $\tfrac{5}{6} < \delta < 1$.
The larger context is such that we do not have control over $\delta$, but we do have control over $\gamma$: we are given a family of $L$-functions (governed by $\delta$), but may choose the manner in which we integrate related quantities (governed by $\gamma$).

To optimize our parameters, as a function of $\delta$, we take
\begin{align}
  \label{eq:alphahatdef}
  \alpha &\,\gets\, \hat{\alpha} \,\coloneqq\, 0
  \\
  \label{eq:betahatdef}
  \beta &\,\gets\, \hat{\beta} \,\coloneqq\, \frac{2 - 3(\delta - \frac{5}{6})}{24 + 36(\delta - \frac{5}{6})}
  \\
  \label{eq:gammahatdef}
  \gamma &\,\gets\, \hat{\gamma} \,\coloneqq\, \half + \frac{3}{4}\left(\delta - \frac{5}{6}\right)
  .
\intertext{
  (Here ``$\gets$'' denotes variable assignment, like in pseudocode. One could write ``$=$'' instead, as long as one understands that prior appearances of these variables do not inherit the values above.) The assignments \eqref{eq:alphahatdef}, \eqref{eq:betahatdef}, \eqref{eq:gammahatdef} ``optimize out'' the parameters $\alpha$, $\beta$, and $\gamma$, i.e.\ sets them to the values which minimizes the error term. The exponent $\rho$ in \eqref{eq:after_contrib_lemmas_et} of the error term becomes
  }
  \label{eq:rhohatdef}
  \rho &\,\gets\, \hat{\rho} \,\coloneqq
  \begin{cases}
    -\frac{3}{4}\left(\delta - \frac{5}{6}\right) & \text{if $\delta \leqslant \tfrac{13}{14}$,}
    \\
    \delta - 1 & \text{if $\delta \geqslant \tfrac{13}{14}$.}
  \end{cases}
\end{align}

The best possible power savings is attained when
$T \asymp D^{\frac{4}{7}}$, $\#\cF \asymp D^{\frac{13}{14}}$, $\beta = \tfrac{1}{16}$. The real parts of the lines of integration will be given throughout the proof as they arise. 


When $\delta \leqslant \tfrac{13}{14}$, the constraints on the error term are \eqref{eq:t1ns_contrib_step2_mt_eval} from the proof of \cref{lemma:t1ns_contrib_r1}, \eqref{eq:t2ns_contrib_step2_mt_eval} from the proof of \cref{lemma:t2ns_contrib_r1}, and the error term in Perron's formula \eqref{eq:perron}. When $\delta \geqslant \tfrac{13}{14}$, the constraints instead come from the error term called $R_1$ in the proof of \cref{lemma:t2s_contrib_r1}, which is to do with the approximation $d \approx D$ for $d \in \cF$. If $\delta = \tfrac{13}{14}$ precisely, then the error term is additionally constrained by \eqref{eq:t1s_contrib_mt_eval} from the proof of \cref{lemma:t1s_contrib_r1} (but the error term is not constrained by \eqref{eq:t1s_contrib_mt_eval} if $\delta > \tfrac{13}{14}$ or $\delta < \tfrac{13}{14}$).

\begin{lemma}[Term 1 non-square contribution]
  \label{lemma:t1ns_contrib_r1}
  If
  $0 < c \leqslant \thalf$,
  $r = 1$,
  $x \asymp (\qrep D)^{1 \pm \eps}$,
  $T = D^{\hat{\gamma}}$,
  $\#\cF = D^{\delta}$,
  and
  $\beta = \hat{\beta}$,
  \begin{align*}
    &\frac{1}{2\pi i}\int_{c - iT}^{c + iT} \Favg \nsqsum \frac{a_\rep(n)\chi_d(n)}{n^s} \Vp \x^{s-\half} \,\frac{ds}{s}
    \,\ll\, D^{\hat{\rho}} \qrep^{\frac{3}{4}} \epsfac
    .
  \end{align*}
\end{lemma}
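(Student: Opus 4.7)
First I would exploit the fact that the integrand is holomorphic in the strip $0 < \sigma < 1$: by \cref{lemma:V_is_holomorphic} the weight $V_s$ is holomorphic there, the weighted Dirichlet series converges absolutely (since $V_s$ acts as an effective cutoff near $n \asymp \mupnp$), and $\x^{s-\half}/s$ is holomorphic away from $s = 0$, which lies outside the strip. Hence I can freely shift the contour from $\sigma = c$ to any $\sigma = \tilde{c} \in (0, 1)$, at the cost of two horizontal segments at $\Im(s) = \pm T$.

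Second I would apply \cref{lemma:t1ns_size} on the shifted contour with $\repdim = 1$ and $\theta = 0$. For $\tilde{c} < \half$ and $\mupnp \gg 1$, the pointwise bound reduces to $\frac{\qrep^{1/2} D^{1/2}}{\#\cF} \mup^{1 - \tilde{c}} \epsfac$. Using $\mupnp \asymp |t|^{2\beta + 1/2} (\qrep D)^{1/2}$ for $|t|$ large (valid since $|\xi| \asymp |t|^{2\beta}$ and $Q \asymp \qrep |t|$), multiplying by $\x^{\tilde{c} - \half}/|s|$, and inserting $\x \asymp (\qrep D)^{1 \pm \eps}$, the powers of $\qrep D$ collapse to $(\qrep D)^{(1+\tilde{c})/2 + O(\eps)}$ while a factor $|t|^{(2\beta + 1/2)(1 - \tilde{c}) - 1}$ remains. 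Integrating in $t$ from $-T$ to $T$ yields $\frac{\qrep^{(1+\tilde{c})/2} D^{(1+\tilde{c})/2}}{\#\cF} T^{(2\beta + 1/2)(1-\tilde{c})} \epsfac$, plus a negligible contribution of size $\log(1/\tilde{c}) = O(\epsfac)$ from $|t| < 1$. The two horizontal segments are smaller than this by a factor of $T$ and are negligible.

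Third I would take $\tilde{c} \to 0^+$ to minimize this bound. With $T = D^{\hat{\gamma}}$, $\#\cF = D^\delta$, and $\beta = \hat{\beta}$, the $D$-exponent reduces to $\half - \delta + \hat{\gamma}(2\hat{\beta} + \half) + O(\eps)$. Routine algebra with \eqref{eq:betahatdef} and \eqref{eq:gammahatdef} gives $\hat{\gamma}(2\hat{\beta} + \half) = (1 + 2\delta)/8$, whence the $D$-exponent equals $(5 - 6\delta)/8$. This matches $\hat{\rho}$ exactly when $\delta \leqslant 13/14$, and is $\leqslant \delta - 1 = \hat{\rho}$ when $\delta \geqslant 13/14$ (as $(5-6\delta)/8 \leqslant \delta - 1 \iff \delta \geqslant 13/14$), where other lemmas are the binding constraint. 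The $\qrep$-exponent is $\half + O(\eps)$, comfortably absorbed into the claimed $\qrep^{3/4}$.

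The main obstacle is really the choice of abscissa: the seemingly natural $\tilde{c} = \half$ gives only $\qrep^{3/4} D^{(13 - 14\delta)/16}$, strictly weaker in $D$ than the target by a factor of $D^{(3 - 2\delta)/16}$. The saving comes from pushing $\tilde{c}$ down to $0^+$, which trades a large factor of $\x^{\half - \tilde{c}} \asymp (\qrep D)^{\half - \tilde{c}}$ gained from $\x^{\tilde{c} - \half}$ against a modest extra factor of $T^{(2\beta + \half)\tilde{c}}$ in the $t$-integration; the choice $\beta = \hat{\beta}$ is engineered precisely so that this trade produces exactly $\hat{\rho}$ when balanced against the analogous bound from \cref{lemma:t2ns_contrib_r1}.
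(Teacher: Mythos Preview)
Your proposal is correct and follows essentially the same route as the paper's proof: shift the contour to $\sigma = \eps$, invoke \cref{lemma:t1ns_size} pointwise, and integrate to obtain the $D$-exponent $\tfrac{1}{2} - \delta + (\tfrac{1}{2} + 2\hat\beta)\hat\gamma = -\tfrac{3}{4}(\delta - \tfrac{5}{6}) \leqslant \hat\rho$. The paper treats the horizontal segments and the range $|t| \lesssim \tau$ slightly more explicitly (obtaining a $D$-exponent of at most $-\tfrac{5}{12}$ for the former and an extra $\tau^{1/2}|s_0|^{2\hat\beta}$ term for the latter), but your terser handling of both is adequate.
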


\hypertarget{proof:t1ns_contrib_r1}{}
\begin{proof}
  The integrand is holomorphic in the strip $0 < \sigma < 1$. Replace the integral with one along the line segments $c - iT \longrightarrow \eps - iT \longrightarrow \eps + iT \longrightarrow c + iT$, i.e.\ write

  \begin{align}
    \nonumber
    \frac{1}{2\pi i}
    &\int_{c - iT}^{c + iT} \Favg \nsqsum \frac{a_\rep(n)\chi_d(n)}{n^s} \Vp \x^{s-\half} \,\frac{ds}{s}
    \\
    \label{eq:t1ns_contrib_step1_mt}
    &=
    \frac{1}{2\pi i}\int_{\eps - iT}^{\eps + iT} \Favg \nsqsum \frac{a_\rep(n)\chi_d(n)}{n^s} \Vp \x^{s-\half} \,\frac{ds}{s}
    \\
    \label{eq:t1ns_contrib_step1_et}
    &+
    \frac{1}{2\pi i}\left(\int_{c - iT}^{\eps - iT} + \int_{\eps + iT}^{c + iT}\right) \Favg \nsqsum \frac{a_\rep(n)\chi_d(n)}{n^s} \Vp \x^{s-\half} \,\frac{ds}{s}
    .
  \end{align}
  
  Let's bound \eqref{eq:t1ns_contrib_step1_et} before continuing with \eqref{eq:t1ns_contrib_step1_mt}. We will look at only one of the two terms. The other is handled in the same way and satisfies the same bound. By \cref{lemma:t1ns_size},
  \begin{align}
    \nonumber
    \int_{\eps + iT}^{c + iT}
    &\Favg \nsqsum \frac{a_\rep(n)\chi_d(n)}{n^s} \Vp x^{s-\half}\,\frac{ds}{s}
    \\
    \nonumber
    &\ll \int_{\eps + iT}^{c + iT} \frac{\qrep^\half D^\half}{\#\cF} \mup^\half \left(1 + \mup^{\half - \sigma + \theta}\right) \epsfac x^{\sigma - \half} \frac{ds}{s}
    \\
    \label{eq:t1ns_contrib_step1_et_eval1}
    &\ll \frac{\qrep^\half D^\half}{\#\cF} \mup^\half \left(1 + \mup^{\half - c + \theta}\right) x^{c - \half} s^{-1} \epsfac \bigg\vert_{s = c + iT}
    \\
    \label{eq:t1ns_contrib_step1_et_eval1_redundant}
    &+ \frac{\qrep^\half D^\half}{\#\cF} \mup^\half \left(1 + \mup^{\half - c + \theta}\right) x^{c - \half} s^{-1} \epsfac \bigg\vert_{s = \eps + iT}
    .
  \end{align}
  The term \eqref{eq:t1ns_contrib_step1_et_eval1_redundant} is at least factor of $T$ smaller than \cref{eq:t1ns_contrib_step1_mt}, so we needn't bother with it. Turning now to \eqref{eq:t1ns_contrib_step1_et_eval1},
  we have assumed that $\tau \ll (D/\#\cF)^{1 - \eps} \ll D^{1 - \delta - \eps}$. We have also assumed that $\delta > \tfrac{5}{6}$. Since $T = D^{\hat{\gamma}} \gg D^{\half}$, we have $|s - \kappa| \asymp T$ along the horizontal segment $\eps + iT \longrightarrow c + iT$. When $c \leqslant \thalf$,
  \begin{align}
    \nonumber
    \eqref{eq:t1ns_contrib_step1_et_eval1}
    &\ll
    \qrep^\half D^{\half-\delta} \left(\qrep^{\half} D^{\frac{1}{2}} T^{\frac{1}{2} + 2\hat\beta} \right)^{1-c}
    (qD)^{c - \half}
    T^{-1} (qDT)^\eps
    \\
    \label{eq:t1ns_contrib_step1_et_eval2}
    &\ll
    \qrep^{\half + \frac{c}{2}} D^{\half + \frac{c}{2} - \delta}  T^{(1-c)(\frac{1}{2} + 2\hat\beta) - 1}
    (qDT)^\eps
    .
  \end{align}
  
  The exponent of $\qrep$ in \eqref{eq:t1ns_contrib_step1_et_eval2} is at most $\tfrac{3}{4}$ (since we are assuming $c \leqslant \thalf$), so \eqref{eq:t1ns_contrib_step1_et_eval2}, which bounds the contribution of the horizontal integrals \eqref{eq:t1ns_contrib_step1_et}, confirms to \cref{lemma:t1ns_contrib_r1}'s stated bound in the $\qrep$ aspect.

  Writing $T = D^{\hat{\gamma}}$ in \eqref{eq:t1ns_contrib_step1_et_eval2}, the exponent of $D$ becomes
  \begin{align*}
    \half
    &+
    \frac{c}{2} - \delta + \left((1-c)\left(\frac{1}{2} + 2\hat\beta\right) - 1\right)\hat{\gamma}
    \\
    &=
    \half + \frac{c}{2} - \delta + \left((1-c)\left(\frac{1}{2} + 2\frac{2 - 3(\delta - \frac{5}{6})}{24 + 36(\delta - \frac{5}{6})}\right) - 1\right)\left(\half + \frac{3}{4}\left(\delta - \frac{5}{6}\right)\right)
    \\
    &=
    \frac{-2c\delta + 3c - 12\delta + 6}{8}
    .
  \end{align*}
  Since
  \begin{align*}
    &\frac{\dee}{\dee c} \Big(-2c\delta + 3c - 12\delta + 6\Big) = 3 - 2\delta > 0 \text{ for $\tfrac{5}{6} < \delta < 1$, and}
    \\
    &\frac{\dee}{\dee \delta} \Big(-2c\delta + 3c - 12\delta + 6\Big) = -2c - 12 < 0 \text{ for $0 < c < \thalf$,}
  \end{align*}
  the supremum, as a function of $0 < c \leqslant \thalf$ and $\tfrac{5}{6} < \delta < 1$ is
  \begin{align*}
    \frac{-2c\delta + 3c - 12\delta + 6}{8}\Bigg|_{c,\delta = \half,\frac{5}{6}} = -\frac{5}{12}
    .
  \end{align*}
  Since $\hat\rho \geqslant -\tfrac{1}{14} \geqslant -\tfrac{5}{12}$, we have verified that the horizontal integrals \eqref{eq:t1ns_contrib_step1_et} contribute an amount bounded by $D^{\hat\rho}$ in the $D$ aspect.

  We now turn to the matter of bounding \eqref{eq:t1ns_contrib_step1_mt}. Applying \cref{lemma:t1ns_size} once again,
  \begin{align}
    \nonumber
    \eqref{eq:t1ns_contrib_step1_mt}
    &=
    \frac{1}{2\pi i}
    \int_{\eps - iT}^{\eps + iT}
    \Favg \nsqsum \frac{a_\rep(n)\chi_d(n)}{n^s} \Vp x^{s-\half}\,\frac{ds}{s}
    \\
    \nonumber
    &\ll \int_{\eps - iT}^{\eps + iT} \frac{\qrep^\half D^\half}{\#\cF} \mup^\half \left(1 + \mup^{\half - \sigma + \theta}\right) \epsfac x^{\sigma - \half} \frac{ds}{s}
    \\
    \nonumber
    &\ll D^{\half-\delta} (qD)^\eps \int_{\eps - iT}^{\eps + iT}  \xi Q^\half  \frac{ds}{s}    
    \\
    \nonumber
    &\ll \qrep^\half D^{\half-\delta} (qD)^\eps \int_{\eps - iT}^{\eps + iT}  |s - s_0|^{2\beta} |s - \kappa|^\half  \frac{ds}{s}
    \\
    \label{eq:t1ns_contrib_step2_mt}
    &\ll \qrep^\half D^{\half-\delta} \left(T^{\half + 2\beta} + \tau^\half |s_0|^{2\beta}\right) (qDT)^\eps 
    .
  \end{align}
  Our assumptions ensure that $\tau^\half |s_0|^{2\beta} \ll T^{\half + 2\beta}$, and it is the case that $\hat{\beta} > 0$, so we may drop the second term in \eqref{eq:t1ns_contrib_step2_mt}. Substituting $T = D^{\hat{\gamma}}$ into the remaining term, the exponent of $D$ becomes
  \begin{align}
    \label{eq:t1ns_contrib_step2_mt_eval}
    \half - \delta + \left(\half + 2\hat\beta\right)\hat\gamma
    &=
    -\frac{3}{4}\left(\delta - \frac{5}{6}\right)
    \leqslant \hat{\rho}
    ,
  \end{align}
  which conforms to the bound of \cref{lemma:t1ns_contrib_r1} in the $D$ aspect. By inspection \eqref{eq:t1ns_contrib_step2_mt} also conforms in the $\qrep$ aspect.
\end{proof}

\begin{lemma}[Term 2 non-square contribution]
  \label{lemma:t2ns_contrib_r1}
  If
  $0 < c \leqslant \thalf$,
  $r = 1$,
  $x \asymp (\qrep D)^{1 \pm \eps}$,
  $T = D^{\hat{\gamma}}$,
  $\#\cF = D^{\delta}$,
  and
  $\beta = \hat{\beta}$,
  \begin{align*}
    &\frac{1}{2\pi i}\int_{c - iT}^{c + iT} \Gfac \Favg \nsqsum \frac{a_{\bar\rep}(n)\chi_d(n)}{n^{1-s}} \Vm \xd^{\!s-\half} \,\frac{ds}{s}
    \,\ll\, D^{\hat{\rho}} \qrep^{\frac{3}{4}} \epsfac 
    .
  \end{align*}
\end{lemma}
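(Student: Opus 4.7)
The plan is to mirror the proof of \cref{lemma:t1ns_contrib_r1}, invoking \cref{lemma:t2ns_size} in place of \cref{lemma:t1ns_size}. First I would verify that the integrand is holomorphic on the strip $0 < \sigma < 1$: the gamma ratio $\prod_j \halfGamma{1-s+\bar\kappa_j}/\halfGamma{s+\kappa_j}$ has no poles there under our hypotheses on $\kappa$, and $V_{1-s}^*(\xi y)$ is holomorphic by \cref{lemma:V_is_holomorphic}. Then I would shift the contour from $\Re(s) = c$ to $\Re(s) = \tfrac{1}{2}$, picking up horizontal segments at heights $\pm iT$.

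The crucial bookkeeping step is to observe that \cref{lemma:t2ns_size} bounds a version of the integrand involving $\afefac$, whereas our integrand has $\xd^{s-\half} = x^{s-\half}\afefac$. Multiplying \cref{lemma:t2ns_size}'s bound by $|x^{s-\half}| \asymp (qD)^{\sigma-\half}(qD)^\eps$ (valid since $x \asymp (qD)^{1\pm\eps}$), and combining with the factor $(QD^r)^{\half-\sigma} \asymp (qD)^{\half-\sigma}|s-\kappa|^{\half-\sigma}$ already present, the two contributions of $(qD)^{\half-\sigma}$ cancel, leaving the clean integrand bound
\[
\frac{q^{\half}D^{\half-\delta}}{|s|}|s-\kappa|^{\half-\sigma}\mum^{\half}\bigl(1+\mum^{\sigma-\half}\bigr)(qDT)^\eps.
\]
At $\sigma = \tfrac{1}{2}$ both $|s-\kappa|^{\half-\sigma}$ and $\mum^{\sigma-\half}$ are trivial, so the bound simplifies to $\asymp q^{\frac{3}{4}}D^{\frac{3}{4}-\delta}|s|^{-\frac{3}{4}-\hat\beta}(qDT)^\eps$ for large $|t|$, using $|\xi| \asymp |s-s_0|^{2\hat\beta}$ with $\hat\alpha = 0$.

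The vertical integral at $\Re(s) = \tfrac{1}{2}$ then evaluates to $q^{\frac{3}{4}}D^{\frac{3}{4}-\delta}T^{\frac{1}{4}-\hat\beta}(qDT)^\eps$ (convergent since $\hat\beta < \tfrac{1}{4}$). Plugging in $T = D^{\hat\gamma}$ from \eqref{eq:gammahatdef}, the $D$-exponent becomes $\tfrac{3}{4}-\delta+\hat\gamma(\tfrac{1}{4}-\hat\beta)$; direct computation verifies that this equals $\hat\rho$ at both $\delta = \tfrac{5}{6}$ and $\delta = \tfrac{13}{14}$ (and, being linear in $\delta$, throughout $[\tfrac{5}{6},\tfrac{13}{14}]$), and is $\leq \hat\rho$ for $\delta > \tfrac{13}{14}$ as well. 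The horizontal segments at $s = \sigma \pm iT$ are bounded by the same integrand evaluated at $|t| = T$; the extra factor of $T^{-1}$ from $1/|s|$ makes them negligible relative to the vertical integral's $T^{\frac{1}{4}-\hat\beta}$.

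The main obstacle will be identifying $\sigma = \tfrac{1}{2}$ as the correct abscissa. The naive mirror of \cref{lemma:t1ns_contrib_r1} under $s \leftrightarrow 1-s$ would suggest shifting to $\sigma = 1 - \eps$, but at that abscissa $\mum^{\sigma-\half}$ dominates $1$, producing an integrand bound of order $qD^{1-\delta}$ which exceeds the target by a factor of $q^{1/4}D^{1-\delta-\hat\rho}$; conversely, shifting to $\sigma = \eps$ leaves the factor $|s-\kappa|^{\half-\eps}$ uncancelled, which integrates to roughly $T^{\frac{3}{4}-\hat\beta}$ and exceeds the target by $T^{\frac{1}{2}}$. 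Only at $\sigma = \tfrac{1}{2}$ do both pathologies vanish simultaneously, yielding a bound structurally identical to \cref{lemma:t1ns_contrib_r1}'s.
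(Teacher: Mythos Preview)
Your proposal is correct and follows essentially the same route as the paper: shift to $\sigma=\tfrac12$, apply \cref{lemma:t2ns_size}, and verify that $\tfrac{3}{4}-\delta+\hat\gamma(\tfrac{1}{4}-\hat\beta)=-\tfrac{3}{4}(\delta-\tfrac{5}{6})\leqslant\hat\rho$. One small point you gloss over: in the region $|t|\lesssim\tau$ the factor $|s-\kappa|^{1/4}$ contributes $\tau^{1/4}$ rather than $|t|^{1/4}$, and the paper records this separately as the term $\tau^{1/4}|s_0|^{-\hat\beta}$ in \eqref{eq:t2ns_contrib_step2_mt}, checking in \eqref{eq:t2ns_contrib_step2_mt_eval} that $D^{3/4-\delta}\tau^{1/4}\ll D^{\hat\rho}$ via $\tau\ll D^{1-\delta}$; your parenthetical ``convergent since $\hat\beta<\tfrac14$'' is also backwards (the integral \emph{diverges} as $T\to\infty$ precisely because $\tfrac14-\hat\beta>0$, which is what produces the $T^{1/4-\hat\beta}$ growth you want).
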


\hypertarget{proof:t2ns_contrib_r1}{}
\begin{proof}
  This proof is similar to the \hyperlink{proof:t1ns_contrib_r1}{proof of \cref*{lemma:t1ns_contrib_r1}}, and will be a bit more brief. 
  Shift the line of integration to $\sigma = \thalf$, i.e.\ write
  \begin{align}
    \nonumber
    \frac{1}{2\pi i}
    &\int_{c - iT}^{c + iT} \Gfac \Favg \nsqsum \frac{a_{\bar\rep}(n)\chi_d(n)}{n^{1-s}} \Vm \xd^{\!s-\half} \,\frac{ds}{s}
    \\
    \label{eq:t2ns_contrib_step1_mt}
    &=
    \frac{1}{2\pi i}\int_{\half - iT}^{\half + iT} \Gfac \Favg \nsqsum \frac{a_{\bar\rep}(n)\chi_d(n)}{n^{1-s}} \Vm \xd^{\!s-\half} \,\frac{ds}{s}
    \\
    \label{eq:t2ns_contrib_step1_et}
    &+
    \frac{1}{2\pi i}\left(\int_{c - iT}^{\half - iT} + \int_{\half + iT}^{c + iT}\right) \Gfac \Favg \nsqsum \frac{a_{\bar\rep}(n)\chi_d(n)}{n^{1-s}} \Vm \xd^{\!s-\half} \,\frac{ds}{s}
    .
  \end{align}
  Bounding the horizontal segments using \cref{lemma:t2ns_size},
  \begin{align}
    \nonumber
    \eqref{eq:t2ns_contrib_step1_et}
    &\ll
    \int_{\half + iT}^{c + iT}
    \frac{\qrep^\half D^\half}{\#\cF} (QD^r)^{\half - \sigma} \mum^\half \left(1 + \mum^{\sigma - \half + \theta}\right) \epsfac x^{\sigma - \half}\,\frac{ds}{s}
    \\
    \label{eq:t2ns_contrib_step1_et_eval1_redundant}
    &\ll \frac{\qrep^\half D^\half}{\#\cF} (QD^r)^{\half - \sigma} \mum^\half \left(1 + \mum^{\sigma - \half + \theta}\right) \epsfac x^{\sigma - \half} s^{-1}\bigg\vert_{s = \half + iT}
    \\
    \label{eq:t2ns_contrib_step1_et_eval1}
    &+ \frac{\qrep^\half D^\half}{\#\cF} (QD^r)^{\half - \sigma} \mum^\half \left(1 + \mum^{\sigma - \half + \theta}\right) \epsfac x^{\sigma - \half} s^{-1}\bigg\vert_{s = c + iT}
    .
  \end{align}
  The contribution of \eqref{eq:t2ns_contrib_step1_et_eval1_redundant} will be subsumed by \eqref{eq:t2ns_contrib_step1_mt}'s.

  When $c \leqslant \thalf$,
  \begin{align}
    \nonumber
    \eqref{eq:t2ns_contrib_step1_et_eval1}
    &\ll
    q^{\frac{5}{4} - c} D^{\frac{5}{4} - c - \delta} T^{-\frac{1}{4} - c} (qD)^{c - \half} (qDT)^\eps
    \\
    \label{eq:t2ns_contrib_step1_et_eval2}
    &\ll
    q^{\frac{3}{4}} D^{\frac{3}{4} - \delta - (\frac{1}{4} + c)\hat\gamma} (qD)^\eps
    .
  \end{align}
  The horizontal segments' contribution \eqref{eq:t2ns_contrib_step1_et_eval2} conforms in the $\qrep$ aspect to the error term of \cref{lemma:t2ns_contrib_r1}.
  The exponent of $D$ is
  \begin{align}
    \label{eq:t2ns_contrib_step1_et_eval3}
    \frac{3}{4} - \delta - \left(\frac{1}{4} + c\right)\hat\gamma
    &=
    \frac{-24c\delta + 4c - 38\delta + 25}{32}
    .
  \end{align}
  The supremum of \eqref{eq:t2ns_contrib_step1_et_eval3} on $0 < c \leqslant \thalf$ and $\tfrac{5}{6} < \delta < 1$ is $-\tfrac{5}{24} < -\tfrac{1}{14} \leqslant \hat\rho$ (at $c = 0, \delta = \tfrac{5}{6}$). This concludes the task of bounding the horizontal segments \eqref{eq:t2ns_contrib_step1_et}.

  Applying \cref{lemma:t2ns_size} to \eqref{eq:t2ns_contrib_step1_mt},
  \begin{align}
    \nonumber
    \eqref{eq:t2ns_contrib_step1_mt}
    &=
    \frac{1}{2\pi i}\int_{\half - iT}^{\half + iT} \Gfac \Favg \nsqsum \frac{a_{\bar\rep}(n)\chi_d(n)}{n^{1-s}} \Vm \xd^{\!s-\half} \,\frac{ds}{s}
    \\
    \nonumber
    &\ll \int_{\half - iT}^{\half + iT} \frac{\qrep^\half D^\half}{\#\cF} (QD^r)^{\half - \sigma} \mum^\half \left(1 + \mum^{\sigma - \half + \theta}\right) \epsfac x^{\sigma - \half}\,\frac{ds}{s}
    \\
    \nonumber
    &\ll \qrep^\half D^{\frac{3}{4}-\delta} (qD)^\eps \int_{\half - iT}^{\half + iT}  \xi^{-\half} Q^{\frac{1}{4}}  \frac{ds}{s}    
    \\
    \nonumber
    &\ll q^{\frac{3}{4}} D^{\frac{3}{4}-\delta} (qD)^\eps \int_{\half - iT}^{\half + iT}  |s - s_0|^{-\beta} |s - \kappa|^{\frac{1}{4}}  \frac{ds}{s}
    \\
    \label{eq:t2ns_contrib_step2_mt}
    &\ll q^{\frac{3}{4}} D^{\frac{3}{4}-\delta} \left(T^{\frac{1}{4} - \beta} + \tau^{\frac{1}{4}} |s_0|^{-\beta}\right) (qDT)^\eps 
    .
  \end{align}
  The first term of \eqref{eq:t2ns_contrib_step2_mt} is $\qrep^{\frac{3}{4}}$ in the $\qrep$ aspect, which is admissible, and
  \begin{align}
    \nonumber
    \ll
    D^{\frac{3}{4}-\delta + (\frac{1}{4} - \beta)\hat\gamma} \asymp D^{-\frac{3}{4}(\delta - \frac{5}{6})} \ll D^{\hat\rho}
  \end{align}
  in the $D$ aspect. The second term of \eqref{eq:t2ns_contrib_step2_mt} in the $D$ aspect is
  \begin{align}
    \label{eq:t2ns_contrib_step2_mt_eval}
    &\ll
    D^{\frac{3}{4}-\delta} \tau^{\frac{1}{4}}
    \ll
    D^{\frac{3}{4}-\delta + \frac{1}{4}(1 - \delta)}
    \ll
    D^{-\frac{3}{4}(\delta - \frac{5}{6})}
    \ll
    D^{\hat\rho}
  \end{align}
  for $\tfrac{5}{6} < \delta < 1$.
\end{proof}

\begin{lemma}[Term 1 square contribution]
  \label{lemma:t1s_contrib_r1}
  If
  $0 < c \leqslant \thalf$,
  $r = 1$,
  $x \asymp (\qrep D)^{1 \pm \eps}$,
  $T = D^{\hat{\gamma}}$,
  $\#\cF = D^{\delta}$,
  and
  $\beta = \hat{\beta}$,
  \begin{align*}
    &\frac{1}{2\pi i}\int_{c - iT}^{c + iT} \Favg \sum_{n=1}^\infty \frac{a_{\rep}(n^2)\chi_d(n^2)}{n^{2s}} \Vpsq \x^{s-\half} \frac{ds}{s}
    \,\ll\, D^{\hat{\rho}} \qrep^{\frac{3}{4}} \epsfac 
    .
  \end{align*}
\end{lemma}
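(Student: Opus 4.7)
The plan is to proceed in the same spirit as the \hyperlink{proof:t1ns_contrib_r1}{proof of \cref*{lemma:t1ns_contrib_r1}}, shifting the contour of integration to a vertical line with small real part and then applying \cref{lemma:t1s_size} on the vertical segment as well as on the two connecting horizontal segments.

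First, I observe that the integrand is holomorphic in the strip $0 < \sigma < 1$: the Dirichlet series $\sum_n a_\rep(n^2)\chi_d(n^2) n^{-2s}$ converges absolutely for $\sigma > \half + \theta$, the factor $V_s(\cdot)$ multiplied against it is holomorphic in the strip by \cref{lemma:V_is_holomorphic}, and the analytic continuation provided by this integral representation agrees with what the convergent Dirichlet series provides on $\sigma > \half + \theta$. I would therefore shift the contour from $\sigma = c$ to $\sigma = \eps$, writing the original integral as the integral along $\eps - iT \to \eps + iT$ plus the contributions of the two horizontal segments $c \pm iT \to \eps \pm iT$.

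Next, I would bound the horizontal segments. On $\sigma \in [\eps, c]$ at height $T$, \cref{lemma:t1s_size} gives an integrand pointwise bound of $(1 + \mup^{\half - \sigma})\epsfac \cdot \x^{\sigma - \half} / |s|$, and on these segments $|s| \asymp T$, $\mup \asymp q^\half T^{\half + 2\beta}$, and $\x^{\sigma - \half} \asymp (qD)^{(\sigma - \half) \pm \eps}$. The extremal contribution comes from $\sigma = c$; substituting $T = D^{\hat\gamma}$, $\beta = \hat\beta$, and $c \le \half$ shows that the resulting power of $q$ is at most $q^{3/4}$ and the resulting exponent of $D$ is at most $\hat\rho$ (with considerable room to spare, since even at the worst corner $c = \half$, $\delta = \tfrac{5}{6}$ the $D$ exponent is $-\hat\gamma = -\half$, much smaller than the threshold $\hat\rho = 0$).

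Finally, I would bound the central vertical segment $\sigma = \eps$ by directly integrating the bound from \cref{lemma:t1s_size} against $|ds|/|s|$:
\begin{align*}
  \int_{\eps - iT}^{\eps + iT} \mup^{\half - \eps} \x^{\eps - \half} \frac{|ds|}{|s|}
  \;\ll\; q^{\frac{1}{4}} \x^{-\half} \int_1^T |s - s_0|^{\beta} |s - \kappa|^{\frac{1}{4}} \frac{dt}{t}
  \;\ll\; q^{-\frac{1}{4}} D^{-\half + (\hat\beta + \frac{1}{4})\hat\gamma}(qDT)^\eps
  ,
\end{align*}
where I used $\x \asymp (qD)^{1 \pm \eps}$ and the assumption $\tau \ll D^{1 - \delta - \eps}$ to absorb the $|s_0|^{2\beta}$ contribution from small $|t|$. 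A direct computation with \eqref{eq:betahatdef}, \eqref{eq:gammahatdef} reduces the exponent of $D$ to $(-8 + 3(\delta - \tfrac{5}{6}))/24$, which for $\tfrac{5}{6} < \delta < 1$ is always $\le -\tfrac{1}{4}$, comfortably smaller than $\hat\rho$; the exponent of $q$ is $-\tfrac{1}{4} \le \tfrac{3}{4}$. The main obstacle I anticipate is purely bookkeeping: one must check the bound at the boundary value $\delta = \tfrac{13}{14}$, where \cref{thm:quadratic_murmurations}'s full strength $\hat\rho = -\tfrac{1}{14}$ kicks in, and verify the computations match the listed shape $D^{\hat\rho} q^{\frac{3}{4}}$ in all cases. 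Combining the three bounds produces the claimed estimate.
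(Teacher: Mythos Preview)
Your approach matches the paper's: shift to $\sigma=\eps$, bound horizontal segments and the vertical segment separately via \cref{lemma:t1s_size}. However, there is an arithmetic slip in your vertical-segment computation that changes the conclusion qualitatively.

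In the display
\[
\int_{\eps - iT}^{\eps + iT} \mup^{\half - \eps} \x^{\eps - \half} \frac{|ds|}{|s|}
\;\ll\; q^{\frac{1}{4}} \x^{-\half} \int_1^T |s - s_0|^{\beta} |s - \kappa|^{\frac{1}{4}} \frac{dt}{t}
\]
you have dropped a factor of $D^{1/4}$: since $Q \asymp q|s+\kappa|$, one has $(\xi\sqrt{QD})^{1/2} \asymp q^{1/4}D^{1/4}|s-s_0|^{\beta}|s+\kappa|^{1/4}$, not $q^{1/4}|s-s_0|^{\beta}|s+\kappa|^{1/4}$. The correct bound for the vertical segment is therefore $q^{-1/4}D^{-\frac{1}{4}+(\hat\beta+\frac{1}{4})\hat\gamma}(qDT)^\eps$, and the exponent of $D$ works out to $\tfrac{1}{8}\delta - \tfrac{3}{16}$, not your $\tfrac{1}{8}\delta - \tfrac{7}{16}$.

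This matters because the corrected exponent is \emph{not} ``comfortably smaller than $\hat\rho$'': at $\delta = \tfrac{13}{14}$ it equals $\hat\rho = -\tfrac{1}{14}$ exactly, and the paper explicitly notes this (the term \eqref{eq:t1s_contrib_mt_eval} is one of the binding constraints at that value of $\delta$). For all other $\tfrac{5}{6}<\delta<1$ the inequality is strict, so your overall conclusion survives, but the margin you claim does not exist.
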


\hypertarget{proof:t1s_contrib_r1}{}
\begin{proof}
  Shift the line of integration to $\sigma = \eps$. The horizontal segments are
  \begin{align*}
    \frac{1}{2\pi i}
    &\left(\int_{c - iT}^{\eps - iT} + \int_{\eps + iT}^{c + iT}\right)\Favg \sum_{n=1}^\infty \frac{a_{\rep}(n^2)\chi_d(n^2)}{n^{2s}} \Vpsq \x^{s-\half} \frac{ds}{s}
    \\
    &\ll
    \int_{\eps + iT}^{c + iT}  \left(1 + \mup^{\half - \sigma + \theta}\right) \epsfac \x^{\sigma-\half} \frac{ds}{s} \qquad\qquad\text{(by \cref{lemma:t1s_size})}
    \\
    &\ll
    (\xi\sqrt{QD})^\half (qD)^{-\half} T^{-1} \epsfac + \epsfac T^{-1}
    \\
    &\ll \xi^\half (qD)^{-\frac{1}{4}} T^{-\frac{3}{4}} \epsfac + T^{-1}\epsfac
    .
  \end{align*}
  This satisfies the bound of \cref{lemma:t1s_contrib_r1} in the $\qrep$ aspect. The $T^{-1}$ term satisfies the bound in the $D$ aspect. The first term is smaller than the integral along the vertical segment, which we'll bound momentarily, by at least a factor of $T$, so we may ignore it for our purposes.

  Using \cref{lemma:t1s_size} for the integral along the vertical segment,
  \begin{align*}
    \frac{1}{2\pi i}
    \int_{\eps - iT}^{\eps + iT}
    &\Favg \sum_{n=1}^\infty \frac{a_{\rep}(n^2)\chi_d(n^2)}{n^{2s}} \Vpsq \x^{s-\half} \frac{ds}{s}
    \\
    &\ll
    \frac{1}{2\pi i}
    \int_{\eps - iT}^{\eps + iT} \left(1 + \mup^{\half - \sigma + \theta}\right) \epsfac \x^{\sigma-\half} \frac{ds}{s}
    \\
    &\ll (qD)^{-\frac{1}{4}} (T^{\frac{1}{4} + \hat\beta} + |s_0|^{\hat\beta} \tau^{\frac{1}{4}}) \epsfac
  \end{align*}

  The exponent of $D$ in the first term is
  \begin{align}
    \label{eq:t1s_contrib_mt_eval}
    -\frac{1}{4} + \left(\frac{1}{4} + \hat\beta\right)\hat\gamma
    &=
    \frac{1}{8}\delta - \frac{3}{16}
    \leqslant
    \hat\rho
  \end{align}
  (with equality iff $\delta = \tfrac{13}{14}$).
\end{proof}

We write
\begin{align}
  \label{eq:sstardef}
  |s_*| \coloneqq \max_j\!\left\{|s + \kappa_j|, |1 - s + \bar\kappa_j|, 1\right\}
  ,
\end{align}
useful for abbreviating error terms when $|s| \asymp 1$.

\begin{lemma}[Term 2 square contribution]
  \label{lemma:t2s_contrib_r1}
  If
  $0 < c \leqslant \thalf$,
  $r = 1$,
  $x \asymp (\qrep D)^{1 \pm \eps}$,
  $T = D^{\hat{\gamma}}$,
  $\#\cF = D^{\delta}$,
  and
  $\beta = \hat{\beta}$,
  \begin{align*}
    \frac{1}{2\pi i}
    &\int_{c - iT}^{c + iT}  \Gfac \Favg \sum_{n=1}^\infty \frac{\rootnum_{\rep\otimes\chi_d} a_{\bar\rep}(n^2)\chi_d(n^2)}{n^{2-2s}} \Vmsq \xd^{\!s-\half} \frac{ds}{s}
    \\
    &= \frac{\rootnum_{\cF}}{2\pi i}\int_{\eps - iT}^{\eps + iT}
    \frac{\halfGamma{1-s+\bar\kappa}}{\halfGamma{s+\kappa}}
    \frac{L(2-2s, \barrepsq)}{L^{(2\qrep)}(3-2s, \barrepsq)}
    \mspace{1mu}
    \prod_{p \nmid 2\qrep}
    \left(1 - \frac{1}{(p+1)(1 - \bar\alpha_1(p)^{-2}p^{3-2s})}\right)
    \\
    &\hspace{8cm}\cdot
    \frac{1 - \left(1 - \frac{\Delta D}{D}\right)^{1 - (s-\half)}}{(1 - (s-\half))\frac{\Delta D}{D}}
    \xD^{\!s-\half} \frac{ds}{s}
    \\
    &+ O(D^{\hat{\rho}} \qrep^{\frac{3}{4}} + |\tau|D^{\delta - 1})\epsfac
    .
  \end{align*}
\end{lemma}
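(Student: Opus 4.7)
The strategy is to apply the four successive approximation lemmas \cref{lemma:t2s_approx_step1,lemma:t2s_approx_step2,lemma:t2s_approx_step3,lemma:t2s_approx_step4} in sequence, and between each application, shift the contour of integration to a vertical line whose abscissa minimizes the error term produced at that step. The final application of \cref{lemma:euler_prod_rep_square} (with $r=1$) converts the square-sum Dirichlet series into the Euler product appearing in the stated main term. The horizontal segments traversed when shifting contours will be bounded by \cref{lemma:t2s_size}, and the meromorphicity preserved by each of \cref{lemma:t2s_approx_step1,lemma:t2s_approx_step2,lemma:t2s_approx_step3,lemma:t2s_approx_step4} is what makes this contour manipulation legal.

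First I would start on the line $\Re(s)=c$ with $0<c\leqslant \tfrac12$ as given. I apply \cref{lemma:t2s_approx_step1}, whose error term is $\#\cF/D$ times a factor bounded independently of $\sigma$ in a neighborhood of the critical line, and which is best absorbed at moderate abscissa. Next, I shift the contour to an abscissa chosen so that the error term of \cref{lemma:t2s_approx_step2} (which involves $q^{1/6}D^{1/2}/\#\cF \cdot (D^{1/14}+|s|^{1/6})$ weighted by $(\pi x/qD)^{\sigma-1/2}$) is as small as possible; since $x\asymp(qD)^{1\pm\eps}$ this power of $(\pi x/qD)^{\sigma-1/2}$ is essentially $(qD)^\eps$, so the abscissa choice is driven purely by matching the $D^{1/14}$ and $|s|^{1/6}\asymp T^{1/6}$ terms against the $\qrep^{1/6}D^{1/2-\delta}$ prefactor. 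With $T=D^{\hat\gamma}$, $\#\cF=D^{\delta}$, and $\beta=\hat\beta$ as prescribed, this contributes $\ll q^{3/4}D^{\hat\rho}(qD)^\eps$ provided $\tfrac{5}{6}<\delta<1$; the case $\delta\geqslant \tfrac{13}{14}$ is where this term becomes the binding constraint. Then I apply \cref{lemma:t2s_approx_step3}, whose error is similar in shape but comes with the min of $1$ and $\Delta D/(|s|D)$; this last factor forces a shift of contour to a larger abscissa (I would take $\sigma=\eps$) so that the sum over $t$ of $1/|s|$ near the critical line is tame. Finally \cref{lemma:t2s_approx_step4}, valid for $\sigma<\tfrac12-\theta=\tfrac12$, extends the truncated Dirichlet series to the full one at small cost.

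After the four approximations, the inner Dirichlet series is $\sum_n a_{\bar\rep}(n^2)n^{-(2-2s)}\prod_{p\mid n/(n,2q)} p/(p+1)$, which by \cref{lemma:euler_prod_rep_square} (applied with $s\mapsto 1-s$ and $\rep\mapsto\bar\rep$, noting $\repsq$ and $\barrepsq$ are related by complex conjugation of the Satake parameters) equals
\[
\frac{L(2-2s,\barrepsq)}{L^{(2\qrep)}(3-2s,\barrepsq)}\prod_{p\nmid 2\qrep}\left(1-\frac{1}{(p+1)(1-\bar\alpha_1(p)^{-2}p^{3-2s})}\right),
\]
which is exactly what appears in the stated main term. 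The prefactor $6\Eta_{\cF}\Delta D/(\pi^2\phi(\qrep)\#\cF)$ from \cref{lemma:t2s_approx_step2} cancels, up to an error $O(D^{1/2}\qet/\#\cF)$ coming from \cref{lemma:Fsize}, against the normalizing factor $\prod_{p\mid 2q}p/(p+1)$ implicit when rewriting the product $p\mid 2qn$ as $p\mid n/(n,2q)$ in \cref{lemma:t2s_approx_step3}; this is how the transition between steps 2 and 3 is made consistent.

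The remaining task is to bound the horizontal segments introduced by the three contour shifts. \Cref{lemma:t2s_size} gives $(QD)^{1/2-\sigma}(1+\mum^{\sigma-1/2+\theta})(qD)^\eps$ pointwise on the integrand; multiplied by $x^{\sigma-1/2}/s$ and integrated horizontally between two abscissas at height $|t|=T$, the $1/s$ factor provides a gain of $T^{-1}$, and the powers of $\xi$, $Q^{1/2}$ and $(x/qD)^{\sigma-1/2}$ are all bounded under our assumptions by routine substitution of $T=D^{\hat\gamma}$ and $\beta=\hat\beta$, analogously to \eqref{eq:t1ns_contrib_step1_et_eval2} and \eqref{eq:t2ns_contrib_step1_et_eval2}. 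The main obstacle, and the reason the $\qet$ bound of \cref{lemma:disc_power_sum} matters, is verifying that the step 2 error term --- which is the only one exhibiting a genuine subconvexity gain via the $D^{1/14}$ exponent --- satisfies the claimed $D^{\hat\rho}$ bound in the boundary regime $\delta=\tfrac{13}{14}$ where it simultaneously balances the error from \cref{lemma:t1s_contrib_r1} in \eqref{eq:t1s_contrib_mt_eval}; this is precisely the calculation driving the definitions \eqref{eq:betahatdef}, \eqref{eq:gammahatdef}, \eqref{eq:rhohatdef}.
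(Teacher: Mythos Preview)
Your strategy is exactly the paper's: apply \cref{lemma:t2s_approx_step1,lemma:t2s_approx_step2,lemma:t2s_approx_step3,lemma:t2s_approx_step4} in sequence while shifting contours (the paper uses $\sigma=\tfrac12$ for steps 1--2, then shifts to $\sigma=\eps$---smaller, not larger---for steps 3--4), bounds the horizontal pieces via \cref{lemma:t2s_size}, and identifies the resulting Dirichlet series through \cref{lemma:euler_prod_rep_square}. One correction to your analysis: the binding constraint for $\delta\geqslant\tfrac{13}{14}$ is the step-1 error $R_1\asymp \#\cF/D=D^{\delta-1}$ (the approximation $d\approx D$ from \cref{lemma:t2s_approx_step1}), not the step-2 error $R_2$, which is strictly smaller there; the paper notes this explicitly just before the proof.
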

\hypertarget{proof:t2s_contrib_r1}{}
\begin{proof}
  
  Let $I_0, \dots, I_6$ be the line segments
  \begin{alignat*}{11}
    I_0:&\;\; &c\mspace{2mu} - iT &\longrightarrow& \,c\mspace{2mu} + iT&
    &\qquad\qquad
    I_4:&\;\; &\thalf - iT &\longrightarrow& \,\eps - iT&
    \\
    I_1:&\;\; &c\mspace{2mu} - iT &\longrightarrow& \,\thalf - iT&
    &
    I_5:&\;\; &\eps - iT &\longrightarrow& \,\eps + iT&
    \\
    I_2:&\;\; &\thalf - iT &\longrightarrow& \,\thalf + iT&
    &
    I_6:&\;\; &\eps + iT &\longrightarrow& \,c\mspace{2mu} + iT&
    \\
    I_3:&\;\; &\thalf + iT &\longrightarrow& \,c\mspace{2mu} + iT&
    &
    &&&&&
    .
  \end{alignat*}
  Let
  \begin{align*}
    F_0 &\coloneqq \frac{\halfGamma{1-s+\bar\kappa}}{\halfGamma{s+\kappa}} \Favg \sum_{n=1}^\infty \frac{\rootnum_{\rep\otimes\chi_d} a_{\bar\rep}(n^2)\chi_d(n^2)}{n^{2-2s}} V_{1-s}^*\!\left(\frac{n^2\xi}{\sqrt{\qrep d}}\right) \left(\frac{\pi\x}{\qrep d}\right)^{\!s-\half}
    \\
    F_1 &\coloneqq \frac{\halfGamma{1-s+\bar\kappa}}{\halfGamma{s+\kappa}} \Favgrn \sum_{n=1}^\infty \frac{a_{\bar\rep}(n^2)\chi_d(n^2)}{n^{2-2s}} V_{1-s}^*\!\left(\frac{n^2\xi}{\sqrt{\qrep D}}\right) \left(\frac{\pi\x}{\qrep d}\right)^{\!s-\half}
    \\
    F_2 &\coloneqq \frac{\halfGamma{1-s+\bar\kappa}}{\halfGamma{s+\kappa}} \sum_{n=1}^\infty \frac{a_{\bar\rep}(n^2)}{n^{2-2s}} V_{1-s}^*\!\left(\frac{n^2\xi}{\sqrt{\qrep D}}\right) \prod_{p\mid 2\qrep n} \frac{p}{p+1} \frac{6\rootnum_{\cF} \Eta_{\cF} \Delta D}{\pi^2 \phi(\qrep) \#\cF} \frac{1 - \left(1 - \frac{\Delta D}{D}\right)^{1 - (s-\half)}}{(1 - (s-\half))\frac{\Delta D}{D}} \left(\frac{\pi\x}{\qrep D}\right)^{\!s-\half}
    \\
    F_3 &\coloneqq \rootnum_{\cF}\frac{\halfGamma{1-s+\bar\kappa}}{\halfGamma{s+\kappa}} \sum_{n=1}^\infty \frac{a_{\bar\rep}(n^2)}{n^{2-2s}} V_{1-s}^*\!\left(\frac{n^2\xi}{\sqrt{\qrep D}}\right) \prod_{p\mid \frac{n}{(n,2q)}} \frac{p}{p+1} \frac{1 - \left(1 - \frac{\Delta D}{D}\right)^{1 - (s-\half)}}{(1 - (s-\half))\frac{\Delta D}{D}} \left(\frac{\pi\x}{\qrep D}\right)^{\!s-\half}
    \\
    F_4 &\coloneqq
    \rootnum_{\cF}
    \frac{\halfGamma{1-s+\bar\kappa}}{\halfGamma{s+\kappa}}
    \frac{L(2-2s, \barrepsq)}{L^{(2\qrep)}(3-2s, \barrepsq)}
    \mspace{1mu}
    \prod_{p \nmid 2\qrep} \left(1 - \frac{1}{(p+1)(1 - \bar\alpha_j(p)^{-2}p^{3-2s})}\right)
    \frac{1 - \left(1 - \frac{\Delta D}{D}\right)^{1 - (s-\half)}}{(1 - (s-\half))\frac{\Delta D}{D}} \left(\frac{\pi\x}{\qrep D}\right)^{\!s-\half}
    \shortintertext{and}
    R_0 &\coloneqq |s_*|^{\half - \sigma} \left(1 + \mumd^{\sigma - \half}\right) \epsfac
    \\
    R_1 &\coloneqq |s_*|^{\half - \sigma} \frac{\#\cF}{D} \left(1 + \mumd^{\sigma - \half}\right) \epsfac \max_{d \in \cF} \left(\frac{\pi\x}{\qrep d}\right)^{\!\sigma-\half}
    \\
    R_2 &\coloneqq
    |s_*|^{\half - \sigma} \frac{\qrep^{\frac{1}{6} }D^\half}{\#\cF} \left(1 + \mumd^{\sigma - \half}\right)
    \left(D^{\frac{1}{14}} + |s_*|^{\frac{1}{6}}\right)
    \\
    R_3 &\coloneqq
    |s_*|^{\half - \sigma} \frac{D^{\half}}{\#\cF}\qet \left(1 + \mumd^{\sigma - \half}\right) \min\!\left\{ 1, \frac{D}{|s|\Delta D} \right\} \epsfac \max_{d \in \cF} \left(\frac{\pi\x}{\qrep d}\right)^{\!\sigma-\half}
    \\
    R_4 &\coloneqq |s_*|^{\half - \sigma} \mumd^{\sigma - \half} \min\!\left\{ 1, \frac{D}{|s|\Delta D} \right\} \epsfac
    .
  \end{align*}
  The left hand side of \cref{lemma:t2s_contrib_r1} (times $2\pi i$) can be written
  \begin{align*}
    &\int_{I_0} F_0\,\frac{ds}{s}
    \\
    &= \int_{I_1 + I_2 + I_3} F_0\,\frac{ds}{s}
    \\
    &=
    \int_{I_1 + I_2 + I_3} F_1 + O(R_1)\,\frac{ds}{s} \qquad\text{(by \cref{lemma:t2s_approx_step1})}
    \\
    &= \int_{I_1 + I_2 + I_3} F_2 + O(R_1 + R_2)\,\frac{ds}{s} \qquad\text{(by \cref{lemma:t2s_approx_step2})}
    \\
    &=
    \int_{I_1 + I_2 + I_3} O(R_1 + R_2)\,\frac{ds}{s}
    \,+\,
    \int_{I_1 + I_4 + I_5 + I_6 + I_3} F_2\,\frac{ds}{s}
    \\
    &=
    \int_{I_1 + I_2 + I_3} O(R_1 + R_2)\,\frac{ds}{s}
    \,+\,
    \int_{I_1 + I_4 + I_6 + I_3} O(R_0)\,\frac{ds}{s}
    \,+\,
    \int_{I_5} F_2\,\frac{ds}{s} \qquad\text{(by \cref{lemma:t2s_size})}
    \\
    &=
    \int_{I_1 + I_2 + I_3} O(R_1 + R_2)\,\frac{ds}{s}
    \,+\,
    \int_{I_1 + I_4 + I_6 + I_3} O(R_0)\,\frac{ds}{s}
    \,+\,
    \int_{I_5} F_3 + O(R_3) \,\frac{ds}{s} \qquad\text{(by \cref{lemma:t2s_approx_step3})}
    \\
    &=
    \int_{I_1 + I_2 + I_3} O(R_1 + R_2)\,\frac{ds}{s}
    \,+\,
    \int_{I_1 + I_4 + I_6 + I_3} O(R_0)\,\frac{ds}{s}
    \,+\,
    \int_{I_5} F_4 + O(R_3 + R_4) \,\frac{ds}{s} \qquad\text{(by \cref{lemma:t2s_approx_step4})}
    .
  \end{align*}
  \Cref{lemma:t2s_contrib_r1} is then straightforward to verify.
\end{proof}

We introduce \cref{lemma:eliminate_derivative_factor} so that we may remove the factor of
$$\frac{1 - \left(1 - \frac{\Delta D}{D}\right)^{1 - (s-\half)}}{(1 - (s-\half))}$$
appearing in the integrand of the right hand side of \cref{lemma:t2s_contrib_r1}, and so that we may extend the line of integration to run from $c - i\infty$ to $c + i\infty$, as opposed to being truncated at $|\Im(s)| < T$.

\begin{lemma}
  \label{lemma:eliminate_derivative_factor}
  Suppose that
  $\#\cF = D^{\delta}$
  and
  $T^{1-\eps} \gg D^{1-\delta} \gg \tau^{1+\eps}$.
  Let $\subconvexityparam$ be such that
  $L(\thalf + it, \bar\chi^2) \ll (\qrep |t|)^{\subconvexityparam + \eps}$.
  Then
  \begin{align*}
    &\frac{1}{2\pi i}\int_{\frac{3}{4} - iT}^{\frac{3}{4} + iT}
    \frac{\halfGamma{1-s+\bar\kappa}}{\halfGamma{s+\kappa}}
    \frac{L(2-2s, \barrepsq)}{L^{(2\qrep)}(3-2s, \barrepsq)}
    \mspace{1mu}
    \prod_{p \nmid 2\qrep}
    \left(1 - \frac{1}{(p+1)(1 - \bar\alpha_1(p)^{-2}p^{3-2s})}\right)
    \\
    &\hspace{8cm}\cdot
    \frac{1 - \left(1 - \frac{\Delta D}{D}\right)^{1 - (s-\half)}}{(1 - (s-\half))\frac{\Delta D}{D}}
    \xD^{\!s-\half} \frac{ds}{s}
    \\
    &=
    \frac{1}{2\pi i}\int_{\frac{3}{4} - i\infty}^{\frac{3}{4} + i\infty}
    \frac{\halfGamma{1-s+\bar\kappa}}{\halfGamma{s+\kappa}}
    \frac{L(2-2s, \barrepsq)}{L^{(2\qrep)}(3-2s, \barrepsq)}
    \mspace{1mu}
    \prod_{p \nmid 2\qrep}
    \left(1 - \frac{1}{(p+1)(1 - \bar\alpha_1(p)^{-2}p^{3-2s})}\right)
    \xD^{\!s-\half} \frac{ds}{s}
    \\&
    \quad
    + O(\qrep^{\subconvexityparam+\eps} D^{(\subconvexityparam - \frac{1}{4})(1-\delta) + \eps})
    .
  \end{align*}
\end{lemma}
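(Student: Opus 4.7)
Let $\eta := \Delta D/D$ and write $g_D(s) := \frac{1 - (1-\eta)^{3/2-s}}{(3/2-s)\eta}$, so that $g_D$ is entire in $s$ (the apparent pole at $s=3/2$ is removable). Denote by $F(s)/s$ the common integrand on both sides of the claimed identity, so the left-hand side has an extra factor $g_D(s)$ and is truncated at $|\Im s| < T$. The plan is to decompose the difference between the two integrals as
\begin{align*}
\frac{1}{2\pi i}\int_{3/4 - iT}^{3/4 + iT}\!\! \frac{F(s)}{s}\big(g_D(s) - 1\big)\, ds \;-\; \frac{1}{2\pi i}\!\left(\int_{3/4 - i\infty}^{3/4 - iT} + \int_{3/4+iT}^{3/4+i\infty}\right)\!\! \frac{F(s)}{s}\, ds,
\end{align*}
and bound each piece separately by balancing estimates on $F$, $g_D - 1$, and $g_D$ respectively.

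\textbf{Pointwise bounds on the line $\sigma = 3/4$.} By Stirling, $\thalfGamma{1-s+\bar\kappa}/\thalfGamma{s+\kappa} \ll (1+|t|)^{-1/4}$. On this line $\Re(2-2s) = 1/2$, so the subconvexity hypothesis gives $L(2-2s, \bar\chi^2) \ll (q|t|)^{\lambda + \eps}$. The Euler product factor $\prod_{p\nmid 2q}(1 - \ldots)$ converges absolutely because $\Re(3-2s) = 3/2$, and is therefore $O(1)$. Since $\x \asymp (qD)^{1\pm\eps}$, the factor $(\pi\x/qD)^{s-1/2}$ is $O((qD)^\eps)$. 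Combining and using $|s|^{-1} \ll (1+|t|)^{-1}$ gives $|F(s)/s| \ll q^{\lambda+\eps}(1+|t|)^{\lambda - 5/4 + \eps}$. For $g_D$ itself, a binomial/Taylor expansion $(1-\eta)^{3/2-s} = 1 - (3/2-s)\eta + O((|s|\eta)^2)$ yields $|g_D(s) - 1| \ll |s|\eta$ in the range $|s|\eta \leqslant 1$, while the trivial numerator bound $|1-(1-\eta)^{3/2-s}|\leqslant 2$ gives $|g_D(s)| \ll 1/(|s|\eta)$ uniformly. By \cref{lemma:Fsize} the parameter $\eta$ satisfies $\eta \asymp qD^{\delta-1}$ up to bounded factors, which in our regime is small.

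\textbf{Balancing.} For the first integral, split at the threshold $|t| = 1/\eta$. On $|t| \leqslant 1/\eta$ use $|g_D - 1| \ll |t|\eta$, which produces
$\,q^{\lambda+\eps}\eta\int_1^{1/\eta} |t|^{\lambda - 1/4}dt \ll q^{\lambda+\eps}\eta^{1/4-\lambda}(qD)^\eps.$
On $1/\eta < |t| \leqslant T$ use the bound $|g_D - 1| \ll 1$, which produces $q^{\lambda+\eps}\int_{1/\eta}^{T}|t|^{\lambda-5/4}dt \ll q^{\lambda+\eps}\eta^{1/4-\lambda}(qD)^\eps$. Both regimes therefore contribute $q^{\lambda+\eps}\eta^{1/4-\lambda}(qD)^\eps$. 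Substituting $\eta \asymp qD^{\delta-1}$ gives $\eta^{1/4 - \lambda} \ll q^{1/4-\lambda}D^{(\lambda-1/4)(1-\delta)}$, which matches the stated error up to the permitted $q^\eps$ and $D^\eps$ slack. The tail $|t| > T$ contributes $q^{\lambda+\eps}T^{\lambda - 1/4 + \eps}$, which under the hypothesis $T^{1-\eps} \gg D^{1-\delta}$ is also $\ll q^{\lambda+\eps}D^{(\lambda-1/4)(1-\delta) + \eps}$ (using $\lambda < 1/4$).

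\textbf{Main obstacle.} The principal delicacy is the uniform control of $g_D(s)$ across the transition $|s|\eta \asymp 1$: for small $|s|$ one must exploit the first-order cancellation $g_D - 1 = O(|s|\eta)$, while for large $|s|$ the factor $g_D$ is genuinely small but the $L$-function estimate must still absorb its oscillations. The choice of contour at $\sigma = 3/4$ is forced: it is the only vertical line where the Euler product still converges absolutely while the hypothesis $T^{1-\eps} \gg D^{1-\delta}$ meshes with the $|t|^{\lambda-5/4}$ integrability tail, and where the $(\x/qD)^{s-1/2}$ factor neither blows up nor degenerates. The appearance of the constant $1/4$ in $(\lambda - 1/4)(1-\delta)$ ultimately traces back to the Stirling exponent $-1/4$ of the gamma ratio on $\sigma = 3/4$.
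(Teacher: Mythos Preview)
Your argument is correct in the $D$-aspect and mirrors the paper's approach: estimate the integrand on $\sigma=3/4$ via Stirling, the subconvexity hypothesis on $L(2-2s,\bar\chi^2)$, and absolute convergence of the remaining factors, then split at $|t|\asymp D/\Delta D$ using $g_D-1=O(|s|\eta)$ below and $g_D=O((|s|\eta)^{-1})$ above --- the paper arrives at the same split by introducing a free cutoff $T_1$ and optimising to $T_1\asymp D^{1-\delta}$, whereas you go directly to $1/\eta$. Two small corrections to your commentary: the Euler product converges absolutely for every $\sigma<1$ by \cref{lemma:prod_convergence}, not only at $\sigma=3/4$ (what singles out $\sigma=3/4$ is that $\Re(2-2s)=1/2$, so the subconvexity input applies on the nose rather than through Phragm\'en--Lindel\"of); and your $q$-tracking via $\eta\asymp qD^{\delta-1}$ actually produces $q^{1/4+\eps}$ in the error, not the $q^{\lambda+\eps}$ you claim it matches --- though the paper's own proof does not track the $q$-aspect here either, writing simply $\Delta D\asymp D^\delta$.
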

\hyperlink{proof:eliminate_derivative_factor}{Click here to jump to the proof of \cref*{lemma:eliminate_derivative_factor}.} The approach is to truncate the integral to some intermediate height, either approximate the spurious factor $(1 - \left(1 - \tfrac{\Delta D}{D}\right)^{1 - (s-\half)})/((1 - (s-\half))\tfrac{\Delta D}{D})$ by $1$ when $|s|$ is small or bound it when $|s|$ is big, and then extend the line of integration to infinity after the spurious factor is removed. Three error terms are created by these three steps, and they are found to balance when the point of truncation is taken to be on the order of $D^{1-\delta}$.

  

We are now prepared to prove \cref{thm:quadratic_murmurations}.

\hypertarget{proof:quadratic_murmurations}{}
\begin{proof}[Proof of \cref{thm:quadratic_murmurations}]
  
  By Perron's formula \eqref{eq:perron},
  \begin{align}
    \nonumber
    \frac{1}{\#\cF} \sum_{d \in \cF} \frac{1}{\sqrt{\x}} \sum_{n < \x} a_\rep(n)\chi_d(n)
    &=
    \frac{1}{2\pi i} \int_{1 + \eps - iT}^{1 + \eps + iT} \Favg L(s,\rep\otimes\chi_d)\,\x^{s-\half}\,\frac{ds}{s}
    \\
    \label{eq:thm1proof_perron_et}
    &+
    O(\x^{\half}T^{-1} (\x Tq)^\eps)
    .
  \end{align}
  
  The construction of $\cF$ ensures that each $L(s,\rep\otimes\chi_d)$ is entire. Shifting the line of integration to $\Re(s) = \thalf$,
  \begin{align}
    \nonumber
    &\frac{1}{2\pi i} \int_{1 + \eps - iT}^{1 + \eps + iT} \Favg L(s,\rep\otimes\chi_d)\,\x^{s-\half}\,\frac{ds}{s}
    \\
    \label{eq:thm1proof_shift_mt}
    &=
    \frac{1}{2\pi i} \int_{\half - iT}^{\half + iT} \Favg L(s,\rep\otimes\chi_d)\,\x^{s-\half}\,\frac{ds}{s}
    \\
    \label{eq:thm1proof_shift_et}
    &+
    \frac{1}{2\pi i} \left(\int_{1 + \eps - iT}^{\half - iT} + \int_{\half + iT}^{1 + \eps + iT}\right) \Favg L(s,\rep\otimes\chi_d)\,\x^{s-\half}\,\frac{ds}{s}
    .
  \end{align}

  We now bound the contribution from the horizontal segments \eqref{eq:thm1proof_shift_et}. Using the subconvexity bound \cite{petrow_young:2023} and the Phragm\'en--Lindel\"of principle \cite[Thm.\ 8.2.1]{goldfeld:book}, 
  \begin{align*}
    \frac{1}{2\pi i}\left(\int_{1 + \eps - iT}^{\half - iT} + \int_{\half + iT}^{1 + \eps + iT} \right) \frac{1}{\#\cF} \sum_{d \in \cF} L(s,\rep \otimes \chi_d) \,\x^{s-\half} \,\frac{ds}{s}
    &\ll
    \int_\half^{1+\eps} \left(\qrep DT\right)^{\frac{1}{3}(1 - \sigma) + \eps} \x^{\sigma - \half} T^{-1}\,d\sigma 
    \\
    &\ll
    \left(\qrep DT\right)^{\frac{1}{6} + \eps} T^{-1} + \x^{\half+\eps} T^{-1}
    .
  \end{align*}
  When $x \asymp (\qrep D)^{1 \pm \eps}$ and $T = D^{\hat{\gamma}}$, 
  \begin{align*}
    \left(\qrep DT\right)^{\!\frac{1}{6} + \eps} T^{-1} + \x^{\half+\eps} T^{-1}
    \ll
    \qrep^{\half + \eps} D^{\hat{\rho} + \eps}
    .
  \end{align*}

  The other error term \eqref{eq:thm1proof_perron_et},
  \begin{align*}
    O\big(\x^{\half}T^{-1}(\x T q)^\eps\big)
    ,
  \end{align*}
  which also comes from Perron's formula \eqref{eq:perron}, is asymptotically bounded by $\qrep^{\half + \eps} D^{\hat{\rho} + \eps}$.
  
  Having bounded all error terms which have arisen thus far, we continue working on the main term \eqref{eq:thm1proof_shift_mt}.
  Substituting the approximate functional equation into \eqref{eq:thm1proof_shift_mt}, we obtain the decomposition
  \begin{align}
    \nonumber
    \frac{1}{2\pi i} \int_{\half - iT}^{\half + iT} \Favg L(s,\rep\otimes\chi_d)\,\x^{s-\half}\,\frac{ds}{s}
    &=
    \eqref{eq:afe_unweighted_t1ns_re} + \eqref{eq:afe_unweighted_t2ns_re} + \eqref{eq:afe_unweighted_t1s_re} + \eqref{eq:afe_unweighted_t2s_re}
    ,
  \end{align}
  where we have further split each of the two terms in the approximate functional equation into sums over squares \eqref{eq:afe_unweighted_t1s_re} \eqref{eq:afe_unweighted_t2s_re} and sums over nonsquares \eqref{eq:afe_unweighted_t1ns_re} \eqref{eq:afe_unweighted_t2ns_re}.

  The three truncated inverse Mellin transforms \eqref{eq:afe_unweighted_t1ns_re}, \eqref{eq:afe_unweighted_t2ns_re}, and \eqref{eq:afe_unweighted_t1s_re} are bounded by \cref{lemma:t1ns_contrib_r1,lemma:t2ns_contrib_r1,lemma:t1s_contrib_r1} for $x \asymp (qD)^{1\pm\eps}$, $T = D^{\hat{\gamma}}$, $\#\cF = D^{\delta}$, $\beta = \hat{\beta}$. In each case they conform to the error term $O(D^{\frac{1}{12}\hat\rho+\eps}q^{\frac{1}{6} + \eps} + D^{\hat{\rho} + \eps} \qrep^{\frac{3}{4}}$ of \cref{thm:quadratic_murmurations}.
  
  The integral \eqref{eq:afe_unweighted_t2s_re} from the squares in the second term of the approximate functional equation ultimately contributes the main term of \cref{thm:quadratic_murmurations}.
  To analyze \eqref{eq:afe_unweighted_t2s_re}, we apply \cref{lemma:t2s_contrib_r1}, yielding the estimate
  \begin{align}
    \label{eq:thm1proof_L2approx}
    \eqref{eq:afe_unweighted_t2s_re}
    &=
    \frac{\rootnum_{\cF}}{2\pi i}\int_{\eps - iT}^{\eps + iT}
    \frac{\halfGamma{1-s+\bar\kappa}}{\halfGamma{s+\kappa}}
    \frac{L(2-2s, \barrepsq)}{L^{(2\qrep)}(3-2s, \barrepsq)}
    \mspace{1mu}
    \prod_{p \nmid 2\qrep} \left(1 - \frac{1}{(p+1)(1 - \bar\alpha_1(p)^{-2}p^{3-2s})}\right)
    \\
    \nonumber
    &\hspace{8cm}\cdot
    \frac{1 - \left(1 - \frac{\Delta D}{D}\right)^{1 - (s-\half)}}{(1 - (s-\half))\frac{\Delta D}{D}}
    \left(\frac{\pi\x}{\qrep D}\right)^{\!s-\half}
    \frac{ds}{s}
    \\
    \nonumber
    &+ O(D^{\hat{\rho}} \qrep^{\frac{3}{4}} \epsfac )
    .
  \end{align}

  We now shift the contour of integration of the right hand side of \eqref{eq:thm1proof_L2approx} to $\Re(s) = 3/4$, so that we may apply \cref{lemma:eliminate_derivative_factor}. Before continuing with the main term, let's assess the consequences of this shift.

  Recall that we've written $\rep \eqqcolon |\mspace{0mu}\cdot\mspace{0mu}|^{i\tau}\chi$. If $\bar\chi^2$ is nontrivial, then
  the integrand \eqref{eq:thm1proof_L2approx} has no poles in the region $\eps - iT \longrightarrow \tfrac{3}{4} - iT \longrightarrow \tfrac{3}{4} + iT  \longrightarrow \eps + iT \longrightarrow \eps - iT$.
  If $\bar\chi^2$ is trivial, then, using our assumption that $|\tau| < T$, the integrand \eqref{eq:afe_unweighted_t2s_re} has a pole at $s = \thalf + i\tau$ with residue
  \begin{align}
    \label{eq:thm1proof_residue}
    -\rootnum_{\cF}
    \frac{6}{\pi^2}\frac{1 + \frac{1}{5}\one{2\nmid q}}{1 + 2i\tau}
    \left(\frac{\pi \x}{q D}\right)^{\!i\tau}
    \prod_{p\mid q}\frac{p}{p+1}
    \prod_{\mspace{1.75mu}p \nmid q} \!\left(1 + \frac{1}{(p + 1)(p^2 - 1)}\right)
    \!.
  \end{align}

  We now bound the integral of \eqref{eq:thm1proof_L2approx} along the horizontal segments $\eps - iT \longrightarrow \tfrac{3}{4} - iT$ and $\tfrac{3}{4} + iT  \longrightarrow \eps + iT$
  For $\sigma < \thalf$, we have $L(2-2s,\barrepsq) \ll 1$. Combining with
  \begin{align}
    \nonumber
    \frac{\halfGamma{1-s+\bar\kappa}}{\halfGamma{s+\kappa}} \ll |s|^{\half - \sigma}
  \end{align}
  and
  \begin{align}
    \nonumber
    \frac{1 - \left(1 - \frac{\Delta D}{D}\right)^{1 - (s-\half)}}{(1 - (s-\half))\frac{\Delta D}{D}} \left(\frac{\pi\x}{\qrep D}\right)^{\!s - \half} \ll \frac{D}{|s|\Delta D}
  \end{align}
  we find that, for $x \asymp (\qrep D)^{1 \pm \eps}$, $T = D^{\hat{\gamma}}$, $\#\cF = D^\delta$, and $\beta = \hat{\beta}$, the contribution from the horizontal segments $\eps \pm iT \longrightarrow \thalf - \eps \pm iT$ is $\ll D^{1 - \delta - \frac{3}{2}\hat{\gamma} + \eps} \ll D^{\hat\rho}$.

  For $\thalf < \sigma < \tfrac{3}{4}$, we use e.g.\ the convexity bound \cite[Thm. 5.23]{IK} to bound the contribution of the horizontal integral by $D^{1 - \delta - \hat{\gamma} + \eps} q^{\frac{1}{4} + \eps}$; the product appearing in the integrand of \eqref{eq:afe_unweighted_t2s_re} converges absolutely (\cref{lemma:prod_convergence}).
  

  We continue now with the main term 
  \begin{align*}
    &\frac{\rootnum_{\cF}}{2\pi i}\int_{\frac{3}{4} - iT}^{\frac{3}{4} + iT}
    \frac{\halfGamma{1-s+\bar\kappa}}{\halfGamma{s+\kappa}}
    \frac{L(2-2s, \barrepsq)}{L^{(2\qrep)}(3-2s, \barrepsq)}
    \mspace{1mu}
    \prod_{p \nmid 2\qrep} \left(1 - \frac{1}{(p+1)(1 - \bar\alpha_1(p)^{-2}p^{3-2s})}\right)
    \\
    &\hspace{8cm}\cdot
    \frac{1 - \left(1 - \frac{\Delta D}{D}\right)^{1 - (s-\half)}}{(1 - (s-\half))\frac{\Delta D}{D}}
    \left(\frac{\pi\x}{\qrep D}\right)^{\!s - \half} \frac{ds}{s}
    .
  \end{align*}
  Applying \cref{lemma:eliminate_derivative_factor} with $\lambda = 1/6$ \cite{petrow_young:2023} and collecting the residue \eqref{eq:thm1proof_residue} and the error terms, all of which we have noted to be admissible over the course of the proof, proves \cref{thm:quadratic_murmurations}.
\end{proof}

\subsection{Proof of \cref{thm:gamma_murmurations}}
\label{sec:contrib_weighted}

The proof of \cref{thm:gamma_murmurations} proceeds very much along the same lines of the proof of \cref{thm:quadratic_murmurations} from \cref{sec:contrib_unweighted}. It is, however, much simpler, thanks to the weight $f$ featuring in \cref{thm:gamma_murmurations}.

We ask that $f(\sigma + it) \ll |t|^{-2 - \eps}$ for two reasons. The first of these reasons is to ensure that tails of integrals which appear are negligible, in turn allowing us to extend the vertical line segments we integrate along to infinity. The largest any of the integrands appearing over the course of the proof can be in the $t$ aspect is $|t|^{\frac{2}{3} + \eps}$: the error term $R_2$ in the \hyperlink{proof:t2s_contrib_r1}{proof of \cref*{lemma:t2s_contrib_r1}} at $\sigma = \eps$ is of this size (in what follows we'll take $\beta = 0$). Imposing $f(\sigma + it) \ll |t|^{-2 - \eps}$ ensures, with room to spare, that every integrand decays quickly enough for all of the contribution to come from regions $|s| \ll 1$.

Second, we require $f(\sigma + it) \ll |t|^{-2 - \eps}$ to obviate the need for an equivalent of \cref{lemma:eliminate_derivative_factor}. The more quickly $f$ decays, the greater the extent to which the mass of the integral is concentrated near the real axis. When $|s|$ is sufficiently small, we'll have
\begin{align}
  \label{eq:deriv_approx}
  \frac{1 - \left(1 - \frac{\Delta D}{D}\right)^{1 - (s-\half)}}{(s - \half)\frac{\Delta D}{D}} \approx 1
\end{align}
by a first order series expansion. If the contribution to our integrals from outside regions where \eqref{eq:deriv_approx} is valid is appropriately small, then we have effectively removed the factor of the left hand side of \eqref{eq:deriv_approx} from our integrands. This procedure happens in \eqref{eq:t2s_contrib_smoothed_step2}. In many applications, requiring that $f$ decay at this rather modest rate is somewhat of a non-issue, whereas incurring penalty to the error term from an equivalent of \cref{lemma:eliminate_derivative_factor} would be regrettable.

We introduce a new target for the $D$ aspect of the error term:
\begin{align}
  \label{eq:rhofhatdef}
  \hat{\rho}_f \coloneqq -\frac{1}{8} + \left|\mspace{1mu}\delta - \frac{7}{8}\right| = \max\!\left\{\frac{3}{4} - \delta,\, \delta - 1\right\}.
\end{align}
Our target in the $\qrep$ aspect, $\qrep^{\frac{3}{4}}$,
remains unchanged.

\begin{lemma}[Smoothed term 1 non-square contribution]
  \label{lemma:t1ns_contrib_r1_smoothed}
  If
  $0 < c < 1$,
  $r = 1$,
  $x \asymp (\qrep D)^{1 \pm \eps}$,
  and
  $\#\cF = D^{\delta}$,
  \begin{align*}
    &\frac{1}{2\pi i}\int_{c - i\infty}^{c + i\infty} \Favg \nsqsum \frac{a_\rep(n)\chi_d(n)}{n^s} \Vp \x^{s-\half} f(s)\mspace{1.5mu}ds
    \,\ll\, D^{\hat{\rho}_f} \qrep^{\frac{3}{4}} \epsfac
    .
  \end{align*}
\end{lemma}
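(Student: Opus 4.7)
The plan is to follow the template of the proof of Lemma \ref{lemma:t1ns_contrib_r1}, exploiting two features that make the smoothed case substantially simpler: the rapid decay hypothesis $f(\sigma + it) \ll |t|^{-2-\eps}$ makes the integrand absolutely integrable along any vertical line in the strip $0 < \sigma < 1$, and the horizontal segments at $|\Im(s)| \to \infty$ vanish in the limit so the contour may be shifted freely. I would take $\alpha = \beta = 0$ in the definition \eqref{eq:xidef}, so that $\xi \equiv 1$; the dynamic rebalancing that forced $\beta = \hat\beta$ in the unsmoothed setting is no longer necessary since $f$ already tames the $t$-aspect.

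By Lemma \ref{lemma:V_is_holomorphic} the integrand is holomorphic throughout the strip, so for any $0 < c < 1$ its integral on $\Re(s) = c$ equals its value on $\Re(s) = \eps$. On that line $\sigma < \half$, so Lemma \ref{lemma:t1ns_size} (with $r = 1$, $\theta = 0$, the second summand of $1 + \mup^{\half - \sigma}$ dominating) gives the pointwise estimate
\[
\Favg \nsqsum \frac{a_\rep(n)\chi_d(n)}{n^s}\, V_s\!\left(\frac{\pi^{\frac{1}{2}} n}{\sqrt{q d}}\right)
\,\ll\, \frac{q^{\frac{1}{2}} D^{\frac{1}{2}}}{\#\cF}\, (QD)^{\frac{1-\sigma}{2}}\, (qD)^\eps.
\]
Multiplying by $x^{\sigma - \half} \asymp (qD)^{-\frac{1}{2}}$, using $Q \asymp q|s - \kappa|$, and substituting $\#\cF = D^\delta$, the integrand at $\sigma = \eps$ is bounded by $q^{\frac{1}{2}} D^{\frac{1}{2} - \delta}\, |s - \kappa|^{\frac{1}{2}}\, |f(s)|\, (qD)^\eps$.

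The next step is the $t$-integration. Recalling from Example \ref{example:GL1} that $\kappa = (1 - \chi(-1))/2 - i\tau$, so that $|s - \kappa| \ll 1 + |t + \tau|$, a straightforward split-and-bound at $|t| = |\tau|$ using the decay of $f$ in each region gives
\[
\int_{-\infty}^\infty |s - \kappa|^{\frac{1}{2}}\, |f(\eps + it)|\, dt \,\ll\, 1 + |\tau|^{\frac{1}{2}}.
\]
Invoking the hypothesis $\tau \ll D^{1-\delta-\eps}$ from Theorem \ref{thm:gamma_murmurations} bounds $\tau^{\frac{1}{2}}$ by $D^{\frac{1-\delta}{2}+\eps}$, so the total integral is at most $q^{\frac{1}{2}}\, D^{1 - \frac{3\delta}{2}}\, (qD)^\eps$.

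Finally one verifies that this is absorbed by $q^{\frac{3}{4}} D^{\hat\rho_f}$ throughout $\tfrac{3}{4} < \delta < 1$. Trivially $q^{\frac{1}{2}} \leq q^{\frac{3}{4}}$, and since $\hat\rho_f = \max\{\tfrac{3}{4} - \delta,\, \delta - 1\}$, the comparisons $1 - \tfrac{3\delta}{2} \leq \tfrac{3}{4} - \delta$ and $1 - \tfrac{3\delta}{2} \leq \delta - 1$ reduce to $\delta \geq \tfrac{1}{2}$ and $\delta \geq \tfrac{4}{5}$ respectively, both comfortably satisfied in the range where each branch of $\hat\rho_f$ is active. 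I anticipate no serious obstacle; the one step requiring care is the integration estimate for $|s - \kappa|^{\frac{1}{2}} |f|$, where one must track the peak of $|s - \kappa|$ near $t = -\tau$ against the decay of $f$ centered at $t = 0$.
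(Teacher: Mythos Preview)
Your proof is correct and follows essentially the same route as the paper: shift to $\sigma = \eps$, apply Lemma~\ref{lemma:t1ns_size} with $\xi \equiv 1$ (the paper likewise sets $\beta = 0$ in the smoothed regime), and use the decay of $f$ to control the $t$-integral, arriving at the bound $\tau^{1/2} q^{1/2} D^{1/2-\delta}(\tau qD)^\eps$. You supply more detail than the paper does on the $t$-integration near $|t| \approx |\tau|$ and on the final comparison with $\hat\rho_f$, both of which the paper leaves implicit.
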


\hypertarget{proof:t1ns_contrib_r1_smoothed}{}
\begin{proof}
  The integrand is holomorphic for $0 < \sigma < 1$, so we may shift the line of integration to $\sigma = \eps$. Invoking \cref{lemma:t1ns_size},
  \begin{align}
    \nonumber
    \frac{1}{2\pi i}
    \int_{(\eps)}
    &\Favg \nsqsum \frac{a_\rep(n)\chi_d(n)}{n^s} \Vp x^{s-\half} f(s)\mspace{1.5mu}ds
    \\
    \nonumber
    &\ll \int_{(\eps)} \frac{\qrep^\half D^\half}{\#\cF} \big(\xi\sqrt{QD}\big)^{1 - \sigma} \epsfac x^{\sigma - \half} f(s)\mspace{1.5mu}ds
    \\
    \nonumber
    &\ll \frac{\tau^\half \qrep^\half D^\half}{\#\cF} (\tau qD)^\eps
    ,
  \end{align}
  where we have made use our earlier observations about the consequences of $f$'s rate of decay.
\end{proof}

\begin{lemma}[Smoothed term 2 non-square contribution]
  \label{lemma:t2ns_contrib_r1_smoothed}
  If
  $0 < c < 1$,
  $r = 1$,
  $x \asymp (\qrep D)^{1 \pm \eps}$,
  and
  $\#\cF = D^{\delta}$,
  \begin{align*}
    &\frac{1}{2\pi i}\int_{c - i\infty}^{c + i\infty} \Gfac \Favg \nsqsum \frac{\rootnum_{\rep\otimes\chi_d}a_{\bar\rep}(n)\chi_d(n)}{n^{1-s}} \Vm \xd^{\!s-\half} f(s)\mspace{1.5mu}ds
    \,\ll\, D^{\hat{\rho}_f} \qrep^{\frac{3}{4}} \epsfac
    .
  \end{align*}
\end{lemma}

\hypertarget{proof:t2ns_contrib_r1_smoothed}{}
\begin{proof}
  Shifting the line of integration to $\sigma = \thalf$ and applying \cref{lemma:t2ns_size},
  \begin{align*}
    &\frac{1}{2\pi i}\int_{(c)} \Gfac \Favg \nsqsum \frac{\rootnum_{\rep\otimes\chi_d} a_{\bar\rep}(n)\chi_d(n)}{n^{1-s}} \Vm \xd^{\!s-\half} f(s)\mspace{1.5mu}ds
    \\
    &\ll
    \int_{(\half)}
    \frac{\qrep^\half D^\half}{\#\cF} (QD^r)^{\half - \sigma} \mum^\half \left(1 + \mum^{\sigma - \half + \theta}\right) \epsfac x^{\sigma - \half} f(s)\mspace{1.5mu}ds
    \\
    &\ll
    \frac{\tau^{\frac{1}{4}} \qrep^{\frac{3}{4}} D^{\frac{3}{4}}}{\#\cF} (\tau \qrep D)^\eps
    .
  \end{align*}
  This is the term which constrains the $\tau$ aspect of the error term in \cref{thm:gamma_murmurations} when $\cF \ll D^{\frac{7}{8}}$.
\end{proof}

\begin{lemma}[Smoothed term 1 square contribution]
  \label{lemma:t1s_contrib_r1_smoothed}
  If
  $0 < c < 1$,
  $r = 1$,
  $x \asymp (\qrep D)^{1 \pm \eps}$,
  and
  $\#\cF = D^{\delta}$,
  \begin{align*}
    &\frac{1}{2\pi i}\int_{c - i\infty}^{c + i\infty} \Favg \sum_{n=1}^\infty \frac{a_{\rep}(n^2)\chi_d(n^2)}{n^{2s}} \Vpsq \x^{s-\half} f(s)\mspace{1.5mu}ds
    \,\ll\, D^{\hat{\rho}_f} \qrep^{\frac{3}{4}} \epsfac
    .
  \end{align*}
\end{lemma}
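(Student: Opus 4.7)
The plan is to mirror the proof of \cref{lemma:t1s_contrib_r1} but exploit the rapid decay of $\mf$ to avoid the contour-shifting gymnastics and the careful optimization of $\beta$ that were needed in the unsmoothed case. Concretely, I would first observe that the integrand is holomorphic in the strip $0 < \sigma < 1$: $V_s$ is holomorphic there by \cref{lemma:V_is_holomorphic}, the Dirichlet series $\sum a_\rep(n^2)\chi_d(n^2) n^{-2s}$ converges absolutely after $V_s$ cuts off the tail, and $\mf$ is holomorphic in the strip by hypothesis. So I would shift the line of integration from $\Re(s) = c$ to $\Re(s) = \eps$, with no poles crossed. The decay $\mf(\sigma + it) \ll |t|^{-2-\eps}$ uniformly on the strip justifies pushing the contour and discarding the horizontal segments at $\Im(s) = \pm T$ as $T \to \infty$, since on those segments the integrand is at worst polynomial in $|t|$ (the factor from \cref{lemma:t1s_size} grows at most polynomially in the reduced analytic conductor).

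Next I would apply \cref{lemma:t1s_size} pointwise on the new contour, using $\repdim = 1$ and $\theta = 0$. With $\xi \asymp |s-s_0|^{2\beta}$ bounded in the $s$-aspect by a small power, and $Q \asymp \qrep |s - \kappa|$, the bound becomes
\[
\Favg \sum_n \frac{a_\rep(n^2)\chi_d(n^2)}{n^{2s}} \Vpsq \ll \big(1 + \mup^{\half - \sigma}\big)(qD)^\eps \ll (\qrep|s-\kappa|D)^{\frac{1}{4} - \frac{\sigma}{2}}(\tau qD)^\eps
\]
at $\sigma = \eps$. Multiplying by $\x^{s - \half}$ with $\x \asymp (\qrep D)^{1\pm\eps}$ and integrating against $\mf(s)$, a short calculation gives a pointwise bound on the integrand of order $\qrep^{-\frac{1}{4}+\eps}D^{-\frac{1}{4}+\eps}(1 + |t|)^{\frac{1}{4}}|\mf(\sigma + it)|$. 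Since $\mf(\sigma + it) \ll |t|^{-2-\eps}$, the $t$-integral converges absolutely and contributes an absolutely bounded constant, yielding a total bound of $\qrep^{-\frac{1}{4}+\eps}D^{-\frac{1}{4}+\eps}$.

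Finally I would verify that this is well within the target $D^{\hat\rho_f}\qrep^{\frac{3}{4}}(\tau qD)^\eps$: we have $\hat\rho_f = -\frac{1}{8} + |\delta - \frac{7}{8}| \geqslant -\frac{1}{8}$, so $D^{-\frac{1}{4}} \leqslant D^{\hat\rho_f}$ comfortably, and $\qrep^{-\frac{1}{4}} \leqslant \qrep^{\frac{3}{4}}$ trivially. The only thing to keep an eye on is that the shift of contour is valid uniformly in the parameters; this follows from \cref{cond:afe_specific} (in particular $\Re(s_0) < 0$ ensures $\xi(s)$ is well defined and nonvanishing in the strip) together with the polynomial growth estimates on $V_s$ implicit in \cref{lemma:V_approx}. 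There is no real obstacle here: the hypothesis $\mf(\sigma + it) \ll |t|^{-2-\eps}$ is much stronger than what would be needed to close the contour, and the contribution from the term 1 square sum is genuinely small at $\sigma$ near $0$, so the lemma follows without any delicate balancing.
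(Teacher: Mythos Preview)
Your approach is essentially identical to the paper's: shift the contour to $\sigma = \eps$ using holomorphicity and the decay of $\mf$, then apply \cref{lemma:t1s_size} pointwise to obtain a bound of order $(\qrep D)^{-\frac{1}{4}+\eps}$. The only minor discrepancy is that the paper tracks an explicit factor of $\tau^{\frac{1}{4}}$ (arising because $|s-\kappa| \asymp \tau$ near $t=0$), whereas you write $(1+|t|)^{\frac{1}{4}}$; this is harmless for the stated conclusion since $\tau \ll D^{1-\delta}$ keeps the bound well below $D^{\hat\rho_f}$.
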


\hypertarget{proof:t1s_contrib_r1_smoothed}{}
\begin{proof}
  Shifting the line of integration to $\sigma = \eps$ and applying \cref{lemma:t1s_size},
  \begin{align*}
    \frac{1}{2\pi i}
    &\int_{(c)} \Favg \sum_{n=1}^\infty \frac{a_{\rep}(n^2)\chi_d(n^2)}{n^{2s}} \Vpsq \x^{s-\half} f(s)\mspace{1.5mu}ds
    \\
    &\ll
    \int_{(\eps)}  \left(1 + \mup^{\half - \sigma + \theta}\right) \epsfac \x^{\sigma-\half} f(s)\mspace{1.5mu}ds
    \\
    &\ll
    (\xi\sqrt{QD})^\half (\qrep D)^{-\half} \epsfac
    \\
    &\ll
    \tau^{\frac{1}{4}} (\qrep D)^{-\frac{1}{4}} (\tau\qrep D)^\eps
    .
    \qedhere
  \end{align*}
\end{proof}

\begin{lemma}[Smoothed term 2 square contribution]
  \label{lemma:t2s_contrib_r1_smoothed}
  If
  $0 < c < 1$,
  $r = 1$,
  $x \asymp (\qrep D)^{1 \pm \eps}$,
  and
  $\#\cF = D^{\delta}$,
  \begin{align*}
    &\frac{1}{2\pi i}\int_{c - i\infty}^{c + i\infty}  \Gfac \Favg \sum_{n=1}^\infty \frac{\rootnum_{\rep\otimes\chi_d} a_{\bar\rep}(n^2)\chi_d(n^2)}{n^{2-2s}} \Vmsq \xd^{\!s-\half} \frac{ds}{s}
    \\
    &= \frac{\rootnum_{\cF}}{2\pi i}\int_{\eps - i\infty}^{\eps + i\infty}
    \frac{\halfGamma{1-s+\bar\kappa}}{\halfGamma{s+\kappa}}
    \frac{L(2-2s, \barrepsq)}{L^{(2\qrep)}(3-2s, \barrepsq)}
    \mspace{1mu}
    \prod_{p \nmid 2\qrep} \left(1 - \frac{1}{(p+1)(1 - \bar\alpha_1(p)^{-2}p^{3-2s})}\right)
    \left(\frac{\pi\x}{\qrep D}\right)^{\!s-\half} f(s)\mspace{1.5mu}ds
    \\
    &+ O(D^{\hat{\rho}_f} \qrep^{\frac{3}{4}} + |\tau|D^{\delta - 1} )\epsfac
    .
  \end{align*}
\end{lemma}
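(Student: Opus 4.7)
The plan is to mirror the \hyperlink{proof:t2s_contrib_r1}{proof of \cref*{lemma:t2s_contrib_r1}}, successively invoking \cref{lemma:t2s_approx_step1,lemma:t2s_approx_step2,lemma:t2s_approx_step3,lemma:t2s_approx_step4} on the inner sum, but exploiting the decay $f(\sigma + it) \ll |t|^{-2-\eps}$ to simplify matters dramatically. Concretely, the integrand is holomorphic in $0 < \sigma < 1$, so I would shift the line of integration freely between $\Re(s) = c$, $\Re(s) = \thalf$, and $\Re(s) = \eps$ without incurring horizontal-segment contributions (those vanish in the limit thanks to $f$'s decay). Between successive approximation steps I would pick whichever vertical line minimizes the error, exactly as in the unweighted case.

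With the parameter $\alpha = \beta = 0$ (which is optimal in this smoothed regime since $f$ already localizes), applying \cref{lemma:t2s_approx_step1,lemma:t2s_approx_step2,lemma:t2s_approx_step3,lemma:t2s_approx_step4} in sequence yields, modulo error terms, the Dirichlet series $\sum_n a_{\bar\rep}(n^2) n^{-(2-2s)} \prod_{p\mid n/(n,2q)} \frac{p}{p+1}$ multiplied by the ratio of gamma factors, the scale factor $(\pi\x/qD)^{s-1/2}$, and the spurious factor
\[
\frac{1 - \left(1 - \tfrac{\Delta D}{D}\right)^{1 - (s-\half)}}{(1 - (s-\half))\tfrac{\Delta D}{D}}.
\]
\Cref{lemma:euler_prod_rep_square} (with $\repdim = 1$) then identifies this Dirichlet series with the Euler product $L(2-2s,\barrepsq)/L^{(2q)}(3-2s,\barrepsq) \prod_{p\nmid 2q}(1 - \frac{1}{(p+1)(1 - \bar\alpha_1(p)^{-2}p^{3-2s})})$ appearing in the claimed main term. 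The errors from the four approximation steps, integrated against $f$ on $\Re(s) = \eps$, each contribute at most $O(q^{3/4} D^{\hat\rho_f}(qD)^\eps)$: for example, $R_2$ at $\sigma = \eps$ gives $q^{1/6}D^{1/2-\delta}(D^{1/14} + |t|^{1/6})$, which integrates against $|t|^{-2-\eps}$ to $q^{1/6}D^{4/7-\delta} \ll D^{3/4 - \delta}$, while $R_1$ gives the matching $D^{\delta - 1}$ contribution.

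The remaining task is to remove the spurious factor above. A first-order Taylor expansion in $s$ around $s = \thalf$ shows that the factor equals $1 + O\!\left(|s - \thalf|\tfrac{\Delta D}{D}\right)$ for $|s - \thalf| \ll D/\Delta D$, while it is uniformly bounded by $O(D/(|s|\Delta D))$ outside this region. Against the decay of $f$ both the ``local'' error (polynomial in $|s|$ of degree $1$ times $\Delta D/D \sim D^{\delta-1}$) and the ``tail'' integral (bounded by $D/\Delta D \cdot \int_{|t| > 1} |t|^{-2-\eps}\,dt$) are absorbed into $O(D^{\hat\rho_f + \eps})$, as $D^{\delta - 1} \ll D^{\hat\rho_f}$ uniformly for $\tfrac{3}{4} < \delta < 1$. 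This is exactly the scenario motivating the decay assumption on $f$ discussed in the preamble to \cref{sec:contrib_weighted}; no analogue of \cref{lemma:eliminate_derivative_factor} (and its associated subconvexity-driven loss) is needed.

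The main obstacle is bookkeeping: tracking the four error terms $R_1,\dots,R_4$ from the \hyperlink{proof:t2s_contrib_r1}{proof of \cref*{lemma:t2s_contrib_r1}} together with the new error from removing the spurious factor, and verifying that each integrates (against $f$) to within the target $q^{3/4} D^{\hat\rho_f}(qD)^\eps$ uniformly in $\tfrac{3}{4} < \delta < 1$. The two regimes $\delta \lessgtr \tfrac{7}{8}$ in $\hat\rho_f = -\tfrac{1}{8} + |\delta - \tfrac{7}{8}|$ correspond to whether the dominant error is the ``truncation error'' $D^{\delta-1}$ from $R_1$ or the ``disc-power error'' contribution $D^{3/4 - \delta}$ coming from the $q^{3/4}$-strength subpart of $R_2$; they balance precisely at $\delta = \tfrac{7}{8}$.
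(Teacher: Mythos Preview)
Your proposal is correct and follows essentially the same route as the paper's proof: reuse the $F_j$/$R_j$ machinery from the proof of \cref{lemma:t2s_contrib_r1}, apply \cref{lemma:t2s_approx_step1,lemma:t2s_approx_step2,lemma:t2s_approx_step3,lemma:t2s_approx_step4} in sequence, then remove the spurious factor via the Taylor/tail split. The paper integrates $R_1,R_2,R_3$ on $\Re(s)=\tfrac12$ and only $R_4$ on $\Re(s)=\eps$ (your choice of $\eps$ for all four still gives admissible bounds, just slightly looser in the $\tau$-aspect), and your final-paragraph attribution of the $D^{3/4-\delta}$ constraint to $R_2$ is off---within this lemma the binding error is $D^{\delta-1}$ from $R_1$ and the spurious-factor step, while $D^{3/4-\delta}$ actually comes from \cref{lemma:t2ns_contrib_r1_smoothed}.
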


\hypertarget{proof:t2s_contrib_r1_smoothed}{}
\begin{proof} 
  Let $F_0$ through $F_4$ and $R_0$ through $R_4$ be as in the \hyperlink{proof:t2s_contrib_r1}{proof of \cref*{lemma:t2s_contrib_r1}}. The left hand side of \cref{lemma:t2s_contrib_r1} (times $2\pi i$) can be written
  \begin{align}
    \nonumber
    \int_{(c)}
    F_0 \,f(s)\mspace{1.5mu}ds
    &= \int_{(\half)} F_0 \,f(s)\mspace{1.5mu}ds
    \\
    \nonumber
    &= \int_{(\half)} \big(F_1 + O(R_1)\big)\,f(s)\mspace{1.5mu}ds \qquad\text{(by \cref{lemma:t2s_approx_step1})}
    \\
    \nonumber
    &= \int_{(\half)} \big(F_2 + O(R_1 + R_2)\big)\,f(s)\mspace{1.5mu}ds \qquad\text{(by \cref{lemma:t2s_approx_step2})}
    \\
    \nonumber
    &= \int_{(\half)} \big(F_3 + O(R_1 + R_2 + R_3)\big)\,f(s)\mspace{1.5mu}ds \qquad\text{(by \cref{lemma:t2s_approx_step3})}
    \\
    \nonumber
    &= \int_{(\half)} O(R_1 + R_2 + R_3)\,f(s)\mspace{1.5mu}ds
    \,+\,
    \int_{(\eps)} F_3 \,f(s)\mspace{1.5mu}ds
    \\
    \label{eq:t2s_contrib_smoothed_step1}
    &= \int_{(\half)} O(R_1 + R_2 + R_3)\,f(s)\mspace{1.5mu}ds
    \,+\,
    \int_{(\eps)} O(R_4) \,f(s)\mspace{1.5mu}ds 
    \,+\,
    \int_{(\eps)} F_4 \,f(s)\mspace{1.5mu}ds \qquad\text{(by \cref{lemma:t2s_approx_step4})}
    .
  \end{align}
  
  Analyzing the third term on the right hand side of \eqref{eq:t2s_contrib_smoothed_step1},
  \begin{align}
    \nonumber
    &\int_{(\eps)}
    \frac{\halfGamma{1-s+\bar\kappa}}{\halfGamma{s+\kappa}}
    \frac{L(2-2s, \barrepsq)}{L^{(2\qrep)}(3-2s, \barrepsq)}
    \mspace{1mu}
    \prod_{p \nmid 2\qrep} \left(1 - \frac{1}{(p+1)(1 - \bar\alpha_j(p)^{-2}p^{3-2s})}\right)
    \\
    \nonumber
    &\hspace{8cm}\cdot
    \frac{1 - \left(1 - \frac{\Delta D}{D}\right)^{1 - (s-\half)}}{(1 - (s-\half))\frac{\Delta D}{D}} \left(\frac{\pi\x}{\qrep D}\right)^{\!s-\half}
    f(s)\mspace{1.5mu}ds
    \\
    \label{eq:t2s_contrib_smoothed_mt}
    &=
    \int_{(\eps)}
    \frac{\halfGamma{1-s+\bar\kappa}}{\halfGamma{s+\kappa}}
    \frac{L(2-2s, \barrepsq)}{L^{(2\qrep)}(3-2s, \barrepsq)}
    \mspace{1mu}
    \prod_{p \nmid 2\qrep} \left(1 - \frac{1}{(p+1)(1 - \bar\alpha_j(p)^{-2}p^{3-2s})}\right)
    \left(\frac{\pi\x}{\qrep D}\right)^{\!s-\half}
    f(s)\mspace{1.5mu}ds
    \\
    \label{eq:t2s_contrib_smoothed_step2}
    &\begin{aligned}
    &+
    \int_{(\eps)}
    \frac{\halfGamma{1-s+\bar\kappa}}{\halfGamma{s+\kappa}}
    \frac{L(2-2s, \barrepsq)}{L^{(2\qrep)}(3-2s, \barrepsq)}
    \mspace{1mu}
    \prod_{p \nmid 2\qrep} \left(1 - \frac{1}{(p+1)(1 - \bar\alpha_j(p)^{-2}p^{3-2s})}\right)
    \\
    &\hspace{8cm}
    \cdot
    \left[\frac{1 - \left(1 - \frac{\Delta D}{D}\right)^{1 - (s-\half)}}{(1 - (s-\half))\frac{\Delta D}{D}} - 1\right]
    \left(\frac{\pi\x}{\qrep D}\right)^{\!s-\half}
    f(s)\mspace{1.5mu}ds
    .
    \end{aligned}
  \end{align}
  If $s\Delta D \ll D$, then a linear approximation gives
  \begin{align}
    \label{eq:t2s_smoothed_approx1}
    \frac{1 - \left(1 - \frac{\Delta D}{D}\right)^{1 - (s-\half)}}{(1 - (s-\half))\frac{\Delta D}{D}}
    &=
    1 + O\!\left(\frac{|s_*|\Delta D}{D}\right)
    .
  \end{align}
  If $s\Delta D \gg D$, then instead bound the numerator as $\ll_\sigma 1$, so
  \begin{align}
    \label{eq:t2s_smoothed_approx2}
    \frac{1 - \left(1 - \frac{\Delta D}{D}\right)^{1 - (s-\half)}}{(1 - (s-\half))\frac{\Delta D}{D}}
    &\ll
    O\!\left(\frac{D}{|s_*|\Delta D}\right)
    .
  \end{align}
  Shifting \eqref{eq:t2s_contrib_smoothed_step2}'s line of integration to $\sigma = \thalf - \eps$ --- the integrand is holomorphic in the strip $0 < \sigma < \thalf$ --- and substituting \eqref{eq:t2s_smoothed_approx1} \eqref{eq:t2s_smoothed_approx2},
  \begin{align}
    \nonumber
    \eqref{eq:t2s_contrib_smoothed_step2}
    &= \int_{(\half - \eps)}
    \frac{\halfGamma{1-s+\bar\kappa}}{\halfGamma{s+\kappa}}
    \frac{L(2-2s, \barrepsq)}{L^{(2\qrep)}(3-2s, \barrepsq)}
    \mspace{1mu}
    \prod_{p \nmid 2\qrep} \left(1 - \frac{1}{(p+1)(1 - \bar\alpha_j(p)^{-2}p^{3-2s})}\right)
    \\
    \nonumber
    &\hspace{7cm}
    \cdot
    \left[\frac{1 - \left(1 - \frac{\Delta D}{D}\right)^{1 - (s-\half)}}{(1 - (s-\half))\frac{\Delta D}{D}} - 1\right]
    \left(\frac{\pi\x}{\qrep D}\right)^{\!s-\half}
    f(s)\mspace{1.5mu}ds
    \\
    \nonumber
    &\begin{aligned}
    \ll
    &\int_{\half - \eps - i\frac{D}{\Delta D}}^{\half - \eps + i\frac{D}{\Delta D}}
    \frac{\halfGamma{1-s+\bar\kappa}}{\halfGamma{s+\kappa}}
    \frac{L(2-2s, \barrepsq)}{L^{(2\qrep)}(3-2s, \barrepsq)}
    \frac{|s_*|\Delta D}{D}
    \left(\frac{\pi\x}{\qrep D}\right)^{\!s-\half}
    f(s)\mspace{1.5mu}ds
    \\
    &+
    \int_{\substack{\Re(s) = \half - \eps \\ |\Im(s)| > \frac{D}{\Delta D}}}
    \frac{\halfGamma{1-s+\bar\kappa}}{\halfGamma{s+\kappa}}
    \frac{L(2-2s, \barrepsq)}{L^{(2\qrep)}(3-2s, \barrepsq)}
    \frac{D}{|s_*|\Delta D}
    \left(\frac{\pi\x}{\qrep D}\right)^{\!s-\half}
    f(s)\mspace{1.5mu}ds
    \end{aligned}
    \qquad\qquad\text{(using \cref{lemma:prod_convergence})}
    \\
    \nonumber
    &\ll
    \frac{\Delta D}{D} \int_{\half - \eps - i\frac{D}{\Delta D}}^{\half - \eps + i\frac{D}{\Delta D}} (|s| + |\tau|)^{\half - \sigma}\,f(s)\mspace{1.5mu}ds
    \,+\, \frac{D}{\Delta D} \int_{\half - \eps + i\frac{D}{\Delta D}}^{\half - \eps + i\infty} (|s| + |\tau|)^{\half - \sigma}|s|^{-1}\,f(s)\mspace{1.5mu}ds
    \\
    \label{eq:t2s_contrib_smoothed_step3}
    &\ll
    |\tau|^\eps\frac{\Delta D}{D} \,+\, |\tau|^\eps\mspace{-2mu}\left(\frac{D}{\Delta D}\right)^{\!-1 + \eps}
    ,
  \end{align}
  using the facts $\tau \ll \big(\frac{D}{\#\cF}\big)^{1-\eps}$ and $f(s) \ll |s_*|^{-2 - \eps}$. Since $\frac{\Delta D}{D} \asymp D^{\delta - 1} \ll D^{\hat{\rho}_f}$, the contribution from the error term \eqref{eq:t2s_contrib_smoothed_step2} is admissible. It is straightforward to verify, proceeding along the same lines as the proofs of \cref{lemma:t1ns_contrib_r1_smoothed,lemma:t2ns_contrib_r1_smoothed,lemma:t1s_contrib_r1_smoothed}, that the integrals in the first and second terms of \eqref{eq:t2s_contrib_smoothed_step1} also constitute admissible error terms. The main term of \cref{lemma:t2s_contrib_r1_smoothed} is \eqref{eq:t2s_contrib_smoothed_mt} (divided by $2\pi i$).
\end{proof}

\begin{proof}[Proof of \cref{thm:gamma_murmurations}]
  By Mellin inversion applied to each Dirichlet series,
  \begin{align*}
    \frac{1}{2\pi i} \int_{(1+\theta)} \Favg L(s,\rep\otimes\chi_d)\,\x^{s-\half}\,f(s)\mspace{1.5mu}ds
    &=
    \Favg \frac{1}{\sqrt{\x}} \sum_{n=1}^\infty \cM^{-1}\mspace{-2mu}\{f\}\big(\tfrac{n}{x}\big) \,a_\rep(n)\chi_d(n)
    .
  \end{align*}

  We now evaluate the left hand side in a different way.
  The integrand is holomorphic, so we may shift the line of integration to $\Re(s) = c$ with $0 < c < 1$. Convergence is ensured by the convexity bound \cite[Thm.\ 8.2.3]{goldfeld:book}. Then, by the approximate functional equation \eqref{eq:afe_rep},
  \begin{align*}
    \frac{1}{2\pi i}
    &
    \int_{c - iT}^{c + iT} \Favg L(s,\rep \otimes \chi_d)\,\x^{s-\half}\,f(s)\mspace{1.5mu}ds
    \\
    &=
    \frac{1}{2\pi i} \int_{c - iT}^{c + iT} \Favg \sum_{\substack{n=1 \\ n \neq \square}}^\infty \frac{a_\rep(n)\chi_d(n)}{n^s}V_s\!\left(\frac{n}{\sqrt{\qrep d^\repdim}}\right)\x^{s-\half}\,f(s)\mspace{1.5mu}ds
    \\
    &+
    \frac{1}{2\pi i} \int_{c - iT}^{c + iT} \prod_{j=1}^\repdim \frac{\halfGamma{1-s+\bar\kappa_j}}{\halfGamma{s+\kappa_j}} \Favg \sum_{\substack{n=1 \\ n \neq \square}}^\infty \frac{\rootnum_{\rep\otimes\chi_d} a_{\bar\rep}(n)\chi_d(n)}{n^{1-s}}V_{1-s}^*\!\left(\frac{n}{\sqrt{\qrep d^\repdim}}\right)\xd^{\!s-\half}f(s)\mspace{1.5mu}ds
    \\
    &+
    \frac{1}{2\pi i} \int_{c - iT}^{c + iT} \Favg \sum_{n=1}^\infty \frac{a_\rep(n^2)\chi_d(n^2)}{n^{2s}}V_s\!\left(\frac{n^2}{\sqrt{\qrep d^\repdim}}\right)\x^{s-\half}\,f(s)\mspace{1.5mu}ds
    \\
    &+
    \frac{1}{2\pi i} \int_{c - iT}^{c + iT} \prod_{j=1}^\repdim \frac{\halfGamma{1-s+\bar\kappa_j}}{\halfGamma{s+\kappa_j}} \Favg \sum_{n=1}^\infty \frac{\rootnum_{\rep\otimes\chi_d} a_{\bar\rep}(n^2)\chi_d(n^2)}{n^{2-2s}}V_{1-s}^*\!\left(\frac{n^2}{\sqrt{\qrep d^\repdim}}\right)\xd^{\!s-\half}f(s)\mspace{1.5mu}ds
    .
  \end{align*}
  Applying \cref{lemma:t1ns_contrib_r1_smoothed,lemma:t2ns_contrib_r1_smoothed,lemma:t1s_contrib_r1_smoothed,lemma:t2s_contrib_r1_smoothed} to each of the four individual terms above proves \cref{thm:gamma_murmurations}.
\end{proof}

\appendix

\section{Auxiliary proofs}
\label{sec:lemma_proofs}

{\allowdisplaybreaks 

\subsection{The approximate functional equation}
\label{subsec:proofs_afe}

\hypertarget{proof:afe_psi}{}
\begin{proof}[Proof of \cref{lemma:afe_psi}]
\begin{align*}
  &
  \frac{1}{2\pi i} \int_{c_V - i\infty}^{c_V + i\infty}  \Lambda(s+w,\rep) \psi(s,w)\,\frac{dw}{w}
  \\
  &=
  \frac{1}{2\pi i} \int_{-c_V - i\infty}^{-c_V + i\infty}  \Lambda(s+w,\rep) \psi(s,w)\,\frac{dw}{w}
  + \sum_{|\Re(w')| < c_V} \underset{w'=w}{\mathrm{Res}} \,\Lambda(s+w,\rep) \frac{\psi(s,w)}{w}
  \\
  &=
  \frac{1}{2\pi i} \int_{-c_V - i\infty}^{-c_V + i\infty}  \rootnum_\rep \Lambda(1-s-w,\bar\rep) \psi(s,w)\,\frac{dw}{w}
  + \sum_{|\Re(w')| < c_V} \underset{w'=w}{\mathrm{Res}} \,\Lambda(s+w,\rep) \frac{\psi(s,w)}{w}
  \\
  &=
  -\frac{1}{2\pi i} \int_{c_V - i\infty}^{c_V + i\infty}  \rootnum_\rep \Lambda(1-s+w,\bar\rep) \psi(s,-w)\,\frac{dw}{w}
  + \sum_{|\Re(w')| < c_V} \underset{w'=w}{\mathrm{Res}} \,\Lambda(s+w,\rep) \frac{\psi(s,w)}{w}
  .
\end{align*}

As $\Lambda(s+w,\rep)\psi(s,w)$ is holomorphic at $w = 0$, 
\begin{align*}
  &\Lambda(s,\rep)\psi(s,0)
  \;+\;
  \sum_{\substack{|\Re(w')| < c_V \\ w' \neq 0}} \underset{w'=w}{\mathrm{Res}} \,\Lambda(s+w,\rep) \frac{\psi(s,w)}{w}
  \\
  &\hspace{1cm}
  =
  \frac{1}{2\pi i} \int_{c_V - i\infty}^{c_V + i\infty}  \Lambda(s+w,\rep) \psi(s,w)\,\frac{dw}{w}
  \;+\;
  \frac{\rootnum_\rep}{2\pi i} \int_{c_V - i\infty}^{c_V + i\infty}  \Lambda(1-s+w,\bar\rep) \psi(s,-w)\,\frac{dw}{w}
\end{align*}

Replacing $L(s+w,\rep)$ and $L(1-s+w,\rep)$ with their series definitions --- recall that by assumption $L(s+w,\rep)$ and $L(1-s+w,\bar\rep)$ converge absolutely whenever $\Re(w) = c_V$,

\begin{align*}
  \left(\frac{\qrep}{\pi^\repdim}\right)^{\!\frac{s}{2}}
  &
  \prod_{j=1}^\repdim \halfGamma{s+\kappa_j} L(s,\rep) \psi(s,0)
  \;+\;
  \sum_{\substack{|\Re(w')| < c_V \\ w' \neq 0}} \underset{w'=w}{\mathrm{Res}} \,\Lambda(s+w,\rep) \frac{\psi(s,w)}{w}
  \\
  &
  =
  \frac{1}{2\pi i} \int_{c_V - i\infty}^{c_V + i\infty} \left(\frac{\qrep}{\pi^\repdim}\right)^{\!\frac{s+w}{2}} \prod_{j=1}^\repdim \halfGamma{s+w+\kappa_j} \sum_{n=1}^\infty \frac{a_\rep(n)}{n^{s+w}} \psi(s,w)\,\frac{dw}{w}
  \\
  &
  +
  \frac{\rootnum_\rep}{2\pi i} \int_{c_V - i\infty}^{c_V + i\infty}
  \left(\frac{\qrep}{\pi^\repdim}\right)^{\!\frac{1-s+w}{2}} \prod_{j=1}^\repdim \halfGamma{1-s+w+\bar\kappa_j} \sum_{n=1}^\infty \frac{a_{\bar\rep}(n)}{n^{1-s+w}} \psi(s,-w)\,\frac{dw}{w}
  ,
\end{align*}
equivalently
\begin{align*}
  \prod_{j=1}^\repdim
  \halfGamma{s+\kappa_j}
  &L(s,\rep) \psi(s,0)
  \\
  ={}
  &
  \sum_{n=1}^\infty \frac{a_\rep(n)}{n^{s}}
  \frac{1}{2\pi i} \int_{c_V - i\infty}^{c_V + i\infty} \prod_{j=1}^\repdim \halfGamma{s+w+\kappa_j} \psi(s,w) \left(\frac{\qrep^\half}{\pi^{\frac{\repdim}{2}} n}\right)^{\!w} \,\frac{dw}{w}
  \\
  {}+{}
  \rootnum_\rep\left(\frac{\pi^\repdim}{\qrep}\right)^{\!s - \half}
  &
  \sum_{n=1}^\infty \frac{a_{\bar\rep}(n)}{n^{1-s}}
  \frac{1}{2\pi i} \int_{c_V - i\infty}^{c_V + i\infty} \prod_{j=1}^\repdim \halfGamma{1-s+w+\bar\kappa_j} \psi(s,-w) \left(\frac{\qrep^\half}{\pi^{\frac{\repdim}{2}} n}\right)^{\!w} \,\frac{dw}{w}
  \\
  -{}
  &\sum_{\substack{|\Re(w')| < c_V \\ w' \neq 0}} \underset{w'=w}{\mathrm{Res}} \,\Lambda(s+w,\rep) \frac{\psi(s,w)}{w}
  .
  \qedhere
\end{align*}
\end{proof}

\hypertarget{proof:V_approx}{}
\begin{proof}[Proof of \cref{lemma:V_approx}]
  Recall \eqref{eq:Gdef}:
  \begin{align*}
    G(s) &\coloneqq \prod_{j=1}^\repdim \halfGamma{s + \kappa_j}
  \end{align*}

  For brevity, in addition to the usual $s \eqqcolon \sigma + it$, write
  \begin{align}
    \label{eq:sigmawdef}
    w \eqqcolon \sigma_w + it_w,\qquad\qquad \kappa_j \eqqcolon \sigma_j + it_j
    .
  \end{align}
  Moreover, write
  \begin{align}
    \nonumber
    |s_*| \coloneqq \max_j\!\left\{ |s + \kappa_j|, |1 - s + \bar\kappa_j|, 1 \right\}
    .
  \end{align}

  To prove \cref{lemma:V_approx}, we will
  \begin{enumerate}[label=\arabic*.]
  \item
    Estimate the sizes of the individual factors in $V_s$'s integrand,
  \item
    Combine these estimates and regroup terms,
  \item
    Bound each of the new groups of terms individually,
  \item
    Recombine these groups so that we may bound the integral along the vertical line $\Re(w) = \sigma_w$,
  \item
    Observe that, for our bound to be small, we must take $\sigma_w < 0$ if $n < Q^\half\xi^{\pm 1}$, thereby picking up a residue.
  \end{enumerate}
  We look only at $V_s(\pi^{\frac{\repdim}{2}}n / \qrep^\half\xi)$. Handling $V_{1-s}^*(\pi^{\frac{\repdim}{2}} \xi n / \qrep^\half)$ afterwards is easy.

  For the first part of the proof we estimate
  \begin{align*}
    \frac{G(s+w)}{G(s)},
    \;
    \left(e^{i\frac{w}{A}} + e^{-i\frac{w}{A}} - 1\right)^{-B}, \text{ and}
    \;
    \left(\xi^{\pm 1}y\right)^{-w}
  \end{align*}
  in \eqref{eq:V_G_approx}, \eqref{eq:V_psi_approx}, and \eqref{eq:V_xi_approx} respectively.
    
  We begin with the gamma factors $G(s+w)/G(s)$. By Stirling's formula \eqref{eq:stirling} (see also \cref{rem:stirling}), we have the approximation
  \begin{align}
    \nonumber
    \left|\frac{G(s+w)}{G(s)}\right|
    &=
    \prodr \left|\frac{s+w+\kappa_j}{2}\right|^{\half(\sigma + \sigma_w + \sigma_j - 1)}
    \exp\!\left(-\half \sumr \sigma + \sigma_w + \sigma_j\right)
    \\
    \nonumber
    &\hspace{3cm}\cdot
    \exp\!\left(-\half \sumr (t + t_w + t_j)\arg(s + w + \kappa_j)\right)
    \\
    \nonumber
    &\quad\cdot
    \Bigg[
    \prodr \left|\frac{s+\kappa_j}{2}\right|^{\half(\sigma + \sigma_j - 1)}
    \exp\!\left(-\half \sumr \sigma + \sigma_j\right)
    \\
    \nonumber
    &\hspace{3cm}\cdot
    \exp\!\left(-\half \sumr (t + t_j)\arg(s + \kappa_j)\right)
    \Bigg]^{\!-1}
    \left(1 + O(\repdim |s_*|^{-1})\right)
    \\
    \label{eq:V_G_approx}
    &
    \begin{aligned}
    ={}&\exp\!\left(-\half\sumr t_w \arg(s + w + \kappa_j)  + \half \sumr (t + t_j)\big(\arg(s + \kappa_j) - \arg(s + w + \kappa_j)\big)\right)
    \\
    &\cdot
    \left(\frac{1}{(2e)^{\repdim}} \prodr |s + \kappa_j|\right)^{\frac{\sigma_w}{2}}
    \prodr \left|1+\frac{w}{s+\kappa_j}\right|^{\half(\sigma + \sigma_w + \sigma_j - 1)}
    \left(1 + O(\repdim |s_*|^{-1})\right)
    .
    \end{aligned}
  \end{align}  
  
  Continuing with the first step of the proof, we estimate the second factor of the integrand,
  \begin{align*}
    \left(e^{i\frac{w}{A}} + e^{-i\frac{w}{A}} - 1\right)^{-B}
    &=
    \left(e^{i\frac{\sigma_w}{A}} e^{-\frac{t_w}{A}} + e^{-i\frac{\sigma_w}{A}} e^{\frac{t_w}{A}} - 1\right)^{-B}
    \\
    &=
    \left(e^{\mp i\frac{\sigma_w}{A}} e^{\pm \frac{t_w}{A}}
    \left(e^{\pm i\frac{2\sigma_w}{A}} e^{\mp \frac{2t_w}{A}} - e^{\pm i\frac{\sigma_w}{A}} e^{\mp \frac{t_w}{A}} + 1\right)
    \right)^{-B}
    .
  \end{align*}
  Having written the factor in this form, we obtain the following estimate of its size:
  \begin{align}
    \nonumber
    \left|\left(e^{i\frac{w}{A}} + e^{-i\frac{w}{A}} - 1\right)^{-B}\right|
    &=
    e^{-\frac{B}{A}|t_w|}
    \left|1 - e^{\pm i\frac{\sigma_w}{A}} e^{-\frac{|t_w|}{A}} + e^{\pm i\frac{2\sigma_w}{A}} e^{-\frac{2|t_w|}{A}}\right|^{-B}
    \\
    \label{eq:V_psi_approx}
    &=
    e^{-\frac{B}{A}|t_w|}\left(1 + O\!\left(B e^{-\frac{|t_w|}{A}}\right)\right)
    .
  \end{align}

  The size of third factor of $V_s$'s integrand needn't be estimated at all in fact:
  \begin{align}
    \label{eq:V_xi_approx}
    \left|\left(\xi^{\pm 1}y\right)^{-w}\right|
    &=
    y^{-\sigma_w} |\xi|^{\mp \sigma_w} \exp\!\left(\pm t_w \arg\xi \right)
    .
  \end{align}

  We are now on to step 2 of the proof.
  Combining \eqref{eq:V_G_approx}, \eqref{eq:V_psi_approx}, and \eqref{eq:V_xi_approx},
  \begin{align}
    \nonumber
    &\left|\frac{G(s+w)}{G(s)} \left(e^{i\frac{w}{A}} + e^{-i\frac{w}{A}} - 1\right)^{-B}\left(\frac{\xi \qrep^\half}{\pi^{\frac{\repdim}{2}} n}\right)^{w}\frac{1}{w}\right|
    \\
    \nonumber
    ={}&
    \exp\!\left(-\frac{B}{A}|t_w| - t_w \arg\xi - \frac{t_w}{2} \sumr \arg(s + w + \kappa_j)\right)
    \left(\frac{\qrep^\half |\xi| }{(2\pi e)^{\frac{\repdim}{2}} n} \prodr |s + \kappa_j|^\half \right)^{\sigma_w}
    \\
    \label{eq:V_integrand_size}
    \cdot
    &\exp\!\left(\half \sumr (t + t_j)\big(\arg(s + \kappa_j) - \arg(s + w + \kappa_j)\big)\right)
    \frac{1}{|w|}\prodr \left|1+\frac{w}{s+\kappa_j}\right|^{\half(\sigma + \sigma_w + \sigma_j - 1)}
    \\
    \nonumber
    \cdot
    &\left(1 + O\!\left(\repdim |s_*|^{-1} + B e^{-\frac{|t_w|}{A}}\right)\right)
    .
  \end{align}
  This step is quite short.
  
  We now proceed with step 3 of the proof, where we regroup the factors in \eqref{eq:V_integrand_size} judiciously, and then bound these individual groups.

  The first factor of \eqref{eq:V_integrand_size} we look at is $|\xi^{it_w}| = \exp(-t_w \arg\xi)$.
  When $\sigma > \Re(s_0)$, which is always the case under our assumptions, we have
  \begin{align}
    \nonumber
    |\arg\xi| < \pi|\beta|,
  \end{align}
  where we have used the assumption that $|\beta| < 1$. So
  \begin{align}
    \label{eq:V_argxi}
    \exp\!\left(- t_w \arg\xi\right) \leqslant \exp(\pi|\beta||t_w|).
  \end{align}

  Next we look at the collection of factors of the form $\exp(-t\arg(s))$ coming from Stirling's formula \eqref{eq:stirling}.
  With some case-by-case analysis, we arrive at the bound, coarse for the sake of simplicity,
  \begin{align}
    \label{eq:V_args}
    \half \sumr (t + t_j)\arg(s + \kappa_j) - (t + t_j + t_w) \arg(s + w + \kappa_j)
    \,\leqslant\,
    \frac{\pi\repdim}{2}|t_w| + \sumr |\sigma| + |\sigma_j| + |\sigma_w|
    .
  \end{align}
  
  We bound the last piece of \eqref{eq:V_integrand_size} by bounding its logarithm, so that it may be more easily compared to the exponential factors.
  \begin{align}
    \nonumber
    &\log\!\left(\prodr \left|1+\frac{w}{s+\kappa_j}\right|^{\half(\sigma + \sigma_w + \sigma_j - 1)}\right)
    \\
    \nonumber
    &=
    \half\sumr \big(\sigma + \sigma_w + \sigma_j - 1\big)\log\!\left|1 + \frac{w}{s+\kappa_j}\right|
    \\
    \nonumber
    &\leqslant
    \half\sumr \big|\sigma + \sigma_w + \sigma_j - 1\big|\log\!\left(1 + \left|\frac{\sigma_w}{s+\kappa_j}\right| + \left|\frac{t_w}{s+\kappa_j}\right|\right)
    \\
    \nonumber
    &\leqslant
    \half\sumr \big|\sigma + \sigma_w + \sigma_j - 1\big|\left(\left|\frac{\sigma_w}{s+\kappa_j}\right| + \left|\frac{t_w}{s+\kappa_j}\right|\right)
    \\
    \nonumber
    &\leqslant
    \frac{|t_w|}{2}\sumr \frac{|\sigma + \sigma_w + \sigma_j - 1\big|}{|s+\kappa_j|}
    +
    \frac{|\sigma_w|}{2}\sumr \frac{|\sigma + \sigma_w + \sigma_j - 1\big|}{|s+\kappa_j|}
    \\
    \label{eq:V_poly}
    &\leqslant
    \frac{|t_w|}{2}\sumr \left(\left|\frac{\sigma_w - 1}{\sigma + \sigma_j}\right| + 1\right)
    +
    \frac{|\sigma_w|}{2}\sumr \left(\left|\frac{\sigma_w - 1}{\sigma + \sigma_j}\right| + 1\right)
    .
  \end{align}

  We are now on to step 4 of the proof.
  Multiplying the bounds \eqref{eq:V_argxi}, \eqref{eq:V_args}, and \eqref{eq:V_poly}, we bound the size of \eqref{eq:V_integrand_size}, and hence the integrand of $V_s$:
  \begin{align}
    \nonumber
    &\exp\!\left(-\frac{B}{A}|t_w| \pm t_w \arg\xi - \frac{t_w}{2} \sumr \arg(s + w + \kappa_j)\right)
    \left(\frac{\qrep^\half |\xi| }{(2\pi e)^{\frac{\repdim}{2}} n} \prodr |s + \kappa_j|^\half \right)^{\sigma_w}
    \\
    \nonumber
    \cdot
    &\exp\!\left(\half \sumr (t + t_j)\big(\arg(s + \kappa_j) - \arg(s + w + \kappa_j)\big)\right)
    \frac{1}{|w|}\prodr \left|1+\frac{w}{s+\kappa_j}\right|^{\half(\sigma + \sigma_w + \sigma_j - 1)}
    \\
    \label{eq:V_bound_prod}
    &
    \begin{aligned}
    \ll
    &\left(\frac{\qrep^\half |\xi| }{(2\pi e)^{\frac{\repdim}{2}} n} \prodr |s + \kappa_j|^\half \right)^{\sigma_w}
    \exp\!\left(\sumr |\sigma| + |\sigma_j| + |\sigma_w| + \frac{|\sigma_w|}{2} \left(\left|\frac{\sigma_w - 1}{\sigma + \sigma_j}\right| + 1\right)\right)
    \frac{1}{|\sigma_w|}
    \\
    &\cdot
    \exp\!\left(\left[-\frac{B}{A} + \pi|\beta| + \frac{\pi\repdim}{2} + \half\sumr \left(\left|\frac{\sigma_w - 1}{\sigma + \sigma_j}\right| + 1\right)\right]|t_w|\right)
    .
    \end{aligned}
  \end{align}
  By assumption, we have
  \begin{align*}
    \frac{B}{A} > \pi|\beta| + \frac{\pi\repdim}{2} + \half\sumr \left(\left|\frac{\sigma_w - 1}{\sigma + \sigma_j}\right| + 1\right),
  \end{align*}
  so the integrand decays exponentially, and the value of the integrand of $V_s$ along a vertical line is $\ll$ its size at $t_w = 0$:
  \begin{align}
    \nonumber
    &\int_{(\sigma_w)} \left|\frac{G(s+w)}{G(s)} \left(e^{i\frac{w}{A}} + e^{-i\frac{w}{A}} - 1\right)^{-B}\left(\frac{\xi \qrep^\half}{\pi^{\frac{\repdim}{2}} n}\right)^{\!w}\right|\frac{dw}{w}
    \\
    \label{eq:V_bound}
    &\ll
    \left(\frac{\qrep^\half |\xi| }{(2\pi e)^{\frac{\repdim}{2}} n} \prodr |s + \kappa_j|^\half \right)^{\!\sigma_w}
    \exp\!\left(\sumr |\sigma| + |\sigma_j| + \frac{|\sigma_w|}{2} \left(\left|\frac{\sigma_w - 1}{\sigma + \sigma_j}\right| + 3\right)\right)
    \frac{r + B}{|\sigma_w|}
    .
  \end{align}

  Finally we arrive at step 5 of the proof. Note that the integrand of $V_s$ has a pole at $w = 0$, with residue $1$. Moreover there are no other poles in the region $\tfrac{\pi}{3}A > \sigma_w > -\min\!\left\{\frac{\pi}{3}A,\, \sigma + \Re(\kappa_1),\dots,\sigma + \Re(\kappa_\repdim)\right\}$ specified by the hypothesis of \cref{lemma:V_approx}. The result then follows from case-by-case analysis depending on whether the positive quantity
  \begin{align*}
    \frac{\qrep^\half |\xi| }{(2\pi e)^{\frac{\repdim}{2}} n} \prodr |s + \kappa_j|^\half
    &=
    \frac{Q^\half|\xi|}{n}
  \end{align*}
  is greater than $1$ or less than $1$.

  The bound for $V_{1-s}^*(\pi^{\frac{\repdim}{2}}\xi n/\qrep^\half )$ is derived similarly, with the only differences being $\sigma \mapsto 1 - \sigma$, $\kappa_j \mapsto \bar\kappa_j$, and $\xi \mapsto \xi^{-1}$.
\end{proof}

\hypertarget{proof:V_deriv_approx}{}
\begin{proof}[Proof of \cref{lemma:V_deriv_approx}]
  We prove the result for $V_s$. The only difference with $V_s^*$ is $\kappa_j \mapsto \bar\kappa_j$, which is immaterial.
  \begin{align}
    \nonumber
    V_s\!\left(\frac{\xi^{\pm 1} y}{D^{\frac{\repdim}{2}}}\right) - V_s\!\left(\frac{\xi^{\pm 1} y}{d^{\frac{\repdim}{2}}}\right)
    &=
    \frac{1}{2\pi i} \int_{(c_V\!)} \frac{G(s+w)}{G(s)} \left(e^{i\frac{w}{A}} + e^{-i\frac{w}{A}} - 1\right)^{-B} (\xi^{\pm 1}y)^{-w}\left(D^{\frac{\repdim w}{2}} - d^{\frac{\repdim w}{2}}\right)\frac{dw}{w}
    \\
    \label{eq:V_deriv_1}
    &=
    \frac{1}{2\pi i} \int_{(c_V\!)} \frac{G(s+w)}{G(s)} \left(e^{i\frac{w}{A}} + e^{-i\frac{w}{A}} - 1\right)^{-B} \left(\frac{\xi^{\mp 1}D^{\frac{\repdim}{2}}}{y}\right)^{\!w}
    \left(1 - \left(1 - \frac{D - d}{D}\right)^{\!\frac{\repdim w}{2}}\right)
    \frac{dw}{w}
    .
  \end{align}
  The singularity of \eqref{eq:V_deriv_1} at $w = 0$ is removable, and thus the contour can be shifted freely in the same range as \cref{lemma:V_approx}.

  We have
  \begin{align}
    \label{eq:V_deriv_2}
    \left|1 - \left(1 - \frac{D - d}{D}\right)^{\!\frac{\repdim w}{2}}\right| \ll \min\!\left\{1, \frac{r\Delta D}{D}w \right\}
  \end{align}
  if $\Re(w) > 0$, and
  \begin{align}
    \label{eq:V_deriv_3}
    \left|1 - \left(1 - \frac{D - d}{D}\right)^{\!\frac{\repdim w}{2}}\right| \ll \min\!\left\{e^{\frac{r \Delta D}{D} |\Re(w)|}, \frac{r\Delta D}{D}w \right\}
  \end{align}
  if $\Re(w) < 0$.

  Mimicking calculations done during the \hyperlink{proof:V_approx}{proof of \cref*{lemma:V_approx}}, we see that the integrand of \eqref{eq:V_deriv_1} decays exponentially. If
  \begin{align}
    \label{eq:V_deriv_4}
    \frac{B}{A} - \left(\pi|\beta| + \frac{\pi\repdim}{2} + \half\sumr \left(\left|\frac{\sigma_w - 1}{\sigma + \sigma_j}\right| + 1\right)\right)
    \;>\; \frac{r\Delta D}{D}
    ,
  \end{align}
  then the exponential decay is rapid enough to ensure that the value of the integral is $\ll$ the value one would obtain using only the $r\Delta D/D$ upper bound of \eqref{eq:V_deriv_2} and \eqref{eq:V_deriv_3}. A consequence of \eqref{eq:V_deriv_4} is that \eqref{eq:V_deriv_1} is $\ll$ the value of the integrand at $\Im(w) = 0$.

  It then follows from \eqref{eq:V_bound} that
  \begin{align}
    \nonumber
    &\int_{(\sigma_w)} \left|\frac{G(s+w)}{G(s)} \left(e^{i\frac{w}{A}} + e^{-i\frac{w}{A}} - 1\right)^{-B}\left(\frac{\xi^{\mp 1} D^{\frac{\repdim}{2}}}{y}\right)^{\!w}\right|
    \left|1 - \left(1 - \frac{D - d}{D}\right)^{\!\frac{\repdim w}{2}}\right|
    \frac{dw}{w}
    \\
    \nonumber
    \ll
    &\left(\frac{\qrep^\half |\xi|^{\pm 1} }{(2\pi e)^{\frac{\repdim}{2}} n} \prodr |s + \kappa_j|^\half \right)^{\!\sigma_w}
    \exp\!\left(\sumr \sigma + |\Re(\kappa_j)| + \frac{|\sigma_w|}{2} \left(\left|\frac{\sigma_w - 1}{\sigma + \Re(\kappa_j)}\right| + 3\right)\right)
    \frac{\repdim + B}{|\sigma_w|}
    \\
    \nonumber
    &\cdot\frac{\repdim \Delta D}{D}\left(1 + \frac{\repdim \Delta D}{D}|\sigma_w|\right)
    .
  \end{align}
  By assumption the $\frac{\repdim \Delta D}{D}|\sigma_w|$ term does not contribute.
\end{proof}

\subsection{Sums over fundamental discriminants}
\label{subsec:proofs_discriminants}

\hypertarget{proof:sqfree_character_estimate}{}
\begin{proof}[Proof of \cref{lemma:sqfree_character_estimate}]
  Consider the Dirichlet series
  \begin{align*}
    \sum_{n = 1}^\infty n^z \mu(n)^2 \chi(n)\chi_m^0(n) n^{-s}
    &=
    \prod_{p\nmid m} 1 + \frac{\chi(p)}{p^{s-z}}
    \\
    &=
    \prod_{p\mid m} \left(1 + \frac{\chi(p)}{p^{s-z}}\right) \prod_{p} \left(\frac{1 - \frac{\chi(p)}{p^{2s-2z}}}{1 - \frac{\chi(p)}{p^{s-z}}}\right)
    \\
    &=
    \frac{L(s-z,\chi)}{L(2s-2z,\chi^2)} \prod_{p\mid m} \left(1 + \frac{\chi(p)}{p^{s-z}}\right) 
    .
  \end{align*}

  By Perron's formula \eqref{eq:perron},
  \begin{align*}
    \sum_{n < \x} n^z \mu(n)^2 \chi(n)\chi_m^0(n)
    &=
    \frac{1}{2\pi i} \int_{\sigma_0 - iT}^{\sigma_0 + iT} \frac{L(s-z,\chi)}{L(2s-2z,\chi^2)} \prod_{p\mid m} \left(1 + \frac{\chi(p)}{p^{s-z}}\right) \x^s\,\frac{ds}{s}
    \;+\: O\big(\x^{\sigma_0} T^{-1} (\x Tq)^\eps\big)
  \end{align*}
  for $\sigma_0 > 1 + \Re(z)$. 
  If $T > |\Im(z)|$, then, shifting to $\sigma_1 = \thalf + \Re(z)$,
  \begin{align}
    \nonumber
    \sum_{n < \x} n^z \mu(n)^2 \chi(n)\chi_m^0(n)
    &=
    \one{\text{$\chi$ is trivial}}\frac{\x^{z+1}}{z+1}\frac{6}{\pi^2}\prod_{p \mid m\ff}\frac{p}{p+1}
    \\
    \nonumber
    &+
    \frac{1}{2\pi i}\left(\int_{\sigma_0 - iT}^{\sigma_1 - iT} + \int_{\sigma_1 + iT}^{\sigma_0 + iT}\right) \frac{L(s-z,\chi)}{L(2s-2z,\chi^2)} \prod_{p\mid m} \left(1 + \frac{\chi(p)}{p^{s-z}}\right) \x^s\,\frac{ds}{s}
    \\
    \label{eq:LoverL2int}
    &+
    \frac{1}{2\pi i} \int_{\sigma_1 - iT}^{\sigma_1 + iT} \frac{L(s-z,\chi)}{L(2s-2z,\chi^2)} \prod_{p\mid m} \left(1 + \frac{\chi(p)}{p^{s-z}}\right) \x^s\,\frac{ds}{s}
    \\
    \nonumber
    &+
    O\big(\x^{\sigma_0} T^{-1} (\x Tq)^\eps\big)
    .
  \end{align}

  For $\sigma > 0$,
  \begin{align}
    \label{eq:trivprodbound}
    \begin{aligned}
    \prod_{p\mid m} \left|1 + \frac{\chi(p)}{p^{s}}\right|
    &\leqslant
    \exp\!\left(\sum_{p\mid m} \log(1 + p^{-\sigma})\right)
    \\
    &\leqslant
    \exp\!\left(\sum_{p\mid m} p^{-\sigma}\right)
    \\
    &\ll
    \exp\!\left(\sum_{p\ll \log m} p^{-\sigma}\right)
    \\
    &\ll
    \exp\!\left(\frac{(\log m)^{1-\sigma}}{\log\log m}\right)
    \\
    &\ll
    m^\eps
    .
    \end{aligned}
  \end{align}

  Petrow--Young \cite{petrow_young:2023} prove the subconvexity bound
  \begin{align}
    \label{eq:pysubconvexity}
    L(\thalf + it, \chi) \ll (\ff(|t|+1))^{\frac{1}{6} + \eps}.
  \end{align}
  By \cite[Thms.\ 6.7 and 11.4]{MV},
  \begin{align}
    \label{eq:mv1overL}
    \frac{1}{L(1 + it,\chi)} \ll (\ff (|t|+1))^\eps.
  \end{align}

  Substituting \eqref{eq:trivprodbound} \eqref{eq:pysubconvexity} \eqref{eq:mv1overL} into \eqref{eq:LoverL2int},
  \begin{align*}
    &\frac{1}{2\pi i} \int_{\sigma_1 - iT}^{\sigma_1 + iT} \frac{L(s-z,\chi)}{L(2s-2z,\chi^2)} \prod_{p\mid m} \left(1 + \frac{\chi(p)}{p^{s-z}}\right) \x^s\,\frac{ds}{s}
    \\
    &\ll
    m^\eps \int_{\sigma_1 - iT}^{\sigma_1 + iT} \left|\frac{L(s-z,\chi)}{L(2s-2z,\chi^2)}\right| \x^\sigma\,\frac{ds}{s}
    \\
    &\ll
    m^\eps x^{\sigma_1} \int_{\sigma_1 - iT}^{\sigma_1 + iT} (\ff (|t|+1))^{\frac{1}{6} + \eps} \,\frac{ds}{s}
    \\
    &\ll
    x^{\sigma_1} \ff^{\frac{1}{6}} \left(T^{\frac{1}{6}} + |\Im(z)|^{\frac{1}{6}}\right) (m\ff T|\Im(z)|)^\eps
    .
  \end{align*}

  The contribution from the horizontal segments is the Perron error term plus the error term above times a factor of $T^{-1}$, and so may be omitted.

  Thus,
  \begin{align*}
    \sum_{n < \x} n^z \mu(n)^2 \chi(n)\chi_m^0(n)
    &=
    \one{\text{$\chi$ is trivial}}\frac{\x^{z+1}}{z+1}\frac{6}{\pi^2}\prod_{p \mid m\ff}\frac{p}{p+1}
    \\
    &+
    O\left(\left(\x^{1 + \Re(z)} T^{-1} + x^{\sigma_1} \ff^{\frac{1}{6}} \left(T^{\frac{1}{6}} + |\Im(z)|^{\frac{1}{6}}\right) \right) (\x m\ff T|\Im(z)|)^\eps \right)
    .
  \end{align*}
  Taking $T = \max\{\x^{\frac{3}{7}}, 2|\Im(z)|\}$ gives \cref{lemma:sqfree_character_estimate}.
\end{proof}

\hypertarget{proof:disc_power_sum}{}
\begin{proof}[Proof of \cref{lemma:disc_power_sum}]
  We show that \cref{lemma:disc_power_sum} holds with an error term of
  \begin{align}
    \label{eq:disc_sum_et1}
    D^{\Re(z) + \half}\ff^{\frac{1}{6}}\Big(D^{\frac{1}{14}} + |\Im(z)|^{\frac{1}{6}}\Big)(m\ff D(|z|+1))^\eps
    ,
  \end{align}
  as well as with an error term of
  \begin{align}
    \label{eq:disc_sum_et2}
    D^{\Re(z) + \half}\ff^{\frac{1}{2}}(|z|+1) \ff^\eps\divcount(m)
    .
  \end{align}
  We may then take the $\min$.
  The \hyperlink{proof:disc_power_sum_et1}{proof for (\ref*{eq:disc_sum_et1})} and the \hyperlink{proof:disc_power_sum_et2}{proof for (\ref*{eq:disc_sum_et2})} are given separately below, and proceed using different methods.
  \renewcommand{\qedsymbol}{}
\end{proof}

\hypertarget{proof:disc_power_sum_et1}{}
  \begin{proof}[Proof of \cref{lemma:disc_power_sum} with error \eqref{eq:disc_sum_et1}]
  Let
  \begin{align}
    \label{eq:fundamental_discriminant_classification}
    &
    \begin{aligned}
    \cF_1 &\coloneqq \{0 < d < D \,:\, \text{$d$ is squarefree and $1 \mod 4$}\}
    \\
    \cF_2 &\coloneqq \{0 < d < D \,:\, \text{$d = 8\ell$ for some odd squarefree $\ell$}\}
    \\
    \cF_3 &\coloneqq \{0 < d < D \,:\, \text{$d = 4\ell$ for some squarefree $\ell$ which is $3 \mod 4$}\}
    .
    \end{aligned}
  \end{align}
  It is standard that every fundamental discriminant is in exactly one of the families $\cF_1$, $\cF_2$, or $\cF_3$ \cite[Thm.\ 9.13]{MV}.

  We evaluate the left hand side of \cref{lemma:disc_power_sum} for each family individually.

  $\fbox{$\cF_1$}$
  Let $\chi_4^-$ denote the nontrivial character mod $4$.
  Write
  \begin{align*}
    &\one{\text{$d$ is squarefree, $d = 1 \mod 4$, $(d,m) = 1$, $d = a \mod q$}}
    \,=\,
    \mu(d)^2 \frac{\chi_4^0(d) + \chi_4^-(d)}{2} \chi_m^0(d) \frac{1}{\phi(q)} \sum_{\chi_q \mod q} \overline{\chi}_q(a) \chi_q(d)
    .
  \end{align*}
  Substituting,
  \begin{align*}
    \starsum*_{\substack{d \in \cF_1 \\ d = a \mod q \\ (d,m) = 1}} d^z
    &=
    \sum_{d = 1}^D d^z \mu(d)^2 \frac{\chi_4^0(d) + \chi_4^-(d)}{2} \chi_m^0(d) \frac{1}{\phi(q)} \sum_{\chi_q \mod q} \overline{\chi}_q(a) \chi_q(d)
    \\
    &=
    \frac{1}{2\phi(q)} \sum_{\chi_4 \mod 4} \sum_{\chi_q \mod q} \overline{\chi}_q(a) \sum_{d = 1}^D d^z \mu(d)^2 \chi_4\chi_m^0\chi_q(d)
    .
  \end{align*}
  By \cref{lemma:sqfree_character_estimate},
  \begin{align*}
    \frac{1}{2\phi(q)}
    \sum_{\chi_4 \mod 4} \sum_{\chi_q \mod q}
    &
    \overline{\chi}_q(a)
    \sum_{d = 1}^D d^z \mu(d)^2 \chi_4\chi_m^0\chi_q(d)
    \\
    ={}& \frac{1}{2\phi(q)} \sum_{\chi_4 \mod 4} \sum_{\chi_q \mod q} \overline{\chi}_q(a)
    \one{\text{$\chi_4 \chi_q$ is trivial}} \frac{D^{z+1}}{z+1} \frac{6}{\pi^2} \prod_{p \mid 4qm} \frac{p}{p+1}
    \\
    &+ O\!\left(q^{\frac{1}{6}} D^{\Re(z) + \half}\!\left(D^{\frac{1}{14}} + |\Im(z)|^{\frac{1}{6}}\right) \big(mqD|\Im(z)|\big)^\eps\right)
    .
  \end{align*}
  
  If $4 \nmid q$, then $\chi_4 \chi_q$ is trivial iff $\chi_4$ and $\chi_q$ are both trivial. If $4 \mid q$, then $\chi_4 \chi_q$ is trivial iff $\chi_4$ and $\chi_q$ are both trivial or $\chi_4 = \chi_4^-$ and $\chi_q$ is the lift of $\chi_4^-$ to the modulus $q$. It follows that
  \begin{align}
    \nonumber
    &\frac{1}{2\phi(q)} \sum_{\chi_4 \mod 4} \sum_{\chi_q \mod q} \overline{\chi}_q(a) \one{\text{$\chi_4 \chi_q$ is trivial}} \frac{D^{z+1}}{z+1} \frac{6}{\pi^2} \prod_{p \mid 4qm} \frac{p}{p+1}
    \\
    \label{eq:F1_power_sum}
    &= \frac{D^{z+1}}{z+1} \frac{6}{\pi^2\phi(q)} \prod_{p \mid 4qm} \frac{p}{p+1} \cdot
    \begin{cases}
      \thalf & \text{if $4 \nmid q$} \\
      \one{a = 1 \mod 4} & \text{if $4 \mid q$.}
    \end{cases}
  \end{align}

  $\fbox{$\cF_2$}$
  Similarly,
  \begin{align}
    \nonumber
    \starsum*_{\substack{d \in \cF_2 \\ d = a \mod q \\ (d,m) = 1}} d^z
    {}={}&
    \sum_{\ell = 1}^{D/8} (8\ell)^z \mu(\ell)^2 \chi_4^0(\ell) \chi_m^0(8\ell) \frac{1}{\phi(q)} \sum_{\chi_q \mod q} \overline{\chi}_q(a) \chi_q(8\ell)
    \\
    \nonumber
    {}={}&
    \frac{8^z\chi_m^0(8)}{\phi(q)} \sum_{\chi_q \mod q} \chi_q(8) \overline{\chi}_q(a) \sum_{\ell = 1}^{\frac{D}{8}} \ell^z \mu(\ell)^2 \chi_4^0\chi_m^0\chi_q(\ell)
    \\
    \nonumber
    {}={}&
    \frac{8^z\chi_m^0(8)}{\phi(q)} \sum_{\chi_q \mod q} \chi_q(8) \overline{\chi}_q(a)
    \one{\text{$\chi_q$ is trivial}} \frac{D^{z+1}}{8^{z+1}(z+1)} \frac{6}{\pi^2} \prod_{p \mid 4qm} \frac{p}{p+1}
    \\
    \nonumber
    &+ O\!\left(q^{\frac{1}{6}} D^{\Re(z) + \half}\!\left(D^{\frac{1}{14}} + |\Im(z)|^{\frac{1}{6}}\right) \big(mqD|\Im(z)|\big)^\eps\right)
    \\
    \label{eq:F2_power_sum}
    {}={}&
    \frac{1}{8}\one{(qm,2) = 1} \frac{D^{z+1}}{(z+1)} \frac{6}{\pi^2\phi(q)} \prod_{p \mid 4qm} \frac{p}{p+1}
    \\
    \nonumber
    &+ O\!\left(q^{\frac{1}{6}} D^{\Re(z) + \half}\!\left(D^{\frac{1}{14}} + |\Im(z)|^{\frac{1}{6}}\right) \big(mqD|\Im(z)|\big)^\eps\right)
    .
  \end{align}

  $\fbox{$\cF_3$}$
  Similarly,
  \begin{align*}
    \starsum*_{\substack{d \in \cF_3 \\ d = a \mod q \\ (d,m) = 1}} d^z
    {}={}&
    \sum_{\ell = 1}^{D/4} (4\ell)^z \mu(\ell)^2 \frac{\chi_4^0(\ell) - \chi_4^-(\ell)}{2} \chi_m^0(4\ell) \frac{1}{\phi(q)} \sum_{\chi_q \mod q} \overline{\chi}_q(a) \chi_q(4\ell)
    \\
    {}={}&
    \frac{4^z\chi_m^0(4)}{2\phi(q)} \sum_{\chi_4 \mod 4} \sum_{\chi_q \mod q} \chi_q(4) \overline{\chi}_q(a) \sum_{\ell = 1}^{\frac{D}{4}} \ell^z \mu(\ell)^2 \chi_4\chi_m^0\chi_q(\ell)
    \\
    {}={}&
    \frac{4^z\chi_m^0(4)}{2\phi(q)} \sum_{\chi_4 \mod 4} \sum_{\chi_q \mod q} \chi_q(4) \overline{\chi}_q(a)
    \one{\text{$\chi_4\chi_q$ is trivial}} \frac{D^{z+1}}{4^{z+1}(z+1)} \frac{6}{\pi^2} \prod_{p \mid 4qm} \frac{p}{p+1}
    \\
    &+ O\!\left(q^{\frac{1}{6}} D^{\Re(z) + \half}\!\left(D^{\frac{1}{14}} + |\Im(z)|^{\frac{1}{6}}\right) \big(mqD|\Im(z)|\big)^\eps\right)
    .
  \end{align*}
  If $2 \mid q$, then the factor $\chi_q(4)$ makes the above sum $0$. If $2 \mid m$, then the factor $\chi_m^0(4)$ makes the sum $0$. If $2\nmid q$, then $\chi_4\chi_q$ is trivial iff both $\chi_4$ and $\chi_q$ are individually trivial. It follows that
  \begin{align}
    \nonumber
    &\frac{4^z\chi_m^0(4)}{2\phi(q)} \sum_{\chi_4 \mod 4} \sum_{\chi_q \mod q} \chi_q(4) \overline{\chi}_q(a)
    \one{\text{$\chi_4\chi_q$ is trivial}} \frac{D^{z+1}}{4^{z+1}(z+1)} \frac{6}{\pi^2} \prod_{p \mid 4qm} \frac{p}{p+1}
    \\
    \label{eq:F3_power_sum}
    &\hspace{7cm}=
    \frac{1}{8}\one{(qm,2) = 1} \frac{D^{z+1}}{(z+1)} \frac{6}{\pi^2\phi(q)} \prod_{p \mid 4qm} \frac{p}{p+1}
    .
  \end{align}
  Summing \eqref{eq:F1_power_sum}, \eqref{eq:F2_power_sum}, and \eqref{eq:F3_power_sum} yields \cref{lemma:disc_power_sum} with an error term of \eqref{eq:disc_sum_et1}.
\end{proof}

\hypertarget{proof:disc_power_sum_et2}{}
  \begin{proof}[Proof of \cref{lemma:disc_power_sum} with error \eqref{eq:disc_sum_et2}]
  The case $q = 1$ and $\Re(s) \eqqcolon \sigma \geqslant 0$ is due to Jutila \cite[Lemma 1]{jutila}. The case $q > 1$ odd and $\sigma \geqslant 0$ is due to Stankus \cite[Lemma 1]{stankus}. We will show that
  \begin{enumerate}[label=(\roman*)]
    \item
      the range $\sigma \geqslant 0$ can be extended to $\sigma > -1$, and
    \item
      The conclusion of \cite[Lemma 1]{stankus} holds for even $q$ as well.
  \end{enumerate}

  \textbf{(i).}
  Extending the permissible range of $\sigma$ is a straightforward application of summation by parts \cite[Thm.\ 4.2]{apostol}; viz.\ if $c(n)$ is a sequence supported on the positive integers and $f$ continuously differentiable on the interval $[x,y]$, then
  \begin{align}
    \label{eq:summation_by_parts}
    \sum_{x < n \leqslant y} c(n) f(n) = f(y)\sum_{0 \leqslant n \leqslant y} c(n) \,-\, f(x)\sum_{0 \leqslant n \leqslant x} c(n) \,-\, \int_x^y f'(t)\sum_{0 \leqslant n \leqslant t} c(n)\,dt.
  \end{align}
  This is a special case of integration by parts for Lebesgue--Stieltjes integrals.
  
  Taking $c(d) \coloneqq \one{\text{$d$ is a fundamental discriminant},\, (d,m) = 1,\, d = \ell \mod q}$ and $f(t) = t^s$ in \eqref{eq:summation_by_parts} gives
  \begin{align}
    \label{eq:summation_by_parts_subbed}
    \starsum*_{\substack{x < d \leqslant y \\ (d,m) = 1 \\ d = \ell \mod q}} d^s = y^s\starsum*_{\substack{0 \leqslant d \leqslant y \\ (d,m) = 1 \\ d = \ell \mod q}} 1 \,-\, x^s\starsum*_{\substack{0 \leqslant d \leqslant x \\ (d,m) = 1 \\ d = \ell \mod q}} 1\,-\, \int_x^y st^{s-1}\starsum*_{\substack{0 \leqslant d \leqslant t \\ (d,m) = 1 \\ d = \ell \mod q}} 1\,dt.
  \end{align}

  For brevity, set
  \begin{align*}
    C \coloneqq \frac{\eta_{m,q,\ell}}{1+s} \frac{6}{\pi^2\phi(q)} \prod_{p\mid 2qm} \frac{p}{p+1},
  \end{align*}
  the coefficient appearing in \cref{lemma:disc_power_sum}.
  Define the function $R: \R \to \R$ via
  \begin{align}
    \label{eq:Rdef_partial_summation}
    R(D) \coloneqq \starsum*_{\substack{0 < d < D \\ (d,m) = 1 \\ d = \ell \mod q}} 1 \,-\, CD^{1+s}.
  \end{align}
  
  Rewriting \eqref{eq:summation_by_parts_subbed} in terms of this new notation,
  \begin{align}
    \nonumber
    \starsum*_{\substack{x < d \leqslant y \\ (d,m) = 1 \\ d = \ell \mod q}} d^s
    &= y^s(Cy + R(y)) - x^s(Cx + R(x)) - \int_x^y st^{s-1}(Ct + R(t))\,dt
    \\
    \label{eq:summation_by_parts_after_notation}
    &= \frac{C}{s+1}y^{s+1} - \frac{C}{s+1}x^{s+1} + y^s R(y) - x^s R(x) - \int_x^y st^{s-1} R(t)\,dt.
  \end{align}
  It is clear from the definition \eqref{eq:Rdef_partial_summation} that $R(t) = -Ct$ for $0 \leqslant t < 1$. It follows that
  \begin{align}
    \label{eq:summation_by_parts_small_x}
    \int_x^1 st^{s-1} R(t)\,dt = \frac{Cs}{s+1}(x^{s+1} - 1)
    \quad\quad\text{and}\quad\quad
    x^s R(x) = -Cx^{s+1}
  \end{align}
  if $x < 1$. Hence, if $\sigma > -1$, we may take the limit $x \to 0$ in \eqref{eq:summation_by_parts_after_notation}, obtaining
  \begin{align}
    \label{eq:summation_by_parts_after_limit}
    \starsum*_{\substack{0 < d \leqslant y \\ (d,m) = 1 \\ d = \ell \mod q}} d^s
    &= \frac{C}{s+1}y^{s+1} + y^s R(y) + \frac{Cs}{s+1} - \int_1^y st^{s-1} R(t)\,dt.
  \end{align}

  In the range $t \geqslant 1$, using the bound
  \begin{align}
    \label{eq:Rbound}
    R(t) \ll t^\half \divcount(m) q^{\half + \eps}
  \end{align}
  of \cite[Lemma 1]{jutila} and \cite[Lemma 1]{stankus} gives
  \begin{align}
    \nonumber
    \left| \int_1^y st^{s-1} R(t)\,dt \right|
    &\leqslant \int_1^y |s|t^{\sigma-1} |R(t)|\,dt
    \\
    \label{eq:summation_by_parts_large_x}
    &\ll |s|\divcount(m) q^{\half + \eps} \int_1^y t^{\sigma-1} t^\half\,dt
  \end{align}
  
  Substituting \eqref{eq:Rbound} and \eqref{eq:summation_by_parts_large_x} into \eqref{eq:summation_by_parts_after_limit} gives
  \begin{align*}
    \starsum*_{\substack{0 < d \leqslant y \\ (d,m) = 1 \\ d = \ell \mod q}} d^s
    &= \frac{C}{s+1}\big(y^{s+1} + s\big) + O\!\left((1 + |s|) \divcount(m) q^{\half + \eps} \left(1 + \int_1^y t^{\sigma-\half}\,dt\right)\right).
  \end{align*}  
  
  \textbf{(ii).} To show that the conclusion of \cite[Lemma 1]{stankus} holds for even $q$ as well we follow Stankus's proof. Consider the quantity
  \begin{align*}
        \starsum*_{\substack{0 < d < D \\ (d,m) = 1 \\ d = \ell \mod q}} 1
  \end{align*}
  with $q$ even (and all variables conforming to the hypothesis of \cref{lemma:disc_power_sum}).
  
  \Cref{lemma:disc_power_sum} pertains only in cases where $\ell$ is coprime to $q$. If $q$ is even then any such $\ell$ must be odd. If $d$ is even, then it is impossible to have $d = \ell \mod q$ for such $q$ and $\ell$. For $d$ a fundamental discriminant, recall the classification \eqref{eq:fundamental_discriminant_classification}: either
  \begin{align}
    \nonumber
    &
    \begin{aligned}
    &\text{$d$ is squarefree and $1 \mod 4$,}
    \\
    &\text{$d = 4n$ for some squarefree $n$ which is $2 \mod 4$ (i.e.\ $d = 8n'$, for some odd squarefree $n'$), or}
    \\
    &\text{$d = 4n$ for some squarefree $n$ which is $3 \mod 4$.}
    \end{aligned}
  \end{align}
  Hence, if $q$ is even, for our immediate purposes we need only consider the first of these three cases, where $d = 1 \mod 4$.

  For any positive integer $k$, let $\chi_k^0$ denote the trivial Dirichlet character modulo $k$. Let $\chi_4$ denote the nontrivial character modulo $4$. Then $n = 1 \mod 4$ iff $\thalf(\chi_4^0(n) + \chi_4(n)) = 1$. Hence,
  \begin{align*}
    \starsum*_{\substack{0 < d < D \\ (d,m) = 1 \\ d = \ell \mod q \\ d = 1 \mod 4}} 1
    =
    \sum_{0 < n < D} \mu(n)^2 \,\frac{\chi_4^0(n) + \chi_4(n)}{2} \chi_m^0(n) \frac{1}{\phi(q)}\sum_{\chi_q \mod q} \overline{\chi_q(\ell)} \chi_q(n).
  \end{align*}
  The squarefree indicator function may be written \cite[(2.4)]{MV}
  \begin{align*}
    \mu(n)^2 = \sum_{a^2\mid n}\mu(a).
  \end{align*}
  Substituting,
  \begin{align}
    \nonumber
    &
    \sum_{0 < n < D} \mu(n)^2
    \,\frac{\chi_4^0(n) + \chi_4(n)}{2}
    \chi_m^0(n) \frac{1}{\phi(q)}
    \sum_{\chi_q \mod q} \overline{\chi_q(\ell)} \chi_q(n)
    \\
    \nonumber
    &=
    \frac{1}{2\phi(q)}\sum_{0 < n < D} \sum_{a^2\mid n} \mu(a) (\chi_4^0(n) + \chi_4(n)) \chi_m^0(n) \sum_{\chi_q \mod q} \overline{\chi_q(\ell)} \chi_q(n)
    \\
    \nonumber
    &=
    \frac{1}{2\phi(q)}\sum_{a^2 < D} \sum_{r < D/a^2} \mu(a) (\chi_4^0(a^2r) + \chi_4(a^2r)) \chi_m^0(a^2r) \sum_{\chi_q \mod q} \overline{\chi_q(\ell)} \chi_q(a^2r)
    \\
    \nonumber
    &=
    \frac{1}{2\phi(q)}\sum_{\substack{a^2 < D \\ (a, qm) = 1}} \sum_{\substack{r < D/a^2 \\ (r, qm) = 1}} \mu(a) (1 + \chi_4(r)) \sum_{\chi_q \mod q} \overline{\chi_q(\ell)} \chi_q(a^2r)
    \\
    \label{eq:stankus1}
    &= \frac{1}{2\phi(q)}\sum_{\substack{a^2 < D \\ (a, qm) = 1}} \sum_{\substack{r < D/a^2 \\ (r, qm) = 1}} \mu(a)
    \\
    \label{eq:stankus2}
    &+ \frac{1}{2\phi(q)}\sum_{\substack{a^2 < D \\ (a, qm) = 1}} \sum_{\substack{r < D/a^2 \\ (r, qm) = 1}} \mu(a) \chi_4(r)
    \\
    \label{eq:stankus3}
    &+ \frac{1}{2\phi(q)}\sum_{\substack{a^2 < D \\ (a, qm) = 1}} \sum_{\substack{r < D/a^2 \\ (r, qm) = 1}} \mu(a) \sum_{\substack{\chi_q \mod q \\ \chi_q \neq \chi_q^0}} \overline{\chi_q(\ell)} \chi_q(a^2r)
    \\
    \label{eq:stankus4}
    &+ \frac{1}{2\phi(q)}\sum_{\substack{a^2 < D \\ (a, qm) = 1}} \sum_{\substack{r < D/a^2 \\ (r, qm) = 1}} \mu(a) \chi_4(r) \sum_{\substack{\chi_q \mod q \\ \chi_q \neq \chi_q^0}} \overline{\chi_q(\ell)} \chi_q(a^2r)
    .
  \end{align}

  As is pointed out in \cite[Proof of lemma 1]{stankus},
  \begin{align}
    \nonumber
    \sum_{\substack{0 < n < N \\ (n,k) = 1}} 1 = \frac{\phi(k)}{k}N + O(\divcount(k)).
  \end{align}
  With this, \eqref{eq:stankus1} can be evaluated:
  \begin{align*}
    &\frac{1}{2\phi(q)}\sum_{\substack{a^2 < D \\ (a, qm) = 1}} \sum_{\substack{r < D/a^2 \\ (r, qm) = 1}} \mu(a)
    \\
    &=
    \frac{1}{2\phi(q)}\sum_{\substack{a^2 < D \\ (a, qm) = 1}} \frac{\phi(qm)}{qm}D \frac{\mu(a)}{a^2} + O(\divcount(qm))
    \\
    &=
    \frac{1}{2\phi(q)}\left( \frac{\phi(qm)}{qm}D \left(\frac{1}{\zeta^{(qm)}(2)} - \sum_{\substack{a^2 > D \\ (a, qm) = 1}}  \frac{\mu(a)}{a^2} \right) + \sum_{\substack{a^2 < D \\ (a, qm) = 1}}O(\divcount(qm))\right)
    \\
    &=
    \frac{1}{2\phi(q)}\left( \frac{\phi(qm)}{qm}D \left(\frac{6}{\pi^2}\prod_{p\mid qm} (1 - p^{-2})^{-1} + O\!\left(\frac{\phi(qm)}{qm}D^{-\half}\right)\right) + O\!\left(\frac{\phi(qm)}{qm}D^{\half}\divcount(qm)\right)\right)
    \\
    &=
    \frac{1}{2\phi(q)}\left( \frac{6}{\pi^2}\prod_{p \mid qm} 1 - p^{-1} \prod_{p\mid qm} \frac{1}{1 - p^{-2}}\cdot D  + O\!\left(\frac{\phi(qm)}{qm}D^{\half}\divcount(qm)\right)\right)
    \\
    &=
    \frac{3}{\pi^2\phi(q)}\prod_{p\mid qm} \frac{1}{1 + p^{-1}}\cdot D  + O\!\left(\frac{\phi(qm)\divcount(qm)}{qm\phi(q)}D^{\half}\right)
    .
  \end{align*}
  
  Next we bound \eqref{eq:stankus2}:
  \begin{align*}
    &\frac{1}{2\phi(q)}\sum_{\substack{a^2 < D \\ (a, qm) = 1}} \sum_{\substack{r < D/a^2 \\ (r, qm) = 1}} \mu(a) \chi_4(r)
    \\
    &= \frac{1}{2\phi(q)}\sum_{\substack{a^2 < D \\ (a, qm) = 1}} \mu(a) \left(\sum_{\substack{r < D/a^2 \\ (r, qm) = 1 \\ r = 1 \mod 4}} 1 - \sum_{\substack{r < D/a^2 \\ (r, qm) = 1 \\ r = 3 \mod 4}} 1\right)
    \\
    &= \frac{1}{2\phi(q)}\sum_{\substack{a^2 < D \\ (a, qm) = 1}} \mu(a) \left(\frac{\phi(qm)}{2qm}\frac{D}{a^2} + O(\divcount(2qm)) - \frac{\phi(qm)}{2qm}\frac{D}{a^2} + O(\divcount(2qm))\right)
    \\
    &= \frac{1}{2\phi(q)}\sum_{\substack{a^2 < D \\ (a, qm) = 1}} \mu(a) O(\divcount(2qm))
    \\
    &\ll \frac{1}{2\phi(q)} \left(\frac{\phi(qm)}{qm}D^\half\divcount(2qm) + \divcount(2qm)^2\right)
    \\
    &\ll \frac{\phi(qm)\divcount(2qm)}{qm\phi(q)}D^\half + \frac{\divcount(2qm)^2}{\phi(q)}
    .
  \end{align*}

  We now bound \eqref{eq:stankus3}:
  \begin{align*}
    &\frac{1}{2\phi(q)}\sum_{\substack{a^2 < D \\ (a, qm) = 1}} \sum_{\substack{r < D/a^2 \\ (r, qm) = 1}} \mu(a) \sum_{\substack{\chi_q \mod q \\ \chi_q \neq \chi_q^0}} \overline{\chi_q(\ell)} \chi_q(a^2r)
    \\
    &=
    \frac{1}{2\phi(q)} \sum_{\substack{\chi_q \mod q \\ \chi_q \neq \chi_q^0}} \overline{\chi_q(\ell)} \sum_{\substack{a^2 < D \\ (a, qm) = 1}} \mu(a) \chi_q(a^2) \sum_{\substack{r < D/a^2 \\ (r, qm) = 1}} \chi_q(r)
    .
    \shortintertext{Applying P\'olya--Vinogradov \cite[Thm.\ 9.18]{MV},}
    &\ll
    \frac{1}{2\phi(q)} \sum_{\substack{\chi_q \mod q \\ \chi_q \neq \chi_q^0}} \overline{\chi_q(\ell)} \sum_{\substack{a^2 < D \\ (a, qm) = 1}} \mu(a) \chi_q(a^2) \big(\sqrt{q}\log q + O(\divcount(m))\big) 
    \\
    &\ll
    \frac{1}{2\phi(q)} \sum_{\substack{\chi_q \mod q \\ \chi_q \neq \chi_q^0}} \overline{\chi_q(\ell)} \frac{\phi(qm)}{qm} D^\half \big(\sqrt{q}\log q + O(\divcount(m))\big)
    \\
    &\ll
    \frac{\phi(qm)}{qm} D^\half \big(\sqrt{q}\log q + O(\divcount(m))\big)
    .
  \end{align*}

  We analyze \eqref{eq:stankus4} in much the same way. There is a key difference however: writing
  \begin{align*}
    &\frac{1}{2\phi(q)}\sum_{\substack{a^2 < D \\ (a, qm) = 1}} \sum_{\substack{r < D/a^2 \\ (r, qm) = 1}} \mu(a) \chi_4(r) \sum_{\substack{\chi_q \mod q \\ \chi_q \neq \chi_q^0}} \overline{\chi_q(\ell)} \chi_q(a^2r)
    \\
    &=
    \frac{1}{2\phi(q)} \sum_{\substack{\chi_q \mod q \\ \chi_q \neq \chi_q^0}} \overline{\chi_q(\ell)} \sum_{\substack{a^2 < D \\ (a, qm) = 1}} \mu(a) \chi_q(a^2) \sum_{\substack{r < D/a^2 \\ (r, qm) = 1}} \chi_q(r)\chi_4(r),
  \end{align*}
  if $q$ is even, then it may be that $\chi_q \chi_4$ is trivial, preventing us from applying P\'olya--Vinogradov. This happens only when $4 \mid q$, and for exactly one character in such cases: the lift of $\chi_4$ to the modulus $q$. For this particular character, the corresponding term in the sum above is
  \begin{align*}
    &\frac{\chi_4(\ell)}{2\phi(q)} \sum_{\substack{a^2 < D \\ (a, qm) = 1}} \mu(a) \left(\frac{\phi(qm)}{qm}\frac{D}{a^2} + O(\divcount(qm))\right)
    \\
    &=
    \frac{\chi_4(\ell)}{2\phi(q)} \frac{\phi(qm)}{qm} D \left(\frac{1}{\zeta^{(qm)}(2)} - \sum_{\substack{a^2 > D \\ (a, qm) = 1}} \frac{\mu(a)}{a^2}\right) + O\!\left(\frac{\phi(qm)\divcount(qm)}{qm\phi(q)}D^\half\right)
    \\
    &=
    \frac{3\chi_4(\ell)}{\pi^2\phi(q)}\prod_{p\mid qm} \frac{1}{1 + p^{-1}}\cdot D + O\!\left(\frac{\phi(qm)\divcount(qm)}{qm\phi(q)}D^\half\right)
    .
  \end{align*}
  The other characters modulo $q$ have their contribution bounded in the same way as above.
\end{proof}

\hypertarget{proof:stankus_2nd_moment}{}
\begin{proof}[Proof of \cref{lemma:stankus_2nd_moment}]
  \begin{align}
    \nonumber
    \sum_{\substack{n < N \\ n \neq \square \\ (n,q) = 1}} \left|\starsum*_{\substack{0 < d < D \\ d = \ell \mod q }} \chi_d(n)\right|^2
    &=
    \sum_{k \geqslant 0} \sum_{\substack{m < N/2^k \\ 2^km \neq \square \\ (m,2q) = 1}} \left|\starsum*_{\substack{0 < d < D \\ d = \ell \mod q }} \chi_d(2^km)\right|^2
    \\
    &=
    \label{eq:discs_n_odd}
    \sum_{\substack{m < N \\ m \neq \square \\ (m,2q) = 1}} \left|\starsum*_{\substack{0 < d < D \\ d = \ell \mod q }} \chi_d(m)\right|^2
    \\
    &+
    \label{eq:discs_n_even}
    \sum_{k \geqslant 1} \sum_{\substack{m < N/2^k \\ 2^km \neq \square \\ (m,2q) = 1}} \left|\starsum*_{\substack{0 < d < D \\ d = \ell \mod q }} \chi_d(2^k) \chi_d(m)\right|^2
    .
  \end{align}

  By \cite[Lemma 5]{stankus},
  \begin{align}
    \label{eq:discs_n_odd_eval}
    \eqref{eq:discs_n_odd} \ll qND(\log D)^4.
  \end{align}

  Looking at \eqref{eq:discs_n_even},
  \begin{align}
    \nonumber
    \sum_{k \geqslant 1} \sum_{\substack{m < N/2^k \\ 2^km \neq \square \\ (m,2q) = 1}} \left|\starsum*_{\substack{0 < d < D \\ d = \ell \mod q }} \chi_d(2^k) \chi_d(m)\right|^2
    ={}&
    \sum_{\substack{k \geqslant 1 \\ \text{$k$ even}}} \sum_{\substack{m < N/2^k \\ 2^km \neq \square \\ (m,2q) = 1}} \left|\starsum*_{\substack{0 < d < D \\ d = \ell \mod q \\ (d,2) = 1}} \chi_d(m)\right|^2
    \\
    \nonumber
    &
    +
    \sum_{\substack{k \geqslant 1 \\ \text{$k$ odd}}} \sum_{\substack{m < N/2^k \\ 2^km \neq \square \\ (m,2q) = 1}} \left|\starsum*_{\substack{0 < d < D \\ d = \ell \mod q}} \chi_d(2) \chi_d(m)\right|^2
    \\
    \label{eq:discs_n_even_k_even}
    \ll{}&
    \log N \cdot \sum_{\substack{m < N/2 \\ m \neq \square \\ (m,2q) = 1}} \left|\starsum*_{\substack{0 < d < D \\ d = \ell \mod q \\ (d,2) = 1}} \chi_d(m)\right|^2
    \\
    \label{eq:discs_n_even_k_odd}
    &+
    \log N \cdot \sum_{\substack{m < N/2 \\ 2m \neq \square \\ (m,2q) = 1}} \left|\starsum*_{\substack{0 < d < D \\ d = \ell \mod q}} \chi_d(2) \chi_d(m)\right|^2
    .
  \end{align}

  Looking at the sum in \eqref{eq:discs_n_even_k_odd},
  \begin{align}
    \nonumber
    \sum_{\substack{m < N/2 \\ 2m \neq \square \\ (m,2q) = 1}} \left|\starsum*_{\substack{0 < d < D \\ d = \ell \mod q}} \chi_d(2) \chi_d(m)\right|^2
    &=
    \sum_{\substack{m < N/2 \\ 2m \neq \square \\ (m,2q) = 1}} \left|\starsum*_{\substack{0 < d < D \\ d = \ell \mod q \\ \chi_d(2) = 1}} \chi_d(m) - \starsum*_{\substack{0 < d < D \\ d = \ell \mod q \\ \chi_d(2) = -1}} \chi_d(m)\right|^2
    \\
    \label{eq:discs_n_even_k_odd_m_square}
    &=
    \sum_{\substack{m < N/2 \\ m = \square \\ (m,2q) = 1}} \left|\starsum*_{\substack{0 < d < D \\ d = \ell \mod q \\ \chi_d(2) = 1}} \chi_d(m) - \starsum*_{\substack{0 < d < D \\ d = \ell \mod q \\ \chi_d(2) = -1}} \chi_d(m)\right|^2
    \\
    \label{eq:discs_n_even_k_odd_m_nonsquare}
    &
    +
    \sum_{\substack{m < N/2 \\ m \neq \square \\ (m,2q) = 1}} \left|\starsum*_{\substack{0 < d < D \\ d = \ell \mod q \\ \chi_d(2) = 1}} \chi_d(m) - \starsum*_{\substack{0 < d < D \\ d = \ell \mod q \\ \chi_d(2) = -1}} \chi_d(m)\right|^2
    .
  \end{align}
  The second equality immediately above uses the fact that $2m \neq \square$ is automatic if $m$ is odd.

  Recall the classification of fundamental discriminants \eqref{eq:fundamental_discriminant_classification}. Looking at \eqref{eq:discs_n_even_k_odd_m_square},
  \begin{align}
    \label{eq:discs_n_even_k_odd_m_square_eval}
    \sum_{\substack{m < N/2 \\ m = \square \\ (m,2q) = 1}} \left|\starsum*_{\substack{0 < d < D \\ d = \ell \mod q \\ \chi_d(2) = 1}} \chi_d(m) - \starsum*_{\substack{0 < d < D \\ d = \ell \mod q \\ \chi_d(2) = -1}} \chi_d(m) \right|^2
    &=
    \sum_{\substack{m < N/2 \\ m = \square \\ (m,2q) = 1}} \left|\starsum*_{\substack{0 < d < D \\ d = \ell \mod q \\ d = 1 \mod 8 \\ (d,m) = 1}} 1 - \starsum*_{\substack{0 < d < D \\ d = \ell \mod q \\ d = 5 \mod 8 \\ (d,m) = 1}} 1 \right|^2
    .
  \end{align}

  By \cref{lemma:disc_power_sum}, with the more precise error terms \eqref{eq:disc_sum_et1} and \eqref{eq:disc_sum_et2},
  \begin{align}
    \label{eq:discs_count_1mod8}
    &\starsum*_{\substack{0 < d < D \\ d = \ell \mod q \\ d = 1 \mod 8 \\ (d,m) = 1}} 1
    = CD
    + O\!\left(
    \min\!\left\{
    D^{\frac{4}{7}} q^{\frac{1}{6} + \eps}
    ,
    D^{\half} q^{\frac{1}{2} + \eps}
    \right\}
    \right)
    \shortintertext{and}
    \label{eq:discs_count_5mod8}
    &\starsum*_{\substack{0 < d < D \\ d = \ell \mod q \\ d = 5 \mod 8 \\ (d,m) = 1}} 1
    = CD
    + O\!\left(
    \min\!\left\{
    D^{\frac{4}{7}} q^{\frac{1}{6} + \eps}
    ,
    D^{\half} q^{\frac{1}{2} + \eps}
    \right\}
    \right)
    ,
  \end{align}
  where $C$ is the coefficient from \cref{lemma:disc_power_sum}. 
  
  Substituting \eqref{eq:discs_count_1mod8} and \eqref{eq:discs_count_5mod8} into \eqref{eq:discs_n_even_k_odd_m_square_eval},
  \begin{align}
    \nonumber
    \eqref{eq:discs_n_even_k_odd_m_square}
    &=
    \sum_{\substack{m < N/2 \\ m = \square \\ (m,2q) = 1}} \left|\starsum*_{\substack{0 < d < D \\ d = \ell \mod q \\ \chi_d(2) = 1}} \chi_d(m) - \starsum*_{\substack{0 < d < D \\ d = \ell \mod q \\ \chi_d(2) = -1}} \chi_d(m) \right|^2
    \\
    \nonumber
    &=
    \sum_{\substack{m < N/2 \\ m = \square \\ (m,2q) = 1}}
    \left| CD
    + O\!\left(
    \min\!\left\{
    D^{\frac{4}{7}} q^{\frac{1}{6} + \eps}
    ,
    D^{\half} q^{\frac{1}{2} + \eps}
    \right\}
    \right)
    - CD
    + O\!\left(
    \min\!\left\{
    D^{\frac{4}{7}} q^{\frac{1}{6} + \eps}
    ,
    D^{\half} q^{\frac{1}{2} + \eps}
    \right\}
    \right)
    \right|^2
    \\
    \nonumber
    &\ll
    \sum_{\substack{m < N/2 \\ m = \square \\ (m,2q) = 1}}
    + O\!\left(
    \min\!\left\{
    D^{\frac{8}{7}} q^{\frac{1}{3} + \eps}
    ,
    D q^{1 + \eps}
    \right\}
    \right)
    \\
    \label{eq:discs_n_even_k_odd_m_square_eval_2}
    &\ll N^\half
    \min\!\left\{
    D^{\frac{8}{7}} q^{\frac{1}{3} + \eps}
    ,
    D q^{1 + \eps}
    \right\}
    .
  \end{align}

  Looking next at \eqref{eq:discs_n_even_k_odd_m_nonsquare},
  \begin{align}
    \nonumber
    \eqref{eq:discs_n_even_k_odd_m_nonsquare}
    &=
    \sum_{\substack{m < N/2 \\ m \neq \square \\ (m,2q) = 1}} \left|\starsum*_{\substack{0 < d < D \\ d = \ell \mod q \\ \chi_d(2) = 1}} \chi_d(m) - \starsum*_{\substack{0 < d < D \\ d = \ell \mod q \\ \chi_d(2) = -1}} \chi_d(m)\right|^2
    \\
    \label{eq:discs_n_even_k_odd_m_nonsquare_mod8_split}
    &\ll
    \sum_{\substack{m < N/2 \\ m \neq \square \\ (m,2q) = 1}} \left|\starsum*_{\substack{0 < d < D \\ d = \ell \mod q \\ \chi_d(2) = 1}} \chi_d(m)\right|^2
    +
    \sum_{\substack{m < N/2 \\ m \neq \square \\ (m,2q) = 1}} \left|\starsum*_{\substack{0 < d < D \\ d = \ell \mod q \\ \chi_d(2) = -1}} \chi_d(m)\right|^2
  \end{align}
  
  Fainleib and Saparniyazov prove the following
  \begin{theorem}[{Fainleib--Saparniyazov \cite[\foreignlanguage{russian}{Теорема} 1]{FS}}] 
    \label{thm:FS}
    Let $M, K \in \Z_{\geqslant 2}$ and $q, \ell \in \Z_{>0}$ with $(\ell, q) = 1$. Let $a(k)$ be a sequence of complex numbers, and set $b(k) \coloneqq \sum_{r\mid k} \mu\big(\tfrac{k}{r}\big) a_r$.
    \begin{align*}
      \sum_{\substack{m < M \\ m \neq \square \\ (m,2q) = 1}} \left|\sum_{\substack{k < K \\ k = \ell \mod q}} a(k) \left(\frac{k}{m}\right)\right|^2
      \ll
      qMK(\log K)^2 \left(\sum_{k < K} \frac{|b(k)|}{\sqrt{k}}\right)^2
      .
    \end{align*}
  \end{theorem}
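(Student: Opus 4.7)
The plan is to prove \cref{thm:FS} by combining the Möbius decoupling $a = 1 \ast b$ with a large-sieve-type inequality for quadratic characters. Because the right-hand side features $\big(\sum_k |b(k)|/\sqrt k\big)^{2}$ rather than the more familiar $L^{2}$ norm $\sum_k |b(k)|^{2}$, one must apply Cauchy--Schwarz in a weighted form tailored to the $1/\sqrt k$ measure.

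First I would expand $a(k) = \sum_{r\mid k}b(r)$ inside the inner sum and write $k = rk'$, obtaining
\[
    S(m) \coloneqq \!\! \sum_{\substack{k < K \\ k\equiv \ell\,(q)}}\! a(k)\Big(\tfrac{k}{m}\Big)
    = \sum_{\substack{r < K \\ (r,q)=1}}\! b(r)\Big(\tfrac{r}{m}\Big)\, T_r(m),
\]
where $T_r(m) \coloneqq \sum_{k' < K/r,\ k'\equiv r^{-1}\ell\,(q)}(k'/m)$; the restriction $(r,q)=1$ is forced by $(\ell,q)=1$, since any prime $p \mid (r,q)$ would force $p \mid \ell$ too. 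Then Cauchy--Schwarz in the weighted form
\[
    |S(m)|^{2} \,\leqslant\, \bigg(\sum_r \frac{|b(r)|}{\sqrt r}\bigg) \bigg(\sum_r \sqrt r\,|b(r)|\,|T_r(m)|^{2}\bigg)
\]
gives, upon summing over $m$ and swapping orders,
\[
    \sum_m |S(m)|^{2} \,\leqslant\, \bigg(\sum_r \frac{|b(r)|}{\sqrt r}\bigg)\sum_r \sqrt r\,|b(r)|\sum_m |T_r(m)|^{2}.
\]

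The core step is the mean-square bound $\sum_m^{*}|T_r(m)|^{2}\ll qMK r^{-3/2}(\log K)^{2}$, which is the point at which a large-sieve inequality for quadratic characters must be invoked. I would either appeal to Heath--Brown's quadratic large sieve (after factoring $m = m_1 m_2^{2}$ with $m_1$ squarefree to reduce to proper Dirichlet characters) or else expand $|T_r(m)|^{2}$ and handle the off-diagonal sums by Pólya--Vinogradov. Substituting back and simplifying produces the claimed $\ll qMK(\log K)^{2}\big(\sum_r|b(r)|/\sqrt r\big)^{2}$. The main obstacle will be the reduction from $m$ non-square to $m$ squarefree: the Kronecker symbol $(k/m)$ is a genuine Dirichlet character only in the squarefree case, so the square-factor summation $\sum_{m_2 \leqslant \sqrt M} 1/m_2 \ll \log M$ that accompanies this reduction must be tracked carefully, and it is this cost (combined with the $\log K$ from Pólya--Vinogradov) that accounts for the $(\log K)^{2}$ in the statement.
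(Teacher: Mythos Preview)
The paper does not prove \cref{thm:FS}. It is quoted verbatim from Fainleib--Saparniyazov \cite[\foreignlanguage{russian}{Теорема} 1]{FS} inside the proof of \cref{lemma:stankus_2nd_moment} and then applied as a black box; no argument is given or sketched. So there is no ``paper's own proof'' to compare your proposal against.

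Your outline is a plausible reconstruction of how such an inequality is proved---M\"obius inversion $a = 1 \ast b$, weighted Cauchy--Schwarz with weights $r^{\mp 1/2}$, then a mean-square bound on the resulting character sum $T_r(m)$---and it correctly identifies the appearance of $\big(\sum_k |b(k)|/\sqrt{k}\big)^2$ as the reason for the asymmetric Cauchy--Schwarz. One point to be careful about: Heath--Brown's quadratic large sieve postdates \cite{FS} and gives a stronger bound than what is stated here, so if you want to match the original, the P\'olya--Vinogradov route (which naturally produces the $(\log K)^2$ and the factor $q$ from the arithmetic progression) is closer in spirit. But since the paper itself simply cites the result, any correct derivation is acceptable for your purposes.
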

  Let $a(k) = \mu(k)^2$ be the indicator function of squarefree integers. For this choice,
  \begin{align*}
    b(k)
    =
    \sum_{r\mid k} \mu\!\left(\tfrac{k}{r}\right) \mu(r)^2
    =
    \one{\sqrt{k} \in \Z}\mu(\sqrt{k}).
  \end{align*}
  
  Consider the first term of \eqref{eq:discs_n_even_k_odd_m_nonsquare_mod8_split},
  \begin{align*}
    \sum_{\substack{m < N/2 \\ m \neq \square \\ (m,2q) = 1}} \left|\starsum*_{\substack{0 < d < D \\ d = \ell \mod q \\ \chi_d(2) = 1}} \chi_d(m)\right|^2
    =
    \sum_{\substack{m < N/2 \\ m \neq \square \\ (m,2q) = 1}} \left|\sum_{\substack{0 < k < D \\ k = \ell \mod q \\ n = 1 \mod 8}} a(k) \left(\frac{k}{m}\right)\right|^2
    .
  \end{align*}
  Applying \cref{thm:FS},
  \begin{align}
    \nonumber
    \sum_{\substack{m < N/2 \\ m \neq \square \\ (m,2q) = 1}} \left|\starsum*_{\substack{0 < d < D \\ d = \ell \mod q \\ \chi_d(2) = 1}} \chi_d(m)\right|^2
    &\ll
    qND(\log D)^2\left(\sum_{0 < k < D} \one{\sqrt{k} \in \Z} \frac{|\mu(\sqrt{k})|}{\sqrt{k}}\right)^2
    \\
    \nonumber
    &\ll
    qND(\log D)^2\left(\sum_{0 < r < D^\half}\frac{1}{r}\right)^2
    \\
    \label{eq:discs_n_even_k_odd_m_nonsquare_1mod8_bound}
    &\ll
    qND(\log D)^4
    .
  \end{align}

  The second term of \eqref{eq:discs_n_even_k_odd_m_nonsquare_mod8_split} is bounded in the same way:
  \begin{align}
    \label{eq:discs_n_even_k_odd_m_nonsquare_5mod8_bound}
    \sum_{\substack{m < N/2 \\ m \neq \square \\ (m,2q) = 1}} \left|\starsum*_{\substack{0 < d < D \\ d = \ell \mod q \\ \chi_d(2) = -1}} \chi_d(m)\right|^2
    &\ll
    qND(\log D)^4
    .
  \end{align}

  Substituting \eqref{eq:discs_n_even_k_odd_m_nonsquare_1mod8_bound} and \eqref{eq:discs_n_even_k_odd_m_nonsquare_5mod8_bound} into \eqref{eq:discs_n_even_k_odd_m_nonsquare_mod8_split} gives
  \begin{align}
    \label{eq:discs_n_even_k_odd_m_nonsquare_1mod4_bound}
    \sum_{\substack{m < N/2 \\ m \neq \square \\ (m,2q) = 1}} \left|\starsum*_{\substack{0 < d < D \\ d = \ell \mod q \\ d = 1 \mod 4}} \chi_d(m)\right|^2
    &\ll
    qND(\log D)^4
    .
  \end{align}

  Moreover, \eqref{eq:discs_n_even_k_odd_m_nonsquare_1mod4_bound} bounds \eqref{eq:discs_n_even_k_even}:
  \begin{align}
    \nonumber
    \eqref{eq:discs_n_even_k_even}
    &=
    \log N \cdot \sum_{\substack{m < N/2 \\ m \neq \square \\ (m,2q) = 1}} \left|\starsum*_{\substack{0 < d < D \\ d = \ell \mod q \\ (d,2) = 1}} \chi_d(m)\right|^2
    \\
    \nonumber
    &=
    \log N \cdot \sum_{\substack{m < N/2 \\ m \neq \square \\ (m,2q) = 1}} \left|\starsum*_{\substack{0 < d < D \\ d = \ell \mod q \\ d = 1 \mod 4}} \chi_d(m)\right|^2
    \\
    \label{eq:discs_n_even_k_even_bound}
    &\ll
    qND(\log D)^4\log N
    .
  \end{align}

  Combining \eqref{eq:discs_n_odd_eval}, \eqref{eq:discs_n_even_k_odd_m_square_eval_2}, \eqref{eq:discs_n_even_k_odd_m_nonsquare_1mod4_bound}, and \eqref{eq:discs_n_even_k_even_bound} proves \cref{lemma:stankus_2nd_moment}.
\end{proof}

\subsection{Analyzing terms in the approximate functional equation averaged over a family of quadratic twists}
\label{subsec:proofs_average_afe}

\hypertarget{proof:t1s_approx}{}
\begin{proof}[Proof of \cref{lemma:t1s_approx}]
  Using the assumption $\sigma > \thalf + \theta$ to ensure absolute convergence,
  \begin{align}
    \Favg \sum_{n=1}^\infty \frac{a_{\rep}(n^2)\chi_d(n^2)}{n^{2s}} \Vpsq
    \label{eq:t1s_approx_mt1}
    &= \sum_{n=1}^\infty \frac{a_{\rep}(n^2)}{n^{2s}} \Favg \chi_d(n^2)
    \\
    \label{eq:t1s_approx_et1}
    &+ \Favg \sum_{n=1}^\infty \frac{a_{\rep}(n^2)\chi_d(n^2)}{n^{2s}} \left(\Vpsq - 1\right)
    .
  \end{align}

  The error term \eqref{eq:t1s_approx_et1} is bounded using \cref{lemma:V_approx}:
  \begin{align}
    \nonumber
    \eqref{eq:t1s_approx_et1}
    &\ll \sum_{n^2 > \mupnp} n^{2\theta - 2\sigma} \;\epsfac
    \\
    \label{eq:t1s_approx_et1_eval}
    &\ll
    \mup^{\half - \sigma + \theta} \epsfac
    .
  \end{align}

  Applying \cref{lemma:disc_power_sum} to the main term \eqref{eq:t1s_approx_mt1},
  \begin{align}
    \nonumber
    \eqref{eq:t1s_approx_mt1}
    &=
    \sum_{n=1}^\infty \frac{a_{\rep}(n^2)}{n^{2s}} \frac{1}{\#\cF} \starsum*_{\substack{D_0 < d < D \\ d = \ell \mod \qrep \\ (d,n) = 1}} 1
    \\
    \label{eq:t1s_approx_mt2_et2}
    &=
    \sum_{n=1}^\infty \frac{a_{\rep}(n^2)}{n^{2s}} \frac{1}{\#\cF} \left(\Delta D\frac{6\Eta_{\cF}}{\pi^2\phi(\qrep)} \prod_{p\mid 2\qrep n} \frac{p}{p+1} + O\!\left(D^\half \qet (n\qrep D)^\eps \right)\right)
    ,
  \end{align}
  where $\Eta_{\cF}$ is as defined in \eqref{eq:Etadef}.

  We first process the error term in \eqref{eq:t1s_approx_mt2_et2}:
  \begin{align}
    \nonumber
    \sum_{n=1}^\infty \frac{a_{\rep}(n^2)}{n^{2s}} \frac{1}{\#\cF} \,O\!\left(D^\half \qet (n\qrep D)^\eps \right)
    &\ll
    \sum_{n=1}^\infty n^{2\theta - 2\sigma + \eps} \,\frac{D^{\half}}{\#\cF} \qet (qD)^\eps
    \\
    \label{eq:t1s_approx_et2_eval}
    &\ll
    \frac{D^{\half}}{\#\cF} \qet (qD)^\eps
    .
  \end{align}

  Next we apply \cref{lemma:Fsize} to the main term of \eqref{eq:t1s_approx_mt2_et2}, obtaining
  \begin{align}
    \nonumber
    &\sum_{n=1}^\infty \frac{a_{\rep}(n^2)}{n^{2s}}
    \frac{\Delta D}{\#\cF} \frac{6\Eta_{\cF}}{\pi^2\phi(\qrep)} \prod_{p\mid 2\qrep n} \frac{p}{p+1}
    \\
    \nonumber
    &=
    \sum_{n=1}^\infty \frac{a_{\rep}(n^2)}{n^{2s}} \frac{1}{\#\cF}
    \left(\#\cF + O\!\left(D^\half \qet (\qrep D)^\eps \right)\right)
    \left(\frac{6\Eta_{\cF}}{\pi^2\phi(\qrep)} \prod_{p\mid 2\qrep} \frac{p}{p+1}\right)^{-1}
    \frac{6\Eta_{\cF}}{\pi^2\phi(\qrep)} \prod_{p\mid 2\qrep n} \frac{p}{p+1}
    \\
    \label{eq:t1s_approx_mt3}
    &=
    \sum_{n=1}^\infty \frac{a_{\rep}(n^2)}{n^{2s}} \prod_{p\mid \frac{n}{(2\qrep, n)}} \frac{p}{p+1}
    \\
    \label{eq:t1s_approx_et3}
    &+
    \sum_{n=1}^\infty \frac{a_{\rep}(n^2)}{n^{2s}} \;O\!\left(\frac{D^\half}{\#\cF} \qet (n\qrep D)^\eps\right)
    .
  \end{align}

  \Cref{lemma:t1s_approx}'s main term is \eqref{eq:t1s_approx_mt3}.
  The Dirichlet series in \eqref{eq:t1s_approx_et3} converges absolutely, per the assumption $\sigma > \thalf + \theta$, resulting in a copy of the error term \eqref{eq:t1s_approx_et2_eval}. Collecting \eqref{eq:t1s_approx_et1_eval}, \eqref{eq:t1s_approx_et2_eval}, and \eqref{eq:t1s_approx_mt3} completes the proof.
\end{proof}

\hypertarget{proof:t2s_approx_step1}{}
\begin{proof}[Proof of \cref{lemma:t2s_approx_step1}]
  We have the decomposition
  \begin{align}
    \nonumber
    &\Favg \sum_{n=1}^\infty \frac{a_{\bar\rep}(n^2)\chi_d(n^2)}{n^{2-2s}} \Vmsq
    \\
    \nonumber
    &= \sum_{n=1}^\infty \frac{a_{\bar\rep}(n^2)}{n^{2-2s}} \VmDsq \Favg \chi_d(n^2)
    \\
    \label{eq:t2s_approx_step1_1}
    &+ \Favg \sum_{n=1}^\infty \frac{a_{\bar\rep}(n^2)\chi_d(n^2)}{n^{2-2s}} \left(\Vmsq - \VmDsq\right)
    .
  \end{align}

  We bound the error term \eqref{eq:t2s_approx_step1_1} using \cref{lemma:V_deriv_approx}:
  \begin{align}
    \nonumber
    &\Favg \sum_{n=1}^\infty \frac{a_{\bar\rep}(n^2)\chi_d(n^2)}{n^{2-2s}} \left(\Vmsq - \VmDsq\right)
    \\
    \nonumber
    &\ll \frac{\Delta D}{D} \Favg \sum_{n=1}^\infty \frac{|a_{\bar\rep}(n^2)\chi_d(n^2)|}{n^{2-2\sigma}}
    \left(\frac{\xi^{-1} \sqrt{QD^\repdim}}{n}\right)^{\!\sigma_w}
    \\
    \nonumber
    &\ll \frac{\Delta D}{D} \sum_{n^2 < \mum^{1+\eps}} \frac{n^{2\theta}}{n^{2-2\sigma}}
    \\
    \nonumber
    &\ll \frac{\#\cF}{D} \left(1 + \mum^{\sigma - \half + \theta}\right) \epsfac
    .
  \end{align}
\end{proof}

\hypertarget{proof:t2s_approx_step2}{}
\begin{proof}[Proof of \cref{lemma:t2s_approx_step2}]
  Write
  \begin{align}
    \nonumber
    &\sum_{n=1}^\infty \frac{a_{\bar\rep}(n^2)}{n^{2-2s}} \VmDsq \Favg \chi_d(n^2) \left(\frac{\pi^\repdim \x}{\qrep d^\repdim}\right)^{\!s - \half}
    \\
    \label{eq:t2s_approx_step2_1}
    &\hspace{2cm}
    = \sum_{n=1}^\infty \frac{a_{\bar\rep}(n^2)}{n^{2-2s}} \VmDsq \Favg \chi_d(n^2)d^{-\repdim(s-\half)} \left(\frac{\pi^\repdim \x}{\qrep}\right)^{\!s - \half}
    .
  \end{align}
  We will apply \cref{lemma:disc_power_sum} momentarily. Recall the definitions \eqref{eq:etadef} and \eqref{eq:Etadef},
  \begin{align*}
    \eta_{m,q,a} &\coloneqq \thalf\one{4 \nmid q} + \one{\text{$4\mid q$ and $a = 1\mod 4$}} + \tfrac{1}{4}\one{(mq,2) = 1}
    \\
    \Eta_{\cF} &\coloneqq 
    \eta_{\qrep,\qrep,\ell}
    .
  \end{align*}
  Recall also that we write $|s_*| \coloneqq \max_j\{|s+\kappa_j|, |1-s+\bar\kappa_j|, 1\}$.

  Applying \cref{lemma:disc_power_sum} to \eqref{eq:t2s_approx_step2_1},
  \begin{align}
    \nonumber
    \eqref{eq:t2s_approx_step2_1}
    &= \sum_{n=1}^\infty \frac{a_{\bar\rep}(n^2)}{n^{2-2s}} \VmDsq \frac{1}{\#\cF} \starsum*_{\substack{D_0 < d < D \\ d = \ell \mod \qrep \\ (d,n) = 1}} d^{-\repdim(s-\half)} \left(\frac{\pi^\repdim \x}{\qrep}\right)^{\!s - \half}
    \\
    \nonumber
    &= \sum_{n=1}^\infty \frac{a_{\bar\rep}(n^2)}{n^{2-2s}} \VmDsq \frac{1}{\#\cF}
    \Bigg[
    \frac{6\Eta_{\cF}}{\pi^2\phi(q)} \prod_{p\mid 2qn}\frac{p}{p+1}
    \left(\frac{D^{1 - r(s-\half)}}{1 - r(s-\half)} - \frac{D_0^{1 - r(s-\half)}}{1 - r(s-\half)}\right)
    \\
    \nonumber
    &\hspace{5.5cm}
    + O\!\left(D^{\half - r(\sigma-\half)}\qrep^{\frac{1}{6}} \left(D^{\frac{1}{14}} + (r|s_*|)^{\frac{1}{6}}\right) (nQD)^\eps \right)\Bigg]
    \left(\frac{\pi^\repdim \x}{\qrep}\right)^{\!s - \half}
    \\
    \nonumber
    &= \sum_{n=1}^\infty \frac{a_{\bar\rep}(n^2)}{n^{2-2s}} \VmDsq \frac{6\Eta_{\cF} D}{\pi^2\phi(q)\#\cF} \prod_{p\mid 2qn}\frac{p}{p+1}
    \frac{1 - \left(\frac{D_0}{D}\right)^{1 - r(s-\half)}}{1 - r(s-\half)} \xD^{\!s - \half}
    \\
    \label{eq:t2s_approx_step2_2}
    &+ \sum_{n=1}^\infty \frac{a_{\bar\rep}(n^2)}{n^{2-2s}} \VmDsq \frac{1}{\#\cF} O\!\left(D^{\half}\qrep^{\frac{1}{6}} \left(D^{\frac{1}{14}} + (r|s_*|)^{\frac{1}{6}}\right) (nQD)^\eps \right) \xD^{\!\sigma - \half}
    .
  \end{align}
  The error term \eqref{eq:t2s_approx_step2_2} can be bounded in a familiar way:
  \begin{align}
    \nonumber
    \eqref{eq:t2s_approx_step2_2}
    &\ll
    \sum_{n^2 < \mum^{1+\eps}} \frac{n^{2\theta}}{n^{2-2\sigma}} \frac{\qrep^{\frac{1}{6}} D^\half}{\#\cF} \left(D^{\frac{1}{14}} + (r|s_*|)^{\frac{1}{6}}\right) (nQD)^\eps \xD^{\!\sigma - \half}
    \\
    \nonumber
    &\ll
    \left(1 + \mum^{\sigma - \half + \theta}\right) \frac{\qrep^{\frac{1}{6}} D^\half}{\#\cF} \left(D^{\frac{1}{14}} + (r|s_*|)^{\frac{1}{6}}\right) (QD)^\eps \xD^{\!\sigma - \half}
    .
    \qedhere
  \end{align}
\end{proof}

\hypertarget{proof:t2s_approx_step3}{}
\begin{proof}[Proof of \cref{lemma:t2s_approx_step3}]
  \Cref{lemma:Fsize} implies
  \begin{align*}
    \frac{\Delta D}{\#\cF} \frac{6\Eta_{\cF}}{\pi^2\phi(q)} \prod_{p\mid 2q} \frac{p}{p+1} = 1 + O\!\left(\#\cF^{-1} 
    D^\half \qet
    (\qrep D)^\eps\right)
    .
  \end{align*}
  Substituting into the left hand side of \cref{lemma:t2s_approx_step3},
  \begin{align}
    \nonumber
    &\sum_{n=1}^\infty
    \frac{a_{\bar\rep}(n^2)}{n^{2-2s}}
    \prod_{p \mid 2qn} \frac{1}{1 + p^{-1}} \VmDsq \frac{6\Eta_{\cF}\Delta D}{\pi^2 \phi(\qrep)\#\cF} \frac{1 - \left(1 - \frac{\Delta D}{D}\right)^{1 - r(s-\half)}}{(1 - r(s-\half))\frac{\Delta D}{D}}
    \\
    \label{eq:t2s_size3_1}
    &=
    \sum_{n=1}^\infty
    \frac{a_{\bar\rep}(n^2)}{n^{2-2s}}
    \frac{\prod_{p \mid 2qn} \frac{p}{p+1}}{\prod_{p \mid 2q} \frac{p}{p+1}} \VmDsq \frac{1 - \left(1 - \frac{\Delta D}{D}\right)^{1 - r(s-\half)}}{(1 - r(s-\half))\frac{\Delta D}{D}}
    \\
    \label{eq:t2s_size3_2}
    &+ \sum_{n=1}^\infty \frac{a_{\bar\rep}(n^2)}{n^{2-2s}}
    \frac{\prod_{p \mid 2qn} \frac{p}{p+1}}{\prod_{p \mid 2q} \frac{p}{p+1}} \VmDsq O\!\left(\#\cF^{-1}
    D^\half \qet
    (\qrep D)^\eps\right) \frac{1 - \left(1 - \frac{\Delta D}{D}\right)^{1 - r(s-\half)}}{(1 - r(s-\half))\frac{\Delta D}{D}}
    .
  \end{align}
  Upon noting that
  \begin{align*}
    \frac{\prod_{p \mid 2qn} \frac{p}{p+1}}{\prod_{p \mid 2q} \frac{p}{p+1}}
    =
    \prod_{p \mid \frac{n}{(n,2q)}} \frac{p}{p+1}
    ,
  \end{align*}
  we recover the main term of \cref{lemma:t2s_approx_step3} in \eqref{eq:t2s_size3_1}.

  To bound the error term \eqref{eq:t2s_size3_2}, let's look at the factor $\frac{1 - \left(1 - \frac{\Delta D}{D}\right)^{1 - r(s-\half)}}{(1 - r(s-\half))\frac{\Delta D}{D}}$ more closely.
  When $r|s_*|\Delta \ll D$, a first order series expansion gives
  \begin{align}
    \nonumber
    \frac{1 - \left(1 - \frac{\Delta D}{D}\right)^{1 - r(s-\half)}}{(1 - r(s-\half))\frac{\Delta D}{D}}
    &=
    \frac{1 - \left(1 - \big(1 - r(s-\half)\big)\frac{\Delta D}{D} + O\!\left(\left(\frac{r|s_*|\Delta D}{D}\right)\right)^2\right)}{(1 - r(s-\half))\frac{\Delta D}{D}}
    \\
    \label{eq:t2s_size3_3}
    &= 1 + O\!\left(\frac{r|s_*|\Delta D}{D}\right)
  \end{align}
  In the complementary regime, the numerator is bounded as
  \begin{align*}
    1 - \left(1 - \frac{\Delta D}{D}\right)^{1 - r(s-\half)}
    \ll
    1 + \exp\!\left(\big(1 - r(\sigma - \thalf)\big)\frac{\Delta D}{D}\right)
    ,
  \end{align*}  
  so
  \begin{align}
    \nonumber
    \frac{1 - \left(1 - \frac{\Delta D}{D}\right)^{1 - r(s-\half)}}{(1 - r(s-\half))\frac{\Delta D}{D}}
    &\ll
    \frac{D}{r|s_*|\Delta D}\left(1 + \exp\!\left(\big(1 - r(\sigma - \thalf)\big)\frac{\Delta D}{D}\right)\right)
    \\
    \label{eq:t2s_size3_4}
    &\ll_{r,\sigma}
    \frac{D}{|s_*|\Delta D}
    .
  \end{align}
  We substitute \eqref{eq:t2s_size3_3} and \eqref{eq:t2s_size3_4} into the error term \eqref{eq:t2s_size3_2}, which we can now bound:
  \begin{align*}
    \eqref{eq:t2s_size3_2}
    &\ll
    \sum_{n=1}^\infty \frac{a_{\bar\rep}(n^2)}{n^{2-2s}}
    \prod_{p \mid \frac{n}{(n,2q)}} \frac{p}{p+1} \VmDsq 
    \frac{D^{\frac{1}{2}}}{\#\cF} \qet (\qrep D)^\eps \min\!\left\{1,\, \frac{\Delta D}{|s_*| D}\right\}
    \\
    &\ll
    \sum_{n^2 < \mum^{1+\eps}} \frac{n^{2\theta}}{n^{2-2\sigma}}
    \frac{D^{\frac{1}{2}}}{\#\cF} \qet (\qrep D)^\eps \min\!\left\{1,\, \frac{\Delta D}{|s_*| D}\right\}
    \\
    &\ll
    \frac{D^\half}{\#\cF} \qet \left(1 + \mum^{\sigma - \half + \theta}\right) \min\!\left\{1,\, \frac{\Delta D}{|s_*| D}\right\} (\qrep D)^\eps
    .
  \end{align*}
\end{proof}

\hypertarget{proof:t2s_approx_step4}{}
\begin{proof}[Proof of \cref{lemma:t2s_approx_step4}]
  The Dirichlet series in the left hand side of \cref{lemma:t2s_approx_step4} decomposes as
  \begin{align}
    \sum_{n=1}^\infty \frac{a_{\bar\rep}(n^2)}{n^{2-2s}} \prod_{p \mid \frac{n}{(n,2q)}} \frac{1}{1 + p^{-1}} \VmDsq
    \label{eq:t2s_approx_step4_1}
    &
    = \sum_{n=1}^\infty \frac{a_{\bar\rep}(n^2)}{n^{2-2s}} \prod_{p \mid \frac{n}{(n,2q)}} \frac{1}{1 + p^{-1}}
    \\
    \label{eq:t2s_approx_step4_2}
    &
    + \sum_{n=1}^\infty \frac{a_{\bar\rep}(n^2)}{n^{2-2s}} \prod_{p \mid \frac{n}{(n,2q)}} \frac{1}{1 + p^{-1}} \left(\VmDsq - 1\right)
    ,
  \end{align}
  since the hypothesis of \cref{lemma:t2s_approx_step4} requires that $\sigma < \thalf - \theta$, ensuring that the Dirichlet series \eqref{eq:t2s_approx_step4_1} converges absolutely.

  The term \eqref{eq:t2s_approx_step4_2} will end up being a factor of \cref{lemma:t2s_approx_step4}'s error term. Applying \cref{lemma:V_approx},
  \begin{align}
    \nonumber
    \eqref{eq:t2s_approx_step4_2}
    &\ll \sum_{n^2 > \mum^{1-\eps}} \frac{|a_{\bar\rep}(n^2)|}{n^{2-2\sigma}} \prod_{p \mid \frac{n}{(n,2q)}} \frac{p}{p + 1}
    \\
    \nonumber
    &\ll \sum_{n^2 > \mum^{1-\eps}} \frac{n^{2\theta}}{n^{2-2\sigma}}
    \\
    \label{eq:t2s_approx_step4_3}
    &\ll \mum^{\sigma - \half + \theta} \epsfac
    .
  \end{align}
  
  The other factor in the error term, $\frac{1 - \left(1 - \frac{\Delta D}{D}\right)^{1 - r(s-\half)}}{(1 - r(s-\half))\frac{\Delta D}{D}}$, is bounded as
  \begin{align}
    \label{eq:t2s_approx_step4_4}
    \frac{1 - \left(1 - \frac{\Delta D}{D}\right)^{1 - r(s-\half)}}{(1 - r(s-\half))\frac{\Delta D}{D}}
    \ll
    \min\!\left\{1,\, \frac{\Delta D}{|s_*| D}\right\}
    ;
  \end{align}
  see \eqref{eq:t2s_size3_3} and \eqref{eq:t2s_size3_4} from the \hyperlink{proof:t2s_approx_step3}{proof of \cref*{lemma:t2s_approx_step3}}.

  Combining \eqref{eq:t2s_approx_step4_1}, \eqref{eq:t2s_approx_step4_3}, and \eqref{eq:t2s_approx_step4_4} proves \cref{lemma:t2s_approx_step4}.
\end{proof}

\hypertarget{proof:eliminate_derivative_factor}{}
\begin{proof}[Proof of \cref{lemma:eliminate_derivative_factor}]
  
  Our integral is over imaginary parts $|t| < T$. We introduce a cutoff $T_1$, to be optimized later, and split the line of integration into the regions $|t| < T_1$ and $T_1 < |t| < T$. Ultimately we will take $T_1 \asymp D^{1 - \delta}$.

  Decomposing,
  \begin{align}
    \nonumber
    &\frac{1}{2\pi i}\int_{\sigma - iT}^{\sigma + iT} \frac{\halfGamma{1-s+\bar\kappa}}{\halfGamma{s+\kappa}}
    \frac{L(2-2s, \barrepsq)}{L^{(2\qrep)}(3-2s, \barrepsq)}
    \mspace{1mu}
    \prod_{p \nmid 2\qrep}
    \left(1 - \frac{1}{(p+1)(1 - \bar\alpha_1(p)^{-2}p^{3-2s})}\right)
    \\
    \nonumber
    &\hspace{8cm}\cdot
    \frac{1 - \left(1 - \frac{\Delta D}{D}\right)^{1 - (s-\half)}}{(1 - (s-\half))\frac{\Delta D}{D}} \left(\frac{\pi \x}{\qrep D}\right)^{\!s-\half} \frac{ds}{s}
    \\
    \label{eq:deriv_approx_step1_mt}
    &=
    \frac{1}{2\pi i}\int_{\sigma - iT_1}^{\sigma + iT_1} \frac{\halfGamma{1-s+\bar\kappa}}{\halfGamma{s+\kappa}}
    \frac{L(2-2s, \barrepsq)}{L^{(2\qrep)}(3-2s, \barrepsq)}
    \mspace{1mu}
    \prod_{p \nmid 2\qrep}
    \left(1 - \frac{1}{(p+1)(1 - \bar\alpha_1(p)^{-2}p^{3-2s})}\right)
    \\
    \nonumber
    &\hspace{8cm}\cdot
    \frac{1 - \left(1 - \frac{\Delta D}{D}\right)^{1 - (s-\half)}}{(1 - (s-\half))\frac{\Delta D}{D}} \left(\frac{\pi \x}{\qrep D}\right)^{\!s-\half} \frac{ds}{s}
    \\
    \label{eq:deriv_approx_step1_et}
    &+
    \frac{1}{2\pi i}\left(\int_{\sigma - iT}^{\sigma - iT_1} + \int_{\sigma + iT_1}^{\sigma + iT}\right) \frac{\halfGamma{1-s+\bar\kappa}}{\halfGamma{s+\kappa}}
    \frac{L(2-2s, \barrepsq)}{L^{(2\qrep)}(3-2s, \barrepsq)}
    \mspace{1mu}
    \prod_{p \nmid 2\qrep}
    \left(1 - \frac{1}{(p+1)(1 - \bar\alpha_1(p)^{-2}p^{3-2s})}\right)
    \\
    \nonumber
    &\hspace{8cm}\cdot
    \frac{1 - \left(1 - \frac{\Delta D}{D}\right)^{1 - (s-\half)}}{(1 - (s-\half))\frac{\Delta D}{D}} \left(\frac{\pi \x}{\qrep D}\right)^{\!s-\half} \frac{ds}{s}
    .
  \end{align}
  
  We further decompose the fledgling main term \eqref{eq:deriv_approx_step1_mt} as
  \begin{align}
    \label{eq:deriv_approx_step2_mt}
    \eqref{eq:deriv_approx_step1_mt}
    &=
    \frac{1}{2\pi i}\int_{\sigma - iT_1}^{\sigma + iT_1} \frac{\halfGamma{1-s+\bar\kappa}}{\halfGamma{s+\kappa}}
    \frac{L(2-2s, \barrepsq)}{L^{(2\qrep)}(3-2s, \barrepsq)}
    \mspace{1mu}
    \prod_{p \nmid 2\qrep}
    \left(1 - \frac{1}{(p+1)(1 - \bar\alpha_1(p)^{-2}p^{3-2s})}\right)
    \left(\frac{\pi \x}{\qrep D}\right)^{\!s-\half} \frac{ds}{s}
    \\
    \label{eq:deriv_approx_step2_et}
    &
    \begin{aligned}
    &
    \mspace{2mu}+
    \frac{1}{2\pi i}\int_{\sigma - iT_1}^{\sigma + iT_1} \frac{\halfGamma{1-s+\bar\kappa}}{\halfGamma{s+\kappa}}
    \frac{L(2-2s, \barrepsq)}{L^{(2\qrep)}(3-2s, \barrepsq)}
    \mspace{1mu}
    \prod_{p \nmid 2\qrep}
    \left(1 - \frac{1}{(p+1)(1 - \bar\alpha_1(p)^{-2}p^{3-2s})}\right)
    \\
    &\hspace{7.5cm}\cdot
    \left(\frac{1 - \left(1 - \frac{\Delta D}{D}\right)^{1 - (s-\half)}}{(1 - (s-\half))\frac{\Delta D}{D}} - 1\right) \left(\frac{\pi \x}{\qrep D}\right)^{\!s-\half} \frac{ds}{s}
    .
    \end{aligned}
  \end{align}

  In \eqref{eq:deriv_approx_step2_mt} we extend the line of integration to infinity and record the error term
  \begin{align}
    \label{eq:deriv_approx_step3_mt}
    \eqref{eq:deriv_approx_step2_mt}
    &=
    \frac{1}{2\pi i}\int_{\sigma - i\infty}^{\sigma + i\infty} \frac{\halfGamma{1-s+\bar\kappa}}{\halfGamma{s+\kappa}}
    \frac{L(2-2s, \barrepsq)}{L^{(2\qrep)}(3-2s, \barrepsq)}
    \mspace{1mu}
    \prod_{p \nmid 2\qrep}
    \left(1 - \frac{1}{(p+1)(1 - \bar\alpha_1(p)^{-2}p^{3-2s})}\right)
    \left(\frac{\pi \x}{\qrep D}\right)^{\!s-\half} \frac{ds}{s}
    \\
    \label{eq:deriv_approx_step3_et}
    &
    \begin{aligned}
      &
    +
    \frac{1}{2\pi i}\left(\int_{\sigma - i\infty}^{\sigma - iT_1} + \int_{\sigma + iT_1}^{\sigma + i\infty} \right)\frac{\halfGamma{1-s+\bar\kappa}}{\halfGamma{s+\kappa}}
    \frac{L(2-2s, \barrepsq)}{L^{(2\qrep)}(3-2s, \barrepsq)}
    \\
    &\hspace{6.5cm}
    \cdot
    \prod_{p \nmid 2\qrep}
    \left(1 - \frac{1}{(p+1)(1 - \bar\alpha_1(p)^{-2}p^{3-2s})}\right)
    \left(\frac{\pi \x}{\qrep D}\right)^{\!s-\half} \frac{ds}{s}
    .
    \end{aligned}
  \end{align}

  The main term of \cref{lemma:eliminate_derivative_factor}'s right hand side is \eqref{eq:deriv_approx_step3_mt}. We must now bound the three error terms \eqref{eq:deriv_approx_step1_et}, \eqref{eq:deriv_approx_step2_et}, and \eqref{eq:deriv_approx_step3_et}. Let's collect some auxiliary bounds in preparation, on the individual factors of the integrand.
  
  As mentioned in the \hyperlink{proof:t2s_approx_step3}{proof of \cref*{lemma:t2s_approx_step3}}, 
  \begin{align}
    \label{eq:derivproof_deriv_bound_1}
    \frac{1 - \left(1 - \frac{\Delta D}{D}\right)^{\frac{3}{2} - s}}{(\frac{3}{2} - s)\frac{\Delta D}{D}}
    &=
    1 + O\!\left(\frac{|s|\Delta D}{D}\right)
    \shortintertext{
      when $D \gg |s|\Delta D$, and
    }
    \frac{1 - \left(1 - \frac{\Delta D}{D}\right)^{\frac{3}{2} - s}}{(\frac{3}{2} - s)\frac{\Delta D}{D}}
    &\ll
    \frac{D}{|s|\Delta D}
    \label{eq:derivproof_deriv_bound_2}
  \end{align}
  in the complementary regime $D \gg |s|\Delta D$.

  By Stirling's formula \eqref{eq:stirling},
  \begin{align}
    \label{eq:derivproof_gamma_bound}
    \frac{\halfGamma{1-s+\bar\kappa}}{\halfGamma{s+\kappa}}
    \ll
    (|t + \tau| + 1)^{\half - \sigma}
    .
  \end{align}
  
  Given a ``subconvexity bound'' $L(\thalf + it, \chi^2) \ll |t|^{\subconvexityparam + \eps}$ with $\subconvexityparam < \tfrac{1}{4}$, the Phragm\'en--Lindel\"of principle \cite[Thm.\ 8.2.1]{goldfeld:book} implies, for $\thalf < \sigma < 1$,
  \begin{align}
    \nonumber
    L(\sigma + it, \bar\chi^2)
    &\ll |t|^{2\subconvexityparam(1 - \sigma) + \eps}
    ,
    \shortintertext{and so}
    \label{eq:derivproof_L_bound}
    L(2 - 2s, \barrepsq)
    =
    L(2 - 2s + 2i\tau, \bar\chi^2)
    &
    \ll |t - \tau|^{4\subconvexityparam\sigma - 2\subconvexityparam + \eps}
  \end{align}
  for $\thalf < 2 - 2\sigma < 1 \Longleftrightarrow \thalf < \sigma < \tfrac{3}{4}$.

  Moreover, for $\sigma < 1$,
  \begin{align}
    \label{eq:derivproof_prod_bound}
    \begin{aligned}
      \prod_{p \nmid 2\qrep}
      \left(1 - \frac{1}{(p+1)(1 - \bar\alpha_1(p)^{-2}p^{3-2s})}\right)
      &\ll 1
    \end{aligned}
  \end{align}
  by \cref{lemma:prod_convergence}. 

  Combining the bounds \eqref{eq:derivproof_deriv_bound_1}, \eqref{eq:derivproof_deriv_bound_2}, \eqref{eq:derivproof_gamma_bound}, \eqref{eq:derivproof_L_bound}, and \eqref{eq:derivproof_prod_bound}, for $\thalf < \sigma < \tfrac{3}{4}$, $\x \asymp (qD)^{1 \pm \eps}$, and $|t| \gg \tau^{1 + \eps}$
  \begin{align}
    \label{eq:derivproof_integrand_bound}
    \begin{aligned}
      &\frac{\halfGamma{1-s+\bar\kappa}}{\halfGamma{s+\kappa}}
      \frac{L(2-2s, \barrepsq)}{L^{(2\qrep)}(3-2s, \barrepsq)}
      \mspace{1mu}
      \prod_{p \nmid 2\qrep}
      \left(1 - \frac{1}{(p+1)(1 - \bar\alpha_1(p)^{-2}p^{3-2s})}\right)
      \frac{1 - \left(1 - \frac{\Delta D}{D}\right)^{1 - (s-\half)}}{(1 - (s-\half))\frac{\Delta D}{D}} \left(\frac{\pi \x}{\qrep D}\right)^{\!s-\half}
    \\
    &\hspace{7cm}
    \ll
    |s|^{(4\subconvexityparam-1)(\sigma - \half) + \eps} \min\!\left\{1 + O\!\left(\frac{|s|\Delta D}{D}\right), \frac{D}{|s|\Delta D}\right\}
    .
    \end{aligned}
  \end{align}
  With \eqref{eq:derivproof_integrand_bound} we are ready to bound the contribution of our three error terms \eqref{eq:deriv_approx_step1_et}, \eqref{eq:deriv_approx_step2_et}, and \eqref{eq:deriv_approx_step3_et}.

  Beginning with \eqref{eq:deriv_approx_step1_et},
  if $\tau^{1+\eps} \ll T_1 \ll \frac{D}{\Delta D}$ (and using the assumption that $ \frac{D}{\Delta D} \ll T^{1-\eps}$),
  \begin{align}
    \nonumber
    \eqref{eq:deriv_approx_step1_et}
    &\ll \int_{T_1}^T |t|^{(4\subconvexityparam-1)(\sigma - \half) + \eps} \min\!\left\{1 + O\!\left(\frac{|t|\Delta D}{D}\right), \frac{D}{|t|\Delta D}\right\} \,\frac{dt}{t}
    \\
    \nonumber
    &\ll
    \int_{T_1}^{\frac{D}{\Delta D}} |t|^{(4\subconvexityparam-1)(\sigma - \half) + \eps} \left(1 + O\!\left(\frac{|t|\Delta D}{D}\right)\right) \,\frac{dt}{t}
    \,+\,
    \int_{\frac{D}{\Delta D}}^T |t|^{(4\subconvexityparam-1)(\sigma - \half) + \eps} \frac{D}{|t|\Delta D} \,\frac{dt}{t}
    \\
    &
    \label{eq:deriv_approx_step1_eval}
    \begin{aligned}
    \ll& \frac{\Delta D}{D} \left(\left(\frac{D}{\Delta D}\right)^{(4\subconvexityparam-1)(\sigma - \half) + 1 + \eps} - T_1^{(4\subconvexityparam-1)(\sigma - \half) + 1 + \eps} \right)
    \\
    &+ \left(\frac{D}{\Delta D}\right)^{(4\subconvexityparam-1)(\sigma - \half) + \eps} - T_1^{(4\subconvexityparam-1)(\sigma - \half) + \eps}
    \\
    &+ \frac{D}{\Delta D} \left(T^{(4\subconvexityparam-1)(\sigma - \half) - 1 + \eps} - \left(\frac{D}{\Delta D}\right)^{(4\subconvexityparam-1)(\sigma - \half) - 1 + \eps} \right)
    .
    \end{aligned}
  \end{align}
  The term on the last line of \eqref{eq:deriv_approx_step1_eval} is dwarfed by the term on the middle line, and so may be omitted.

  Onto \eqref{eq:deriv_approx_step2_et}.
  If $\tau^{1+\eps} \ll T_1 \ll \frac{D}{\Delta D}$,
  \begin{align}
    \nonumber
    \eqref{eq:deriv_approx_step2_et}
    &= \frac{1}{2\pi i}\int_{\sigma - iT_1}^{\sigma + iT_1} \frac{\halfGamma{1-s+\bar\kappa}}{\halfGamma{s+\kappa}}
    \frac{L(2-2s, \barrepsq)}{L^{(2\qrep)}(3-2s, \barrepsq)}
    \mspace{1mu}
    \prod_{p \nmid 2\qrep}
    \left(1 - \frac{1}{(p+1)(1 - \bar\alpha_1(p)^{-2}p^{3-2s})}\right)
    \\
    \nonumber
    &\hspace{7.5cm}\cdot
    \left(\frac{1 - \left(1 - \frac{\Delta D}{D}\right)^{1 - (s-\half)}}{(1 - (s-\half))\frac{\Delta D}{D}} - 1\right) \left(\frac{\pi \x}{\qrep D}\right)^{\!s-\half} \frac{ds}{s}
    \\
    \nonumber
    &\ll
    \int_{\sigma - iT_1}^{\sigma + iT_1} |s|^{(4\subconvexityparam-1)(\sigma - \half) + \eps} \frac{|s|\Delta D}{D} \,\frac{ds}{s}
    \\
    &\ll
    \label{eq:deriv_approx_step2_eval}
    \frac{\Delta D}{D} + \frac{\Delta D}{D} T_1^{(4\subconvexityparam-1)(\sigma - \half) + 1 + \eps}
    .
  \end{align}

  Now \eqref{eq:deriv_approx_step3_et}:
  \begin{align}
    \nonumber
    \eqref{eq:deriv_approx_step3_et}
    &=
    \frac{1}{2\pi i}\left(\int_{\sigma - i\infty}^{\sigma - iT_1} + \int_{\sigma + iT_1}^{\sigma + i\infty} \right)\frac{\halfGamma{1-s+\bar\kappa}}{\halfGamma{s+\kappa}}
    \frac{L(2-2s, \barrepsq)}{L^{(2\qrep)}(3-2s, \barrepsq)}
    \\
    \nonumber
    &\hspace{6.5cm}
    \cdot
    \prod_{p \nmid 2\qrep}
    \left(1 - \frac{1}{(p+1)(1 - \bar\alpha_1(p)^{-2}p^{3-2s})}\right)
    \left(\frac{\pi \x}{\qrep D}\right)^{\!s-\half} \frac{ds}{s}
    \\
    \nonumber
    &\ll
    \int_{\sigma - i\infty}^{\sigma - iT_1} |s|^{(4\subconvexityparam-1)(\sigma - \half) + \eps} \,\frac{ds}{s}
    \\
    &\ll
    \label{eq:deriv_approx_step3_eval}
    T_1^{(4\subconvexityparam-1)(\sigma - \half) + \eps}
    .
  \end{align}

  Write $T_1 \asymp D^\beta$ and $\gamma \coloneqq \subconvexityparam - \tfrac{1}{4}$. Combining with $\Delta D \asymp D^{\delta}$ and taking $\sigma = \tfrac{3}{4}$, the bounds on the three error terms become
  \begin{align*}
    D^{-\eps}\eqref{eq:deriv_approx_step1_eval}
    &\ll
    D^{\gamma(1-\delta)}
    + D^{-(1-\delta) + (\gamma + 1)\beta}
    + D^{\gamma\beta}
    \\
    D^{-\eps}\eqref{eq:deriv_approx_step2_eval}
    &\ll D^{-(1-\delta)} + D^{-(1-\delta) + (\gamma + 1)\beta}
    \\
    D^{-\eps}\eqref{eq:deriv_approx_step3_eval}
    &\ll D^{\gamma\beta}
    .
  \end{align*}
  Take $\beta = 1-\delta$. The error term is of size $D^{\gamma(1-\delta)}$. One can verify that accommodating the possibility of $T_1 \gg D^{1 - \delta + \eps}$ gives no benefit.
\end{proof}


~

\section{Glossary}
\label{sec:glossary}
\vspace{-2\baselineskip}
{\allowdisplaybreaks
\begin{align*}
  \\&A && \text{\eqref{eq:psidef} Parameter in $\xi$}
  \\&a_\rep && \text{\eqref{eq:rep_Lfunc_series} Dirichlet coefficient with analytic normalization}
  \\&B && \text{\eqref{eq:psidef} Parameter in $\xi$}
  \\&c && \text{\hyperlink{hyper:cdef}{Abscissa for integrals along vertical lines or line segments}}
  \\&c_g && \text{\hyperlink{hyper:cgdef}{Abscissa of integration in \cref*{sec:outline_construction}}}
  \\&c_V && \text{\eqref{eq:Vdef} Abscissa of integration in $V$}
  \\&D && \text{\eqref{eq:Fdef} Upper bound of $d \in \cF$}
  \\&D_0 && \text{\eqref{eq:Fdef} Lower bound of $d \in \cF$}
  \\&\Delta D && \text{\hyperlink{hyper:DeltaD}{$D - D_0$}}
  \\&d && \text{Positive fundamental discriminant}
  \\&e && 2.718281828459045...
  \\&\cF && \text{\eqref{eq:Fdef} Family}
  \\&\mf && \text{\Cref{thm:gamma_murmurations} Meromorphic function}
  \\&\ff && \text{\Cref{lemma:sqfree_character_estimate} Modulus of Dirichlet character}
  \\&\Gp && \text{\eqref{eq:Gdef} Gamma factors of $L(s,\rep)$}
  \\&\Gm && \text{\eqref{eq:Gdef} Gamma factors of $L(s,\bar\rep)$}  
  \\&g && \text{\hyperlink{hyper:gdef}{Approximation to average Dirichlet series in \cref*{sec:outline_construction}}}
  \\&i && \text{$\sqrt{-1}$}
  \\&K_1 && \text{\hyperlink{hyper:K1K2def}{Approximation to average of first term of AFE in \cref*{sec:outline_construction}}}
  \\&K_2 && \text{\hyperlink{hyper:K1K2def}{Approximation to residue of $g$ in \cref*{sec:outline_construction}}}
  \\&L(s,\rep) && \text{\eqref{eq:rep_Lfunc_series} $L$-function}
  \\&L^{(m)} && \text{$L$-function with Euler factors at $p \mid m$ removed}
  \\&L(s,\rep^{(2)}) && \text{\eqref{eq:repL2def} $\textstyle{\sum_n a_\rep(n^2)n^{-s}}$}
  \\&\ell && \text{\eqref{eq:Fdef} $d = \ell \mod \qrep$}
  \\&M && \text{\eqref{eq:murmurationsdef} Scale-invariant murmuration asymptotic}
  \\&\cM^{-1} && \text{Inverse Mellin transform}
  \\&m && \text{\Cref{lemma:sqfree_character_estimate} sum over $n$ coprime to $m$}
  \\&N_E && \text{\hyperlink{hyper:NEdef}{Conductor of elliptic curve}}
  \\&n && \text{\eqref{eq:rep_Lfunc_series} Index of summation or of Dirichlet coefficient}
  \\&\nmur && \text{\hyperlink{hyper:murdefs}{Controls horizontal aspect in \cref*{sec:outline_construction}}}
  \\&O && \text{Big $O$ notation}
  \\&p && \text{Prime number ($\in \Z_{>0}$)}
  \\&Q && \text{\eqref{eq:Qdef} Reduced analytic conductor}
  \\&\qrep && \text{\eqref{eq:rep_Lfunc_series} Conductor of $L(s,\rep)$}
  \\&\qmur && \text{\hyperlink{hyper:murdefs}{Controls vertical aspect in \cref*{sec:outline_construction}}}
  \\&\repdim && \text{\eqref{eq:rep_Lfunc_series} Degree of $L(s,\rep)$}
  \\&\cS && \text{\hyperlink{hyper:cSdef}{Set of $n$'s summed over in \cref*{sec:outline_construction}}}
  \\&s && \text{Complex number}
  \\&s_0 && \text{\eqref{eq:xidef} Shift for $s$ in $\xi$ to remain holomorphic}
  \\&|s_*| && \textstyle{\max_j\!\left\{ |s + \kappa_j|, |1 - s + \bar\kappa_j|, 1 \right\}}
  \\&T && \text{\hyperlink{hyper:cdef}{Bound on $t$ for integrals along vertical line segments.}}
  \\&t && \Im(s)
  \\&t_w && \text{\eqref{eq:sigmawdef} $\Im(w)$}
  \\&t_j && \text{\eqref{eq:sigmawdef} $\Im(\kappa_j)$}
  \\&V_s && \text{\eqref{eq:Vdef} Weight in approximate functional equation's first term}
  \\&V_s^* && \text{\eqref{eq:Vdef} Weight in approximate functional equation's second term}
  \\&V_\pm && \text{\eqref{eq:wtVdef} Weights for more general form of approximate functional equation}
  \\&\x && \text{\Cref{thm:quadratic_murmurations} Controls horizontal aspect}
  \\&\alpha && \text{\text{\eqref{eq:xidef}} Exponent of $D^2$ in $\xi$}
  \\&\hat\alpha && \text{\eqref{eq:alphahatdef} $0$}
  \\&\alpha_j && \text{\eqref{eq:rep_Lfunc_series} Euler product coefficient}
  \\&\alpha_1 && \text{\eqref{eq:rep_Lfunc_series} $\alpha_j$ when $\repdim = 1$}
  \\&\beta && \text{\eqref{eq:xidef} Exponent of $(s - s_0)^2$ in $\xi$}
  \\&\hat\beta && \text{\eqref{eq:betahatdef} Value of $\beta$ which purportedly optimizes error term}
  \\&\Gamma && \text{$\Gamma$ function}
  \\&\gamma && \text{\eqref{eq:gammadeltadef} $T = D^\gamma$}
  \\&\hat\gamma && \text{\eqref{eq:gammahatdef} Optimal value of $\gamma$}
  \\&\delta && \text{\eqref{eq:gammadeltadef} $\#\cF = D^\delta$}
  \\&\eps && \text{\hyperlink{hyper:epsdef}{Sufficiently small positive real which may change from line to line}}
  \\&\zeta && \text{Riemann zeta function}
  \\&\eta_\ell && \text{\eqref{eq:etadef} Coefficient reflecting different densities of fundamental discriminants}
  \\&\Eta_{\cF} && \text{\eqref{eq:Etadef} $\eta_{\qrep,\qrep,\ell}$}
  \\&\theta && \text{\hyperlink{hyper:thetadef}{Progress towards the Ramanujan--Petersson conjecture}}
  \\&\kappa_j && \text{\eqref{eq:rep_Lfunc_series} Local parameters at infinity}
  \\&\kappa && \text{\hyperlink{hyper:kappadef}{Local parameter at infinity when $\repdim = 1$}}
  \\&\Lambda(s,\rep) && \text{\eqref{eq:Lambdadef} Completed $L$-function}
  \\&\subconvexityparam && \text{\Cref{lemma:eliminate_derivative_factor} Progress towards the Lindel\"of hypothesis}
  \\&\mu && \text{M\"{o}bius function} 
  \\&\centralchar && \text{\hyperlink{hyper:centralchardef}{Central character of $\rep$}}
  \\&\xi && \text{\eqref{eq:xidef} Special function imbued in $V_s$}
  \\&\pi && 3.14159265358979323...
  \\&\rho && \text{\eqref{eq:after_contrib_lemmas_et} Exponent of error term in $D$ aspect}
  \\&\hat\rho && \text{\eqref{eq:rhohatdef} Optimal value of $\rho$}
  \\&\hat\rho_f && \text{\eqref{eq:rhofhatdef} Optimal value of $\rho$ when the weight function $f$ is present}
  \\&\sigma && \text{$\Re(s)$}
  \\&\divcount && \text{Divisor counting function: $\divcount(n) \coloneqq \#\{d\in \Z_{>0} \,:\, d\mid n\}$}
  \\&\sigma_w && \text{\eqref{eq:sigmawdef} $\Re(w)$}
  \\&\sigma_j && \text{\eqref{eq:sigmawdef} $\Re(\kappa_j)$}
  \\&\tau && \text{\hyperlink{hyper:gl1repdef}{Exponent of unramified part of $\mathrm{GL}_1$ representation at $\infty$}}
  \\&\Phi &&\text{\hyperlink{hyper:Phidef}{Reasonable function}}
  \\&\phi && \text{Euler totient function}
  \\&\rep && \text{\eqref{eq:rep_Lfunc_series} Irreducible unitary cuspidal automorphic representation}
  \\&\chi && \text{\hyperlink{hyper:gl1repdef}{Dirichlet character corresponding to $\mathrm{GL}_1$ representation}}
  \\&\chi_d && \text{Kronecker symbol $\big(\tfrac{d}{\cdot}\big)$}
  \\&\chi_k^0 && \text{Trivial character mod $k$}
  \\&\psi && \text{\Cref{cond:afe_general} Special function in more general form of approximate functional equation}
  \\&\ET && \text{\eqref{eq:ETdef} Error term of \cref*{thm:r1size,thm:r2size}}
  \\&\ET_1,\dots,\ET_5 && \text{\eqref{eq:ETdef} Individual error terms in precursor to \cref*{thm:r1size,thm:r2size}}
  \\&\rootnum_{\cF} && \text{\hyperlink{hyper:rootnumFdef}{$\rootnum_{\rep\otimes\chi_d}$}}
  \\&\rootnum_{\chi_d} && \text{\hyperlink{hyper:rootnumchid}{Root number of $L(s,\chi_d)$ for $d > 0$.}}
  \\&\starsum && \text{Sum over fundamental discriminants}  
  \\&\int_{(c)} && \text{\eqref{eq:vertical_line_integral} Integral from $c - i\infty$ to $c + i\infty$}
  \\&\otimes && \text{\eqref{eq:rankin_selberg_gl1} Rankin--Selberg convolution}
  \\&\onesymb && \text{$1$ if statement is true, $0$ otherwise}
  \\&n\neq\square && \text{There does not exist $m \in \Z$ such that $n = m^2$.}
  \\&(\,\cdot\,,\cdot) && \text{Owl}
  \\&f \ll g && f = O(g)
  \\&f \asymp g && \text{$f \ll g \ll f$, with the implied constants absolute}
  \\&f \asymp g^{b\pm\eps} && \text{\hyperlink{hyper:asymppmdef}{$g^{b-\eps} \ll f \ll g^{b+\eps}$}}
  \\&\# && \text{Cardinality of a set}
\end{align*}
} 

\section*{Acknowledgements}
We thank Mike Rubinstein and Jerry Wang for their help.

\renewcommand{\bibliofont}{\normalfont\small} 
\bibliographystyle{amsalpha}
\bibliography{murmurationsbib}{}

\end{document}